\theoremstyle{plain}
\newtheorem{theorem}{Theorem}[section]
\newtheorem{corollary}[theorem]{Corollary}
\newtheorem{lemma}[theorem]{Lemma}
\newtheorem{proposition}[theorem]{Proposition}
\newtheorem{definition-lemma}[theorem]{Definition-Lemma}
\theoremstyle{remark}
\newtheorem{remark}[theorem]{Remark}
\theoremstyle{definition}
\newtheorem{convention}[theorem]{Convention}
\newcommand{\Pic}[0]{\operatorname{Pic}}
\newcommand{\wh}{\widehat}
\newcommand{\wt}{\widetilde}
\def\ep{\varepsilon}
\def\ddbar{\partial\bar\partial}
\def\ve{\varepsilon}
\def\NE{\overline{\operatorname{NE}}}
\def\BC{\operatorname{BC}}
\def\NA{\overline{\operatorname{NA}}}
\def\Null{\operatorname{Null}}
\newcommand{\<}{\leq}
\def\>{\geq}
\newcommand{\mbQ}{\mathbb{Q}}
\newcommand{\mbR}{\mathbb{R}}
\newcommand{\CC}{\mathbb{C}}
\newcommand{\cC}{{\mathcal{C}}}
\def\mcO{\mathcal{O}}
\newcommand{\num}{\equiv}
\newcommand{\dbr}{\bar{\partial}}
\newcommand{\OO}{{\mathcal{O}}}
\newcommand{\Q}{{\mathbb{Q}}}
\newcommand{\R}{{\mathbb{R}}}
\newcommand{\mult}{{\rm mult}}
\newcommand{\Supp}{{\rm Supp}}
\newcommand{\mbC}{\mathbb{C}}
\newcommand{\mbZ}{\mathbb{Z}}
\newcommand{\bir}{\dashrightarrow}
\newcommand{\vphi}{\varphi}
\def\injective{\hookrightarrow}
\def\surjective{\twoheadrightarrow}
\def\lrd{\lfloor}
\def\rrd{\rfloor}
\def\mbN{\mathbb{N}}
\def\mbP{\mathbb{P}}
\def\>{\geq}
\def\ve{\varepsilon}
\def\mcA{\mathcal{A}}
\def\mcO{\mathcal{O}}
\def\mcC{\mathcal{C}}
\def\mcE{\mathcal{E}}
\def\mcH{\mathcal{H}}
\def\mcK{\mathcal{K}}
\def\mcL{\mathcal{L}}
\def\mcM{\mathcal{M}}
\def\mcP{\mathcal{P}}
\def\msI{\mathscr{I}}
\def\msJ{\mathscr{J}}
\def\msL{\mathscr{L}}
\def\eps{\epsilon}
\def\bfP{\textbf{P}}
\def\bfQ{\textbf{Q}}
\def\lrd{\lfloor}
\def\rrd{\rfloor}
\def\pt{\operatorname{pt}}
\def\Ex{\operatorname{Ex}}
\def\dim{\operatorname{dim}}
\def\NA{\operatorname{\overline{NA}}}
\def\NE{\operatorname{\overline{NE}}}
\def\Nef{\operatorname{Nef}}
\def\Div{\operatorname{Div}}
\def\lct{\operatorname{lct}}
\def\LCT{\operatorname{LCT}}
\def\Bs{\operatorname{Bs}}
\def\Zero{\operatorname{Zero}}
\theoremstyle{definition}
\newtheorem{definition}[theorem]{Definition}
\theoremstyle{definition}
\numberwithin{equation}{section}
\theoremstyle{remark}
\newtheorem{claim}[theorem]{Claim}
\author{Omprokash Das}
\address{School of Mathematics\\
Tata Institute of Fundamental Research\\
Homi Bhabha Road, Navy Nagar\\
Colaba, Mumbai 400005}
\email{omdas@math.tifr.res.in}
\email{omprokash@gmail.com}
\thanks{Omprokash Das was supported by the Start--Up Research Grant(SRG), Grant No. \# SRG/2020/000348 of the Science and Engineering Research Board (SERB), Govt. Of India.}
\author{Christopher Hacon}
\address{Department of Mathematics\\
University of Utah\\
155 S 1400 E\\
Salt Lake City, Utah 84112, USA}
\email{hacon@math.utah.edu}
\thanks{Christopher Hacon was partially supported by the NSF research grants no: DMS-1952522, DMS-1801851 and by a grant from the Simons Foundation; Award Number: 256202.}
\author{Mihai P\u aun}
\address{Institut für Mathematik\\ Universität Bayreuth \\ 95440 Bayreuth, Germany
}
\email{mihai.paun@uni-bayreuth.de}
\thanks{Mihai P\u aun gratefully acknowledges support from the DFG}
\title[$4$-dimensional K\"ahler MMP]{On the $4$-dimensional minimal model program for K\"ahler varieties}
\begin{document}
\maketitle
\begin{abstract}
 	In this article we establish the following results: Let $(X, B)$ be a dlt pair, where $X$ is a $\mbQ$-factorial K\"ahler $4$-fold -- (i) if $X$ is compact and $K_X+B\sim_{\mbQ} D\>0$ for some effective $\mbQ$-divisor, then $(X, B)$ has a log minimal model, (ii) if $(X/T, B)$ is a semi-stable klt pair, $W\subset T$ a compact subset and $K_X+B$ is effective over $W$ (resp. not effective over $W$), then we can run a $(K_X+B)$-MMP over $T$ (in a neighborhood of $W$) which ends with a minimal model over $T$ (resp. a Mori fiber space over $T$). We also give a proof of the existence of flips for analytic varieties in all dimensions and the relative MMP for projective morphisms between analytic varieties.

 \end{abstract}

\tableofcontents

\date{\today}

\section{Introduction}
In recent years there has been substantial progress towards the minimal model program for complex projective varieties of arbitrary dimension \cite{BCHM10}.
Unluckily, much less is known about the minimal model program for K\"ahler varieties.
In dimension 3, the situation is now well understood, including the cone theorem, the base point free theorem, the existence of flips and divisorial contractions and the termination of flips (see \cite{HP16}, \cite{CHP16}, \cite{DO23}, \cite{DH20} and references therein).
In higher dimension, however the situation is less clear.
Recently, however, Fujino proved the minimal model program for projective morphisms between complex analytic spaces (of arbitrary dimension) \cite{Fuj22}.

In this paper we take the first steps towards proving that the minimal model program holds for K\"ahler 4-folds. In particular we show that it holds for effective dlt pairs, and for (strongly) semistable families of $3$-folds over curves.

\begin{theorem}\label{thm:effective-dlt-mm}
Let $(X, B)$ be a $\mbQ$-factorial compact K\"ahler $4$-fold dlt pair such that $K_X+B\sim_{\mbQ} M\>0$. Then $(X, B)$ has a log minimal model.
\end{theorem}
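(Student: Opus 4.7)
The plan follows the BCHM strategy for effective pairs, adapted to the K\"ahler analytic setting by leveraging the three-dimensional K\"ahler MMP. The argument rests on two main reductions, after which one runs an MMP with scaling and appeals to special termination.

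First, I would reduce to the case $\operatorname{Supp}(M)\subseteq\operatorname{Supp}(\lfloor B\rfloor)$. Since $K_X+B\sim_{\mbQ} M$, one has $K_X+(B+\delta M)\sim_{\mbQ}(1+\delta)M$ for any $\delta>0$; after suitably rescaling and shifting coefficients (and perhaps first perturbing $B$ so that the new components can be absorbed with coefficient $\leq 1$), one may replace $(X,B)$ by a dlt pair whose effective representative is supported in $\lfloor B\rfloor$. Once this is achieved, every $(K_X+B)$-negative extremal ray $R$ must satisfy $M\cdot R<0$, so there is some component $M_i\subseteq\lfloor B\rfloor$ with $M_i\cdot R<0$, and hence $R$ is represented by a curve lying inside $M_i$. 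This is the key structural consequence of effectivity: all MMP steps will take place in a neighborhood of $\lfloor B\rfloor$.

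Next, I would run a $(K_X+B)$-MMP with scaling of a K\"ahler class $\omega$. By the previous step, the existence of contractions and flips can be analyzed by restriction to the three-dimensional components $M_i$, where the cone theorem, base-point-free theorem, and contraction/flip theorems are available by \cite{HP16, CHP16, DO23, DH20}; the existence of flips in the analytic category in all dimensions is the separate result established later in the paper. Thus each step of the MMP can be performed.

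The main obstacle, as always, is termination of flips. Here I would invoke special termination: via adjunction, the normalization of each component $M_i$ carries an induced three-dimensional dlt pair, and the restriction of the MMP to $M_i$ is a three-dimensional MMP that terminates by the known three-dimensional theory. Combined with the reduction that every $(K_X+B)$-negative ray meets $\lfloor B\rfloor$, this forces the four-dimensional MMP to terminate with a log minimal model. The hard part will be making adjunction and the propagation of the K\"ahler scaling class precise in the analytic K\"ahler category after divisorial contractions and flips, and in verifying that the induced three-dimensional MMPs on the (possibly non-projective) boundary components fall within the scope of the available termination results; this is where the technical heart of the argument should lie.
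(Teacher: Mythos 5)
Your outline correctly identifies the easy half of the argument: when $\operatorname{Supp}(M)\subseteq\operatorname{Supp}(\lfloor B\rfloor)$, every $(K_X+B)$-negative extremal ray is $M$-negative, hence lies in a boundary component $T$, and one contracts it by applying the $3$-dimensional cone/contraction theorems to $(T,B_T)$ and extending the contraction to $X$; special termination then finishes. This is exactly the paper's Theorem \ref{thm:special-effective-dlt-mmp}. The gap is in your first reduction. You cannot in general arrange $\operatorname{Supp}(M)\subseteq\lfloor B'\rfloor$ for a dlt pair $(X,B')$ with $K_X+B'\sim_{\mbQ}\lambda(K_X+B)$ by "rescaling and shifting coefficients": to keep the pair dlt you must truncate $B+\delta M$ at coefficient $1$, and the truncation $(B+\delta M)^{\leq 1}$ is no longer $\mbQ$-linearly equivalent to a positive multiple of $K_X+B$ plus something supported in $\lfloor B\rfloor$ — the excess discarded on components where $B$ already has coefficient close to $1$ is lost, and a minimal model for the truncated pair is not a minimal model for $(X,B)$. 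Raising $\delta$ only forces \emph{one} new component of $M$ into the boundary before truncation becomes necessary.

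This is precisely why the paper (following Birkar) runs an induction on $\theta(X,B,M)$, the number of components of $M$ with $\operatorname{mult}<1$ in $B$: one sets $\alpha=\min\{t:\lfloor(B+tM)^{\leq 1}\rfloor\neq\lfloor B\rfloor\}$, writes $(B+\alpha M)^{\leq 1}=B+C$, obtains a log minimal model of the \emph{auxiliary} pair $(X,B+C)$ by minimality of $\theta$, and then must transport this back to $(X,B)$ by running a $(K_Y+B_Y+E)$-MMP with scaling of the effective divisor $C_Y$ (Theorem \ref{t-scale}, which needs the decomposition $D=\alpha C+D'$ with $\operatorname{Supp}D'\subset S$), followed by a delicate limiting argument over $\mathcal T=\{t:K_X+B+tC\text{ has a log minimal model}\}$, using Lemma \ref{lem:extracting-divisor} to extract the divisors where discrepancies become equal at $t=\tau$. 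None of this is dispensable, and your proposal as written skips it. Your second concern — that scaling should be done by a K\"ahler class — also departs from the paper: the scaling divisor must be the effective $C$ supported in $M$ so that the negative locus stays inside $S$ at every step; a K\"ahler scaling class gives no such control, and in the K\"ahler category there is no cone theorem in dimension $4$ to fall back on.
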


\begin{theorem}\label{thm:ss-mmp}
Let $f:(X,B)\to T$ be a $\mbQ$-factorial  
semi-stable klt pair of dimension $4$ and $W\subset T$ a compact subset (see Definition \ref{def:klt-semi-stable-pair}). If $K_X+B$ is effective  (resp. not effective) over $W$ (see Lemma \ref{l-psef}), then we can run the $(K_X+B)$-MMP over a neighborhood of $W$ in $T$ which ends with a minimal model over $W$ (resp. with a Mori fiber space over $W$).
\end{theorem}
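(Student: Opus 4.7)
The strategy is to reduce Theorem \ref{thm:ss-mmp} to the compact effective case, Theorem \ref{thm:effective-dlt-mm}. Since $W \subset T$ is compact, I replace $T$ by an open neighborhood of $W$ and choose a compactification $\bar f:\bar X \to \bar T$ of $f$ with $\bar X$ a compact $\mathbb{Q}$-factorial K\"ahler variety. By semi-stability of $f$, for each $t \in T$ the fiber $f^{-1}(t)$ has reduced SNC support compatible with $\Supp(B)$, so $(X, B + f^* t)$ is dlt. I choose an effective divisor $F = \sum a_i f^* t_i$ with $t_i \in T \setminus W$ and $a_i \gg 0$, together with appropriate vertical components on $\bar X \setminus X$, so that $(\bar X, \bar B + \bar F)$ is $\mathbb{Q}$-factorial dlt and $K_{\bar X} + \bar B + \bar F \sim_{\mathbb{Q}} D \ge 0$ globally; here the $f$-effectivity of $K_X+B$ over $W$ combined with the positivity of the fiber components of $F$ absorbs the non-effective loci outside $W$.

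Effective case: Apply Theorem \ref{thm:effective-dlt-mm} to $(\bar X, \bar B + \bar F)$ to obtain a log minimal model. Since $\bar F$ is pulled back from $\bar T$, it is numerically trivial on every fiber of $\bar f$, hence each step of the $(K_{\bar X} + \bar B + \bar F)$-MMP over $\bar T$ is also a step of the $(K_{\bar X} + \bar B)$-MMP over $\bar T$. Restricting to a neighborhood of $W$ yields the desired minimal model of $(X, B)$ over $T$.

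Non-effective case: Run the $(K_X+B)$-MMP over $T$ with scaling of a relative K\"ahler class $\omega$. At each step $K_X + B + \lambda \omega$ is $f$-nef, and applying the preceding argument to $K_X + B + \lambda \omega + F$ (effective for sufficiently large $F$) carries out the extremal contraction or flip. The scaling threshold $\lambda$ strictly decreases; if $\lambda \to 0$ at a minimal model of $(X, B)$, then $K_X + B$ is $f$-pseudo-effective, contradicting Lemma \ref{l-psef}. Hence a fiber-type extremal contraction appears before termination, yielding a Mori fiber space over a neighborhood of $W$.

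The main obstacle is \emph{termination of flips} in dimension $4$, which is open in general. In the semi-stable setting I would invoke \emph{special termination} along the SNC components of $B + F$: by adjunction, the restriction of the scaled MMP to any log canonical center reduces to an MMP in dimension $\le 3$, which terminates by the K\"ahler MMP of \cite{HP16,CHP16,DO23,DH20} together with ACC for log canonical thresholds on K\"ahler surfaces and $3$-folds. A further delicate point is the existence of extremal contractions and flips in the analytic K\"ahler category, which relies on the general analytic MMP results established elsewhere in this paper.
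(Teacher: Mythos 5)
Your reduction collapses at the very first step. In Definition \ref{def:klt-semi-stable-pair} the base $T$ is an arbitrary smooth curve (e.g.\ a disk) and $X$ is merely a normal K\"ahler variety, proper but not projective over $T$; there is no compactification $\bar f:\bar X\to\bar T$ with $\bar X$ a compact $\mbQ$-factorial K\"ahler variety in general, and nothing in the paper supplies one. Even granting such a $\bar X$, Theorem \ref{thm:effective-dlt-mm} runs an \emph{absolute} MMP: its extremal rays need not be vertical over $\bar T$, so its steps are not steps of a $(K_X+B)$-MMP \emph{over} $T$, and the relative version of the effective MMP over a base is exactly what is being proved, not something you may quote. (A relative MMP is available only for projective morphisms, Theorem \ref{t-mmpscale}, which does not apply here.) The actual proof avoids compactification entirely: it introduces the possibly infinite-dimensional spaces $N^1(X/T;W)$, $N_1(X/T;W)$, proves a weak relative cone theorem (Theorem \ref{thm:weak-cone0}) via the Douady-space boundedness of Lemma \ref{l-douady}, contracts the extremal face fiber by fiber using Theorem \ref{thm:contraction-non-q-factorial}, extends over a neighborhood of each $t$ by \cite[Proposition 11.4]{KM92}, and glues with the rigidity lemma. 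Your proposal does not address the existence of these relative contractions at all.

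In the non-effective case there are two further problems. First, ``$K_X+B+\lambda\omega+F$ effective'' is not a hypothesis Theorem \ref{thm:effective-dlt-mm} can digest: $\omega$ is a transcendental $(1,1)$-class, not a $\mbQ$-divisor, so $K_X+B+\lambda\omega\sim_{\mbQ}M\geq 0$ makes no sense. Second, and more seriously, special termination along lc centers of $B+F$ does not by itself terminate the flips: each fiber $X_t$ is a separate lc center of $(X,X_t+B)$, and a priori the flipping loci could hit infinitely many distinct fibers, each finitely often. The missing idea — the heart of Theorem \ref{thm:ss-non-pseff-mmp} — is to run the MMP with scaling of a sequence of \emph{general} K\"ahler classes, show via Lemma \ref{l-psef} that the nef thresholds satisfy $\lim\lambda_i=\lambda>0$, deduce the degree bound $\omega\cdot C_i\leq 12/\lambda$ for the flipping curves, and invoke Lemma \ref{l-douady} to confine them to finitely many fibers; only then does special termination (applied to $(X,X_{t_j}+B)$ for those finitely many fibers) finish the argument. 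Your sketch of the Mori-fiber-space alternative gets the rough dichotomy right but supplies none of this mechanism.
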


The main idea for the proof of Theorem \ref{thm:effective-dlt-mm} is as follows.  If $K_X+B\sim _{\mathbb Q}M\geq 0$, then running the minimal model program for $K_X+B$ is equivalent to running the minimal model program for $K_X+B+\lambda M$ for any $\lambda >0$. Suppose for simplicity that  
$(X,{\rm Supp}(B+M))$ has simple normal crossings and $(X,B+\lambda M)$ is dlt for some $\lambda >0$ such that the support of $\lfloor B+\lambda M\rfloor$ is equal to the support of $M$. It then follows that $K_X+B$ is nef if and only if $K_X+B+\lambda M$ is nef. If this is not the case, then we show that there is a $K_X+B$ negative extremal ray $R$ spanned by a rational curve $C$ such that $M\cdot C<0$ and hence $S\cdot C<0$ for a component $S$ of $M$ and hence of $\lfloor B+\lambda M\rfloor$. By adjunction $K_S+B_S:=(K_X+B+\lambda M)|_S$ is a divisorially log terminal 3-fold. We can now apply the 3-dimensional minimal model program to the pair $(S,B_S)$ and in particular we have a contraction $S\to T$ corresponding to the $K_S+B_S$ negative extremal face $F$ spanned by the curves of the ray $R$ contained in $S$.
Since $S\cdot C<0$, we are able to extend this to a contraction $X\to Y$ of the ray $R$. If this is a divisorial contraction, we replace $X$ by $Y$ and repeat the procedure. Otherwise we have a flipping contraction, which is in particular a projective morphism and hence its flip $X\dasharrow X^+$ exists by \cite{Fuj22} (see also Theorem \ref{t-mmpscale} below). We then replace $X$ by $X^+$ and repeat the procedure. In order to conclude it is necessary to show the termination of the corresponding sequences of flips. This follows along the usual approach by using special termination, the acc for log canonical thresholds, and termination of flips in dimension 3.
Some of the ideas in this approach are inspired by the approach for projective varieties \cite{BCHM10}, \cite{Bir07}, and \cite{Bir10}, but not surprisingly many new technical issues arise in the context of K\"ahler varieties. Regarding Theorem \ref{thm:ss-mmp}, we simply remark that according to our definition of a  
semi-stable klt pair $f:(X,B)\to T$, for any $t\in W$, $(X,X_t+B)$ is a plt pair, thus $K_{X_t}+B_t=(K_X+X_t+B)|_{X_t}$ is a klt 3-fold and so we can reduce questions on the existence of the relative $(X,B)$ minimal model program to known results about the 3-fold minimal model program for $(X_t,B_t)$. Termination of flips when $K_X+B$ is not effective over $W$ is the most challenging part of this proof as the usual approach does not immediately apply here.\\

We will also use the results of \cite{Nak87} and recent advances in the minimal model program to prove the following results conjectured in \cite{Nak87}.

\begin{theorem}[Finite generation  conjecture]\label{c-fg} Let $f: X \to Y$ be a proper surjective morphism of analytic varieties where $X$ is in Fujiki's class $\mathcal C$. Suppose that $(X,B)$ is a klt pair. Then the relative canonical $\mathcal O _Y$-algebra \[R(X/Y,K_X+B):=\oplus_{m\>0} f_*\mathcal O _X(m(K_X+B))\] is locally finitely generated. 
\end{theorem}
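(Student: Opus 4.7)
The plan is to combine Nakayama's framework \cite{Nak87}, which reduces finite generation to the existence of appropriate relative minimal models, with the MMP results established in the present paper and in \cite{Fuj22}. Since the assertion is local on $Y$, I may shrink and assume $Y$ is Stein.

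First I would take a log resolution $\mu: (X', B') \to (X, B)$, writing $K_{X'} + B' = \mu^*(K_X + B) + E$ with $B', E \geq 0$ without common components, $E$ exceptional, and $(X', B')$ klt with SNC support. The klt assumption together with relative Kawamata--Viehweg vanishing yields an isomorphism $R(X/Y, K_X+B) \cong R(X'/Y, K_{X'}+B')$, so I may replace $(X, B)$ by $(X', B')$ and assume that $X$ is smooth and $B$ is SNC. Since Fujiki's class $\mathcal{C}$ is preserved under bimeromorphic modification, $X'$ still lies in $\mathcal{C}$, and after a further modification one may assume $X'$ is K\"ahler over $Y$.

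Next I would run a $(K_X + B)$-MMP over $Y$, invoking the relative MMP for analytic varieties established earlier in the paper (including the existence of flips in all dimensions and the relative MMP of \cite{Fuj22}). Two cases arise. If the MMP terminates with a Mori fiber space, then $K_X + B$ is not relatively pseudo-effective and $f_* \mathcal{O}_X(m(K_X+B)) = 0$ for $m > 0$, giving $R(X/Y, K_X+B) = \mathcal{O}_Y$, which is trivially finitely generated. Otherwise the MMP produces a relative minimal model $(X^m, B^m)/Y$ on which $K_{X^m} + B^m$ is nef over $Y$. The canonical ring is preserved at each step of the MMP, so $R(X/Y, K_X+B) \cong R(X^m/Y, K_{X^m}+B^m)$.

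Finally, the relative base-point-free theorem in the analytic setting shows that $K_{X^m} + B^m$ is relatively semi-ample, yielding a contraction $g: X^m \to Z$ over $Y$ and a relatively ample $\mathbb{Q}$-line bundle $A$ on $Z$ with $K_{X^m} + B^m = g^*A$. Finite generation of the relative section ring of a relatively ample line bundle then completes the argument. The principal obstacle is ensuring that the relative MMP and base-point-free theorem are available in the Fujiki-class (not necessarily projective) setting over $Y$; bridging this gap---reducing the non-projective case to the projective one via an appropriate bimeromorphic model, in the spirit of Nakayama's framework---is where the main technical work lies.
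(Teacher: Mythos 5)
Your proposal has a genuine gap in the case of intermediate relative Kodaira dimension. The step ``run a $(K_X+B)$-MMP over $Y$ \ldots{} otherwise the MMP produces a relative minimal model'' is exactly what is \emph{not} available when $K_X+B$ is pseudo-effective over $Y$ but neither $B$ nor $K_X+B$ is relatively big: termination of the MMP with scaling is only known under such a bigness hypothesis (this is precisely the restriction in Theorem \ref{t-mmpscale}(2), and the same restriction appears in \cite{BCHM10} and \cite{Fuj22}). Even if you were handed a relative minimal model, your final step fails for the same reason: the relative base-point-free theorem applied to $L=K_{X^m}+B^m$ requires $L-(K_{X^m}+B^m+\text{(klt)})$, i.e.\ essentially a positive multiple of $K_{X^m}+B^m$ itself, to be nef and \emph{big} over $Y$; in the non-big case semi-ampleness of a nef $K_{X^m}+B^m$ is the abundance conjecture, which is open. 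So the proposal only proves the theorem when $K_X+B$ is relatively big (or not relatively pseudo-effective), and the hard case is left untreated. There is also an internal circularity to be aware of: in this paper the existence of analytic flips (Theorem \ref{t-mmpscale}) is itself \emph{deduced} from finite generation (Corollary \ref{c-fg1}), so one cannot use the paper's own MMP to prove Theorem \ref{c-fg} without an independent source for flips.

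The paper's actual proof sidesteps the MMP entirely. First, following the proof of \cite[Theorem 5.1]{Fuj15} (see also \cite[21.5]{Fuj22}), one replaces $f:X\to Y$ by a \emph{projective} morphism $g:Z\to Y$ bimeromorphic to the relative Iitaka fibration of $K_X+B$, where $(Z,B_Z)$ is log smooth klt and $K_Z+B_Z$ is \emph{big} over $Y$, with
\[
\oplus_{m\ge 0} f_*\mathcal O_X\bigl(me(K_X+B)\bigr)\cong \oplus_{m\ge 0} g_*\mathcal O_Z\bigl(me'(K_Z+B_Z)\bigr)
\]
for suitable $e,e'>0$; this reduction simultaneously disposes of the non-projectivity of $f$ and of the non-big case. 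Then Corollary \ref{c-fg1} (the analytic Cascini--Lazi\'c finite generation theorem for relatively big klt pairs, proved by induction on dimension via extension theorems and Kawamata--Viehweg vanishing, with no MMP) gives local finite generation. If you want to salvage your route, you must insert this Iitaka-fibration reduction before invoking any MMP or base-point-free theorem.
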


\begin{theorem}\label{t-mmpscale}
Let $\pi :X\to U$ be a projective morphism of normal varieties and $B\geq 0$ a $\Q$-divisor such that $(X,B)$ is klt. Let $W\subset U$ be a compact subset such that $\pi :X\to U$ satisfies property $\mathbf P$ or $\mathbf Q$ (see Definition \ref{def:property-pq}) and $X$ is $\mbQ$-factorial near $W$ (cf. \ref{def:Q-factorial}), then after shrinking $U$ in a neighborhood of $W$, 
\begin{enumerate}
    \item we can run the $K_X+B$ MMP over $U$,
    \item if $K_X+B$ is pseudo-effective, and either $B$ or $K_X+B$ is big over $U$, then any MMP with scaling of a relatively ample divisor terminates with a minimal model, and
    \item if $K_X+B$ is not pseudo-effective over $U$, then any MMP with scaling of a relatively ample divisor terminates with a Mori fiber space.
\end{enumerate}
\end{theorem}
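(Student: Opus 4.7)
My approach is to adapt the strategy of \cite{BCHM10} and \cite{Bir10} to the analytic setting, with Fujino's analytic MMP framework \cite{Fuj22} as the main technical input. Fujino's work provides the cone theorem, divisorial contractions, and the existence of flips for projective morphisms $\pi : X \to U$ of normal analytic varieties satisfying property $\mathbf{P}$ or $\mathbf{Q}$ near a compact subset $W\subset U$. Granting this, part (1) is immediate after shrinking $U$ around $W$: for each $(K_X+B)$-negative extremal ray $R$ over $U$ one forms the associated contraction $\varphi_R : X\to Z$ and either replaces $X$ by $Z$ (divisorial case) or by the flip $X^+$ (small case). Both operations preserve $\Q$-factoriality near $W$ and property $\mathbf{P}$ or $\mathbf{Q}$, so the procedure iterates.

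For part (2), I would run the MMP with scaling of a divisor $A$ that is ample over $U$, producing a non-increasing sequence of thresholds
\[
\lambda_i := \inf\{\lambda \geq 0 : K_{X_i}+B_i+\lambda A_i \text{ is nef over } U\}.
\]
The plan is to reduce termination to a finiteness-of-models statement in the analytic category. If $B$ is big over $U$, then after a small perturbation one can assume that $B$ contains an ample component; the analytic analogue of the finiteness of weak log canonical models, obtained by localizing the arguments of \cite{BCHM10} and \cite{Bir07} in a neighborhood of $W$, then implies that only finitely many distinct outcomes can occur along the scaled MMP, so the thresholds must eventually reach zero. If instead $K_X+B$ is big over $U$, a small $\Q$-factorial modification together with perturbation by an ample divisor reduces to the previous case.

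Part (3) I would derive from (2). Since $A$ is ample over $U$, for $\lambda\gg 0$ the divisor $K_X+B+\lambda A$ is ample over $U$ and in particular $B+\lambda A$ is big over $U$. Applying part (2) to the klt pair $(X,B+\lambda A)$, the MMP with scaling of $A$ terminates. Viewed as an MMP for $(X,B)$ with scaling parameter decreasing from its initial value toward $0$, this process must either produce a log minimal model for $(X,B)$ or execute a Mori fiber contraction at some finite stage. The former is excluded by the hypothesis that $K_X+B$ is not pseudoeffective over $U$, so a Mori fiber space step must occur.

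The hard part will be establishing the analytic finiteness of minimal models used in part (2). In the projective setting this rests on Shokurov's polytope and the finite polyhedral decomposition of the effective cone; in the analytic category one cannot compare $\Q$-divisors globally on $X$, so one must carefully localize around the compact subset $W$ and exploit the hypotheses in properties $\mathbf{P}$ and $\mathbf{Q}$ to transport the algebraic arguments. Once this finiteness is in hand, the remainder of the proof should closely parallel the projective arguments of \cite{BCHM10}, \cite{Bir07}, \cite{Bir10}.
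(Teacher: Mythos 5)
Your treatment of part (1) matches the paper's in outline (cone theorem, contraction, dichotomy between divisorial and small), except that the paper does not import the existence of flips from \cite{Fuj22}: it constructs the flip directly as ${\rm Projan}\,R(X/Z,K_X+B)$ via its own finite generation result (Corollary \ref{c-fg1}), which applies because $X\to Z$ is birational and so the relative bigness hypothesis is automatic. For parts (2) and (3), however, you take a genuinely different route. You propose to deduce termination of the scaled MMP from an analytic analogue of the BCHM/Birkar finiteness of weak log canonical models; the paper deliberately avoids this (its introduction notes that the BCHM-based route is Fujino's, while its own argument follows \cite{CL10}). Concretely, the paper first reduces to the case $B\geq\epsilon A$ — in case (3) by observing that $K_X+B+\epsilon A$ is still not pseudo-effective over $U$, so every scaling threshold exceeds $\epsilon$ and one may replace $B$ by $B+\epsilon A$ — then chooses relatively ample $\Q$-divisors $H_1,\dots,H_r$ near $B+\lambda A$ spanning, together with $K_X+B$, a cone $\mathcal C$ of dimension $\dim N^1(X/U;W)$ containing $K_X+B+\lambda A$ in its interior; Theorem \ref{t-a} gives finite generation of the multigraded adjoint ring $R(X/U,\mathcal C)$, and termination follows by the argument of \cite[Theorem 6.5]{CL13}. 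What this buys is that the one hard analytic input (Theorem \ref{t-a}) is already proved in Part 2 of the paper, with all the localization difficulties (Stein neighborhoods, uncountable Zariski density, semicontinuity over $U$) dealt with there.

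The substantive gap in your proposal is precisely the step you defer: the assertion that finiteness of models is "obtained by localizing the arguments of \cite{BCHM10} and \cite{Bir07} in a neighborhood of $W$" is not a routine localization. The Shokurov-polytope and ample-model arguments compare divisor classes and linear systems globally; in the analytic category these comparisons only make sense after shrinking to neighborhoods of $W$, the relevant N\'eron--Severi spaces are finite-dimensional only under the hypotheses of property $\mathbf P$ or $\mathbf Q$ (Remark \ref{rmk:relative-NS}), and the notion of an ample model over $U$ must itself be constructed (in the paper this again comes out of finite generation). Without supplying this, parts (2) and (3) are not proved. A secondary issue: your derivation of (3) from (2) as written does not work, since for $\lambda\gg 0$ the divisor $K_X+B+\lambda A$ is already ample over $U$, so the scaled MMP for $(X,B+\lambda A)$ has no steps and yields no information about the $(K_X+B)$-MMP; the correct reduction is the one in the paper, replacing $B$ by $B+\epsilon A$ for $\epsilon$ below the pseudo-effective threshold so that every step of the original scaled MMP is a step of the new one.
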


\begin{remark}
After completing the proofs of Theorems \ref{c-fg} and \ref{t-mmpscale}, we were informed that Fujino has also proved these results see \cite{Fuj22}. We note that Fujino's approach is based on \cite{BCHM10} whereas our approach is inspired by \cite{CL10}. Another possible approach can be found in \cite{Pau12}, which is particularly suited to the analytic context.
\end{remark}
 
 This article is organized in the following manner:  In Part 1, we collect and prove various preliminary results. In Subsection 2.4 we prove two important results, namely Theorem \ref{thm:nef-big-to-kahler} and \ref{thm:nef-restricts-to-pseff}. These two results work as our main tools for testing whether a $(1, 1)$ class $\alpha$ is nef or not, see Remark \ref{rmk:nef-criteria} for more details. Part 2 of the article is devoted to proving finite generation as in \cite{CL10}. We prove Theorem \ref{c-fg} and \ref{t-mmpscale} in Section 4 of this part. In Part 3, we prove Theorem \ref{thm:effective-dlt-mm} (in Section 7) and Theorem \ref{thm:ss-mmp} (in Section 8).\\  
 
{\bf Acknowledgment.} O. Das would like to thank Cristian Martinez for many useful discussions. We would also like to sincerely thank the referees for their careful reading of our paper and many detailed suggestions for improvements.

\part{Preliminaries}
\section{Preliminaries}
A \textit{complex analytic variety} or simply an \textit{analytic variety} is a reduced and irreducible complex space. All complex spaces in this article are assumed to be \textit{second countable} spaces. A holomorphic map $f:X\to Y$ between complex spaces is called a \textit{morphism}. An open subset $U\subset X$ is called a Zariski open set if the complement $Z=X\setminus U$ is a closed analytic subset of $X$, i.e. there is a sheaf of ideals $\msI_Z\subset\mcO_X$ such that $Z=\Supp(\mcO_X/\msI_Z)$. Let $\mcP$ be a property. We say that  \textit{general} points of $X$ satisfy $\mcP$ if there is a dense Zariski open subset $U\subset X$ such that $\mcP$ is satisfied for all $x\in U$. We say that \textit{very general} points of $X$ satisfy $\mcP$ if there is a countable collection of dense Zariski open subsets $\{U_i\}_{i\in I}$ of $X$ such that $x\in X$ satisfies $\mcP$ for all $x\in \cap_{i\in I} U_i$. Similarly, if $f:X\to Y$ is a morphism between complex spaces, we say that \textit{general fibers} of $f$ satisfy $\mcP$ if there is a dense Zariski open subset $U\subset Y$ such that $X_y:=f^{-1}(y)$ satisfies $\mcP$ for all $y\in Y$; very general fibers are defined analogously.

Let $S\subset X$, then we say that $S$ is \textit{uncountably Zariski dense} in $X$ if $S$ is not contained in any countable union of closed analytic subsets of $X$. Note that, if $S\subset X$ is uncountably Zariski dense, then for any non-empty Zariski open subset $U\subset X$, $S\cap U\neq\emptyset$.\\

\begin{definition}\label{def:kahler-variety}
Let $X$ be an analytic variety. Then $X$ is called a K\"ahler variety if the there is a K\"ahler form on $X$, i.e. a positive closed real $(1, 1)$ form $\omega\in \mcA_{\mbR}^{1,1}(X)$ such that the following holds: for every $x\in X$, there is an open neighborhood $x\in U\subset X$ and a closed embedding $\iota: U\to V$ into an open subset of $\mbC^N$, and a strictly plurisubharmonic $C^\infty$ function $f:V\to \mbR$ such that $\omega|_{U\cap X_\textsubscript{sm}}=(i\partial\bar\partial f)|_{U\cap X_\textsubscript{sm}}$.  
\end{definition}

\noindent
\begin{enumerate}
    \item For a compact analytic variety $X$, $N^1(X)$ is defined to be the Bott-Chern cohomology group $H^{1,1}_{\BC}(X)$ (which is also an $\mbR$-vector space), see \cite[Definition 3.1]{HP16}. $N_1(X)$ is defined in \cite[Definition 3.8]{HP16}. When $X$ is a normal compact analytic variety with rational singularities and belongs to Fujiki's class $\mcC$, the duality of $N^1(X)$ and $N_1(X)$ is established in \cite[Proposition 3.9]{HP16}. 
    
    \item Let $X$ be a compact analytic variety. Let $u\in H^{1,1}_{\BC}(X)$ be a class represented by a form $\alpha$ with local potentials. Then $u$ is called nef if for some positive $(1,1)$ smooth form $\alpha$ and for every $\eps>0$, there exists a smooth function $f_\eps\in\mcA^0(X)$ such that
    \[
    \alpha+i\partial\bar{\partial}f_\eps\>-\eps\omega.
    \]
If $X$ is in Fujiki's class $\mcC$, then we denote by $\Nef(X)\subset N^1(X)$ the cone of nef cohomology classes.

    \item For the definitions of  big and pseudo-effective classes and the corresponding cones, see \cite{HP16}, \cite[Definition 2.2]{DH20} and also Subsection \ref{subs:analytic-classes}. 
    
    \item Let $D=\sum a_i D_i$ and $D'=\sum a'_iD_i$ be two $\mbR$-divisors on a normal analytic variety $X$. Then we define $D\wedge D'$ as 
\[
D\wedge D':=\sum_i\min\{a_i, a'_i\} D_i.
\]
\end{enumerate}

\begin{remark}\label{rmk:nef-criteria}
    Let $X$ be a normal compact K\"ahler variety, and $B\>0$ be an effective divisor such that $K_X+B$ is $\mbQ$-Cartier. Under mild singularity assumptions on the pair $(X, B)$, the Minimal Model Program asks whether $K_X+B$ is nef or not. When $X$ is a projective variety, nefness of a $\mbQ$-Cartier divisor $D$ can simply be tested by checking whether $D\cdot C$ is non-negative (or not) for all curves $C\subset X$. However, in general for compact K\"ahler varieties this criteria is not equivalent to Definition \ref{def:kahler-variety}(2), for a counterexample see \cite[Page 5]{HP17}. When $\dim X=3$, using Boucksom's divisorial Zariski decomposition \cite{Bou04} it is shown in \cite{HP16, CHP16} (also see \cite[Lemma 2.7]{DH20}) that $K_X+B$ is nef if and only if $(K_X+B)\cdot C\>0$ for all curves $C\subset X$. This result is expected to be true  in $\dim X\>4$, but a proof is not yet known, the proof in dimension $3$ does not automatically extend in higher dimensions; for a partial result in higher dimensions see \cite{CH20}.
   
    In absence of such a nefness criteria we use our Theorem \ref{thm:nef-restricts-to-pseff} to test whether a class $\alpha\in H^{1,1}_{\BC}(X)$ is nef or not; it says that $\alpha$ is nef if and only if $\alpha|_V$ is a pseudo-effective class for all analytic varieties $V\subset X$.    
\end{remark}

The following results about nefness will be used throughout the article.

\begin{lemma}\cite[Remark 3.12]{HP16}\label{lem:nef-cone}
Let $X$ be a normal compact K\"ahler variety, $\Nef(X)$ is the cone of nef classes in $H^{1,1}_{\BC}(X)$ and $\mcK(X)$ is the (open) cone of K\"ahler classes. Then $\overline{\mcK(X)}=\Nef(X)$.
\end{lemma}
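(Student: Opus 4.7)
The plan is to prove the two inclusions separately, with the inclusion $\overline{\mcK(X)}\subseteq \Nef(X)$ being essentially immediate and the inclusion $\Nef(X)\subseteq\overline{\mcK(X)}$ being the substantive half, following Demailly's perturbation trick.

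For $\overline{\mcK(X)}\subseteq\Nef(X)$, I would first observe that every K\"ahler class is nef: if $\alpha=[\beta]$ with $\beta$ a K\"ahler form, then for any $\eps>0$ one may take $f_\eps\equiv 0$ in the definition of nefness since $\beta\geq 0>-\eps\omega$. Thus $\mcK(X)\subseteq\Nef(X)$, and it remains to show that $\Nef(X)$ is closed in $N^1(X)=H^{1,1}_{\BC}(X)$. Given $\alpha_n\to\alpha$ with $\alpha_n\in\Nef(X)$, fix $\eps>0$; since the difference $\alpha-\alpha_n$ can be represented by a smooth $\ddbar$-exact perturbation that is eventually $\geq -(\eps/2)\omega$, one combines it with the nef potential of $\alpha_n$ at level $\eps/2$ to produce a global smooth $f_\eps$ witnessing $\alpha+i\partial\bar\partial f_\eps\geq -\eps\omega$.

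For the main inclusion $\Nef(X)\subseteq \overline{\mcK(X)}$, I would fix a K\"ahler form $\omega$ on $X$ and, for a nef class $\alpha$ represented by a form $\alpha_0$ with local potentials, produce for each $\eps>0$ a K\"ahler representative of $\alpha+2\eps[\omega]$. By the nef condition there is a smooth $f_\eps\in\mcA^0(X)$ with $\alpha_0+i\partial\bar\partial f_\eps\geq -\eps\omega$, whence
\[
\gamma_\eps:=\alpha_0+2\eps\omega+i\partial\bar\partial f_\eps\geq \eps\omega,
\]
so $\gamma_\eps$ is strictly positive and represents $\alpha+2\eps[\omega]$. Letting $\eps\to 0$ then yields $\alpha\in\overline{\mcK(X)}$.

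The main technical point to verify, and the main potential obstacle in the singular K\"ahler setting, is that $\gamma_\eps$ really represents a K\"ahler class in the sense of Definition \ref{def:kahler-variety}, i.e. that it has the prescribed local-potential structure coming from embeddings $\iota\colon U\hookrightarrow V\subseteq\mbC^N$. This reduces to the following standard checks: first, that $\omega$ itself has such local strictly plurisubharmonic potentials $\varphi_U$ by hypothesis; second, that the globally smooth function $f_\eps$ on $X$ extends (locally, after shrinking $U$) to a smooth function on $V$ by Whitney extension, so that $2\eps\varphi_U+\widetilde f_\eps$ serves as a local potential for $2\eps\omega+i\partial\bar\partial f_\eps$ on $V$; and third, that strict plurisubharmonicity is preserved because $\varphi_U$ is strictly plurisubharmonic on $V$ and the perturbation $\widetilde f_\eps$ is an arbitrarily small smooth correction, hence $2\eps\varphi_U+\widetilde f_\eps$ remains strictly plurisubharmonic after possibly further shrinking $V$. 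Adding the original local potentials of $\alpha_0$ then produces a strictly plurisubharmonic local potential for $\gamma_\eps$, completing the verification.
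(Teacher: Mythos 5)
Your overall strategy is the standard one: the paper gives no proof of Lemma \ref{lem:nef-cone}, only the citation to \cite[Remark 3.12]{HP16}, and your argument is exactly what that citation stands for. Both inclusions are set up correctly --- K\"ahler classes are nef with $f_\eps\equiv 0$; the nef cone is closed because classes close to $\alpha$ in the finite-dimensional space $H^{1,1}_{\BC}(X)$ admit representatives differing from a fixed representative of $\alpha$ by a uniformly small smooth form (note this difference is a small form in the class, not a ``$\ddbar$-exact perturbation'' as you write, but the idea is right); and $\alpha+2\eps\{\omega\}$ is represented by $\gamma_\eps=\alpha_0+2\eps\omega+i\ddbar f_\eps\geq\eps\omega$.

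The one step whose justification fails as written is the verification that $\gamma_\eps$ is a K\"ahler form in the sense of Definition \ref{def:kahler-variety}. You claim that $2\eps\varphi_U+\widetilde f_\eps$ remains strictly plurisubharmonic on $V$ because $\widetilde f_\eps$ is ``an arbitrarily small smooth correction.'' It is not small in any useful sense: the nef condition gives no $C^0$ or $C^2$ bound on $f_\eps$ (these potentials typically degenerate as $\eps\to 0$), and even for fixed $\eps$ the hypothesis only controls $i\ddbar f_\eps$ along $X$, while the complex Hessian of a Whitney extension $\widetilde f_\eps$ in directions transverse to $X$ inside $V$ is completely unconstrained and can overwhelm the $O(\eps)$ positivity of $2\eps\varphi_U$. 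What is actually true is that the full candidate potential $u_U+\widetilde f_\eps+2\eps\varphi_U$ (where $u_U$ is a local potential of $\alpha_0$) satisfies $i\ddbar(\cdot)\geq\eps\omega$ only on $U\cap X_{\rm sm}$. To upgrade this to a strictly psh function on a neighborhood of $X\cap U$ in $V$ one needs a standard device: either interpret $\gamma_\eps\geq\eps\omega$ as saying that the local potential of $\gamma_\eps-\eps\omega$ is psh on $X$, hence (Fornaess--Narasimhan) locally the restriction of an ambient psh function, and then apply a Richberg-type extension for smooth strictly psh functions on complex spaces; or correct the extension off $X$ by adding $C\sum_j|g_j|^2$ for generators $g_j$ of the ideal of $X\cap U$ in $V$ and $C\gg0$, which leaves the restriction to $X$ unchanged and restores positivity transverse to $X$. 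Either repair is routine, but the smallness argument you give is not the reason the step works.
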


\begin{proposition}\label{pro:nef-mori-cone-duality}
Let $X$ be a normal compact K\"ahler variety with rational singularities. Then $\Nef(X)$ and $\NA(X)$ are dual to each other via the natural isomorphism $N^1(X)\to N_1(X)^*$ induced by their usual perfect pairing.
\end{proposition}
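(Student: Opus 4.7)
The plan is to obtain this as the standard polar duality of two closed convex cones in a finite-dimensional real vector space. Both $N^1(X)$ and $N_1(X)$ are finite-dimensional, and by \cite[Proposition 3.9]{HP16} the pairing $N^1(X) \times N_1(X) \to \mathbb{R}$ is perfect, so it identifies $N^1(X)$ with $N_1(X)^*$. By the bipolar theorem, it then suffices to prove the inclusion $\Nef(X) = \NA(X)^\vee$ (equivalently $\NA(X) = \Nef(X)^\vee$), where $(\cdot)^\vee$ denotes the polar dual cone under the pairing.

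For one direction, I would show that every nef class pairs non-negatively with every element of $\NA(X)$. Let $\alpha \in \Nef(X)$ and fix a K\"ahler form $\omega$. By the definition recalled after Definition \ref{def:kahler-variety}, for each $\varepsilon > 0$ there exists $f_\varepsilon \in \mathcal{A}^0(X)$ with $\alpha + i\partial\bar\partial f_\varepsilon \geq -\varepsilon\omega$. Testing this inequality against a positive closed current $T$ of bidimension $(1,1)$ (equivalently, against an effective $1$-cycle) gives $\langle \alpha, [T] \rangle \geq -\varepsilon\langle \omega, [T] \rangle$, and letting $\varepsilon \to 0$ yields $\langle \alpha, [T] \rangle \geq 0$. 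Since $\NA(X)$ is by construction the closure in $N_1(X)$ of the convex cone spanned by such positive classes, continuity of the pairing on finite-dimensional spaces extends this to $\langle \alpha, \gamma \rangle \geq 0$ for every $\gamma \in \NA(X)$.

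For the reverse direction, suppose $\alpha \in N^1(X)$ satisfies $\langle \alpha, \gamma \rangle \geq 0$ for all $\gamma \in \NA(X)$ but $\alpha \notin \Nef(X)$. Since $\Nef(X)$ is closed and convex, Hahn-Banach together with the perfect pairing produces $\gamma_0 \in N_1(X)$ such that $\langle \alpha, \gamma_0 \rangle < 0$ while $\langle \beta, \gamma_0 \rangle \geq 0$ for every $\beta \in \Nef(X)$. By Lemma \ref{lem:nef-cone}, i.e.\ $\Nef(X) = \overline{\mathcal{K}(X)}$, the second condition means that $\gamma_0$ pairs non-negatively with every K\"ahler class. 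One then concludes $\gamma_0 \in \NA(X)$, contradicting the hypothesis on $\alpha$ and completing the argument.

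The main obstacle is the last step: showing that any element of $N_1(X)$ which pairs non-negatively with every K\"ahler class lies in $\NA(X)$. This is the bipolar statement for $\NA(X)$ inside $N_1(X)$, and depending on the precise definition of $\NA(X)$ adopted from \cite[Definition 3.8]{HP16} it may hold essentially by construction; otherwise it would require an auxiliary approximation argument (in the spirit of Demailly's regularization of positive closed currents) to ensure that any class in $N_1(X) \setminus \NA(X)$ can be separated from $\NA(X)$ by a genuine K\"ahler test class, rather than merely by an element of $N^1(X)$.
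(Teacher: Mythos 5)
Your easy direction is fine, and you have correctly located the difficulty, but the argument does not close: the final step is circular. After Hahn--Banach you are left needing ``if $\gamma_0\in N_1(X)$ pairs non-negatively with every K\"ahler (equivalently, by Lemma \ref{lem:nef-cone}, every nef) class, then $\gamma_0\in\NA(X)$'', i.e.\ $\Nef(X)^\vee\subset\NA(X)$. By the bipolar theorem this is exactly equivalent to the inclusion $\NA(X)^\vee\subset\Nef(X)$ that you set out to prove, so you have only traded one half of the duality for the other. It does not hold ``essentially by construction'': $\NA(X)$ is the closed cone generated by classes of positive closed currents, whereas $\Nef(X)^\vee$ is a priori larger, and identifying the two is the entire nontrivial content of the proposition. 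Your proposed repair --- separating a class outside $\NA(X)$ by a ``genuine K\"ahler test class'' rather than by an arbitrary element of $N^1(X)$ --- is the same statement once more: Hahn--Banach only produces a separating functional lying in $\NA(X)^\vee$, and upgrading it to a nef or K\"ahler class presupposes $\NA(X)^\vee\subset\Nef(X)$.

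The missing idea, which is the route the paper takes (following \cite[Proposition 3.15]{HP16}), is to reduce to a desingularization $\pi:\hat X\to X$. Given $\alpha\in N^1(X)$ with $\alpha\cdot\gamma\ge 0$ for all $\gamma\in\NA(X)$, every positive closed current $\hat T$ of bidimension $(1,1)$ on $\hat X$ pushes forward to a positive closed current $\pi_*\hat T$ on $X$, so $\pi^*\alpha\cdot\hat T=\alpha\cdot\pi_*\hat T\ge 0$; the known duality on the compact K\"ahler \emph{manifold} $\hat X$ then shows that $\pi^*\alpha$ is nef, and Lemma \ref{lem:nef-pullback} --- the ingredient the paper explicitly substitutes for \cite[Lemma 3.13]{HP16} --- lets you descend nefness from $\hat X$ to $X$. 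Without this descent through a resolution (or some genuine substitute for it), the hard inclusion remains unproved.
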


\begin{proof}
A similar proof as in \cite[Proposition 3.15]{HP16} holds here. Note that the main ingredient of the proof of \cite[Proposition 3.15]{HP16} is Lemma 3.13 in \cite{HP16}, for which we use Lemma \ref{lem:nef-pullback}.
\end{proof}

\begin{definition}\label{def:finite-generation}
Let $X$ be a complex space and $\mathcal R$ a graded sheaf of $\mcO_X$-algebras. We say that $\mathcal R$ is \textit{locally finitely generated}, if for every $x\in X$ there is an open neighborhood $x\in U$ such that $\mathcal R(U)$ is a finitely generated $\mcO_X(U)$-algebra. We say $\mathcal R$ is \textit{finitely generated}, if there exists an integer $m\>0$ such that for every $x\in X$, there is an open neighborhood $x\in U$ such that $\mathcal R(U)$ generated by elements of degree $\<m$.
\end{definition}

\begin{remark}\label{rmk:finite-generation}
    Note that finite generation is a necessary condition for the existence of $\mbox{Projan\ }\mathcal R\to X$. If $W\subset X$ is a compact subset, then for any locally finitely generated graded algebra $\mathcal R$, there is an open neighborhood $U\supset W$ of $W$ such that $\mathcal R|_U$ is a finitely generated $\mcO_U$-algebra. Indeed, since $W$ is compact, it can be covered by finitely many open sets $\{U_i\}_{1\<i\<k}$ such that $\mathcal R(U_i)$ is a finitely generated $\mcO_X(U_i)$-algebra. Now let $m_i\>0$ be an integer such that each $\mathcal R(U_i)$ is generated by degree $\<m_i$ monomials. Then $m:=\{m_i\;:\; i=1,2,\ldots, k\}$  does the job.\\

\end{remark}

\begin{definition}\label{def:Q-factorial}
Let $X$ be a normal analytic variety. The canonical sheaf $\omega_X$ is defined as $\omega_X:=(\wedge^{\dim X} \Omega^1_X)^{**}$. Note that unlike the case of algebraic varieties, $\omega_X$ here does not necessarily correspond to a Weil divisor $K_X$ such that $\omega_X\cong \mcO_X(K_X)$. However, by abuse of notation we will say that $K_X$ is a canonical divisor when we actually mean the canonical sheaf $\omega_X$. This doesn't create any problem in general as running the minimal model program involves intersecting subvarieties with $\omega_X$. 
\begin{enumerate}
    \item A $\mbQ$-divisor $D$ on $X$ is called $\mbQ$-Cartier if $mD$ is Cartier for some $m\in\mbN$.  We say $X$ is $\mbQ$-factorial, if every prime Weil divisor $D$ on $X$ is $\mbQ$-Cartier and there is a positive integer $m>0$ such that $(\omega_X^{\otimes m})^{**}$ is a line bundle. Note that if $X$ is $\mbQ$-factorial and $U\subset X$ is an open subset, then $U$ is not necessarily $\mbQ$-factorial.
    
    \item A $\mbQ$-divisor $D$ is called $\mbQ$-Cartier at a point $x\in X$, if there is an open neighborhood $x\in U\subset X$ such that $D|_U$ is $\mbQ$-Cartier.
    
    \item Let $\pi :X\to T$ be a projective morphism of complex varieties, $X$ is normal and $W\subset T$ a compact subset. We say that $X$ is $\mathbb Q$-factorial over $W$, if every divisor $D$ defined on a neighborhood of $\pi ^{-1}(W)$ is $\mathbb Q$-Cartier at every point $x\in \pi ^{-1}(W)$ and $\omega _X$ is also $\mathbb Q$-Cartier at every point $x\in \pi ^{-1}(W)$.
    
    \item A pair $(X, \Delta)$ consists of a normal variety $X$ and a $\mbQ$-divisor $\Delta$ such that $K_X+\Delta$ is $\mbQ$-Cartier. The singularities of $(X, \Delta)$ are defined exactly the same way as in \cite[Chapter 2]{KM98}. Note that in this article when we say that a pair $(X, \Delta)$ is klt, we assume that $\Delta$ is an \textit{effective} divisor. If $\Delta$ is not necessarily effective, then we will call $(X, \Delta)$ a \textit{sub-klt pair}. Similar conventions are made for other classes of singularities. If $E$ is a divisor over $X$, the \textit{discrepancy} of $E$ with respect to $(X, \Delta)$ will be denoted by $a(E, X, \Delta)$. 
    
    \item We will often abuse notation and simply say that $K_X+\Delta$ is klt (instead of the pair $(X, \Delta)$ is klt).
\end{enumerate}

\end{definition}

\begin{definition}\label{def:log-terminal-and-log-minimal-model}
Let $(X, B)$ be a log canonical pair and $\phi:X\bir Y$ a bimeromorphic map. Let $B_Y$ be the push-forward of $B$ under $\phi$ and $E_Y=\sum E_j$ the sum of all prime Weil divisors on $Y$ which are contracted by $\phi^{-1}:Y\bir X$.
\begin{enumerate}
    \item We say that $(Y, B_Y+E_Y)$ is a \textit{nef model} if $(Y, B_Y+E_Y)$ is a $\mbQ$-factorial dlt pair and $K_Y+B_Y+E_Y$ is nef.\\
    
    \item  We say that $(Y, B_Y+E_Y)$ is a \textit{log minimal model} if it is a nef model and for any prime Weil divisor $E\subset X$ which is contracted by $\phi$, $a(E, X, B)<a(E, Y, B_Y+E_Y)$ holds.\\

    \item We say $(Y, B_Y)$ is a \textit{log terminal model} of $(X, B)$ if the following hold:
   \begin{enumerate}
       \item[(i)] $(X, B)$ is a $\mbQ$-factorial dlt pair,
      \item[(ii)]  $K_Y+B_Y$ is nef, 
      \item[(iii)] $\phi$ does not extract any divisor, i.e. $\phi^{-1}:Y\bir X$ does not contract any divisor, and
      \item[(iv)] for any prime Weil divisor $E\subset X$ which is contracted by $\phi$, $a(E, X, B)<a(E, Y, B_Y)$ holds.\\
   \end{enumerate}
\end{enumerate}

Clearly, every log terminal model is a log minimal model, and every log minimal model is a nef model. 

\end{definition}

The following result shows that if $(X, B)$ is a plt pair, then every log minimal model of $(X, B)$ is a log terminal model.
\begin{lemma}\label{lem:lmm-to-ltm}
Let $(X, B)$ be a log canonical pair and $(Y, B_Y+E_Y)$ be a log minimal model of $(X, B)$ as in Definition \ref{def:log-terminal-and-log-minimal-model} above. Let $\phi:X\bir Y$ be the induced bimeromorphic map. Then the following holds:
\begin{enumerate}
    \item For any prime Weil divisor $E$ over $X$, $a(E, X, B)\<a(E, Y, B_Y+E_Y)$.
    \item If $(X, B)$ is a plt pair, then $E_Y=0$, i.e. $\phi^{-1}$ does not contract any divisor; in particular, $(Y, B_Y)$ is a log terminal model of $(X, B)$.
\end{enumerate}
\end{lemma}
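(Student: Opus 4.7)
The plan is to reduce both parts to the negativity lemma applied on a common bimeromorphic resolution. I would choose a log resolution of indeterminacy $p\colon W\to X$, $q\colon W\to Y$ of $\phi$, sufficiently high so that any prescribed prime divisor $E$ over $X$ appears as a prime divisor on $W$, and set
\[
F \;:=\; p^*(K_X+B) - q^*(K_Y+B_Y+E_Y).
\]
The coefficient of a prime divisor $E\subset W$ in $F$ equals $a(E,Y,B_Y+E_Y)-a(E,X,B)$, so statement (1) is equivalent to the assertion $F\>0$ (with $W$ allowed to vary to realize any chosen divisorial valuation).

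To apply the negativity lemma with respect to $p$, the first task is to check that $-F$ is $p$-nef. For any curve $C\subset W$ contracted by $p$ we have $p^*(K_X+B)\cdot C=0$, while $q^*(K_Y+B_Y+E_Y)\cdot C\>0$ because $K_Y+B_Y+E_Y$ is nef on $Y$ by the nef-model hypothesis; hence $(-F)\cdot C\>0$. Next I would show $p_*F\>0$. For a prime divisor $D\subset X$ that is not $\phi$-exceptional, its strict transform $D'$ on $Y$ is not a component of $E_Y$ and satisfies $\text{coeff}_{D'}(B_Y)=\text{coeff}_D(B)$ by the definition $B_Y=\phi_*B$, so the coefficient contributed to $p_*F$ vanishes. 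For a prime divisor $D\subset X$ that is $\phi$-exceptional, the defining inequality of a log minimal model in Definition \ref{def:log-terminal-and-log-minimal-model}(2) gives $a(D,Y,B_Y+E_Y)-a(D,X,B)>0$, so the coefficient of $D$ in $p_*F$ is strictly positive. The negativity lemma now yields $F\>0$, proving (1).

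Part (2) I would deduce by contradiction. Suppose a prime divisor $E$ is a component of $E_Y$. Then $E$ is exceptional over $X$, because it is contracted by $\phi^{-1}\colon Y\bir X$, and $a(E,Y,B_Y+E_Y)=-1$ since $E$ has coefficient one in $E_Y$. Applying (1) to this $E$ forces $a(E,X,B)\<-1$, which contradicts the plt hypothesis on $(X,B)$: a plt pair has $a(E,X,B)>-1$ for every divisor $E$ exceptional over $X$. Hence $E_Y=0$, so $\phi^{-1}$ extracts no divisor, and the remaining requirements of Definition \ref{def:log-terminal-and-log-minimal-model}(3) follow immediately from the log-minimal-model hypothesis combined with $E_Y=0$.

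The principal technical subtlety I anticipate is ensuring the negativity lemma is available in the K\"ahler/analytic setting in which we work. Since $p$ is projective bimeromorphic and the assertion $F\>0$ is local on $X$, this should reduce to the classical negativity lemma on a relatively compact open subset where everything becomes projective; once that is granted, the rest is a routine discrepancy computation.
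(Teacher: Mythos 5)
Your proposal is correct and follows essentially the same route as the paper: both set up the difference $F=p^*(K_X+B)-q^*(K_Y+B_Y+E_Y)$ on a common bimeromorphic model, check that $-F$ is $p$-nef using nefness of $K_Y+B_Y+E_Y$, verify $p_*F\>0$ via the same two-case coefficient analysis (non-exceptional divisors contribute $0$, $\phi$-contracted divisors contribute positively by the log minimal model inequality), and conclude by the negativity lemma; part (2) is the identical contradiction with the plt condition. The only cosmetic differences are that the paper works on the normalization of the graph and writes $F=H-G$ with $K_W=p^*(K_X+B)+G$ and $K_W=q^*(K_Y+B_Y+E_Y)+H$, while you allow the resolution to vary to capture an arbitrary divisorial valuation — both are fine.
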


\begin{proof}
(1) Let $W$ be the normalization of the graph of $\phi$, and $p:W\to X$ and $q:W\to Y$ be the induced bimeromorphic morphisms. Then we can write $K_W=p^*(K_X+B)+G$ and $K_W=q^*(K_Y+B_Y+E_Y)+H$. Note that $p_*G=-B$ and $q_*H=-(B_Y+E_Y)$. Thus we have
\[
p^*(K_X+B)=q^*(K_Y+B_Y+E_Y)+H-G.
\]
Therefore $-(H-G)\num_p q^*(K_Y+B_Y+E_Y)$ is $p$-nef, and $p_*(H-G)=p_*H+B$. Let $D$ be a component of $H$. If $q_*D$ is a component of $B_Y$, then $p_*D\neq 0$, and the coefficient of $p_*D$ in $p_*H+B$ is $0$. If $q_*D=0$ and $p_*D\neq 0$, then from the definition of log minimal model it follows that $a(D, Y, B_Y+E_Y)>a(D, X, B)$. In particular, the coefficient of $p_*D$ in $p_*H+B$ is positive. Thus $p_*(H-G)$ is an effective divisor, and hence from the negativity lemma it follows that $H-G$ is an effective divisor. Thus for any prime Weil divisor $E$ over $X$ we have $a(E, X, B)\<a(E, Y, B_Y+E_Y)$.\\

\noindent
(2) Let $E_i$ be a component of $E_Y$. Then $E_i$ is an exceptional divisor over $X$, in particular, $a(E_i, X, B)>-1$, since $(X, B)$ is plt. But from part (1) it follows that $-1<a(E_i, X, B)\<a(E_i, Y, B_Y+E_Y)=-1$. This is a contradiction, and hence $E_i=0$ for all $i$, i.e. $E_Y=0$, i.e. $\phi^{-1}$ does not contract any divisor.\\

\end{proof}

\begin{convention}\label{con:relatively-compact}
We say that a complex space $X$ is \textit{relatively compact} if there is another complex space $Y$ such that $X$ is an open subspace of $Y$ and the closure $\overline{X}\subset Y$ is compact.
We will say that $f:X\to U$ is a morphism from a complex space $X$ to a \textit{relatively compact} space $U$, if there exists a morphism $f':X'\to U'$ of complex spaces such that $U\subset U'$ is a relatively compact open subset of $U'$, $X=X'\times_{U'} U$ and $f$ is the induced morphism to $U$. 
\end{convention}

\subsection{Projective morphisms} \cite[Chapter II, Page 24]{Nak04}\label{s-pm}
Let $f:X\to Y$ be a proper morphism of complex spaces. A line bundle $\msL$ on $X$ is called \textit{$f$-free} or \textit{$f$-generated} if the natural morphism $f^*f_*\msL\to \msL$ is surjective. We say $\msL$ is \textit{$f$-very ample} or \textit{very ample over $Y$} if $\msL$ is $f$-free and $X\to \mbP_Y(f_*\msL)$ is a closed embedding. We say that $\msL$ is \textit{$f$-ample} or \textit{ample over $Y$} if for every $y\in Y$, there is an open neighborhood $y\in V$ and a positive integer $m>0$ such that $\msL^m|_{f^{-1}V}$ is very ample over $V$. A proper morphism $f:X\to Y$ of complex spaces is called \textit{projective}, if there exists a $f$-ample line bundle $\msL$ on $X$. The morphism $f:X\to Y$ is called \textit{locally projective} if $Y$ has a open cover $\{U_i\}$ such that $f|_{X_{U_i}}:X_{U_i}\to U_i$ is projective for all $i$, where $X_{U_i}:=f^{-1}U_i$.  

\begin{remark}\label{rmk:projective-morphism}
Note that the composition of two projective morphisms are not necessarily projective, see \cite[Page 557]{Nak87} for a counterexample. However, the composition of two locally projective morphisms is locally projective. On the other hand, if $f:X\to Y$ and $g:Y\to Z$ are two projective morphisms of complex spaces and $K\subset Z$ is a compact subset, then over a neighborhood of $K$, $g\circ f$ is projective. 
\end{remark}

We have the following properties of $f$-ample line bundles.
\begin{theorem}\label{t-rel-ample}
Let $f:X\to Y$ be a projective morphism of complex spaces, $L$ an $f$-ample line bundle,  $F$ a coherent sheaf and for any integer $m$ let $F(m)=F\otimes L^m$. Then, for any compact subset $K\subset Y$ there exists an integer $m_0=m_0(K,F)$ such that
\begin{enumerate}
 \item $f^*f_*(F(m))\to F(m)$ is surjective for any point $x\in X_K:=f^{-1}K$ and any $m\geq m_0$,
\item $R^if_*(F(m))=0$ on a neighborhood of $K$ for any $i\geq 1$ and $m\geq m_0$,
\item if $U\subset Y$ is a relatively compact Stein open subset, then $F(m)|_{f^{-1}(U)}$ is globally generated and $H^i(f^{-1}(U),F(m))=0$ for any $i\geq 1$ and $m\geq m_0$,
\item if $F$ is invertible, then $F(m)$ is ample (resp. very ample) over a neighborhood of $K$ for all $m\geq m_0$,
\item if $U\subset Y$ is a relatively compact Stein open subset, $S$ is a normal subvariety of $X$ and $D$ is Cartier on $X$, then $|(D+mL)_{|{S_U}}|=|D+mL|_{S_U}$ for all $m\geq m_0$, where ${S_U}=S\cap f^{-1}(U)$.
\end{enumerate}
\end{theorem}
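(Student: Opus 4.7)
The plan is to reduce everything to Serre-type vanishing on a relative projective space via the embedding provided by the $f$-ampleness of $L$, and then use compactness of $K$ to pass from local (pointwise) bounds to a uniform $m_0$.

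First I would establish (2) as the core statement, from which (1) and (3) follow formally. Fix a point $y\in K$. Because $L$ is $f$-ample, after choosing a relatively compact Stein open neighborhood $V$ of $y$ in $Y$ and replacing $L$ by a sufficiently divisible power $L^{k}$, we obtain a closed embedding $\iota\colon f^{-1}(V)\hookrightarrow \mathbb{P}_V(f_*L^{k})=:P_V$ such that $\iota^*\mathcal O_{P_V}(1)=L^{k}|_{f^{-1}(V)}$. Denote by $\pi\colon P_V\to V$ the projection, and set $\tilde F=\iota_*(F|_{f^{-1}(V)})$. Then $R^i f_*(F(m))|_V=R^i\pi_*(\tilde F\otimes\mathcal O_{P_V}(m))$ (up to shifting $m$ by multiples of $k$ to handle the residual degrees $0,\dots,k-1$, one treats each coset separately and takes the maximum). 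Now $\pi$ is a relative projective space over a Stein base, and standard Serre vanishing in the analytic setting (Bănică–Stănășilă, Grauert) yields a $y$-dependent $m_0(y,F)$ such that $R^i\pi_*(\tilde F(m))=0$ on a neighborhood $V'\subset V$ of $y$ for all $i\ge 1$ and $m\ge m_0(y,F)$. Covering the compact set $K$ by finitely many such neighborhoods $V'_1,\dots,V'_N$ and taking the maximum of the corresponding bounds yields a single $m_0=m_0(K,F)$ that works uniformly on a neighborhood of $K$; this proves (2).

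Item (1) is then a standard Nakayama-type argument: on the projective bundle $P_V$, the sheaf $\tilde F(m)$ is generated by $\pi^*\pi_*(\tilde F(m))$ for $m\ge m_0$ by (2) applied there, and restricting this surjection to $X_K\subset f^{-1}(V)$ gives the surjectivity of $f^*f_*(F(m))\to F(m)$ at every $x\in X_K$. Item (3) follows from the Leray spectral sequence: for a relatively compact Stein open $U\subset Y$, Cartan's Theorem B gives $H^i(U,\mathcal G)=0$ for every coherent $\mathcal G$ and $i\ge 1$, while (2) gives $R^if_*(F(m))|_{\bar U}=0$ for $m\ge m_0(\bar U,F)$; combining these yields $H^i(f^{-1}(U),F(m))=0$ for $i\ge 1$ and the global generation of $F(m)|_{f^{-1}(U)}$ follows from (1) together with the fact that $f_*(F(m))$ is itself globally generated on the Stein set $U$.

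For (4), if $F$ is invertible, apply (1) and (2) to $F$ itself to get surjectivity of $f^*f_*(F(m))\to F(m)$ and vanishing; by the standard criterion this yields $f$-ampleness, and very ampleness follows from the separation of points and tangent vectors which can be packaged by further increasing $m$ and appealing to (3) for the ideal sheaves of pairs of points and of thickened points in fibers — again, the uniformity over $K$ is obtained by the same compactness covering. For (5), use the exact sequence
\[
0\longrightarrow \mathcal I_S\otimes\mathcal O_X(D+mL)\longrightarrow \mathcal O_X(D+mL)\longrightarrow \mathcal O_S(D+mL)|_S\longrightarrow 0,
\]
apply $H^0(f^{-1}(U),-)$, and use (3) applied to the coherent sheaf $\mathcal I_S\otimes\mathcal O_X(D)$ to conclude that $H^1(f^{-1}(U),\mathcal I_S\otimes\mathcal O_X(D+mL))=0$ for $m\ge m_0$, which gives the desired surjectivity on sections and hence $|(D+mL)|_{S_U}=|D+mL|_{S_U}$.

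The main obstacle is the uniformity of $m_0$ across the compact set $K$: unlike the projective algebraic setting, analytic Serre vanishing is inherently local over the base and depends on the Stein neighborhood chosen, so one must genuinely use the compactness of $K$ together with a finite covering argument and the observation that shrinking a Stein neighborhood keeps vanishing intact. Everything else is formal once (2) is established in this uniform form.
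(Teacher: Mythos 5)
Your overall architecture matches the paper's for items (3) and (5): Leray plus Cartan's Theorem B for (3), and the twisted ideal-sheaf sequence with vanishing of the $H^1$ of the kernel for (5) (the paper writes the kernel as $\mathcal O_{X_U}(D-S+mL)$, implicitly treating $S$ as a divisor; your $\mathcal I_S\otimes\mathcal O_X(D+mL)$ is the more robust formulation, and both derive the surjectivity on sections from the vanishing over the Stein set $U$). The genuine difference is in (1)--(2): the paper simply quotes these as the relative Grauert--Remmert theorem \cite[IV Theorem 2.1]{BS76}, whereas you re-derive them by embedding $f^{-1}(V)$ into $\mathbb P_V(f_*L^k)$, invoking Serre-type vanishing for the relative projective space over a Stein base, and patching over a finite cover of $K$. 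That is essentially the standard proof of the cited theorem, so it is a legitimate and more self-contained route; the only extra care is the coset-by-coset treatment of the residual degrees modulo $k$, which you note. What the citation buys the paper is brevity; what your derivation buys is an explicit explanation of where the compactness of $K$ enters.

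The one genuine problem is in (4). First, surjectivity of $f^*f_*(F(m))\to F(m)$ together with vanishing of the higher direct images is \emph{not} a criterion for ampleness: relative global generation only shows that $F(m)$ is nef on the fibers (it is the pullback of $\mathcal O(1)$ under the induced map to $\mathbb P(f_*F(m))$), and a relatively generated line bundle with vanishing $R^if_*$ can easily fail to be $f$-ample. The correct conclusion, and the one the paper draws, is that $F(m)$ is $f$-nef for $m\ge m_0$, whence $F(m+1)=F(m)\otimes L$ is $f$-ample because an $f$-nef line bundle tensored with an $f$-ample one is $f$-ample; very ampleness is obtained the same way ($f$-generated tensor $f$-very ample is $f$-very ample). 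Second, your route to very ampleness via separation of points and tangent vectors has a uniformity gap: the bound $m_0(K,F)$ furnished by (2)--(3) depends on the coherent sheaf, and the ideal sheaves $\mathfrak m_x\mathfrak m_{x'}$ form an uncountable family as $(x,x')$ ranges over $X_K\times X_K$, so a finite covering of $K$ does not by itself produce a single $m$ working for all pairs. One would need an additional device (a single sheaf on $X\times_YX$, or a regularity bound) to make that uniform; the nef-plus-(very) ample argument avoids the issue entirely.
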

\begin{proof}
(1-2) are standard results due to Grauert and Remmert, for example, see \cite[IV Theorem 2.1]{BS76}.

For (3) recall that if $G$ is a coherent sheaf on a Stein space $U$, then by Cartan's theorem, $G$ is globally generated and $H^p(U,G)=0$ for every $p>0$. 
Since $U$ is relatively compact, then by (2) we have $R^if_*(F(m))|_U=0$  for any $i\geq 1$ and $m\geq m_0$ and so by a spectral sequence argument $H^i(f^{-1}(U),F(m))=H^i(U,f_*(F(m)))=0$ for $i>0$, since $f_*F(m)$ is coherent on $U$.
By (1) we have $f^*f_*(F(m))\to F(m)$ is surjective over $U$, and since $U$ is Stein, $f_*F(m)|_U$ is globally generated and hence so is $f^*f_*(F(m))|_{f^{-1}(U)}$. In particular, $F(m)|_{f^{-1}(U)}$ is globally generated.\\

(4) If $F$ is invertible, then by (1) we may assume that $F(m)$ is $f$-generated and hence $f$-nef over a neighborhood of $K$ for $m\geq m_0$. But then $F(m+1)$ is $f$-ample (as it is the tensor product of an $f$-nef and an $f$-ample line bundle). The very ampleness statement follows similarly.\\ 

(5) The inclusion $|(D+mL)_{|{S_U}}|\supset |D+mL|_{S_U}$ is immediate from the definitions.
Consider the short exact sequence
\[ 0\to \OO _{X_U}(D-S)\to \OO _{X_U}(D)\to \OO _{S_U}(D|_{S_U})\to 0.\]
Twisting this sequence by $\OO _{X_U}(mL)$ and then pushing forward by $f$ we obtain the following surjectivity from (1):
\[
\xymatrixcolsep{3pc}\xymatrix{H^0(X_U, \mcO_{X_U}(D+mL))\ar@{->>}[r] & H^0(S_U, \mcO_{S_U}((D+mL)|_{S_U})). }
\]

Thus the reverse inclusion holds.

\end{proof}~\\

\subsubsection{Resolutions of singularities}
If $X$ is a complex manifold and $D\subset X$ is a divisor, then $D$ has simple normal crossing support if for any point $x\in X$ we may choose local parameters $z_1,\ldots ,z_n$ such that ${\rm Supp }(D)$ is locally defined by the vanishing of $z_1\cdot \ldots \cdot z_r$.
In this case we say that $(X,D)$ is log smooth.
If $(X,D)$ is a pair, then we let  $SNC(X, D)$ be the open subset of points $x\in X$ such that $(X,D)$ is log smooth on a neighborhood of $x\in X$.

\begin{theorem}[Log Resolution]\cite[Thm. 13.2, 1.10 and 1.6]{BM97}\cite[Thm. 2.16]{DH20}\label{thm:log-resolution}
	Let $X\subset W$ be a relatively compact open subset of an analytic variety $W$ and $D$ a $\mbQ$-Cartier divisor on $X$. Then there exists a projective bimeromorphic morphism $f:Y\to X$ from a smooth variety $Y$ satisfying the following properties:
	\begin{enumerate}
		\item $f$ is a successive blow up of smooth centers contained in $X\setminus SNC(X, D)$,
		\item $f^{-1}(SNC(X, D))\cong SNC(X, D)$, and
		\item $\Ex(f)$ is a pure codimension $1$ subset of $Y$ such that $\Ex(f)\cup(f^{-1}_*D)$ has SNC support.
	\end{enumerate}
	
\end{theorem}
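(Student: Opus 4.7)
The plan is to invoke the Bierstone--Milman canonical algorithm for embedded resolution of singularities of complex analytic varieties. The three features of this algorithm that I would rely on are: (i) it constructs the resolution as a finite sequence of blow-ups along smooth centers, (ii) these centers are determined by a local invariant (built from the Hilbert--Samuel function and order data) whose value drops strictly after each blow-up, and (iii) the algorithm is functorial with respect to smooth morphisms.

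First I would apply the algorithm to $X$ itself. Since $\overline{X}\subset W$ is compact, the strictly decreasing invariant guarantees termination after finitely many steps, yielding a projective bimeromorphic morphism $g:X'\to X$ with $X'$ smooth, each blow-up center smooth and contained in the singular locus of $X$, and $g$ an isomorphism over the smooth locus of $X$. Then, since $D$ is $\mbQ$-Cartier, I would apply the embedded log resolution algorithm to the pair $(X',\, g^{-1}_*D+\Ex(g))$, producing a further finite tower $h:Y\to X'$ of blow-ups along smooth centers contained in the non-SNC locus. The composition $f:=g\circ h:Y\to X$ is the candidate morphism.

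Functoriality under smooth maps implies that the local invariant controlling the algorithm attains its minimal value precisely on the log-smooth locus $SNC(X,D)$; consequently the successive centers of $f$ are contained in $X\setminus SNC(X,D)$ and $f$ restricts to an isomorphism over $SNC(X,D)$, giving (1) and (2). Property (3) is built into the termination criterion of the embedded log resolution: the algorithm stops exactly when the total transform $\Ex(f)\cup f^{-1}_*D$ has simple normal crossings. Purity of codimension one for $\Ex(f)$ follows because every center blown up has codimension at least $2$, so every exceptional component is a divisor, and these divisors together with $f^{-1}_*D$ form an SNC configuration by construction.

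The main technical subtlety is projectivity of $f$. Each individual blow-up along a smooth analytic center is projective, but, as recorded in Remark \ref{rmk:projective-morphism}, compositions of projective morphisms between analytic spaces are in general only locally projective. Relative compactness of $X$ in $W$ is what saves us here: only finitely many blow-ups occur in the algorithm over $\overline{X}$, so the observation in Remark \ref{rmk:projective-morphism} that finite compositions of projective morphisms are projective on a neighborhood of any compact set applies, and $f:Y\to X$ is globally projective. This is the step I would expect to require the most care, together with verifying that the Bierstone--Milman algorithm as stated in \cite{BM97} is available in the precise relatively-compact analytic setting used here.
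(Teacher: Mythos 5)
Your proposal is correct and follows the same route as the paper, which offers no argument of its own for this statement beyond citing Bierstone--Milman \cite{BM97} (Thm.\ 13.2, 1.10, 1.6) and \cite{DH20}; your two-step construction (canonical resolution of $X$ followed by embedded log resolution of the transformed pair, with functoriality forcing the centers into $X\setminus SNC(X,D)$) is exactly how those citations are meant to be assembled. Your identification of projectivity of the composition as the one point needing care, and its resolution via relative compactness of $X$ in $W$ together with Remark \ref{rmk:projective-morphism}, is also the right way to close that gap.
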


\begin{remark}\label{r-log-resolution}
 { Note that if $\mathcal J\subset \mathcal O _X$ is a sheaf of ideals, then there exists a projective bimeromorphic morphism $f:Y\to X$ from a smooth variety $Y$ such that $\mathcal J\cdot \mathcal O _Y= \mathcal O _Y(-G)$ where $(Y,G+{\rm Ex}(f))$ is log smooth.  To see this, simply blow up $\mathcal J$ to get $f_1:X_1\to X$ such that $\mathcal J\cdot \mathcal O _{X_1}= \mathcal O _{X_1}(-D)$ (note that by \cite[Theorem 1.10]{BM97} we can also achieve this step by a finite sequence of blow ups along smooth centers). Then apply Theorem \ref{thm:log-resolution} to obtain $g:Y\to X_1$ so that $(Y,f^{-1}_*D+{\rm Ex}(f))$ is log smooth.}
 
\end{remark}
\begin{lemma}\label{l-log-resolution}
      Let $\pi :X\to U$ be a projective morphism from a smooth complex variety to a relatively compact Stein variety. Let $V\subset |L|$ be a non-empty linear series, then there exists a projective birational morphism $f:X'\to X$ such that ${\rm Fix}f^*V$ is a divisor with simple normal crossings and ${\rm Mob}(f^*V)$ is $\pi \circ f$-free.
\end{lemma}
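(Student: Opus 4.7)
The plan is to reduce the statement to the principalization of the base ideal of $V$ provided by Remark \ref{r-log-resolution}. Form the base ideal $\mathcal{J}=\mathcal{J}(V)\subset \mathcal{O}_X$ as the image of the evaluation map $V\otimes L^{-1}\to \mathcal{O}_X$; equivalently, if $V$ is spanned by sections $s_0,\ldots,s_N\in H^0(X,L)$ (and global sections exist because $U$ is Stein), $\mathcal{J}$ is locally generated by the coefficients of the $s_i$ in a trivialization of $L$. This is a coherent sheaf of ideals whose zero locus (with scheme structure) is the base locus $\mathrm{Bs}(V)$.

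By Remark \ref{r-log-resolution}, one can produce a projective bimeromorphic morphism $f\colon X'\to X$ from a smooth variety so that $\mathcal{J}\cdot\mathcal{O}_{X'}=\mathcal{O}_{X'}(-F)$ for some effective divisor $F$ and $(X', F+\mathrm{Ex}(f))$ is log smooth; in particular $F$ has simple normal crossing support. On $X'$, every pullback $f^*s_i$ vanishes along $F$, so we may write $f^*s_i=\sigma_F\cdot s_i'$ where $\sigma_F$ is the tautological section of $\mathcal{O}_{X'}(F)$ and $s_i'\in H^0(X',f^*L-F)$. Set $V':=\langle s_0',\ldots,s_N'\rangle\subset |f^*L-F|$. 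Because $\mathcal{J}\cdot\mathcal{O}_{X'}=\mathcal{O}_{X'}(-F)$, the base ideal of $V'$ equals the whole of $\mathcal{O}_{X'}$, i.e.\ $V'$ is base-point free on $X'$. This identifies $F$ as $\mathrm{Fix}(f^*V)$ and $V'$ as $\mathrm{Mob}(f^*V)$, proving the SNC assertion on the fixed part.

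It remains to upgrade base-point freeness of $V'$ to freeness over $U$ relative to $\pi\circ f$. Since $U$ is the base and $V'\subset H^0(X',f^*L-F)=\Gamma\bigl(U,(\pi\circ f)_*(f^*L(-F))\bigr)$, the surjective evaluation $V'\otimes_{\mathbb{C}}\mathcal{O}_{X'}\twoheadrightarrow f^*L(-F)$ factors through $(\pi\circ f)^*(\pi\circ f)_*\bigl(f^*L(-F)\bigr)\to f^*L(-F)$, forcing the latter to be surjective. Hence $\mathrm{Mob}(f^*V)$ is $(\pi\circ f)$-free, as required. The only substantive ingredient is the principalization-plus-log-smoothing in Remark \ref{r-log-resolution}; everything else is standard bookkeeping with base ideals, and I do not anticipate any serious obstacle in the analytic setting, since the base ideal of a linear series in a line bundle on a complex analytic space is automatically coherent.
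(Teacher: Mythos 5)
Your proposal is correct and follows essentially the same route as the paper: both form the base ideal of $V$, principalize it via Remark \ref{r-log-resolution} so that $\mathcal J\cdot\mathcal O_{X'}=\mathcal O_{X'}(-F)$ with $F$ simple normal crossings, and then observe that the twisted-down sections generate $f^*L(-F)$, which immediately gives $(\pi\circ f)$-freeness of the mobile part. Your extra bookkeeping (factoring the evaluation map through $(\pi\circ f)^*(\pi\circ f)_*$) just makes explicit what the paper leaves implicit.
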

\begin{proof}
Let $\mathcal V\subset H^0(X,L)$ be the vector space corresponding to the linear series $V$.
Let $\mathfrak b$ be the base ideal of $\mathcal V$ so that $\mathcal V\cdot \mathcal O_X\to L\otimes \mathfrak b$ is surjective.
Let $f:Y\to X$ be a resolution of $\mathfrak b$ so that $\mathfrak b\cdot \mathcal O _{X'}=\mathcal O _{X'}(-F)$, where $F$ is a divisor with simple normal crossings. Then $f^*\mathcal V\otimes \mathcal O _{X'}\to f^*L\otimes \mathcal O _{X'}(-F)$ is surjective, and thus $f^*L\otimes \mathcal O _{X'}(-F)$ is globally generated. In particular, ${\rm Fix}f^*\mathcal V=F$ is a divisor with simple normal crossings and ${\rm Mob}(f^*\mathcal V)$ is $\pi \circ f$-free.
\end{proof}

 \subsection{Bertini's theorem}

In this subsection we will prove a analytic version of Bertini's theorem which will be useful in what follows. First we need the following definitions.
\begin{definition}
Let $X$ be a complex space. A subset $W\subset X$ is called \emph{analytically meager}, if there exist countably many locally analytic subsets $\{Z_i\}_{i\in \mbN}$ of $X$ of codimension $\>1$ such that $W\subset \cup_{i=1}^\infty Z_i$. Clearly, a countable union of analytically meager sets is analytically meager.

\end{definition}

\begin{definition}
Let $X$ be a complete metric space. A subset $M\subset X$ is called \textit{fat}, if there are countably many dense open subsets $\{U_i\}$ of $X$ such that $\cap_i U_i\subset M$. Clearly, countable intersections of fat sets are fat. Let $\mcP$ be a property. We say that  \textit{sufficiently general} points of $X$ satisfy $\mcP$, if there exists a fat subset $M\subset X$ such that $x\in X$ satisfies $\mcP$ for all $x\in M$.
Note that, since $X$ is a complete metric space, by Baire's theorem any fat set is dense in $X$. From \cite[Remark II.3, Page 276]{Man82} we know also that if $M$ is a fat subset of $X$, then $X\setminus M$ is analytically meager. 
\end{definition}
\begin{remark}\label{rmk:sifficiently-general}
    Let $X$ be a complex space and $\msL$ a line bundle on $X$. Let $V\subset H^0(X, \msL)$ be a finite dimensional $\mbC$-subspace. By abuse of terminology we will say that a \textit{sufficiently general member} $D$ of the linear system $|V|$ satisfies property $\mcP$, if for a sufficiently general member $s\in V$, $D=\Zero(s)\subset X$ satisfies property $\mcP$.
\end{remark}

\begin{remark}\label{rmk:meager-set}
    Note that, if $W\subset X$ is an analytically meager set, then $W$ is nowhere dense in $X$, i.e. the interior of the closure $\overline{W}$ is an empty set. Consequently, $X\setminus W$ is dense in $X$. Moreover, if $f:X\to Y$ is a surjective morphism between complex spaces and $W\subset Y$ is an analytically meager set, then $f^{-1}W$ is an analytically meager subset of $X$. Let $g:X\to Y$ be a surjective continuous map between complete metric spaces and $M\subset Y$ is a fat subset. By definition, $M$ contains a countable intersection of dense open subsets, say $\cap U_i$ of $Y$. Then $Y\setminus \cap U_i$ is an analytically meager set, and thus $g^{-1}(Y\setminus \cap U_i)$ is also analytically meager in $X$. In particular, $X\setminus g^{-1}(Y\setminus \cap U_i)$ is a dense subset of $X$, and hence $\cap g^{-1}U_i$ is dense in $X$. Therefore $g^{-1}M$ is a fat subset of $X$.\\
\end{remark}

\begin{theorem}\label{t-bertini+}
Let $\pi :X\to U$ be a projective morphism from a smooth complex variety to a relatively compact Stein space. Let $D$ be a simple normal crossings divisor on $X$ and $L$ a $\pi$-generated line bundle on $X$. Then the following hold:
\begin{enumerate}
    \item  If $\dim X=n$, then there exist sections $s_0,\ldots , s_n\in H^0(X, L)$ generating $L$.
    \item Let $V\subset H^0(X,L)$ be a finite dimensional $\mathbb C$-subspace  such that $V$ generates $L$, i.e. $V\otimes _{\mathbb C}\OO_X\to L$ is surjective. Then $(X, D+G)$ is log smooth for all sufficiently general members $G\in |V|$.  
\end{enumerate}
\end{theorem}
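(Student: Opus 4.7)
The plan is to treat (1) via an inductive Bertini-type cut-down after producing a finite-dimensional generating subspace, and (2) stratum-by-stratum via an analytic Sard--Bertini argument combined with the Baire category theorem.

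For (1), first observe that $\pi$-generation together with Cartan's Theorem A on the Stein space $U$ yields the surjection $H^0(X,L)\otimes_{\mbC}\OO_X\to L$: indeed $\pi_*L$ is coherent, hence generated by $H^0(U,\pi_*L)=H^0(X,L)$, and $\pi^*\pi_*L\to L$ is surjective by hypothesis. Because $U$ is relatively compact Stein, finitely many sections in $H^0(X,L)$ already generate $L$ on all of $X$ (the standard fact that coherent sheaves on a Stein space are finitely generated over any relatively compact open). Let $V_0\subset H^0(X,L)$ be the finite-dimensional subspace they span. I inductively extract $s_0,\ldots,s_n\in V_0$ with strictly decreasing $\dim B_k$, where $B_k:=V(s_0,\ldots,s_k)$: given $s_0,\ldots,s_k$, decompose $B_k=\bigcup_j W_j$ into its (at most countably many, by second countability of $X$) irreducible components; for each $j$ the linear subspace $A_j:=\{s\in V_0:s|_{W_j}\equiv 0\}$ is proper in $V_0$, since some element of $V_0$ generates $L$ at the generic point of $W_j$. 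A countable union of proper subspaces cannot exhaust the finite-dimensional $V_0$, so we may pick $s_{k+1}\in V_0\setminus\bigcup_j A_j$, which forces $\dim B_{k+1}<\dim B_k$. After $n+1$ steps $B_n=\emptyset$, so $s_0,\ldots,s_n$ generate $L$.

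For (2), enumerate the SNC strata of $D$: since $D$ has SNC support (so intersections of components are smooth) and is locally finite while $X$ is second countable, the collection of connected components $\{S_\alpha\}_{\alpha\in A}$ of such intersections (including $S_\emptyset=X$) is countable. The divisor $G\in|V|$ makes $(X,D+G)$ log smooth if and only if $G$ meets every stratum $S_\alpha$ transversally, i.e.\ $G\cap S_\alpha$ is smooth and contains no irreducible component of $S_\alpha$. Since $V$ generates $L$, the associated morphism $\varphi:X\to\mbP(V^*)$ satisfies $L\cong\varphi^*\OO_{\mbP(V^*)}(1)$, and each $G\in|V|$ is the pull-back of a hyperplane $H\subset\mbP(V^*)$. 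For the restriction $\varphi_\alpha:=\varphi|_{S_\alpha}$, failure of transversality of $\varphi^{-1}(H)$ to $S_\alpha$ at some point is equivalent to $H$ passing through a critical value of $\varphi_\alpha$. By the analytic Sard--Bertini theorem for holomorphic maps into finite-dimensional projective space, this critical-value set is a proper closed analytic subset of $\mbP(V^*)$, so the corresponding set $U_\alpha\subset|V|$ of good $G$'s is the complement of a proper closed analytic subset, hence open and dense. Finally $M:=\bigcap_{\alpha\in A}U_\alpha$ is a countable intersection of dense open subsets of the complete metric space $|V|$, so $M$ is fat; every $G\in M$ makes $(X,D+G)$ log smooth.

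The main technical content is the stratified Sard--Bertini step: for each stratum $S_\alpha$, one must check that the ``bad'' hyperplanes form a proper closed analytic subset of $\mbP(V^*)$, which is standard for holomorphic maps with finite-dimensional projective target. The remaining ingredients---countability of strata (via second countability of $X$ and local finiteness of the SNC divisor $D$) and the Baire-category passage from countably many ``open dense'' conditions to a ``fat'' intersection---then upgrade the pointwise Bertini statement to the ``sufficiently general'' statement required by the theorem.
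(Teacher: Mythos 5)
Your argument for part (1) is fine (the paper omits this part as well known): Cartan's Theorem A gives global generation by $H^0(X,L)$, a finite-dimensional generating subspace exists by relative compactness, and the inductive cut-down using the fact that a finite-dimensional vector space is not a countable union of proper subspaces is the standard argument. Your part (2) also has the same overall architecture as the paper's proof -- reduce to a smoothness statement on each stratum, then intersect countably many good sets of sections via Baire category.

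However, the central step of your part (2) contains a genuine gap. You claim that for the restriction $\varphi_\alpha\colon S_\alpha\to \mbP(V^*)$ the set of ``bad'' hyperplanes is a \emph{proper closed analytic subset} of $\mbP(V^*)$, so that $U_\alpha$ is open and dense. This is false in general: the strata $S_\alpha$ are non-compact complex manifolds and $\varphi_\alpha$ is not proper, so the critical values of the relevant incidence projection need not form a closed analytic set (they can even be dense -- already for an entire function $f\colon\mbC\to\mbC$ the critical values form a countable, possibly dense, set). Sard's theorem only yields that the bad hyperplanes have measure zero, and a set of full measure need not contain a countable intersection of dense open sets, so this does not deliver the fat set required by the definition of ``sufficiently general'' used in the statement. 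What is needed -- and what the paper invokes -- is an analytic Bertini theorem for non-compact manifolds asserting that the sections of a finite-dimensional generating subspace with \emph{smooth} zero locus form a \emph{fat} subset (Manaresi, \cite[Theorem II.5]{Man82}), applied to $V|_{S_\alpha}\subset H^0(S_\alpha,L|_{S_\alpha})$ and pulled back to $V$ along the surjection $V\twoheadrightarrow V|_{S_\alpha}$ (after removing the kernel). With that substitution your Baire-category conclusion goes through; without it, the claim that each $U_\alpha$ is open and dense is unjustified and the proof is incomplete.
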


\begin{proof} 
The proof of (1) is well known and hence we will just prove (2). 
 Let $Z\subset X$ be a positive dimensional strata of $D$. Then $V|_Z:=\{s|_Z\; |\; s\in V\}\subset H^0(Z, L|_Z)$ generates $L|_Z$ globally and there is a surjection $\vphi:V\surjective V|_Z$ of vector spaces. Since $V$ (and hence also $V|_Z$) is a finite dimensional $\mbC$-vector space, fixing some norms on $V$ and $V|_Z$ we may assume that $\vphi$ is a surjective continuous linear transformation between two Banach spaces. Then by \cite[Theorem II.5]{Man82}, there exists a fat set $M\subset V|_Z$ such that the zero set $\Zero(s|_Z)\subset Z$ is smooth for all $s|_Z\in M$. Then from Remark \ref{rmk:meager-set} it follows that $\vphi^{-1}M$ is a fat subset of $V$. Let $K:=\ker(\vphi)$; since $V\setminus K\to V|_Z\setminus\{0\}$ is surjective, it follows that $\Zero(s)|_Z$ is smooth for all $s\in \vphi^{-1}M\setminus K$. Note that $V\setminus K$ is a dense open subset of $V$, since $K$ a proper closed subspace of $V$; in particular, $\vphi^{-1}M\setminus K$ is fat subset of $V$. Since there are only finitely many strata of $D$, by induction on the number of positive dimensional strata of $D$, it follows that there is a fat subset $N\subset V$ such that $(X, D+G)$ is log smooth for all $s\in N$ with $\Zero(s)=G$.

\end{proof}

\begin{lemma}\label{l-klt}
Let $f:X\to Y$ be a projective morphism of complex spaces such that $Y$ is a relatively compact Stein space, and $L$ is a $f$-generated line bundle. Let $V\subset H^0(X, L)$ be a finite dimensional $\mbC$-subspace such that $L$ is globally generated by the sections of $V$. If $(X, B)$ is klt, then 
for sufficiently general member $D\in |V|$, $(X,B+tD)$ is klt for any $t<1$.
\end{lemma}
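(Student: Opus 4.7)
The plan is to reduce everything to a log resolution and apply the analytic Bertini result already established in the paper.

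First, I would take a log resolution $g:X'\to X$ of $(X,B)$ via Theorem \ref{thm:log-resolution}; this applies because $f$ is projective and $Y$ is relatively compact Stein, so $X$ is itself relatively compact over a Stein base. Writing $K_{X'}+B_{X'}=g^*(K_X+B)$, the klt hypothesis on $(X,B)$ translates to the statement that every coefficient of $B_{X'}=g^{-1}_*B-\sum_i a(E_i,X,B)E_i$ is strictly less than $1$, while $(X',\Supp(B_{X'})\cup\Ex(g))$ is log smooth by construction.

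Next, I would transport the linear system upstairs. Pulling back the surjection $V\otimes\mathcal O_X\surjective L$ by $g$ shows that $g^*V\subset H^0(X',g^*L)$ still globally generates $g^*L$. I would then apply the analytic Bertini result Theorem \ref{t-bertini+}(2) to the projective morphism $f\circ g:X'\to Y$, using the SNC divisor $\Supp(B_{X'})\cup\Ex(g)$ and the generating subspace $g^*V$. This produces a sufficiently general member $G'\in|g^*V|$ which keeps the entire configuration log smooth. Since $g_*\mathcal O_{X'}=\mathcal O_X$, the map $V\to g^*V$ is an isomorphism, so $G'=g^*G$ for a corresponding sufficiently general $G\in|V|$.

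The last input is that for such a general $G$ the pullback $g^*G$ has no exceptional component: for each exceptional divisor $E_i$, since $V$ generates $L$ at every point of $g(E_i)$, the restriction $V|_{g(E_i)}$ is nonzero, so a sufficiently general $G$ does not contain $g(E_i)$. Hence $g^*G=g^{-1}_*G$, and
\[
g^*(K_X+B+tG)=K_{X'}+B_{X'}+t\,g^{-1}_*G.
\]
All coefficients of $B_{X'}+t\,g^{-1}_*G$ remain strictly less than $1$ for any $t<1$, so $(X,B+tG)$ is sub-klt, and hence klt because $B+tG\geq 0$. The main technical hurdle I anticipate is running Bertini in the non-compact analytic setting rather than a globally projective one; this is precisely what Theorem \ref{t-bertini+} was designed for via the relative compactness of $Y$. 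The secondary genericity point, that no exceptional divisor is picked up by $g^*G$, reduces to $V|_{g(E_i)}\neq 0$, which is immediate from global generation.
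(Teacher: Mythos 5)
Your proof is correct and follows essentially the same route as the paper's: pass to a log resolution and invoke the analytic Bertini theorem (Theorem \ref{t-bertini+}) so that the boundary plus a sufficiently general member stays log smooth with coefficients below $1$. The paper's own proof is just a two-line version of this; you have merely filled in the details it glosses over (transporting $|V|$ to the resolution and checking that $g^*G$ acquires no exceptional components), and those details are handled correctly.
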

\begin{proof}
Passing to a log resolution, we may assume that $X$ is smooth and $B$ has simple normal crossings support. Then by Theorem \ref{t-bertini+}, for sufficiently general $D\in |V|$, $(X, B+tD)$ is log smooth, and the lemma follows.
\end{proof}
\subsection{Linear series}
Let $\pi:X\to U$ be a projective surjective morphism of normal analytic varieties such that $X$ is smooth and $D$ a $\mbR$-divisor on $X$. 
Note that by definition $\OO _X(D)=\OO _X(\lfloor D\rfloor )$.
If $\pi _* \OO _X(D)\ne 0$, then let $B$ be a prime Weil divisor on $X$ and $m_B(D)$ the largest integer $m$ such that $\pi _* \OO _X(D-mB)\to \pi _* \OO _X(D)$ is an isomorphism (this can be computed on any open subset $V\subset U$ such that $V\cap f(B)\ne \emptyset$ cf. \cite[pg 97]{Nak04}).
We define \[|D/U|=\{D'\sim _U D|D'\geq 0\}\qquad {\rm and}\qquad |D|=\{D'\sim D|D'\geq 0\}.\] Here $D'\sim _U D$  if $D-D'$ is a $\mbZ$-linear combination of principal divisors and Cartier divisors pulled back from $U$. Similarly, we say that $D'\sim _{\mbR,U}D$ if $D-D'$ is an $\mbR$-linear combination of principal divisors and Cartier divisors pulled back from $U$. We let $|D/U|_{\mbR}:=\{D'\geq 0|D'\sim _{\mbR,U}D\}$.

Assume now that $U$ is Stein.
\begin{lemma}\label{l-mD} Let $\pi :X\to U$ be a projective morphism from a normal variety $X$ to a Stein variety $U$, and $D$ an $\mbR$-divisor on $X$. If $\pi _* \OO _X(D)\ne 0$, then $|D|\ne \emptyset $ and $|D/U|\ne \emptyset $. Moreover, for a prime Weil divisor $B$ on $X$ define $m_B|D|:={\rm max}\{t \geq 0\;|\;D'\geq tB\ {\rm for\ all}\ D'\in |D|\}$ and $m_B|D/U|:={\rm max}\{t \geq 0\;|\;D'\geq tB\ {\rm for\ all}\ D'\in |D/U|\}$. Then
\[ m_B(D)=m_B|D|=m_B|D/U|.\]

\end{lemma}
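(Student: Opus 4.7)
First I would handle the nonemptiness. Since $U$ is Stein and $\pi_*\OO_X(D)=\pi_*\OO_X(\lfloor D\rfloor)$ is a nonzero coherent sheaf, Cartan's theorem A gives a nonzero global section; via the identification $H^0(U,\pi_*\OO_X(\lfloor D\rfloor))=H^0(X,\OO_X(\lfloor D\rfloor))$, this produces an effective integral divisor $D'\sim \lfloor D\rfloor$, that is, $D'\in |D|=|\lfloor D\rfloor|$, and since $|D|\subseteq |D/U|$ both are nonempty.

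Next I would prove $m_B(D)=m_B|D|$. The key observation is that on a Stein space, a coherent inclusion $\mathcal F\hookrightarrow\mathcal G$ is an isomorphism of sheaves if and only if it is one on $H^0(U,-)$: one direction is obvious, and conversely if $\mathcal G/\mathcal F\neq 0$ then Cartan~A supplies a nonzero global section of $\mathcal G/\mathcal F$ which, using $H^1(U,\mathcal F)=0$ from Cartan~B, lifts to a section of $\mathcal G$ not in $\mathcal F$. Applied to $\pi_*\OO_X(\lfloor D\rfloor-mB)\hookrightarrow\pi_*\OO_X(\lfloor D\rfloor)$ for an integer $m$, this says the inclusion is an iso iff every nonzero $s\in H^0(X,\OO_X(\lfloor D\rfloor))$ satisfies $\ord_B(s)+\lfloor D\rfloor_B\geq m$. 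Translating via the (up-to-scalar) bijection between such sections and divisors $D'=\lfloor D\rfloor+{\rm div}(s)\in |D|$, this condition is $\mult_B(D')\geq m$ for every $D'\in |D|$, whose largest integer solution is $\min_{D'\in |D|}\mult_B(D')=m_B|D|$, itself an integer because $|D|$ consists of integral divisors. Hence $m_B(D)=m_B|D|$.

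For $m_B|D|=m_B|D/U|$, the inclusion $|D|\subseteq|D/U|$ already gives $m_B|D/U|\leq m_B|D|$. For the reverse, fix $E\in |D/U|$ and write $E-\lfloor D\rfloor={\rm div}(f)+\pi^*G$ for some rational function $f$ on $X$ and Cartier divisor $G$ on $U$. Pick $u\in\pi(B)$ and a Stein open $V\ni u$ so small that $\OO_U(G)|_V$ is trivial, say $G|_V={\rm div}(g)$; on $X_V:=\pi^{-1}(V)$ one then has $E|_{X_V}=\lfloor D\rfloor|_{X_V}+{\rm div}\bigl((f\cdot(g\circ\pi))|_{X_V}\bigr)$, so $E|_{X_V}$ is the divisor of a section of $\OO_X(\lfloor D\rfloor)$ over $X_V$. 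The isomorphism $\pi_*\OO_X(\lfloor D\rfloor-m_B(D)B)\cong\pi_*\OO_X(\lfloor D\rfloor)$ --- which by the remark in the statement may be verified over $V$ --- forces this section to vanish to order at least $m_B(D)$ along $B$, so $\mult_B(E|_{X_V})\geq m_B(D)$. Since $B\cap X_V\neq\emptyset$, $\mult_B(E)=\mult_B(E|_{X_V})\geq m_B(D)=m_B|D|$, yielding the desired reverse inequality. The main subtlety --- essentially the only step beyond unwinding definitions --- is this local-to-global move: a divisor $E\in |D/U|$ need not be the divisor of a global section of $\OO_X(\lfloor D\rfloor)$, and one has to shrink $U$ to a Stein neighborhood of a point of $\pi(B)$ on which the auxiliary line bundle $\OO_U(G)$ trivializes, then invoke the fact that $m_B(D)$ may be computed over any such neighborhood.
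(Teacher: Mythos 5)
Your proof is correct and follows essentially the same route as the paper's: Cartan A/B on the Stein base to identify sheaf-level (iso)morphisms of $\pi_*\OO_X(D-mB)\hookrightarrow\pi_*\OO_X(D)$ with statements about global sections, giving $m_B(D)=m_B|D|$, and then the same localization to a Stein open $V$ meeting $\pi(B)$ over which the auxiliary Cartier divisor from $U$ trivializes to handle $|D/U|$ (the paper phrases this last step as a contradiction, you argue it directly, but the content is identical).
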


 \begin{proof}
Since $U$ is Stein, $H^0(X, \OO_X(D))\cong H^0(U, \pi _* \OO _X(D))\ne 0$ and so $|D|\ne \emptyset $ and $|D/U|\ne \emptyset $.
 
  Let $m_B=m_B(D)$. Since $\pi _* \OO _X(D-m_BB)\hookrightarrow  \pi _* \OO _X(D)$ is an isomorphism, $H^0(X, \OO _X(D))=H^0(X, \OO _X(D-m_BB))$
 and hence $m_B|D|\geq m_B$.
 Since $\pi _* \OO _X(D-(m_B+1)B)\hookrightarrow  \pi _* \OO _X(D)$ is not surjective, and $U$ is Stein,
 this map is also not surjective on global sections, i.e. $H^0(\OO _X(D-(m_B+1)B))\to H^0(\OO _X(D))$
 is not surjective so that $m_B|D|<m_B+1$, and hence $m_B|D|= m_B$.
 
 Clearly $m_B|D|\geq m_B|D/U|$. If this inequality is strict, then there is a divisor $G\sim _U D$ such that ${\rm mult }_B(G)<m:=m_B|D|$.
 We can then pick an open subset $V\subset U$ such that $V\cap \pi (B)\ne \emptyset$ and $G|_{X_V}\sim D|_{X_V}$. But then
 $G$ is not in the image of $\phi:\pi _* \OO _X(D-mB)|_V\hookrightarrow  \pi _* \OO _X(D)|_V$. On the other hand, we have already seen that $m=m_B|D|=m_B$ and hence $\phi$ is an isomorphism. This is impossible and so $m_B|D|= m_B|D/U|$.
 \end{proof}
We let \[{\rm Fix}|D/U|=\sum m_B|D/U|\cdot B,\qquad  {\rm Mob}|D/U|=D-{\rm Fix}|D/U|.\]
Note that by what we have seen above, we have 
\[{\rm Fix}|D/U|={\rm Fix}|D|, \mbox{ where }{\rm Fix}|D|=\sum m_B|D|\cdot B.\]
\begin{lemma} Let $\pi :X\to U$ be a projective morphism to a Stein variety, $X$ smooth and $D$ a divisor on $X$ such that $\pi _*\OO _X(D)\ne 0$.
       If  $F={\rm Fix}|D/U|$ and $M={\rm Mob}|D/U|$, then ${\rm Fix}|M/U|=0$ and \[|D/U|=F+|M/U|,\qquad |D|=F+|M|.\] 
\end{lemma}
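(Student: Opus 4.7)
The plan is to establish the three assertions essentially by unwinding the definitions of $m_B|D/U|$ and $F$, together with the identification $m_B|D|=m_B|D/U|=m_B(D)$ provided by the preceding lemma. I will prove the decomposition $|D/U|=F+|M/U|$ first, then deduce $\mathrm{Fix}|M/U|=0$, and finally note that the $|D|=F+|M|$ statement follows by the same argument since $m_B|D|=m_B|D/U|$.

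For $|D/U|=F+|M/U|$, the inclusion $F+|M/U|\subseteq |D/U|$ is immediate: if $M'\sim_{U}M$ with $M'\geq 0$, then $F+M'\sim_{U}F+M=D$ and $F+M'\geq 0$ since $F\geq 0$. For the reverse inclusion, fix $D'\in|D/U|$. By the very definition of $m_B|D/U|$ as the maximal $t\geq 0$ with $D''\geq tB$ for every $D''\in|D/U|$, we have $\mathrm{mult}_B(D')\geq m_B|D/U|$ for every prime Weil divisor $B$; summing over the finitely many $B$ with positive coefficient in $F=\sum_B m_B|D/U|\cdot B$ gives $D'\geq F$. Therefore $D'-F\geq 0$ and $D'-F\sim_{U}D-F=M$, so $D'-F\in|M/U|$.

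For $\mathrm{Fix}|M/U|=0$, I argue by contradiction using the decomposition just established. Suppose $m_B|M/U|=c>0$ for some prime divisor $B$. Then every $M'\in|M/U|$ satisfies $\mathrm{mult}_B(M')\geq c$, so every element $D'=F+M'$ of $|D/U|$ (using $|D/U|=F+|M/U|$) has $\mathrm{mult}_B(D')\geq m_B|D/U|+c$. But this contradicts the maximality in the definition of $m_B|D/U|$, since the strictly larger value $m_B|D/U|+c$ would also be admissible. Hence $m_B|M/U|=0$ for every prime $B$, i.e.\ $\mathrm{Fix}|M/U|=0$.

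Finally, for $|D|=F+|M|$, I simply repeat the argument verbatim with $\sim$ in place of $\sim_{U}$, invoking the preceding lemma (which asserts $m_B(D)=m_B|D|=m_B|D/U|$) to identify the fixed parts $\mathrm{Fix}|D|=\sum_B m_B|D|\cdot B=F$. The routine step is checking $F\geq 0$ and that only finitely many $B$ contribute to $F$, both of which are built into the setup; I do not foresee a serious obstacle here, as the entire argument is a clean definitional manipulation once the preceding lemma is available.
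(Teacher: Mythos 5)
Your proof is correct and takes the same route as the paper, which simply declares the lemma an ``immediate consequence'' of Lemma \ref{l-mD}; you have just spelled out the definitional manipulation (every $D'\in|D/U|$ dominates $F$, hence the decomposition, hence the vanishing of ${\rm Fix}|M/U|$ by maximality) that the paper leaves implicit. No gaps.
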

\begin{proof}
Immediate consequence of Lemma \ref{l-mD}.
\end{proof}
It is easy to see that ${\rm Fix}|kmD/U|\leq k{\rm Fix}|mD/U|$ for any integers $k,m>0$. If $D\sim _{\Q,U}D'\geq 0$, then we let \[{\mathbf {Fix}}(D/U):={\rm lim inf}\frac 1 k {\rm Fix}|kD/U|\] for all $k>0$ sufficiently divisible. Clearly  \[{\mathbf {Fix}}(D):={\rm lim inf}\frac 1 k {\rm Fix}|kD|={\mathbf {Fix}}(D/U).\]
If $S\subset X$ is a smooth divisor, then we let $|D/U|_S\subset |D_{|S}/U|$ be the sub-linear series consisting of all divisors $D'|_S$, where $D'\in |D/U|$ and ${\rm Supp} D'$ does not contain $S$.
If $|D/U|_S\ne \emptyset$, we let ${\rm Fix}_S|D/U|:={\rm Fix}(|D/U|_S)$ and if $|kD/U|_S\ne \emptyset$ for some integer $k>0$, then we let \[{\mathbf {Fix}}_S(D/U):={\rm lim inf}\frac 1 k {\rm Fix}|kD/U|_S\] for all $k>0$ sufficiently divisible. 
Similarly to what we have seen above, one can show that $|D/U|_S\ne \emptyset$ if and only if the homomorphism $\pi _* \OO _X(D)\to \pi _* \OO _S(D|_S)$ is non-zero. Since $U$ is Stein, this is in turn equivalent to the fact that 
$|D|_S\ne \emptyset$. It then follows that ${\rm Fix}_S|D/U|={\rm Fix}_S|D|$, and
\[{\mathbf {Fix}}_S(D):={\rm lim inf}\frac 1 k {\rm Fix}|kD|_S={\mathbf {Fix}}_S(D/U).\] 

If $f:X'\to X$ is a proper birational morphisms of smooth varieties, $D$ an $\R$-Cartier divisor on $X$ and $E\geq 0$ an $f$-exceptional divisor, then $|f^*D/X|_{\R}+E=|f^*D+E/X|_{\R}$, and $|f^*D|_\mbR+E=|f^*D+E|_\mbR$.

If $|D/X|_{\R}\ne \emptyset$, 
then define ${\mathbf B}(D/X)=\cap _{D'\in |D/X|_{\R}}{\rm Supp}(D')$.
 
\begin{lemma}\label{l-2.3} Let $\pi :X\to U$ be a projective morphism to a Stein variety, $X$ smooth and $D$ a divisor on $X$.
      If $D$ is a $\Q$-divisor such that $|D/U|_{\R}\ne \emptyset$, then $|D/U|_{\Q}\ne \emptyset$, $|D|_{\Q}\ne \emptyset$ and 
      \[{\mathbf B}(D/U)=\cap _{D'\in |D/U|_{\Q}}{\rm Supp}(D')=\cap _{D'\in |D|_{\Q}}{\rm Supp}(D').\]
\end{lemma}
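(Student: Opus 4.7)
My plan is to establish the three conclusions in sequence via a common rational-approximation argument combined with Lemma \ref{l-mD}. The main new content is $|D/U|_{\mbQ}\ne \emptyset$; the other two claims will then follow formally.

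For the first step, I fix any $D'\in |D/U|_{\mbR}$ and write $D-D'=\sum_i r_i(f_i)+\sum_j s_j\,\pi^*E_j$ with $r_i,s_j\in \mbR$, meromorphic functions $f_i$ on $X$, and Cartier divisors $E_j$ on $U$. Let $S$ be the finite set of prime divisors appearing in any of $D,D',(f_i),\pi^*E_j$. Comparing multiplicities at each $B\in S$ produces a finite system of linear equations with rational coefficients in the unknowns $b_B=\operatorname{mult}_B D'$ (for $B\in S$), $r_i$, and $s_j$. Thus its real solution set $A\subset \mbR^N$ is a $\mbQ$-defined affine subspace containing the given real point. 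The crux will be to perturb this to a rational point while preserving effectivity, which is delicate for components of zero multiplicity: I would let $S_0=\{B\in S : b_B=0\}$, intersect $A$ with the rational hyperplanes $\{b_B=0\}_{B\in S_0}$ to obtain a smaller $\mbQ$-defined affine subspace $A_0$ still containing the real point, and observe that since $A_0$ has a real point, it has a rational point, and its rational points are dense in $A_0$. Any rational point $(b_B', r_i', s_j')\in A_0$ sufficiently close to our real one will satisfy $b_B'=0$ for $B\in S_0$ and $b_B'>0$ for $B\in S\setminus S_0$, yielding the desired $D''=\sum_{B\in S} b_B' B\in |D/U|_{\mbQ}$.

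To deduce $|D|_{\mbQ}\ne \emptyset$, I choose $m\in \mbN$ so that $mD$, $mD''$, and the $mr_i',ms_j'$ are all integral; then $mD''\in |mD/U|$, so Lemma \ref{l-mD} (which uses the Steinness of $U$ to identify local sections of $\pi_*\OO_X(mD)$ with global ones) gives $|mD|\ne \emptyset$, and dividing a member by $m$ produces an element of $|D|_{\mbQ}$.

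Finally, for the base locus equalities, the inclusions $\mathbf{B}(D/U)\subseteq \bigcap_{D'\in |D/U|_{\mbQ}}\operatorname{Supp}(D')\subseteq \bigcap_{D'\in |D|_{\mbQ}}\operatorname{Supp}(D')$ are immediate from $|D|_{\mbQ}\subseteq |D/U|_{\mbQ}\subseteq |D/U|_{\mbR}$. For the reverse direction, given a prime divisor $B\notin \mathbf{B}(D/U)$, I would fix $D'\in |D/U|_{\mbR}$ with $\operatorname{mult}_B D'=0$ and rerun the approximation argument with $B$ adjoined to $S_0$, producing $D''\in |D/U|_{\mbQ}$ with $\operatorname{mult}_B D''=0$; then applying Lemma \ref{l-mD} to $mD$ with $mD''$ as witness yields $E\in |mD|$ with $\operatorname{mult}_B E=0$, so $\frac 1 m E\in |D|_{\mbQ}$ avoids $B$. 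The main obstacle throughout is the technical one of preserving effectivity under rational perturbation; the observation that a $\mbQ$-defined affine subspace cut out by rational linear equations having a real point automatically has a dense set of rational points, together with the $S_0$ construction to fix the boundary components, resolves this.
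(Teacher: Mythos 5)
Your argument for $|D/U|_{\mbQ}\ne\emptyset$ (rational approximation inside the $\mbQ$-defined affine subspace of relations, fixing the components of multiplicity zero) is exactly the content of the reference the paper invokes at this point, and your deduction of $|D|_{\mbQ}\ne\emptyset$ via Lemma \ref{l-mD} and the Steinness of $U$ matches the paper's mechanism. So the first two conclusions are fine.

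The gap is in the last equality. The loci $\mathbf{B}(D/U)$ and $\bigcap_{D'}\Supp(D')$ are closed analytic subsets of $X$ which may well have irreducible components of codimension $\geq 2$, but your reverse inclusion only tests them against prime \emph{divisors} $B$: you show that if $B\not\subseteq \mathbf{B}(D/U)$ then some member of $|D|_{\mbQ}$ omits $B$ from its support. This proves that the two loci have the same divisorial components, but it does not exclude, say, a point $x$ of codimension $\geq 2$ through which every member of $|D|_{\mbQ}$ passes even though some member of $|D/U|_{\mbQ}$ (or $|D/U|_{\mbR}$) misses it. That is precisely the case the lemma is asserting cannot occur, and it is where the paper does real work: for an arbitrary point $x$, one blows up $x$, $\nu:X'\to X$ with exceptional divisor $E$, notes that $x\in\bigcap_{D'\in|D|_{\mbQ}}\Supp(D')$ if and only if $m_E|\nu^*mD|>0$ for all $m$, and then applies Lemma \ref{l-mD} \emph{on $X'$} to get $m_E|\nu^*mD|=m_E|\nu^*mD/U|$, which transfers the statement between the absolute and relative linear series at the point $x$. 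Your argument can be repaired by inserting exactly this blow-up reduction (your divisorial argument is then the special case where $E$ is replaced by a prime divisor of $X$ itself), but as written the codimension $\geq 2$ part of the asserted set equality is not proved.

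A smaller point: you should say a word about why the set $S$ of relevant prime components is finite (in the analytic category a divisor is only a locally finite sum); this is harmless in the paper's standing setting where $U$ is relatively compact, but it deserves mention since the perturbation argument needs only finitely many strict inequalities to preserve.
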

\begin{proof} It is easy to see that ${\mathbf B}(D/U)\subset \cap _{D'\in |D/U|_{\Q}}{\rm Supp}(D')\subset \cap _{D'\in |D|_{\Q}}{\rm Supp}(D')$.
By \cite[Lemma 2.3]{CL10}, it follows that ${\mathbf B}(D/U)= \cap _{D'\in |D/U|_{\Q}}{\rm Supp}(D')$.  Finally, let $x\in X$ and $\nu :X'\to X$ be the blow up of $x$ and $E$ the corresponding exceptional divisor. Then $x\in \cap _{D'\in |D|_{\Q}}{\rm Supp}(D')$ if and only if $m_E|\nu ^* mD|>0$ for any $m>0$. Assume that $x\in \cap _{D'\in |D|_{\Q}}{\rm Supp}(D')$. Then by Lemma \ref{l-mD}, $m_E|\nu ^* mD/U|>0$ for any $m>0$, and hence $x\in \cap _{D'\in |D/U|_{\Q}}{\rm Supp}(D')$. This shows that $\cap _{D'\in |D/U|_{\Q}}{\rm Supp}(D')\supset \cap _{D'\in |D|_{\Q}}{\rm Supp}(D')$ and the claim follows.
\end{proof}

\begin{lemma}\label{l-lu} Let $\pi :X\to U$ be a projective morphism from a smooth connected complex variety to a Stein space, $\mathcal L=\OO _X(L)$ a line bundle on $X$ and $S$ a smooth divisor on $X$.
       Then the following are equivalent.
       \begin{enumerate}
           \item $\pi _*\mathcal L\to \pi _*(\mathcal L|_S)$ is surjective,
           \item $H^0(X,\mathcal L)\to H^0(S,\mathcal L|_S)$ is surjective or equivalently $|L|_S=|L_{|S}|$,
           \item  $|L/U|_S=|L_{|S}/U|$.
       \end{enumerate}
\end{lemma}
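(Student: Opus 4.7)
The plan is to prove $(1) \Leftrightarrow (2)$ by pushing the natural short exact sequence to the Stein base and using Cartan's Theorems A and B, to derive $(1) \Rightarrow (3)$ from the projection formula after twisting by pullbacks from $U$, and to obtain $(3) \Rightarrow (1)$ by a local reduction to Stein neighborhoods of trivial Picard group.

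For $(1) \Leftrightarrow (2)$, I would push the sequence $0 \to \mathcal{L}(-S) \to \mathcal{L} \to \mathcal{L}|_S \to 0$ forward along $\pi$. Since $U$ is Stein, $H^0(U,-)$ is exact on coherent sheaves (Cartan B) and a coherent sheaf with vanishing global sections is zero (Cartan A); applied to the coherent cokernel $\mathcal{Q}$ of $\pi_*\mathcal{L} \to \pi_*(\mathcal{L}|_S)$, this yields $\mathcal{Q}=0$ iff $H^0(X,\mathcal{L}) \to H^0(S,\mathcal{L}|_S)$ is surjective. Identifying nonzero sections with their zero divisors gives the equivalence with $|L|_S = |L_{|S}|$. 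For $(1) \Rightarrow (3)$, given $G \in |L_{|S}/U|$ we have $G \sim (L + \pi^* D_U)|_S$ for some Cartier divisor $D_U$ on $U$; set $\mathcal{L}' = \mathcal{L} \otimes \pi^*\mathcal{O}_U(D_U)$. By the projection formula, tensoring the surjection $(1)$ for $\mathcal{L}$ with $\mathcal{O}_U(D_U)$ gives $(1)$ for $\mathcal{L}'$, hence $(2)$ also holds for $\mathcal{L}'$; lifting the section defining $G$ produces a divisor $D \in |L + \pi^*D_U| \subset |L/U|$ with $D|_S = G$, so $G \in |L/U|_S$.

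The hard part is $(3) \Rightarrow (1)$: naively lifting $G \in |L|_S|$ via $(3)$ only yields a divisor in $|L + \pi^*M|$ for some Cartier $M$ on $U$ with $(\pi^*M)|_S$ principal on $S$, which does not force $\pi^*M$ to be principal on $X$. Since surjectivity of $\pi_*\mathcal{L} \to \pi_*(\mathcal{L}|_S)$ is a local question on $U$, it suffices to work in a contractible Stein neighborhood $V$ of any given $u \in U$ with $\Pic(V) = 0$; on such $V$ the exponential sequence together with Cartan B gives $\Pic(V) \cong H^2(V,\mathbb{Z}) = 0$. Over such $V$, every line bundle pulled back from $U$ trivializes on $X_V$, so the distinction between $|L/V|$ and $|L|$ on $X_V$ disappears. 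By Cartan A the coherent sheaf $\pi_*(\mathcal{L}|_S)$ is generated on $V$ as an $\mathcal{O}_V$-module by finitely many global sections $s_1, \ldots, s_p \in H^0(S, \mathcal{L}|_S)$; applying $(3)$ to each $\operatorname{div}(s_j)$ and trivializing the pullback twists on $X_V$ produces lifts $t_j \in H^0(X_V, \mathcal{L})$ which, after a consistent choice of trivialization (possible since $\Pic(V) = 0$), assemble into a lift of every local section of $\pi_*(\mathcal{L}|_S)$ over $V$, establishing $(1)$ near $u$.
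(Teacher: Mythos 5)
Your proof is correct and follows essentially the same route as the paper's: both directions $(1)\Leftrightarrow(2)$ rest on $U$ being Stein (Cartan A/B), $(1)\Rightarrow(3)$ is the same projection-formula twist by $\pi^*\mathcal O_U(D_U)$, and $(3)\Rightarrow(1)$ is the same localization-on-$U$ argument that kills the Cartier twists coming from the base. The only cosmetic difference is in the last step, where you pass to a contractible Stein neighborhood with $\Pic(V)=0$, while the paper merely shrinks $V$ so that the finitely many Cartier divisors $C^i$ attached to the chosen generators become principal — a slightly more economical version of the same idea.
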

\begin{proof}
(1) implies (2). Since $U$ is Stein, $H^0(U,\pi _*\mathcal L)\to H^0(U,\pi _*(\mathcal L|_S))$ is surjective, and hence so is
 $H^0(X,\mathcal L)\to H^0(S,\mathcal L|_S)$.
 
 (2) implies (1). Since $U$ is Stein, $\pi _*(\mathcal L|_S)$ is globally generated by sections of $H^0(U,\pi _*(\mathcal L|_S))\cong H^0(S,\mathcal L|_S)$. By assumption these sections lift to $H^0(U,\pi _*\mathcal L)\cong H^0(X,\mathcal L)$.
 Thus $\pi _*\mathcal L\to \pi _*(\mathcal L|_S)$ is surjective.

 (3) implies (1). Since $U$ is Stein, $\pi _*(\mathcal L|_S)$ is globally generated by sections of $H^0(U,\pi _*(\mathcal L|_S))\cong H^0(S,\mathcal L|_S)$. Fix $u\in U$ and $g^1_u,\ldots , g^k_u\in H^0(S,\mathcal L|_S)$
 local generators of $\pi _*(\mathcal L|_S)$ at $u$. If $G^1_u,\ldots ,G^k_u\in |L_{|S}|$ are the corresponding  divisors, then by assumption there are divisors $G^i\in |L+\pi ^*C^i|$ such that $G^i|_S=G^i_u$ and $C^i$ is Cartier on $U$.
 Since the $C^i$ are Cartier, there is an open subset $u\in V\subset U$ such that $C^i|_V$ is principal, and hence $\pi ^*C^i|_{X_V}\sim 0$, where $X_V:=\pi^{-1}V$. But then $G^i|_{X_V}\sim L|_{X_V}$ and $(G^i|_{X_V})|_{S_V}=G^i_u|_{S_V}$, where $S_V=S\cap X_V$. This means that \[g^1_u|_{S_V},\ldots , g^k_u|_{S_V}\in {\rm im}\left( H^0(L|_{X_V})\to H^0(L|_{S_V})\right).\]
 Since  $g^1_u,\ldots , g^k_u\in H^0(S,\mathcal L|_S)$ are 
 local generators of $\pi _*(\mathcal L|_S)$ at $u$, then $\pi _*\mathcal L\to \pi _*(\mathcal L|_S)$ is surjective at $u$. Since $u\in U$ is arbitrary, (1) holds.
 
 (1) implies (3). It is clear that $|L/U|_S\subset |L_{|S}/U|$. Suppose that $G_S\in |L_{|S}/U|$, then we must show that $G_S=G|_S$ for some $G\in |L/U|$. 
 By definition, there is a Cartier divisor $C$ on $U$ such that $G_S\sim L_{|S}+(\pi ^* C)_{|S}$. 
 By our assumption, $\pi _*\mathcal L\to \pi _*(\mathcal L|_S)$ is surjective, and hence so is 
 $\pi _*\mathcal L(\pi ^* C)\to \pi _*(\mathcal L(\pi ^* C)|_S)$ (here we use the projection formula and the fact that $\OO _U(C)$ is invertible).
 Since $U$ is Stein, this induces a surjection on  global sections and hence
 $H^0(X,\mathcal L(\pi ^* C))\to H^0(S, \mathcal L(\pi ^* C)|_S)$ is surjective, i.e. $G_S=G|_S$ for some $G\in |\mathcal L(\pi ^* C)|$. Thus $G\sim _U  L$ concluding the proof.
 
\end{proof}
\begin{lemma}\label{l-gu} Let $\pi :X\to U$ be a projective morphism from a smooth complex variety to a Stein space, $\mathcal L=\OO _X(L)$ a line bundle on $X$. For any point $x\in X$, we have that $\Bs(|L|)$ does not contain $x$ if and only if $\Bs(|L/U|)$ does not contain $x$, if and only if $\pi _* \mathcal L\to \pi _*(\mathcal L/\mathfrak m _x)$ is surjective.
   
\end{lemma}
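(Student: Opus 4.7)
The plan is to prove the cycle of implications $(a) \Rightarrow (b) \Rightarrow (c) \Rightarrow (a)$, where (a) denotes $x \notin \Bs(|L|)$, (b) denotes $x \notin \Bs(|L/U|)$, and (c) denotes the surjectivity of $\pi_*\mathcal{L} \to \pi_*(\mathcal{L}/\mathfrak{m}_x)$. The first implication is immediate because $|L| \subseteq |L/U|$, so any divisor in $|L|$ avoiding $x$ already witnesses (b).

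For $(b) \Rightarrow (c)$, I would adapt the Stein-shrinking trick from the proof of Lemma \ref{l-lu}. Given $G \in |L/U|$ with $x \notin \Supp(G)$, by definition there is a Cartier divisor $C$ on $U$ with $G \sim L + \pi^*C$. After shrinking to a Stein open neighborhood $V$ of $\pi(x)$ on which $C|_V$ becomes principal, we obtain $\pi^*C|_{X_V} \sim 0$, hence $G|_{X_V} \sim L|_{X_V}$. Since $x \notin \Supp(G)$, this linear equivalence produces a section of $\mathcal{L}|_{X_V}$, that is, an element of $(\pi_*\mathcal{L})(V) = H^0(X_V, \mathcal{L})$, not vanishing at $x$. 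This section maps to a nonzero element of the stalk of the skyscraper sheaf $\pi_*(\mathcal{L}/\mathfrak{m}_x)$ at $\pi(x)$, which (being one-dimensional over $\mathbb{C}$) is a generator; surjectivity of sheaves elsewhere is automatic since the target vanishes away from $\pi(x)$.

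For $(c) \Rightarrow (a)$, I would push forward the short exact sequence $0 \to \mathcal{L} \otimes \mathcal{I}_x \to \mathcal{L} \to \mathcal{L}/\mathfrak{m}_x \to 0$ on $X$ to get that the kernel of $\pi_*\mathcal{L} \to \pi_*(\mathcal{L}/\mathfrak{m}_x)$ is the coherent sheaf $\pi_*(\mathcal{L} \otimes \mathcal{I}_x)$ on the Stein base $U$. Cartan's Theorem B yields $H^1(U, \pi_*(\mathcal{L} \otimes \mathcal{I}_x)) = 0$, so the hypothesized surjectivity of sheaves upgrades to a surjection on global sections $H^0(X, \mathcal{L}) = H^0(U, \pi_*\mathcal{L}) \to H^0(U, \pi_*(\mathcal{L}/\mathfrak{m}_x)) = \mathbb{C}$; any preimage of a nonzero element is a global section of $\mathcal{L}$ not vanishing at $x$, so $x \notin \Bs(|L|)$. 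The only delicate point in the whole argument is the passage from relative linear equivalence $\sim_U$ (where arbitrary pullback Cartier twists from $U$ are allowed) to honest linear equivalence locally on $U$, which is precisely resolved by the Stein-shrinking step; no ingredient beyond Cartan B and the mechanism of Lemma \ref{l-lu} is needed.
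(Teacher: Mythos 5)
Your proposal is correct and follows essentially the same route as the paper: the trivial inclusion $|L|\subset |L/U|$ for the first implication, the Stein-shrinking of a neighborhood of $\pi(x)$ to convert $\sim_U$ into $\sim$ for the second, and Cartan's Theorem B (surjectivity of coherent sheaf maps on a Stein base passing to global sections) for the third. The only difference is cosmetic — you make explicit the identification of the kernel as $\pi_*(\mathcal L\otimes \mathcal I_x)$ and the skyscraper structure of the target, which the paper leaves implicit.
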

\begin{proof}
Since $|L|\subset |L/U|$, it is clear that if $\Bs(|L|)$ does not contain $x$, then $\Bs(|L/U|)$ does not contain $x$.

Suppose now that $\Bs(|L/U|)$ does not contain $x$. So there is a divisor $0\<G\in |L/U|$ such that $x\not\in {\rm Supp}(G)$.
Since $G\sim L+\pi ^*C$, where $C$ is a Cartier divisor on $U$, we may find an open subset $\pi (x)\in V\subset U$ such that $C|_V$ is a principal divisor, i.e. $C|_V\sim 0$. But then $G|_{X_V}\sim L|_{X_V}$ and it follows that $\mcL|_{X_V}$ is globally generated at $x$.  
$ \mathcal L\to \mathcal L/\mathfrak m _x\cong \mathbb C _x$ is surjective, and hence so is $\pi _* \mathcal L\to \pi _*(\mathcal L/\mathfrak m _x)$, since $U$ is Stein. 

Suppose now that $\pi _* \mathcal L\to \pi _*(\mathcal L/\mathfrak m _x)$ is surjective.
Since $U$ is Stein, \[H^0(X, \mathcal L)\cong H^0(U, \pi _* \mathcal L)\to H^0(U, \pi _*(\mathcal L/\mathfrak m _x))\cong H^0\left(\{x\}, \mathcal L/\mathfrak m _x\right)\cong \mbC_x\] is surjective, and hence $\Bs(|L|)$ does not contain $x$.
\end{proof}

\begin{lemma}\label{l-2.28}
Let $\pi :X\to U$ be a projective morphism from a smooth variety to a Stein space and let $D_1,\ldots , D_\ell \in {\rm Div}_\mbQ(X)$
be such that $|D_i|_\mbQ \ne \emptyset$ for each $i$. Let $V \subset {\rm Div}_\R(X)$ be the subspace spanned by the
components of $D_1,\ldots , D_\ell$, and let $\mathcal P \subset V$ be the convex hull of $D_1,\ldots , D_\ell$. Assume
that the ring \[R(X; D_1,\ldots , D_\ell):=\bigoplus_{(m_1,\ldots, m_\ell)\in \mathbb N ^k}H^0\left(X, \OO _X\left(\sum m_iD_i\right)\right)\] is finitely generated. Then:
\begin{enumerate}
    \item ${\rm Fix}$ extends to a rational piecewise affine function on $\mathcal P$;
    \item there exists a positive integer $k$ such that for every $D \in \mathcal P$ and every $m \in \mathbb N$,
if $\frac m
k D \in {\rm Div}(X)$, then $\mathbf{Fix}(D) = \frac 1
m {\rm Fix} |mD|$.
\end{enumerate} 
\end{lemma}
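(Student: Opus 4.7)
My plan is to follow the linear-programming analysis of \cite{CL10} adapted to the present analytic setting. The enabling observation is Lemma \ref{l-mD}: since $U$ is Stein, ${\rm Fix}|D|={\rm Fix}|D/U|$ for every $\mbQ$-divisor $D$, so the fixed-part function depends only on global sections on $X$ and it is irrelevant whether we compute it absolutely or relatively over $U$. After replacing each $D_i$ by $k_0D_i$ for a suitable $k_0>0$, I may assume each $D_i$ is Cartier. Finite generation of $R(X;D_1,\ldots,D_\ell)$ then supplies multi-homogeneous generators $\sigma_1,\ldots,\sigma_N$ of multi-degrees $d_1,\ldots,d_N\in\mbN^\ell$; put $F_j:=\operatorname{div}(\sigma_j)\ge 0$. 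Any element of $H^0(X,\mcO_X(\sum m_iD_i))$ is a $\mbC$-linear combination of monomials $\prod_j\sigma_j^{a_j}$ with $\sum_j a_jd_j=m$, so for every prime divisor $P\in\bigcup_j{\rm Supp}(F_j)$,
\[
{\rm coeff}_P\!\left({\rm Fix}\Big|\sum_i m_iD_i\Big|\right)=\min\Bigl\{\sum_j a_j\,{\rm coeff}_P(F_j)\;\Big|\;a\in\mbN^N,\;\sum_j a_jd_j=m\Bigr\}.
\]

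Relaxing $a\in\mbN^N$ to $b\in\mbQ_{\ge 0}^N$ and rescaling by $1/m$ converts each such coefficient into the value of a parametric linear program
\[
\phi_P(c):=\min\Bigl\{\sum_j b_j\,{\rm coeff}_P(F_j)\;\Big|\;b\in\mbQ_{\ge 0}^N,\;\sum_j b_jd_j=c\Bigr\},\qquad c\in\mcP,
\]
with rational constraint data. By standard parametric LP theory each $\phi_P$ is a rational piecewise-affine convex function on the image polyhedron of $b\mapsto\sum b_jd_j$, with only finitely many rational linearity chambers; hence $\phi:=\sum_P\phi_P\cdot P$ is a rational piecewise-affine divisor-valued function of $c$. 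Because the feasible polyhedron has rational vertices, at every rational $c\in\mcP$ the optimum of $\phi_P$ is attained at a rational solution $b^\ast$; for $m$ sufficiently divisible, $mb^\ast\in\mbN^N$ and consequently $\frac{1}{m}{\rm Fix}|mD|=\phi(c)$. Passing to the $\liminf$ in the definition of $\mathbf{Fix}$ yields $\mathbf{Fix}(D)=\phi(D)$, proving (1). For (2), since the family of LPs indexed by the finitely many primes $P$ and the finitely many linearity chambers of $\mcP$ has all vertex data rational, a single $k$, obtained as a common multiple of the denominators of all vertex coordinates together with $k_0$, suffices: whenever $\tfrac{m}{k}D\in\Div(X)$, the optimal $b^\ast$ automatically lies in $\tfrac{1}{m}\mbN^N$, giving $\mathbf{Fix}(D)=\frac{1}{m}{\rm Fix}|mD|$.

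The main obstacle I anticipate is the identification $\mathbf{Fix}(D)=\phi(D)$: the $\liminf$ defining $\mathbf{Fix}$ must in fact be attained uniformly in $D\in\mcP$ after bounded rescaling, and this is precisely where the finite generation hypothesis enters, through the representation of global sections as $\mbC$-linear combinations of monomials in the $\sigma_j$. Once this reduction to the rational LP is in place, the piecewise-affine structure of $\phi$, the rationality of its slopes, and the existence of the uniform $k$ in part (2) are formal consequences of rational polyhedral geometry.
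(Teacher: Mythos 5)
Your argument is correct and is essentially the proof the paper invokes: the paper's proof of this lemma is simply a citation to \cite[Lemma 2.28]{CL10}, and your monomial/parametric-LP analysis (finitely many multigraded generators, the min-over-monomials formula for the coefficient of the fixed part, and rational polyhedral geometry yielding piecewise affineness and a uniform denominator $k$) is precisely how that result is established there. The only caveat worth recording is that in the relative Stein setting the graded pieces are spanned by monomials with coefficients in $H^0(X,\mathcal O_X)$ rather than $\mathbb C$; this is harmless, since such coefficients have effective divisors and therefore cannot lower the multiplicity in your min formula.
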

\begin{proof}
See the proof of \cite[Lemma 2.28]{CL10}.
\end{proof}

\subsection{K\"ahler classes}\label{subs:analytic-classes}

In this section we recall a well known to the experts, characterizations of  K\"ahler classes.
Since we were unable to find complete references in the literature, we include a detailed proof  below.

We consider the following set-up. Let $X$ be a compact, normal complex space. The objects we will work with in this subsection are introduced below.
\smallskip

\begin{definition} Let
$\displaystyle (A_i)_{i\in I}$ be an open finite covering of $X$ such that each subset $A_i$ is a local analytic subset of some open subset $\Omega_i\subset \CC^{N_i}$. The space of forms of type $(p,q)$, denoted by $\cC^{k}_{p,q}(X)$, is defined by local restrictions of forms of type $(p,q)$ which are $k$ times differentiable on the sets $\Omega_i$ above. Here $k$ is a positive integer or $\infty$. The definition of the space of currents on $X$ is then
completely parallel to the smooth case.
\end{definition}

\noindent A Hermitian metric $g$ on $X$ corresponds to a smooth, positive definite $(1,1)$-form. By this we mean that locally $g|_{A_i}$ is given by the restriction of a 
Hermitian form on $\Omega_i$. For a more complete presentation we refer the reader to the first part of the article \cite{Dem85}.
\medskip

\noindent Let $\alpha\in \cC^{\infty}_{1,1}(X)$ be a smooth $(1,1)$-form on $X$.
We assume that $\alpha$ is $\partial$ and $\dbr$ closed, such that moreover the following properties hold true.

\begin{enumerate}
\smallskip

\item[(1)] The class $\{\alpha\}$ is nef, i.e, we have $(f_\ep)_{\ep> 0}\subset \cC^{\infty}(X)$ such that
$$\alpha+ i\ddbar f_\ep\geq -\ep g$$
on $X$.
\smallskip

\item[(2)] The class $\{\alpha\}$ is big, i.e. there exists
a function $\tau\in L^1(X)$ such that 
$$\alpha+ i\ddbar \tau\geq \ep_0 g$$
 as currents on $X$, where $\ep_0> 0$ is a positive constant.
\smallskip

\item[(3)] Let $V\subset X$ be a positive dimensional (compact) reduced analytic subset. Then we have 
$$\int_{V_{\rm reg}}\alpha^{\dim(V)}> 0.$$
\end{enumerate}
\smallskip

\noindent Then we show that the following holds true.

\begin{theorem}\label{thm:nef-big-to-kahler}
Let $X$ be a compact analytic normal variety and $\alpha\in  \cC^{\infty}_{1,1}(X)$
such that $\partial\alpha= 0, \dbr\alpha= 0$. We assume moreover that the
properties (1)-(3) above are satisfied. 
Then $\alpha$ is a K\"ahler class, i.e. there exists a function $\varphi\in \cC^\infty(X)$ such that
\begin{equation}\label{eq1}
\alpha+ i\ddbar \varphi\geq \ep_1 g
\end{equation}
on $X$, where $\ep_1>0$.
\end{theorem}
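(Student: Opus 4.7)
The strategy is the numerical criterion for K\"ahler classes due to Demailly and Paun, adapted to the compact normal analytic setting. Hypothesis (2) supplies a K\"ahler current $T_0 = \alpha + i\partial\bar\partial\tau \geq \epsilon_0 g$ in the class $\{\alpha\}$. By Demailly's regularization theorem -- valid on normal compact analytic varieties via local embeddings into smooth ambient open sets combined with a gluing and partition of unity argument -- one may replace $T_0$ by a sequence $T_k \in \{\alpha\}$ with analytic singularities supported on proper analytic subsets $Z_k \subsetneq X$ and satisfying $T_k \geq -\epsilon_k g$ with $\epsilon_k \searrow 0$. A convex combination with $T_0$ then produces a K\"ahler current $T \in \{\alpha\}$ that is smooth and strictly positive outside a proper analytic subset $Z \subsetneq X$.

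The core of the argument is an induction on $n = \dim X$ used to shrink $Z$. Let $V$ be an irreducible component of $Z$ and pass to its normalization $\tilde V \to V$, a compact normal analytic variety of dimension less than $n$. The restriction $\{\alpha\}|_{\tilde V}$ satisfies the three hypotheses of the theorem: (1) and (3) pass to $\tilde V$ directly (for (3), every irreducible analytic subset $W \subset \tilde V$ maps to an analytic subset of $X$ on which $\alpha$ has positive top self-intersection, and the projection formula transfers this to $W$), while (2) is obtained from a Siu-type decomposition of $T|_V$, whose analytic singularities along $V$ give rise to a K\"ahler current representing $\{\alpha\}|_{\tilde V}$. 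By the induction hypothesis, $\{\alpha\}|_{\tilde V}$ is K\"ahler, hence admits a smooth K\"ahler representative.

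Having produced a smooth K\"ahler form on each component of $Z$, a Monge--Amp\`ere mass concentration combined with a Richberg-type gluing (in the spirit of the original Demailly--Paun proof) yields a new K\"ahler current $T' \in \{\alpha\}$ whose singular locus $Z'$ is strictly contained in $Z$. Since the analytic Zariski topology on $X$ is Noetherian, iterating this procedure finitely many times produces a K\"ahler current in $\{\alpha\}$ with empty singular locus, i.e., a smooth form $\omega \in \{\alpha\}$ with $\omega \geq \epsilon_1 g$ for some $\epsilon_1 > 0$; writing $\omega = \alpha + i\partial\bar\partial\varphi$ then gives (\ref{eq1}). The main obstacle is the faithful execution of the regularization and Monge--Amp\`ere mass concentration on the singular variety $X$: these tools were developed originally in the smooth K\"ahler setting, and the nontrivial technical point is to verify that Demailly's approximation procedure, the associated Lelong number estimates, and the solvability of the relevant Monge--Amp\`ere equations all extend to compact normal analytic spaces by means of the local embeddings into smooth ambient open sets furnished by Definition \ref{def:kahler-variety}.
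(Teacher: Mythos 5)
Your overall strategy -- regularize the K\"ahler current from hypothesis (2), then induct on dimension to remove the singular locus -- is the right circle of ideas, but two of your key steps do not work as stated, and they are precisely the points where the paper's proof has to do something different.

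First, the way you propagate hypothesis (2) to a component $V$ of the singular locus $Z$ is flawed: the K\"ahler current $T$ has its analytic singularities exactly along $Z\supset V$, so ``$T|_V$'' is not defined, and a Siu decomposition of $T$ does not rescue this ($V$ need not be a divisor, and the residual part of the decomposition still has positive Lelong numbers along $V$). The paper's Lemma \ref{ind} instead derives bigness of $\alpha|_V$ from nefness plus hypothesis (3) by applying \cite[Theorem 0.5]{DP04} on a K\"ahler desingularization of $V$; this requires first observing that the existence of the K\"ahler current forces $X$ (hence $V$) to lie in Fujiki's class $\mathcal C$ by \cite[Theorem 0.7]{DP04}, so that such a K\"ahler resolution exists. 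Your proposal never establishes that the subvarieties in question admit K\"ahler resolutions, and without that neither the Demailly--Paun criterion nor your ``projection formula'' verification of (3) gets off the ground. Relatedly, your plan to apply the inductive hypothesis to the normalization $\tilde V$ and then ``have a smooth K\"ahler form on $V$'' ignores that a potential on $\tilde V$ need not descend to a non-normal $V$; the paper's Claim \ref{senilita} circumvents this by modifying $X$ itself (a tower of blow-ups making the strict transform of $V$ smooth) rather than just normalizing $V$, and by invoking Theorem \ref{wpsh}/Corollary \ref{coro1} to descend quasi-psh functions.

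Second, the Monge--Amp\`ere mass-concentration step is both unjustified and unnecessary. You correctly flag that solving the relevant Monge--Amp\`ere equations on a compact normal analytic space is the ``main obstacle,'' but you offer no way around it, and in fact the paper never solves a Monge--Amp\`ere equation. What replaces mass concentration is an extension-plus-gluing argument in the style of \cite{Dem90} and \cite{CT15,CT16}: the smooth potential produced on (a modification of) $V$ by induction is extended to a strictly positive potential on an open neighborhood $U\supset V$ in $X$ by correcting the Hessian in the normal directions with a partition of unity, and is then glued to $\tau$ by the regularized maximum, which works because $\tau$ is smooth off $Z$ and equals $-\infty$ on $V$. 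This neighborhood-extension step is the technical heart of the proof and is entirely absent from your proposal; the ``Richberg-type gluing'' you mention cannot substitute for it, since before any gluing one must first have a potential defined and uniformly positive on a full neighborhood of $V$ in $X$, not merely on $V$.
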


\noindent In particular, $X$ is a K\"ahler space in the sense adopted in
\cite{BG13} provided that $\alpha$ is locally in the image of the $\ddbar$ operator. But in any case we can construct the function $\varphi$ with the properties of \eqref{eq1}.
\smallskip

\begin{remark} In case $X$ admits an embedding into a K\"ahler manifold (i.e. non-singular, but not assumed to be compact or even complete) Theorem \eqref{thm:nef-big-to-kahler}
is a direct consequence of the main result of the article \cite{CT16} by Collins-Tosatti. This hypothesis is not necessarily satisfied in our case, but nevertheless many of the techniques of 
\cite{CT16} will be used in the next proof.
\end{remark}

\subsubsection{Psh functions on complex spaces} We recall here a few basic facts concerning psh functions defined on normal complex spaces. Our main reference is the first section of the article \cite{Dem85}.
\smallskip

\noindent To start with, a \emph{quasi-psh} function $\phi:X\to [-\infty, \infty[$  
is {by definition} given by the restriction to each $A_i$  
of a quasi-psh function on $\Omega_i$, for all $i\in I$.  A locally integrable function 
$\psi:X\to [-\infty, \infty[$ is called \emph{weakly quasi-psh} if it is locally bounded from above and such that 
\begin{equation}\label{eq8}
i\ddbar \psi\geq - Cg
\end{equation}
for some positive real constant $C> 0$. Note that the local boundedness hypothesis is automatic in the non-singular case, but this is no longer true in our actual context.
\smallskip

\noindent We quote next a result which plays a crucial role in what follows. Its proof 
(cf. \cite{Dem85}, Theorem 1.7) relies on two fundamental facts: the desingularization theorem of Hironaka and the characterisation of psh functions by restrictions to holomorphic disks, due to Forna\!ess-Narasimhan. 

\begin{theorem}\label{wpsh}\cite[Theorem 1.7]{Dem85} Let $\psi$ be a weakly quasi-psh function defined on a normal compact complex space $X$. Then the function 
\begin{equation}\label{eq9}
\psi^\star(x):= \lim\sup_{y\to x}\psi(y)
\end{equation}
is quasi-psh on $X$. Moreover, if $\displaystyle i\ddbar \psi\geq - C\gamma$ for some smooth form 
$\gamma$ on $X$ then the Hessian of the function $\psi^\star$ in \eqref{eq9} has the same property. 
\end{theorem}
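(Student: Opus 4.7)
The plan is to reduce to the smooth manifold case via desingularization and then transfer the result back to $X$. By Hironaka's theorem, take a proper bimeromorphic resolution $\pi:\widetilde X\to X$ with $\widetilde X$ smooth and exceptional locus $E\subset\widetilde X$. On $\widetilde X\setminus E$ the pullback $\widetilde\psi:=\pi^*\psi$ is well-defined, locally bounded above, and satisfies $i\ddbar\widetilde\psi\geq -C\pi^*g$ (and similarly $\geq -C\pi^*\gamma$ in the ``moreover'' part). Since $\widetilde X$ is a complex manifold and $E$ has codimension $\geq 1$, standard extension results for quasi-psh functions on complex manifolds yield that the upper semicontinuous regularization $\widetilde\psi^\star$ extends uniquely across $E$ to a quasi-psh function on all of $\widetilde X$ satisfying the same differential inequality.

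Next I would show that $\widetilde\psi^\star$ descends to a function on $X$. Since $\pi$ is proper and $X$ is normal and compact, the fibers $F=\pi^{-1}(x)$ are compact and connected (via Stein factorization / Zariski's main theorem). The restriction $\widetilde\psi^\star|_F$ satisfies $i\ddbar(\widetilde\psi^\star|_F)\geq 0$, i.e. is plurisubharmonic, and being bounded above on a compact connected analytic set it must be constant. Consequently $\widetilde\psi^\star$ is constant along fibers of $\pi$ and descends to a well-defined upper semicontinuous function $\Phi$ on $X$.

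To identify $\Phi$ with $\psi^\star$ I would argue as follows. Off the analytically meager set $\pi(E)$, $\Phi$ equals $\psi$ almost everywhere, so $\Phi\geq\psi^\star$ by the defining property of the usc envelope. Conversely, fix $x_0\in X$ and choose any $z\in\pi^{-1}(x_0)$; since $y\to z$ in $\widetilde X$ forces $\pi(y)\to x_0$, we have $\widetilde\psi^\star(z)=\limsup_{y\to z}\widetilde\psi(y)\leq\limsup_{x\to x_0}\psi(x)=\psi^\star(x_0)$, giving $\Phi(x_0)\leq\psi^\star(x_0)$. Hence $\Phi=\psi^\star$ everywhere, and in particular $\psi^\star$ is usc with the expected local bound.

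Finally I would invoke the Forna\!ess--Narasimhan restriction principle: a usc function on a normal complex space is quasi-psh iff its restriction to every holomorphic disk $f:\Delta\to X$ is quasi-subharmonic. Using normality of $X$ together with properness of $\pi$, every such $f$ lifts (after possibly replacing $\Delta$ by a normalized base change which is again a disk) to a holomorphic $\widetilde f:\Delta\to\widetilde X$ with $\pi\circ\widetilde f=f$; then $\psi^\star\circ f=\widetilde\psi^\star\circ\widetilde f$ inherits quasi-subharmonicity from $\widetilde\psi^\star$. The ``moreover'' assertion follows because every estimate preserves the pullback $-C\pi^*\gamma$, which is the pullback of $-C\gamma$ from $X$. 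I expect the main obstacle to be this final lifting/compatibility step: one must verify carefully that normality of $X$ guarantees the existence of such a lift even for degenerate disks (notably when $f(\Delta)\subset\pi(E)$), and ensure that the pointwise identification $\Phi=\psi^\star$ is robust enough that subharmonicity transfers along $f$ rather than only along its strict transform.
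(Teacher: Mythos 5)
The paper does not actually prove this statement: it is quoted verbatim from Demailly (\cite{Dem85}, Theorem 1.7), and the only indication given of the proof is that it ``relies on the desingularization theorem of Hironaka and the characterisation of psh functions by restrictions to holomorphic disks, due to Fornaess--Narasimhan'' --- which is precisely the strategy you follow, so your route is the intended one. One step of your identification $\Phi=\psi^\star$ is shaky as written, though: the inequality $\Phi\geq\psi^\star$ does not follow from ``$\Phi$ equals $\psi$ almost everywhere,'' because $\psi^\star$ is a pointwise $\limsup$ of the actual values of $\psi$ and an a.e.\ statement cannot control a $\limsup$ that may be attained along a null set. The robust argument uses properness: choose $x_n\to x_0$ with $\psi(x_n)\to\psi^\star(x_0)$, lift to $z_n\in\pi^{-1}(x_n)$, pass to a convergent subsequence $z_n\to z\in\pi^{-1}(x_0)$, and conclude $\Phi(x_0)=\widetilde\psi^{\,\star}(z)\geq\limsup\widetilde\psi(z_n)=\psi^\star(x_0)$. (Implicitly one must read the hypothesis as saying that $\psi$ coincides on $X_{\rm reg}$ with the genuine psh representative of its $i\ddbar$-class; for a pathological $L^1$ representative the pointwise $\limsup$ regularization would not even be well behaved.)

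The gap you flag at the end is real and cannot be closed by strict transforms alone. For a non-constant disk $f:\Delta\to X$ with $f(\Delta)\subset\pi(E)$ (for instance a disk inside $X_{\rm sing}$), every fibre of $\Delta\times_X\widetilde X\to\Delta$ is positive-dimensional, there is no strict transform of $f(\Delta)$, and no lift $\widetilde f$ with $\pi\circ\widetilde f=f$ need exist. The standard repair is to use a multisection rather than a section: the image germ $C=f(\Delta)$ satisfies that $\pi^{-1}(C)\to C$ is proper and surjective, so one may choose a curve germ $\Gamma\subset\pi^{-1}(C)$ not contained in a fibre, hence finite onto a neighbourhood in $C$; normalizing and reparametrizing gives a holomorphic $\widetilde f:\Delta\to\widetilde X$ with $\pi\circ\widetilde f(z)=f(z^k)$ for some $k\geq 1$. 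Then $(\psi^\star\circ f)(z^k)=\widetilde\psi^{\,\star}\circ\widetilde f(z)$ is quasi-subharmonic with the correct bound, and quasi-subharmonicity descends through the branched cover $z\mapsto z^k$ (check the sub-mean-value inequality at $0$ directly and use a local branch of $z^{1/k}$ elsewhere). With these two repairs your argument is exactly Demailly's.
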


\noindent  The following statement is a direct consequence of Theorem \ref{wpsh}.
\begin{corollary}\label{coro1} Let $p: Y\to X$ be a modification, where $X$ and $Y$ are normal complex spaces. We assume that $\displaystyle p^\star\alpha+ i\ddbar \psi_Y\geq Cp^\star g$, where 
$C$ is a real number. Then there exist a quasi-psh function $\psi_X:X\to [-\infty, \infty[$  such that 
\begin{equation}\label{eq10}
\alpha+ i\ddbar \psi_X\geq Cg
\end{equation} 
in the sense of currents on $X$. 
\end{corollary}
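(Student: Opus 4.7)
The strategy is to push $\psi_Y$ forward to a function on $X$, establish the Hessian bound there in the sense of currents, and then invoke Theorem~\ref{wpsh} to obtain a quasi-psh representative. Since $p:Y\to X$ is a modification of normal complex spaces, there exists a proper closed analytic subset $Z\subsetneq X$ such that $p$ restricts to a biholomorphism $Y\setminus p^{-1}(Z)\to X\setminus Z$. I would define $\widetilde{\psi}:X\to[-\infty,\infty[$ by $\widetilde{\psi}:=\psi_Y\circ p^{-1}$ on $X\setminus Z$ and $\widetilde{\psi}:=-\infty$ on $Z$; since $Z$ has Lebesgue measure zero, this choice does not affect $\widetilde{\psi}$ as a distribution on $X$.

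Next I would verify that $\widetilde{\psi}$ is locally bounded above, which is where properness of $p$ is essential. The function $\psi_Y$, being quasi-psh on $Y$, is locally bounded above; since $p$ is proper, $p^{-1}(K)$ is compact in $Y$ for every compact $K\subset X$, so $\psi_Y$ is bounded above on $p^{-1}(K)$ and therefore $\widetilde{\psi}$ is bounded above on $K$. In particular $\widetilde{\psi}\in L^1_{\mathrm{loc}}(X)$.

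The core computation is to deduce $\alpha+i\ddbar\widetilde{\psi}\geq Cg$ as currents on $X$ from the assumed inequality $p^*\alpha+i\ddbar\psi_Y\geq Cp^*g$ on $Y$. I would apply the direct image $p_*$ of currents: it preserves positivity (since $p$ is proper), commutes with $i\ddbar$ (by duality with $p^*$ on test forms), and satisfies $p_*\circ p^*=\mathrm{id}$ on smooth forms because $p$ is a proper bimeromorphic map between normal complex varieties of the same dimension. These three properties together give $\alpha+i\ddbar(p_*\psi_Y)\geq Cg$ on $X$. Pairing against test $(n,n)$-forms shows that the distribution $p_*\psi_Y$ coincides with $\widetilde{\psi}$ (they agree on $X\setminus Z$, and $Z$ is negligible), so the inequality holds for $\widetilde{\psi}$ itself.

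With these ingredients, $\widetilde{\psi}$ is a weakly quasi-psh function on $X$ in the sense of the definition preceding Theorem~\ref{wpsh}. The ``moreover'' clause of that theorem, applied with the smooth form $\gamma:=\alpha-Cg$, shows that the upper semicontinuous regularization $\psi_X(x):=\limsup_{y\to x}\widetilde{\psi}(y)$ is quasi-psh on $X$ and inherits the same Hessian bound, yielding $\alpha+i\ddbar\psi_X\geq Cg$ on $X$. The main point requiring care is the direct-image calculation of the third paragraph, in particular the identity $p_*p^*\alpha=\alpha$ for smooth $(1,1)$-forms and the preservation of positivity under $p_*$; both are standard for proper bimeromorphic morphisms of normal varieties, but some attention is needed to handle the behavior across the exceptional locus.
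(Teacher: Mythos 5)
Your proposal is correct and follows essentially the same route as the paper, which simply takes the direct image of $\psi_Y$, observes that it is automatically locally bounded from above (by properness of $p$), and applies the \emph{usc} regularisation of Theorem \ref{wpsh}. You have merely filled in the standard details (positivity and $i\ddbar$-commutation of $p_*$, and $p_*p^*=\mathrm{id}$ for the degree-one map $p$) that the paper leaves implicit.
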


\noindent Indeed, $\psi_X$ is obtained by taking the direct image of $\psi_Y$ and then applying the \emph{usc} regularisation procedure \eqref{eq9}. We notice that the direct image of $\psi_Y$ is automatically locally bounded from above. 
\smallskip

\begin{proof}[Proof of Theorem \ref{thm:nef-big-to-kahler}] 
The first step consists in constructing a (new) function $\tau$ with similar properties as in (2) above such that its singularities are concentrated along an analytic subset of $X$. 

\noindent Let $\pi:\wh X\to X$ be a desingularization of $X$. The pull-back of (2) shows that we have 
\begin{equation}\label{eq2}
\Theta:= \pi^\star \alpha+ i\ddbar (\tau\circ \pi)\geq \ep_0 \pi^\star g.
\end{equation}
In other words $\Theta$ is a closed $(1,1)$-current on $\wh X$, greater than $\ep_0\pi^\star g$. This implies that $\{\Theta\}$ contains a so-called K\"ahler current, that is to say a representative which is greater than a positive multiple of a Hermitian metric on $\wh X$.

 By Demailly's regularisation theorem (cf. \cite{Dem92}, main result), we can replace $\Theta$ by a cohomologous current 
say $\Theta_1\in \{\Theta\}$ such that 
\begin{equation}\label{eq3}
\Theta_1:= \pi^\star \alpha+ i\ddbar \varphi_1\geq \ep_1 \wh g, \qquad \Theta_1|_{\wh X\setminus W}
\in \cC^\infty(\wh X\setminus W)\end{equation} 
on $\wh X$, where $W\subset \wh X$ is a proper analytic subset and $\wh g$ is a Hermitian metric on $\wh X$.
\smallskip

 The direct image $\pi_\star \Theta_1$ has the property (2) and it is non-singular on the complement of the analytic set $Y\subset X$
\begin{equation}\label{eq4}
Y:= X_{\rm sing}\cup \pi(W).
\end{equation}
In order to keep the notations as simple as possible, we assume from now on that $\tau$ in (2) is smooth in the complement of an analytic set $Y$.
\medskip

The next step consists in establishing the following simple statement, which will be used to argue by induction.

\begin{lemma}\label{ind}
Let $Z\subset X$ be any normal analytic subspace. Then the restriction $\alpha|_Z$ 
defines a $(1,1)$-form on $Z$ which satisfies properties {\rm (1)-(3)}.
\end{lemma}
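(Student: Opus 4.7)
The plan is to verify properties (1)--(3) for $\alpha|_Z$ on the (compact normal) subvariety $Z$. Properties (1) and (3) transfer immediately by restriction; the substance of the lemma is property (2).

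For (1), the family $(f_\ep|_Z)_{\ep > 0} \subset \cC^\infty(Z)$ inherits from $X$ the local inequality $\alpha|_Z + i\ddbar(f_\ep|_Z) \geq -\ep\, g|_Z$, obtained by restricting the corresponding inequality on $X$ to $Z$ inside each local chart $A_i \subset \Omega_i \subset \CC^{N_i}$. For (3), any positive-dimensional compact reduced analytic subvariety $V \subset Z$ is automatically such a subvariety of $X$, with the same intrinsic regular locus $V_{\rm reg}$, so $\int_{V_{\rm reg}}(\alpha|_Z)^{\dim V} = \int_{V_{\rm reg}}\alpha^{\dim V} > 0$ by hypothesis on $X$.

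Property (2) is the main obstacle, because the quasi-psh potential $\tau$ on $X$ may satisfy $\tau|_Z \equiv -\infty$, precluding a direct restriction. My plan is to work on a resolution: let $\pi_Z \colon \wt Z \to Z$ be a desingularization, and construct a proper modification $\rho \colon \wt W \to \wh X$ factoring through both $\wt Z$ and $\wh X$, giving a morphism $q \colon \wt W \to \wt Z$. Pulling back the K\"ahler current $\Theta_1 = \pi^\star \alpha + i\ddbar \varphi_1 \geq \ep_1 \wh g$ to $\wt W$ and applying Demailly's regularization theorem, one reduces the Lelong numbers of the pulled-back potential along the strict transform of $Z$ to arbitrarily small values, using property (3) applied to $Z$ and its subvarieties to guarantee that enough positivity survives the regularization loss. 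This yields a closed $(1,1)$-current on $\wt Z$ in the class $\pi_Z^\star(\alpha|_Z)$ dominating a positive multiple of a Hermitian metric on $\wt Z$, which Corollary \ref{coro1} then pushes down to $Z$, providing the desired $\tau_Z \in L^1(Z)$ with $\alpha|_Z + i\ddbar \tau_Z \geq \ep_Z\, g|_Z$. The main technical difficulty --- controlling Lelong numbers during regularization so that positivity is preserved upon restriction to the strict transform of $Z$ --- is precisely the analytic technique of \cite{CT16}, adapted here to the singular (not necessarily embeddable) setting in which the theorem is formulated.
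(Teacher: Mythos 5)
Your treatment of properties (1) and (3) is correct and matches the paper: both transfer to $Z$ by direct restriction. The problem is property (2), where your argument has a genuine gap. You propose to pull back the K\"ahler current $\Theta_1$ to a model dominating a resolution $\wt Z$ of $Z$ and then ``reduce the Lelong numbers of the pulled-back potential along the strict transform of $Z$ to arbitrarily small values'' by Demailly regularization. This step fails in general: regularization can only lower the Lelong numbers of a current down to the minimal Lelong numbers of its \emph{class}, and there is no reason these vanish generically along $Z$ --- the polar set of every K\"ahler current in the class may contain $Z$. Moreover, even when the generic Lelong number along $Z$ is zero, the restricted current is still singular on a proper analytic subset of $Z$ and does not by itself dominate a Hermitian metric; making ``enough positivity survives, using property (3) on $Z$ and its subvarieties'' precise is exactly the content of the Demailly--P\u aun/Collins--Tosatti theorems, and in the present singular, non-embeddable setting it is the content of Theorem \ref{thm:nef-big-to-kahler} itself, which is being proved by induction. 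Invoking that machinery inside the lemma is therefore circular: the lemma is precisely the ingredient that lets the induction start.

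The paper's proof of (2) avoids restriction of currents entirely. From the existence of the K\"ahler current $\Theta_1$ and \cite[Theorem 0.7]{DP04} one concludes that $\wh X$ (after a further modification) is K\"ahler, hence $X$ lies in Fujiki's class $\mcC$; by \cite[A, page 235]{Fuj83C} so does $Z$, so a suitable desingularization $p_Z:\wh Z\to Z$ is a compact K\"ahler \emph{manifold}. The class $p_Z^\star(\alpha|_Z)$ is nef and satisfies $\int_{\wh Z}p_Z^\star\alpha^{\dim Z}>0$ (this is property (3) for $V=Z$ alone), so \cite[Theorem 0.5]{DP04} --- a known result on smooth compact K\"ahler manifolds --- gives a K\"ahler current in that class, which descends to $Z$ by Corollary \ref{coro1}. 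You should replace your regularization argument with this Fujiki-class observation; as written, your proof of (2) does not go through.
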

\begin{proof}[Proof of Lemma \ref{ind}]
It is clear that $\alpha|_Z$ satisfies the properties (1) and (3). We show next that it is the case for (2) as well. By the existence of the current $\Theta_1$ as in \eqref{eq3} it follows that a further modification of the complex manifold $\wh X$ is K\"ahler, see \cite[Theorem 0.7]{DP04}. We assume that it is the case for $\wh X$ itself. In particular, $X$ is in Fujiki's class $\mcC$.
 \smallskip

Let $p_Z:\wh Z\to Z$ be a desingularization of $Z$. As we have seen that $X$ is in Fujiki's class $\mcC$, by \cite[A, page 235]{Fuj83C}, $Z$ is also in the class $\mcC$. Therefore passing to a higher desingularization we may assume that $\widehat{Z}$ is K\"ahler.

Then the class $p_Z^\star\{\alpha\}$ is nef, and $\displaystyle \int_{\wh Z}p_Z^\star\alpha^d> 0$. By \cite[Theorem 0.5]{DP04} it contains a K\"ahler current, whose direct image combined with Corollary \ref{coro1} allow us to conclude. 
\end{proof}
\medskip

 In this last step we 
remove the singularities of $\pi_\star\Theta_1$ by induction. For this, we are using the gluing techniques as in \cite{Dem90} (the reader may also consult \emph{Complex Analytic and Differential Geometry} by J.-P. Demailly, book available on the author's website, pages 411-414). We have to face two types of difficulties:

 $\bullet$ The space $Y$ may have several components.

 $\bullet$ Even if $Y$ is irreducible, it may not be normal (which will give us troubles, since we intend to use induction).

\smallskip

 In order to understand how it works, we first assume that $Y$ is irreducible and normal.
Lemma \ref{ind} plus the induction hypothesis show the existence of a function $\tau_Y\in \cC^\infty(Y)$ such that 
\begin{equation}\label{eq5}
\alpha|_Y+ i\ddbar \tau_Y\geq \ep_2g|_Y.
\end{equation} 

 By the proof of Theorem 4 in \cite{Dem90} we can assume that there exists an open subset $Y\subset U\subset X$ such that \eqref{eq5} holds true on $U$. That is to say, there exists an extension $\wt \tau_Y\in \cC^\infty(U)$
of $\tau_Y$ such that 
\begin{equation}\label{eq11}
\alpha|_U+ i\ddbar \wt \tau_Y\geq \ep_3g|_U
\end{equation} 
for some strictly positive $\ep_3> 0$. Intuitively the construction of $\wt \tau_Y$ is clear: thanks to \eqref{eq5} the eigenvalues of the Hessian of $\tau_Y$ in the tangent directions of $Y$ are suitable, we simply "correct" the normal directions as indicated in \emph{loc. cit.} In this process there is a loss of positivity involved (since one is using a partition of unity) but since $\ep_2>0$, we can afford that.
\smallskip

 Now we consider the regularized maximum function  
\begin{equation}\label{eq6}
\varphi:= \max_{\rm reg}(\tau, \wt \tau_Y- C)
\end{equation} 
(cf. \cite{Dem90}, part of the proof of Lemma 5)
where $C\gg 0$ is a large enough constant, such that $\varphi= \tau$ near the boundary of $U$. This is possible since $\tau$ is smooth on the complement of $Y$. On the other hand, we clearly have $\varphi= \wt\tau_Y- C$ in a neighborhood of $Y$, since 
$\tau$ equals $-\infty$ when restricted to $Y$. Now the usual properties of the regularised 
maximum of two functions (see especially \emph{loc. cit.}, page 287) show that we have \eqref{eq11}.
\medskip

In order to treat the general case, we formulate the following statement.

\begin{claim}\label{senilita}
Let $Y\subsetneq X$ be an analytic subset of $X$. Then there exists an open subset $U$ such that $Y\subset U\subset X$ and a function $\wt \tau_Y$ for which the property \eqref{eq11} is valid.
\end{claim}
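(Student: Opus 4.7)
The plan is to reduce the general case to the irreducible-and-normal case treated in the preceding paragraph, by induction on $n:=\dim X$. Decompose $Y=\bigcup_{j=1}^N Y_j$ into its (finitely many) irreducible components. Since regularized maxima glue finitely many continuous quasi-psh potentials with uniform strict positivity, it is enough to produce, for each component $Y_j$, an open neighborhood $Y_j\subset U_j\subset X$ and a continuous quasi-psh function $\wt\tau_j$ on $U_j$ with $\alpha|_{U_j}+i\ddbar\wt\tau_j\geq\ep_j g|_{U_j}$. Setting $U=\bigcup_j U_j$ and $\wt\tau_Y:=\max_{\rm reg}(\wt\tau_1-C_1,\ldots,\wt\tau_N-C_N)$ for sufficiently large constants $C_j$ will then yield the required \eqref{eq11}.

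Fix a component $Y_j$. If $Y_j$ happens to be normal, the construction of the preceding paragraph supplies $U_j$ and $\wt\tau_j$ directly. Otherwise, let $\nu_j:\wt Y_j\to Y_j$ denote the normalization, so that $\wt Y_j$ is a normal compact analytic space of dimension $<n$. One verifies that $\nu_j^*(\alpha|_{Y_j})$ satisfies hypotheses (1)--(3) on $\wt Y_j$: nefness is pulled back using the comparison $\nu_j^*(g|_{Y_j})\leq K_j h_j$ for some fixed Hermitian metric $h_j$ on $\wt Y_j$; the positivity of $\int_V\bigl(\nu_j^*(\alpha|_{Y_j})\bigr)^{\dim V}$ on a positive-dimensional analytic subset $V\subset\wt Y_j$ follows from hypothesis (3) applied to $\nu_j(V)\subset X$ combined with the finiteness of $V\to\nu_j(V)$; and bigness on $\wt Y_j$ is obtained exactly as in the proof of Lemma \ref{ind}, by passing to a K\"ahler desingularization (available since $\wt Y_j$ lies in Fujiki's class $\cC$) and invoking \cite[Theorem 0.5]{DP04} together with Corollary \ref{coro1}. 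The inductive hypothesis applied to $\wt Y_j$ then yields a smooth $\sigma_j$ on $\wt Y_j$ with $\nu_j^*(\alpha|_{Y_j})+i\ddbar\sigma_j\geq\delta_j h_j$, whence $\nu_j^*(\alpha|_{Y_j})+i\ddbar\sigma_j\geq(\delta_j/K_j)\nu_j^*(g|_{Y_j})$, and Corollary \ref{coro1} descends this inequality to a quasi-psh function $\tau_j$ on $Y_j$ with $\alpha|_{Y_j}+i\ddbar\tau_j\geq\ep_j'\,g|_{Y_j}$ for some $\ep_j'>0$.

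It remains to extend $\tau_j$ from $Y_j$ to a neighborhood $U_j\supset Y_j$ in $X$. This is the main technical obstacle: $\tau_j$ need not be smooth along the non-normal locus of $Y_j$, and the extension procedure of \cite{Dem90} used in the preceding paragraph was written for smooth data. We overcome this in two steps. First, apply Demailly's regularization theorem \cite{Dem92} on a resolution of singularities of $Y_j$ and descend via Corollary \ref{coro1} to replace $\tau_j$ by a continuous quasi-psh function with analytic singularities, losing only an arbitrarily small amount of positivity, which is absorbed by $\ep_j'$. Second, apply the partition-of-unity patching of \cite{Dem90} to the resulting continuous quasi-psh approximation; the loss of positivity in the patching is again by a fixed constant, so strict positivity is preserved. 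The outcome is a continuous quasi-psh function $\wt\tau_j$ on an open neighborhood $U_j\supset Y_j$ satisfying $\alpha|_{U_j}+i\ddbar\wt\tau_j\geq\ep_j g|_{U_j}$, which suffices for the regularized-max construction of the first paragraph to produce $\wt\tau_Y$ on $U=\bigcup_j U_j$ and to complete the proof of the claim.
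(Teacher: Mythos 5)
Your overall architecture diverges from the paper's at the crucial induction step, and the divergence opens two real gaps. The paper does not normalize $Y$: for irreducible $Y$ it modifies the \emph{ambient} space, constructing $f:X_1\to X$ so that the strict transform $Y_1$ is smooth, produces a K\"ahler current on $Y_1$ with analytic singularities along a proper analytic subset $W_1\subsetneq Y_1$, extends its smooth part to an open set $\Lambda\supset Y_1\setminus W_1$ in $X_1$, and then --- this is the device you are missing --- invokes the induction hypothesis of the Claim \emph{itself} (an induction on $\dim Y$, running in parallel with the induction on $\dim X$) applied to $W=f(W_1)$, which supplies a smooth potential on a full ambient neighborhood $U_W$ of $W$; gluing $f^\star(\tau_W)$ with the potential on $\Lambda$ then yields a function on $f^{-1}(U_Y)$ that descends to $X$. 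In your version the analogous bad locus is the non-normal locus of $Y_j$, along which the descended potential $\tau_j$ fails to be smooth (the fiber-sum of $\sigma_j$ under $\nu_j$ is only weakly quasi-psh there, cf.\ Theorem \ref{wpsh}), and your proposed remedy --- regularize to analytic singularities and then ``apply the partition-of-unity patching'' --- does not close the gap: the extension procedure of \cite{Dem90} corrects the Hessian in the normal directions and genuinely requires smooth (at least $C^2$) data on the subvariety, regularization to analytic singularities only makes matters worse (the function acquires poles), and near the non-normal locus you have produced no second, smooth, \emph{ambiently defined} potential to patch against. Without an induction on $\dim Y$ and a separate treatment of that lower-dimensional locus, the extension step fails exactly where it matters.

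The first paragraph of your proposal has a smaller but still genuine defect: $\max_{\rm reg}(\wt\tau_1-C_1,\ldots,\wt\tau_N-C_N)$ is not defined on $U=\bigcup_j U_j$, since each $\wt\tau_j$ lives only on $U_j$; for the regularized maximum to extend across $U$ one needs that near each $\partial U_i\cap U_j$ it is achieved by the functions actually defined there, and when $Y_i\cap Y_j\ne\emptyset$ the two requirements (at $\partial U_1\cap U_2$ and at $\partial U_2\cap U_1$) pull the constants $C_j$ in opposite directions, so constants alone cannot work. The paper resolves this by adding a small multiple of a quasi-psh function $v$ with log poles along $Y_1\cap Y_2$ (see \eqref{eq12}--\eqref{eq15}), which forces the maximum to equal $\wt\tau_2-C$ near the intersection, and by shrinking the $U_j$ to be disjoint away from it. Your verification that $\nu_j^*(\alpha|_{Y_j})$ satisfies (1)--(3) on the normalization and the descent of positivity via Theorem \ref{wpsh} are fine; the proof breaks at the two gluing/extension points above.
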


Before explaining the arguments of the claim, a first thing to remark is that it would settle Theorem \ref{thm:nef-big-to-kahler}, by the maximum technique used in the particular case we have just treated above. We proceed in two steps.
\smallskip

 $\bullet$ \emph{It is enough to establish the Claim \ref{senilita} in case of an analytic space $Y$ which is irreducible.} This is done by decomposing
\begin{equation}\label{eq7}
Y= Y_1\cup\dots \cup Y_N
\end{equation} 
the set $Y$ as union of irreducible analytic sets and applying the maximum procedure sketched above combined with induction on $N$. Although standard, we explain next the construction of 
$(U, \wt \tau_Y)$ if $N=2$, i.e. we assume that $Y$ only has two components. For an arbitrary $N$ there are no additional arguments to be invoked. 
\smallskip

 Let $(U_1, \wt \tau_1)$ and $(U_2, \wt \tau_2)$ corresponding to $Y_1$ and $Y_2$, respectively such that \eqref{eq11} holds true.
By \cite[Lemma 5]{Dem90} there exists a quasi-psh function $v$ with log-poles along $Y_1\cap Y_2$
and smooth in the complement of this analytic set. We consider the function 
\begin{equation}\label{eq12}
\wt \tau_1+ \ep v 
\end{equation} 
where $0< \ep\ll 1$ is small enough -fixed- such that 
\begin{equation}\label{eq13}
\alpha|_{U_1}+ i\ddbar \left(\wt \tau_1+ \ep v \right)\geq \frac{1}{2}\ep_3g|_{U_1}. 
\end{equation}
This operation may seem silly --since $\wt \tau_1$ is smooth and by 
adding the small multiple of 
$v$ the resulting function becomes singular along the intersection $Y_1\cap Y_2$. Nevertheless, thanks to it we can conclude. Let $W\subset U_1\cap U_2$ be an open subset of $X$ containing $Y_1\cap Y_2$, and $C\gg 0$ large enough such that we have 
\begin{equation}\label{eq14}
\wt \tau_1+ \ep v\geq \wt \tau_2- C
\end{equation} 
on $\partial W$, the boundary of $W$. We fix such a constant $C$ and remark that the function
\begin{equation}\label{eq15}
\max_{\rm reg}(\wt \tau_1+ \ep v, \wt \tau_2- C)
\end{equation} 
defined on $U_1$ is smooth, its Hessian verifies an inequality similar to \eqref{eq13} and moreover it equals 
$\wt \tau_2- C$ near $Y_1\cap Y_2$. By shrinking $U_1$ and $U_2$ we can combine (smoothly!) the function constructed in \eqref{eq15} with $\wt \tau_2- C$ and therefore obtain $(U, \wt \tau_Y)$.
\smallskip

 $\bullet$ \emph{Induction.} We assume that Theorem \ref{thm:nef-big-to-kahler} is established in case of a normal analytic space of dimension smaller than $\dim(X)$ and that Claim \ref{senilita} is established for analytic sets $Z$ such that $\dim(Z)\leq \dim(Y)-1$.
\smallskip

 Let $Y\subsetneq X$ be an irreducible, proper analytic subset of $X$. Then there exists a modification $f: X_1\to X$ with the following properties.
\begin{enumerate}
\smallskip

\item[(i)] The analytic space $X_1$ is compact and normal.
\smallskip

\item[(ii)] $f$ is an isomorphism over a neighborhood of the general points of $Y$.
\smallskip

\item[(iii)] Let $Y_1\subset X_1$ be the strict transform, then $Y_1\to Y$ is a resolution. 
\end{enumerate}
 To construct $f$ we proceed as follows. Let $Y'\to Y$ be a resolution of singularities given by the following finite sequence of successive blow ups 
\begin{equation}
    \xymatrixcolsep{3pc}\xymatrix{
    Y':=Y^n\ar[r]^{g_n} & Y^{n-1}\ar[r] &\cdots \ar[r] & Y^1\ar[r]^{g_1} & Y^0=:Y
    },
\end{equation}
so that $g_i:Y^i\to Y^{i-1}$ is the blow up of an ideal $\msI^{i-1}\subset \OO _{Y^{i-1}}$. 
We will now define inductively a sequence of closed immersions $Y^i\hookrightarrow X^i$ with $Y^0=Y$ and $X^0=X$.
 proceeding by induction, suppose that $Y^{i-1}\hookrightarrow X^{i-1}$ has been defined.
 Let $\msJ^{i-1}\subset \mcO_{X^{i-1}}$ be the full inverse image of the ideal sheaf $\msI^{i-1}$ under the surjection $\mcO_{X^{i-1}}\surjective \mcO_{Y^{i-1}}$. Let $f_i: X^i\to X^{i-1}$ be the blow up of $X^{i-1}$ along the ideal sheaf $\msJ^{i-1}$. Then we have the following commutative diagram of blow ups and closed immersions (also see \cite[Corollary 7.15, page 165]{Har77})
\begin{equation}
    \xymatrixcolsep{3pc}\xymatrixrowsep{3pc}\xymatrix{
Y^i\ar@{^{(}->}[r]\ar[d]_{g_i} & X^i\ar[d]^{f_i}\\
 Y^{i-1}\ar@{^{(}->}[r] & X^{i-1}
    }
\end{equation}
We let $X'=X^n$ and $X_1\to X'$ be the normalization, then the induced morphism $f:X_1\to X$ satisfies (i) and (ii) and if $Y_1$ is the strict transform of $Y'$ under $f$, then $Y_1\to Y'$ is a proper finite bimeromorphic morphism. Since $Y'$ is smooth (and in particular normal), $Y_1\to Y'$ is an isomorphism (due to Zariski's main theorem), and hence (iii) also holds.

  The restriction $\displaystyle f^\star\alpha|_{Y_1}$ of the $f$-inverse image of 
$\alpha$ to $Y_1$ is still nef and big. Indeed, 
the nefness is clear, since we can pull-back the functions $(f_\ep)$ given in (1) and restrict them to $Y_1$. Moreover, we have 
\[\int_{Y_1}f^\star\alpha^d> 0\]
as consequence of the condition (3) and thus $f^\star\alpha|_{Y_1}$ is big.

Thus it contains a K\"ahler current 
\begin{equation}\label{eq22}
 f^\star\alpha|_{Y_1} + i\ddbar \psi_1\geq g_1|_{Y_1},
 \end{equation} where the function $\psi_1$ can be assumed to have analytic singularities and $g_1$ is a Hermitian metric on $X_1$. In particular $\psi_1$ is smooth in the complement of a proper analytic subset $W_1\subsetneq Y_1$. We can also assume that 
$W_1$ contains the analytic set in the complement of which the restriction of the map
$\displaystyle f|_{Y_1}: Y_1\to Y$ is a biholomorphism.
\smallskip
 
 \noindent By modifying the function $\psi_1$ as in \cite{Dem90}, we infer the following: \emph{there exists an open set $\Lambda$ containing $Y_1\setminus W_1$ such that}
\begin{equation}\label{eq24}
 f^\star\alpha + i\ddbar\psi_1\geq \frac{1}{2}g_1
 \end{equation}  
 \emph{on} $\Lambda$. A more general version of this is established in the article \cite{CT15} pages 1181-1185. 
 \smallskip
 
 \noindent Next, we consider the direct image 
$W:= f(W_1)\subsetneq Y$ and we use the induction hypothesis: there exists an open subset $U_W\subset X$ and a smooth function $\tau_W: U_W\to \mathbb R$ such that
\eqref{eq11} holds true. By taking the inverse image via $f$ we get
\begin{equation}\label{eq23}
 f^\star\alpha + i\ddbar (\tau_W\circ f)\geq \ep_4 f^\star g
 \end{equation} 
pointwise on $f^{-1}(U_W)$. 
\smallskip

\noindent By combining \eqref{eq23} and \eqref{eq24} we obtain a $\cC^\infty$ function $\tau_1$ on an open subset $f^{-1}(U_Y)$, the inverse image of an open subset $U_Y$ containing $Y$. In other words, we have 
\begin{equation}\label{eq25}
 f^\star\alpha + i\ddbar\tau_1\geq \ep_5 f^\star g
 \end{equation}  
 on $f^{-1}(U_Y)$. The ``pinched'' open subset $\Lambda$ is involved in the gluing process, but this makes absolutely no difference as the arguments in 
 \cite{CT15} show (the point is that the function $\tau_W$ used in the gluing procedure is defined on an open subset containing $W_1$). 
\smallskip

\noindent The inequality \eqref{eq25} shows that the smooth function $\tau_1$ is constant on every positive dimensional fiber of $f$ at some point of $U_Y$. It therefore descends to $U_Y$ and Claim \ref{senilita} is established.
\medskip
 
\noindent As we have already mentioned, we can glue the function constructed in our claim 
with $\tau$ obtained in the first part of the proof: in this way we obtain $\varphi$.
\end{proof}
\medskip

\noindent We also include the following result whose proof follows exactly as in the non-singular case, modulo the use of Theorem \ref{wpsh}.

\begin{lemma}\label{lem:arbitrary-nef-and-big}
 Let $X$ be a normal, compact K\"ahler analytic variety and $\alpha\in H^{1, 1}_{\ddbar}(X)$ is a nef class. Then $\alpha$ is a big class on $X$ if and only if $\alpha^n>0$, where $n=\dim X$.
\end{lemma}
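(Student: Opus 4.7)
The plan is to reduce to the smooth compact K\"ahler case, where the statement is the classical numerical criterion of Demailly--Paun for bigness, by passing to a K\"ahler resolution and then descending a K\"ahler current representative via the direct image machinery provided by Theorem \ref{wpsh} and Corollary \ref{coro1}.

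The direction that $\alpha$ big implies $\alpha^n>0$ is standard. By definition a big class contains a K\"ahler current, and by Demailly's regularization theorem one may choose a representative $T=\alpha+i\ddbar\tau\geq \varepsilon_0 g$ which is smooth outside a proper analytic subset of $X$. Computing $\alpha^n$ on the smooth locus then yields $\alpha^n\geq \varepsilon_0^n\int_{X_{\rm reg}}g^n>0$.

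For the converse, assume $\alpha$ is nef with $\alpha^n>0$. Since $X$ is K\"ahler it belongs to Fujiki's class $\mathcal C$, so arguing as in the proof of Lemma \ref{ind} we may choose a resolution $\pi:\wh X\to X$ with $\wh X$ a compact K\"ahler manifold. The pullback $\pi^*\alpha$ is nef on $\wh X$ (pull back the smooth approximants $f_\varepsilon$ from property~(1) of the setup), and $(\pi^*\alpha)^n=\alpha^n>0$ by the projection formula. The Demailly--Paun theorem \cite[Theorem 0.5]{DP04} in the smooth compact K\"ahler setting then produces a K\"ahler current $\wh T=\pi^*\alpha+i\ddbar\wh\psi\geq \varepsilon\,\wh g$ in the class $\pi^*\alpha$, where $\wh g$ is a K\"ahler metric on $\wh X$.

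To descend this current to $X$, note that by compactness of $\wh X$ there exists $c>0$ with $\wh g\geq c\,\pi^* g$, and hence $\wh T\geq \varepsilon c\,\pi^* g$. Taking the direct image and applying the upper semi-continuous regularization procedure of Theorem \ref{wpsh}, exactly as in the proof of Corollary \ref{coro1}, yields a quasi-psh function $\psi$ on $X$ with $\alpha+i\ddbar\psi\geq \varepsilon c\,g$ as currents on $X$, which shows that $\alpha$ is big. The main technical obstacle is precisely this descent step: the direct image $\pi_*\wh\psi$ is a priori only a weakly quasi-psh function on the normal base $X$, so Theorem \ref{wpsh} is essential to upgrade it to a genuine quasi-psh function while preserving the strict positivity of the Hessian lower bound.
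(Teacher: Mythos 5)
Your proof is correct and follows essentially the same route as the paper's: pass to a K\"ahler resolution, note that the pullback is nef with the same top self-intersection, invoke Demailly--Paun on the smooth model, and descend the resulting K\"ahler current by the direct-image argument of Corollary \ref{coro1} (i.e.\ Theorem \ref{wpsh}). You have merely spelled out the ``quick direct image argument'' that the paper leaves implicit, so there is nothing to add.
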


\begin{proof}[Proof of Lemma \ref{lem:arbitrary-nef-and-big}]
 Let $f:Y\to X$ be a resolution of singularities of $X$. Given that $X$ is K\"ahler, it will equally be the case for $Y$. Then $f^*\alpha$ is nef and $(f^*\alpha)^n=\alpha^n$, by the projection formula. By a quick direct image argument it follows that $\alpha$ is big if and only if $f^*\alpha$ is big. But the equivalence between the positivity of the top self-intersection and bigness for a nef class on a K\"ahler manifold is well-known \cite{DP04}. 
\end{proof}

\begin{theorem}\label{thm:nef-restricts-to-pseff}
Let $X$ be a compact K\"ahler analytic variety and consider $\alpha\in \cC^{\infty}_{1,1}(X)$ a 
smooth $(1,1)$-form such that $\bar{\partial}\alpha=0,  \partial\alpha= 0$. Then $\alpha$ is a nef class if and only if $\alpha|_Z$ is a pseudo-effective class for all irreducible analytic subvarieties $Z\subset X$. 
\end{theorem}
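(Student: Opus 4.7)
The plan is to handle the two directions separately. The forward implication is immediate: if $\alpha$ is nef on $X$, then restricting the defining potentials $f_\epsilon$ to any irreducible subvariety $Z \subset X$ shows $\alpha|_Z$ is nef on $Z$, and nef classes are pseudo-effective. The substantial content is the converse, which I would prove by induction on $n = \dim X$, with the trivial base case $n = 1$ where pseudo-effective and nef both mean non-negative degree on a curve.

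For the inductive step, assume the result in all dimensions strictly less than $n$. First I would observe that every proper irreducible subvariety $Z \subsetneq X$ is itself a compact K\"ahler analytic variety (with K\"ahler form $\omega|_Z$, where $\omega$ is a fixed K\"ahler form on $X$), and that the hypothesis on $\alpha$ passes to $\alpha|_Z$ on $Z$, since every subvariety of $Z$ is a subvariety of $X$. By the inductive hypothesis, $\alpha|_Z$ is nef on every proper $Z \subsetneq X$. It then remains to upgrade this to nefness of $\alpha$ on $X$ itself, and the natural tool is Theorem \ref{thm:nef-big-to-kahler}.

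The plan is to set $t_0 := \inf\{t \geq 0 \mid \alpha + t\omega \text{ is nef on } X\}$, which is finite and attained since the nef cone is closed (Lemma \ref{lem:nef-cone}) and $\alpha + t\omega$ is K\"ahler for $t \gg 0$, and then to rule out $t_0 > 0$. Assuming $t_0 > 0$, I would verify that $\alpha + t_0\omega$ satisfies hypotheses (1)--(3) of Theorem \ref{thm:nef-big-to-kahler}, so that it is a K\"ahler class, contradicting the minimality of $t_0$ by subtracting a small multiple of $\omega$. Condition (1) holds by construction. For (2), the hypothesis applied with $Z = X$ yields a positive closed current $T \in \{\alpha\}$, so $T + t_0\omega$ is a K\"ahler current representing $\alpha + t_0\omega$; together with nefness, Lemma \ref{lem:arbitrary-nef-and-big} forces $(\alpha + t_0\omega)^n > 0$, giving bigness. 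For (3), the case $V = X$ follows from bigness; for $V \subsetneq X$, I would pass to a resolution $\pi : \tilde V \to V$ and expand
\[(\pi^*(\alpha|_V) + t_0 \pi^*(\omega|_V))^{\dim V}\]
by the binomial formula, observing that each mixed term is non-negative (since $\pi^*(\alpha|_V)$ is nef by the inductive observation and $\pi^*(\omega|_V)$ is semi-positive) while the pure $\omega$-term contributes $t_0^{\dim V} \int_V \omega^{\dim V} > 0$.

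The main obstacle I anticipate is the rigorous handling of singular subvarieties $V$ when verifying (3): one has to confirm that nefness of $\alpha|_V$ as a Bott--Chern class on a possibly singular $V$ is compatible with pulling back and computing top self-intersections on a resolution, and that $Z$ indeed inherits a K\"ahler structure in the sense of Definition \ref{def:kahler-variety} so that the induction applies. These checks are essentially bookkeeping with the definition of nef through local smooth potentials together with the projection formula, but they are where most of the care is required; beyond Theorem \ref{thm:nef-big-to-kahler} and Lemma \ref{lem:arbitrary-nef-and-big}, no new analytic input is needed.
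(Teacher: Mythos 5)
Your proposal is correct, and both arguments pivot on the same mechanism: at the critical threshold the class $\alpha+t\omega$ satisfies hypotheses (1)--(3) of Theorem \ref{thm:nef-big-to-kahler}, hence is K\"ahler, contradicting minimality of $t$. But you reach hypothesis (3) by a genuinely different and somewhat heavier route. The paper does not induct on dimension: since $\alpha+t\omega$ is nef on $X$ (a limit of K\"ahler classes), its restriction to every subvariety $Z$ is automatically nef, and it is big on $Z$ because $\alpha|_Z$ is pseudo-effective and $t\omega|_Z$ is K\"ahler; Lemma \ref{lem:arbitrary-nef-and-big} then yields $\bigl((\alpha+t\omega)|_Z\bigr)^{\dim Z}>0$ in one step. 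Your induction on $\dim X$ establishes the stronger statement that $\alpha|_Z$ itself is nef on every proper $Z$, and the binomial expansion on a resolution then produces the same positivity. This works, but the extra strength is never used, and it obliges you to check that every subvariety is again K\"ahler in the sense of Definition \ref{def:kahler-variety} and that the hypothesis descends --- bookkeeping the direct argument avoids entirely. What your route buys is that you only ever integrate products of nef classes on smooth compact K\"ahler models, so you never need to invoke Lemma \ref{lem:arbitrary-nef-and-big} on a possibly non-normal $Z$. One caution on the forward direction: ``nef implies pseudo-effective'' is not immediate when $Z$ is not normal. The paper produces the required positive current by passing to the normalization $p:Z_\nu\to Z$, constructing $T=p^*(\alpha|_Z)+i\partial\bar\partial\rho\geq 0$ there, pushing forward using $p_*\,i\partial\bar\partial\rho=i\partial\bar\partial\,p_*\rho$, and dividing by $\deg p$; your write-up should include this step rather than restricting potentials directly.
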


\begin{proof}[Proof of Theorem \ref{thm:nef-restricts-to-pseff}] If $\alpha$ is nef, then the restriction $\alpha|_Z$ to any irreducible analytic subvariety $Z\subset X$ is also nef. 
If moreover $Z$ is normal, then it follows that $\alpha|_Z$ is pseudo-effective by the usual argument: we construct a closed positive current in the pullback of $\alpha|_Z$ on any non-singular model of $Z$ and then take the direct image. In general, the argument is as follows. Consider 
\[p: Z_\nu\to Z\]
a normalisation of $Z$. This is a proper map with finite fibers and then there exists a quasi-psh function $\rho$ defined on $Z_\nu$ such that 
\[T:= p^\star(\alpha|_Z)+ i\ddbar\rho\geq 0\]
on $Z_\nu$, since $Z_\nu$ is normal. By \cite{Dem85}, Prop. 1.13 we have
\[p_\star i\ddbar\rho= i\ddbar p_\star \rho\]
and moreover the direct image preserves the positivity of $T$. 

\noindent The map $p$ is proper and finite, thus we have 
$p_\star \big(p^\star(\alpha|_Z)\big)= d \alpha|_Z$ where $d$ is the degree of $p$ and therefore the class 
given by $\alpha|_Z$ is pseudo-effective. 

\noindent For the other direction, we proceed as follows: Let $t={\rm inf}\{s\geq 0|\alpha+ s \omega\ {\rm is\  Kahler}\}$, then $\alpha+ t \omega$ is nef but not K\"ahler. Suppose that $t>0$, then $(\alpha+ t \omega)|_Z$ is big (and nef) for every $Z\subset X$ (including $Z=X$), and hence by Lemma \ref{lem:arbitrary-nef-and-big}, $(\alpha+t\omega)^{\dim Z}\cdot Z=((\alpha+ t \omega)|_Z)^{\dim Z}>0$. Then by Theorem \ref{thm:nef-big-to-kahler}, $\alpha+ t \omega$ is K\"ahler, which is easily seen to contradict the definition of $t$. Therefore $t=0$ and so  $\alpha$ is nef.
\end{proof}

\begin{remark}
Theorem \ref{thm:nef-restricts-to-pseff} holds without the assumption that $X$ is K\"ahler,
by using the gluing procedure employed in the proof of Theorem \ref{thm:nef-big-to-kahler}.
\end{remark}
\begin{lemma}\label{lem:nef-pullback}
Let $f :X'\to X$ be a proper surjective morphism of normal compact K\"ahler varieties. Then a class $\alpha \in
H^{1,1}_{\rm BC}(X )$ nef if and only if $f ^*\alpha$ is nef.
\end{lemma}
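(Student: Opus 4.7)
The plan is to combine Theorem \ref{thm:nef-restricts-to-pseff} with a pushforward-of-currents argument in order to reduce the nefness of $\alpha$ on $X$ to the already-known nefness of $f^*\alpha$ on $X'$.

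For the ``only if'' direction I would simply pull back the defining almost-positivity estimates. Given a K\"ahler form $\omega$ on $X$ and smooth functions $f_\epsilon$ on $X$ with $\alpha+i\partial\bar\partial f_\epsilon\geq -\epsilon\omega$, one has $f^*\alpha+i\partial\bar\partial(f_\epsilon\circ f)\geq -\epsilon\, f^*\omega$ on $X'$. Since $X'$ is compact K\"ahler and $f^*\omega$ is a smooth semi-positive $(1,1)$-form, fixing a K\"ahler form $\omega'$ on $X'$ one has $f^*\omega\leq C\omega'$ for some constant $C>0$, and after rescaling $\epsilon$ by $1/C$ this yields nefness of $f^*\alpha$.

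For the substantial direction, by Theorem \ref{thm:nef-restricts-to-pseff} it suffices to show that $\alpha|_Z$ is pseudo-effective for every irreducible analytic subvariety $Z\subset X$. Fix such a $Z$. Properness and surjectivity of $f$ ensure that some irreducible component $Z'\subset f^{-1}(Z)$ dominates $Z$; set $g:=f|_{Z'}:Z'\to Z$ and $k:=\dim Z'-\dim Z\geq 0$. The class $(f^*\alpha)|_{Z'}=g^*(\alpha|_Z)$ is nef on $Z'$ as a restriction of the nef class $f^*\alpha$ on $X'$, so the (easier) ``only if'' direction of Theorem \ref{thm:nef-restricts-to-pseff} applied on $X'$ produces a closed positive $(1,1)$-current $T$ on $Z'$ representing $g^*(\alpha|_Z)$.

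The last step is to descend to $Z$: fixing a K\"ahler form $\omega'$ on $X'$, the product $T\wedge(\omega'|_{Z'})^k$ is a closed positive current of bidimension $(\dim Z,\dim Z)$ on $Z'$, and its pushforward $g_*\bigl(T\wedge(\omega'|_{Z'})^k\bigr)$ is a closed positive $(1,1)$-current on $Z$. By the projection formula in Bott--Chern cohomology its class equals
\[ \alpha|_Z\cdot g_*\bigl[(\omega'|_{Z'})^k\bigr] \;=\; c\cdot \alpha|_Z, \qquad c=\int_{F}(\omega'|_{Z'})^k>0,\]
where $F$ is a generic fibre of $g$. Thus $c\cdot\alpha|_Z$, and hence $\alpha|_Z$, is pseudo-effective, as required. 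The main technical obstacle is making rigorous sense of the wedge product, the pushforward of positive currents, and the projection formula on the possibly singular and non-K\"ahler varieties $Z'$ and $Z$. I would handle this by replacing $Z'$ with a K\"ahler resolution $\pi:\widetilde{Z'}\to Z'$ (which exists because $Z'\subset X'$ lies in Fujiki's class $\mathcal{C}$), running the wedge and pushforward argument on $\widetilde{Z'}$ where the usual calculus of positive currents applies, and then transferring the resulting positive current down to $Z$ via (the direct-image principle of) Corollary \ref{coro1}.
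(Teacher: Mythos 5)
Your proposal is correct, but it takes a genuinely different route from the paper. The paper does not invoke Theorem \ref{thm:nef-restricts-to-pseff} at all: it sets $t=\inf\{s\geq 0\mid \alpha+s\omega\ \text{is K\"ahler}\}$, assumes $t>0$, and checks the hypotheses of Theorem \ref{thm:nef-big-to-kahler} for $\alpha+t\omega$ directly; the key point is the numerical inequality
$\lambda\int_V(\alpha+t\omega)^k=\int_{V'}f^*(\alpha+t\omega)^k\wedge\eta^j\geq \lambda t^k\int_V\omega^k>0$,
obtained by expanding the power and using nefness of $f^*\alpha$ to discard the cross terms, where $V'$ is the component of $f^{-1}(V)$ dominating $V$ and $\eta$ is K\"ahler on $X'$. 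You instead verify the hypothesis of Theorem \ref{thm:nef-restricts-to-pseff} by producing an actual closed positive current in $\alpha|_Z$ via $g_*\bigl(T\wedge(\omega'|_{Z'})^k\bigr)$. Both arguments hinge on the same projection-formula trick (wedging with a power of a K\"ahler form upstairs to kill the fibre dimension), but the paper works purely with intersection numbers of smooth forms, which avoids the current-theoretic technicalities you must address: making sense of the pushforward of $T\wedge(\omega')^k$ and of its Bott--Chern class on the singular spaces $Z'$ and $Z$, and upgrading the descended $L^1$ potential to a quasi-psh one via Theorem \ref{wpsh} (note that Corollary \ref{coro1} as stated only covers modifications, not maps with positive-dimensional fibres, so you really do need the wedge-then-pushforward extension you sketch). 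Your plan to resolve $Z'$ and descend is sound and consistent with the techniques the paper uses elsewhere, so this is a legitimate alternative proof; its advantage is that it makes the reduction to Theorem \ref{thm:nef-restricts-to-pseff} conceptually transparent, at the cost of more analytic bookkeeping. One small slip: $T\wedge(\omega'|_{Z'})^k$ has bidimension $(\dim Z-1,\dim Z-1)$, not $(\dim Z,\dim Z)$ --- which is in fact what you need for its pushforward to be a $(1,1)$-current on $Z$, so the rest of your argument is unaffected.
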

\begin{proof}
If $\alpha $ is nef then it follows easily that $f ^*\alpha$ is nef. Suppose now that $f ^*\alpha$ is nef and in particular $f^*\alpha$ is pseudo-effective. 
Let $t={\rm inf}\{s\geq 0\;|\;\alpha +s\omega \ \mbox{ is K\"ahler}\}$. Suppose that $t>0$, then $\alpha +t\omega$ is nef but not K\"ahler, and we claim that
$\int _V (\alpha +t\omega) ^{k}>0$ for any subvariety $V$ of codimension $k$.
It follows that conditions (1) and (3) of Theorem \ref{thm:nef-big-to-kahler} are satisfied by $\alpha +t\omega$, and condition (2) is immediate from Lemma \ref{lem:arbitrary-nef-and-big}.
Thus, by Theorem \ref{thm:nef-big-to-kahler}, $\alpha +t\omega$ is K\"ahler, a contradiction. In particular $t=0$ and $\alpha$ is nef.

We now prove the claim. 
For any analytic subvariety $V\subset X$, let $V'$ be the unique irreducible component of $f^{-1}V$ dominating $V$, $F$ a general fiber of $V'\to V$. Assume that $\dim V=k$, $\dim V'=k+j$, $\eta$ K\"ahler on $X'$, and $\lambda=\int _F\eta^j>0$ then, by the projection formula we have
\[\lambda\cdot\int _V (\alpha +t\omega) ^{k}=\int _{V'}f^*(\alpha +t\omega) ^{k}\wedge \eta ^j\geq \int _{V'}f^*(t\omega)^k\wedge \eta ^j=\lambda t^k\int _V \eta ^k >0\]
as $f^*\alpha$ is nef.
\end{proof}
Finally we extend \cite[Corollary 0.3]{DP04} to the singular case.
\begin{corollary}
Let $X$ be a compact normal K\"ahler variety, $\omega $ a K\"ahler form on $X$, and $\alpha \in H^{1,1}_{\rm BC}(X)$, then
\begin{enumerate}
    \item $\alpha $ is nef if and only if $\int _V\alpha ^k\wedge \omega ^{p-k} \geq 0$ for every analytic $p$-dimensional subvariety $V\subset X$ and for all $0<k\leq p$, and
    \item $\alpha $ is K\"ahler if and only if $\int _V\alpha^k \wedge \omega ^{p-k} >0$ for every analytic $p$-dimensional subvariety $V\subset X$ and for all $0<k\leq p$.
\end{enumerate}
\end{corollary}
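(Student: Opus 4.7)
\medskip

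\noindent\textbf{Proof proposal.} The plan is to adapt the standard argument of Demailly--Paun, using the machinery already developed in the previous subsection (Theorem \ref{thm:nef-big-to-kahler}, Lemma \ref{lem:arbitrary-nef-and-big}, and Lemma \ref{lem:nef-cone}) to handle the normal (possibly singular) K\"ahler setting. The forward implications in (1) and (2) are essentially formal: if $\alpha$ is K\"ahler then on a log resolution $f:\widehat X \to X$ the class $f^*\alpha+cf^*\omega$ can be represented by a smooth positive form and so $\int_V \alpha^k\wedge \omega^{p-k}>0$ by computing on $V_{\rm reg}$; if $\alpha$ is only nef, Lemma \ref{lem:nef-cone} gives $\alpha=\lim_{\ep\to 0}(\alpha+\ep\omega)$ with each $\alpha+\ep\omega$ K\"ahler, whence continuity of the integrals yields $\int_V \alpha^k\wedge \omega^{p-k}\geq 0$.

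For the nontrivial direction of (1), I would follow the strategy already used in the proofs of Theorem \ref{thm:nef-restricts-to-pseff} and Lemma \ref{lem:nef-pullback}: set
\[ t:=\inf\{\, s\geq 0 \,\mid\, \alpha+s\omega \text{ is K\"ahler}\,\}, \]
and argue by contradiction that $t=0$. If $t>0$, then by Lemma \ref{lem:nef-cone} the class $\beta:=\alpha+t\omega$ is nef but not K\"ahler. I would then verify the three hypotheses of Theorem \ref{thm:nef-big-to-kahler} for $\beta$. Hypothesis (1) (nefness) is immediate. For hypothesis (3), for any $p$-dimensional analytic subvariety $V\subset X$ the binomial expansion
\[ \int_V \beta^p \;=\; \sum_{k=0}^{p}\binom{p}{k} t^{p-k}\int_V \alpha^k\wedge \omega^{p-k} \]
has all terms nonnegative by assumption, while the $k=0$ term equals $t^p\int_V\omega^p>0$, so $\int_V\beta^p>0$. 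Taking $V=X$ gives $\beta^n>0$, so Lemma \ref{lem:arbitrary-nef-and-big} gives that $\beta$ is big, verifying hypothesis (2). Theorem \ref{thm:nef-big-to-kahler} then forces $\beta$ to be K\"ahler, contradicting the definition of $t$. Hence $t=0$ and $\alpha$ is nef.

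For the nontrivial direction of (2), the strict-positivity hypothesis in particular implies the hypothesis of (1), so by what we have just proved $\alpha$ is nef. Applying the hypothesis with $V=X$ and $k=n$ gives $\int_X \alpha^n>0$, so by Lemma \ref{lem:arbitrary-nef-and-big} the nef class $\alpha$ is big. Finally, applying the hypothesis with $k=\dim V$ for every subvariety $V\subset X$ gives $\int_V \alpha^{\dim V}>0$, which is exactly condition (3) of Theorem \ref{thm:nef-big-to-kahler}. That theorem then yields directly that $\alpha$ is K\"ahler.

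The only subtle point I expect is the first verification step in (1): one must be confident that representing $\alpha$ by a smooth $d$-closed form with local potentials (which is needed to even set up Theorem \ref{thm:nef-big-to-kahler}) goes through in the normal case, and that the binomial identity above makes sense as an identity of Bott--Chern intersection numbers on the possibly singular $X$. Both are standard and were implicitly used in the proofs of Theorem \ref{thm:nef-restricts-to-pseff} and Lemma \ref{lem:nef-pullback}, so no new difficulty arises; everything else is bookkeeping with the inequalities.
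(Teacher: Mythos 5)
Your proof is correct, but it takes a genuinely different route from the paper's. You apply the boundary-point trick directly on the singular $X$: set $t=\inf\{s\geq 0\mid \alpha+s\omega\ \text{K\"ahler}\}$, expand $\int_V(\alpha+t\omega)^p$ binomially so that the hypothesis makes every term nonnegative and the $\omega^p$-term strictly positive, and then feed conditions (1)--(3) of Theorem \ref{thm:nef-big-to-kahler} (with bigness supplied by Lemma \ref{lem:arbitrary-nef-and-big}) to contradict the minimality of $t$. This is exactly the mechanism the paper uses in Theorem \ref{thm:nef-restricts-to-pseff} and Lemma \ref{lem:nef-pullback}, but the paper does \emph{not} use it for this corollary: instead it shows it suffices to prove $\alpha_\epsilon=\alpha+\epsilon\omega$ is nef, passes to a resolution $f:X'\to X$, runs an induction on dimension through normalizations of proper subvarieties to get nefness of the restricted classes, verifies the numerical criterion of \cite[Corollary 0.3]{DP04} for $f^*\alpha_\epsilon$ against the perturbed K\"ahler class $f^*\omega+\delta\omega'$ on the smooth model, and then descends via Lemma \ref{lem:nef-pullback}. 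Your argument is shorter and avoids both the induction on dimension and the detour through the smooth Demailly--P\u aun theorem, at the price of leaning entirely on the full strength of the singular Theorem \ref{thm:nef-big-to-kahler}; the paper's route makes the statement visibly a formal consequence of the classical smooth criterion plus descent. For part (2) your argument (verify $\int_V\alpha^{\dim V}>0$ for all $V$ and invoke Theorem \ref{thm:nef-big-to-kahler} again) is in fact more explicit than the paper's one-line appeal to the K\"ahler cone being the interior of the nef cone. The only point worth making explicit in your write-up is that $\{s\geq 0\mid \alpha+s\omega\ \text{K\"ahler}\}$ is nonempty (since the fixed smooth representative of $\alpha$ satisfies $\alpha\geq -C\omega$ pointwise on the compact $X$), so that $t$ is well defined; this is implicit in the paper's analogous arguments as well.
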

\begin{proof} The only if part is clear, so assume that $\int _V\alpha ^k \wedge \omega ^{p-k} \geq 0$ for any analytic subvariety $V\subset X$ with $p=\dim V$ and any $0<k\leq p$. Let $V\subset X$ be a proper subvariety,  and $\nu:\tilde V\to V$  the normalization. Suppose that $\tilde \alpha =\nu ^*\alpha $ is the pull-back of $\alpha$, then it follows easily by induction on the dimension that $\tilde\alpha $ is nef. Let $f:X'\to X$ be a resolution of singularities and $V'\subset X'$ a subvariety such that $f(V')=V$. If $\nu ':\tilde V'\to V'$ is the normalization and $\alpha '=f^*\alpha$, then $\tilde \alpha'={\nu'}^* \alpha ' $ is nef as it is the pull-back of $\tilde \alpha$ via the induced map $\tilde V'\to \tilde V$. 

It suffices to show that $\alpha _\epsilon:=\alpha +\epsilon \omega $ is nef for any $0<\epsilon \ll 1$. Clearly \[\int _{X'} f^*(\alpha _\epsilon^k  \wedge \omega ^{n-k})=\int _X\alpha _\epsilon ^k  \wedge \omega ^{n-k}\geq \epsilon^k \int _X \omega ^{n}>0.\]
Let $\omega '$ be a K\"ahler class on $X'$ and $\alpha _\epsilon ' :=f^*\alpha _\epsilon $, then $\omega _\delta :=f^*\omega +\delta \omega '$ is K\"ahler for $\delta>0$ and by continuity $\int _{X'} (\alpha _\epsilon') ^k \wedge \omega_\delta  ^{n-k}>0$ for $0<\delta\ll \epsilon $.
Assume now that $V'\subset X' $ is a proper subvariety of dimension $p<\dim X'$, then since $\tilde \alpha'$ is nef (as observed above), we have
\[\int _{V'}(\alpha '_\epsilon) ^k  \wedge \omega_\delta  ^{p-k}=\int _{\tilde V'}(\tilde \alpha '+\epsilon (f\circ {\nu} ')^*\omega )^k \wedge ( \nu ')^*\omega_\delta  ^{p-k}\geq 0.\]
By \cite[Corollary 0.3]{DP04}, $\alpha _\epsilon '$ is nef, and hence by Lemma \ref{lem:nef-pullback}, $\alpha _\epsilon $ is nef. This proves (1). To see (2), note that if $\alpha$ is K\"ahler then the stated inequalities clearly hold. For the reverse implication, simply observe that the K\"ahler cone coincides with the interior of the nef cone. 
\end{proof}

\subsection{Kawamata-Viehweg vanishing}
The fundamental result in this context is Kawamata-Viehweg vanishing cf. \cite[Theorem 3.7]{Nak87} and \cite[Corollary 1.4]{Fuj13}.

\begin{definition}\label{def:f-nef-big}
	Let $f:X\to Y$ be a proper surjective morphism of analytic varieties and $L$ is a line bundle on $X$. Then $L$ is called $f$-nef-big, if $c_1(L)\cdot C\>0$ for all curves $C\subset X$ such that $f(C)=\pt$, and $\kappa(X/Y, L)=\dim X-\dim Y$ (see \cite[(B), Page 554]{Nak87}).
\end{definition}

The following version of (relative) Kawamata-Viehweg vanishing theorem for a proper morphism between analytic varieties follows from \cite[Theorem 3.7]{Nak87} and \cite[Corollary 1.4]{Fuj13}.

\begin{theorem}\cite[Theorem 2.21]{DH20}\label{thm:relative-kvv}
Let $\pi:X\to S$ be a proper surjective morphism of analytic varieties. Let $\Delta\>0$ be a $\mbQ$-divisor on $X$ such that $(X, \Delta)$ is klt, and $D$ is a $\mbQ$-Cartier integral Weil divisor on $X$ such that $D-(K_X+\Delta)$ is $\pi$-nef-big. Then 
        \[
                R^i\pi_*\mcO_X(D)=0\qquad \text{ for all }\ i>0.
        \]
\end{theorem}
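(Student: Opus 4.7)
The plan is to reduce to the smooth case via a log resolution and then invoke the smooth analytic relative Kawamata--Viehweg vanishing theorem of Nakayama--Fujino (\cite[Theorem 3.7]{Nak87}, \cite[Corollary 1.4]{Fuj13}), combined with the Leray spectral sequence to push down.

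\textbf{Step 1: Log resolution and discrepancy formula.} First I would choose a log resolution $f : Y \to X$ of $(X,\Delta)$ provided by Theorem \ref{thm:log-resolution}, which is projective bimeromorphic with $Y$ smooth. Writing the discrepancy formula, I can find an effective $\mbQ$-divisor $\Delta_Y$ with SNC support and an effective $f$-exceptional $\mbZ$-divisor $E$ having no common components with $\Delta_Y$, such that
\[ K_Y + \Delta_Y = f^*(K_X+\Delta) + E \]
and $(Y,\Delta_Y)$ is klt (this uses the klt hypothesis on $(X,\Delta)$, which guarantees all discrepancies are $>-1$ so that rounding up the non-positive part gives an effective integral exceptional divisor).

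\textbf{Step 2: Relative positivity on $Y$.} Set $D_Y := f^*D + E$, which is an integral Weil divisor on $Y$, and compute
\[ D_Y - (K_Y + \Delta_Y) = f^*\bigl(D - (K_X+\Delta)\bigr). \]
Since $D-(K_X+\Delta)$ is $\pi$-nef-big, its pullback is $(\pi\circ f)$-nef-big: nefness on fibers is preserved by pullback along $f$, and the Iitaka dimension over $S$ is preserved because $f$ is bimeromorphic (the general fiber of $\pi\circ f$ has the same dimension as that of $\pi$, and sections lift through $f_*$). Similarly $f^*(D-(K_X+\Delta))$ is $f$-numerically trivial, hence $f$-nef, and trivially $f$-big since $f$ is bimeromorphic (its general fiber is a point, giving $\kappa(Y/X,\cdot)=0=\dim Y-\dim X$).

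\textbf{Step 3: Vanishing on $Y$ and descent.} By the smooth case of relative Kawamata--Viehweg vanishing applied on $Y$, once over the morphism $\pi\circ f$ and once over $f$, I obtain
\[ R^i(\pi\circ f)_*\mcO_Y(D_Y) = 0 \quad \text{and} \quad R^q f_*\mcO_Y(D_Y) = 0 \qquad \text{for all } i,q > 0. \]
Furthermore, since $D_Y - f^*D = E \geq 0$ is $f$-exceptional and $(Y,\Delta_Y)$ is klt with $E$ having integer coefficients on exceptional components, the projection formula together with the fact that $f_*\mcO_Y(E)=\mcO_X$ yields $f_*\mcO_Y(D_Y) = \mcO_X(D)$. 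Feeding both pieces into the Leray spectral sequence
\[ E_2^{p,q} = R^p\pi_*\, R^q f_*\mcO_Y(D_Y) \Rightarrow R^{p+q}(\pi\circ f)_*\mcO_Y(D_Y) \]
forces the degeneration $R^i\pi_*\mcO_X(D) = R^i(\pi\circ f)_*\mcO_Y(D_Y) = 0$ for $i>0$, which is the desired conclusion.

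\textbf{Main obstacle.} The delicate step is Step 2, namely justifying that $\pi$-nef-bigness is preserved under the bimeromorphic pullback by $f$ in the analytic category; one must check that the Iitaka dimension $\kappa(Y/S,\cdot)$ as defined in \cite{Nak87} (involving relative sections of powers of line bundles) is unchanged by $f^*$ because $f_*\mcO_Y=\mcO_X$. One also has to confirm that the smooth-case analytic version of the vanishing applies, which requires the $\mbQ$-boundary $\Delta_Y$ to be a $\mbQ$-divisor with SNC support and fractional part, exactly as guaranteed by the construction of the log resolution. Once this bookkeeping is in place, the argument is routine.
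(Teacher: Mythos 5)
The paper does not actually prove this statement: it is quoted from \cite[Theorem 2.21]{DH20} and attributed to \cite[Theorem 3.7]{Nak87} and \cite[Corollary 1.4]{Fuj13}. Your overall strategy --- log resolution, the smooth-case relative Kawamata--Viehweg vanishing applied over both $\pi\circ f$ and $f$, then the Leray spectral sequence --- is exactly the standard reduction used in those references, and the issue you single out as the ``main obstacle'' (preservation of relative nef-bigness under bimeromorphic pullback) is indeed handled correctly by the argument you sketch, since $f_*\mcO_Y=\mcO_X$ preserves relative Iitaka dimension and $f$ contracts no horizontal curves.

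There is, however, a concrete gap in Steps 1--3 that you do not address. Since $D$ is only $\mbQ$-Cartier, $f^*D$ is merely a $\mbQ$-divisor, so $D_Y:=f^*D+E$ is \emph{not} an integral (let alone Cartier) divisor on $Y$, contrary to what you assert. Consequently $\mcO_Y(D_Y)$ is not the sheaf you want to feed into the smooth vanishing theorem, the identity $D_Y-(K_Y+\Delta_Y)=f^*(D-(K_X+\Delta))$ does not survive rounding, and the ``projection formula'' computation of $f_*\mcO_Y(D_Y)$ does not apply because $\mcO_X(D)$ is not invertible. The standard fix is to set $L:=D-(K_X+\Delta)$ and $N:=K_Y+\lceil f^*L\rceil$, which \emph{is} Cartier on the smooth $Y$; then $N-(K_Y+\langle f^*L\rangle)=f^*L$, where $\langle f^*L\rangle:=\lceil f^*L\rceil-f^*L$ has coefficients in $[0,1)$ and SNC support (choose the log resolution so that ${\rm Exc}(f)\cup{\rm Supp}\,f_*^{-1}\Delta\cup{\rm Supp}\{f^*L\}$ is SNC), so $(Y,\langle f^*L\rangle)$ is klt log smooth and the smooth vanishing applies to $N$ over both $\pi\circ f$ and $f$. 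One then verifies $f_*\mcO_Y(N)=\mcO_X(D)$ directly rather than by projection formula: $N-f^*D=K_Y-f^*(K_X+\Delta)+\langle f^*L\rangle$ has every coefficient $>-1$ because $(X,\Delta)$ is klt and $\lfloor\Delta\rfloor=0$, so for any meromorphic $\phi$ with ${\rm div}(\phi)+D\geq 0$ the integral divisor ${\rm div}(\phi)+N$ has all coefficients $>-1$, hence $\geq 0$; the reverse inclusion is the usual one for exceptional divisors. A further minor inaccuracy: in your Step 1, $E$ and $\Delta_Y$ can share components (whenever some discrepancy is a positive non-integer), so the ``no common components'' claim is false as stated, though harmless.
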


\subsection{Relative cone and contraction theorems for projective morphisms}
Here we collect a cone theorem from \cite{Nak87}. Recall that if $f:X\to Y$ is a projective surjective morphism of analytic varieties and $W\subset Y$ is a compact subset of $Y$, then $Z_1(X/Y;W)$ is generated by curves $C\subset X$ such that $f(C)$ is a point in $W$. We say that two curves $C_1,C_2$ are numerically equivalent over $W$, $C_1\equiv _WC_2$ if $f^*L\cdot (C_1-C_2)=0$ for any Cartier divisor $L$ defined on a neighborhood of $W$. Then $N_1(X/Y;W):=Z_1(X/Y;W)\otimes _{\mathbb Z}\mathbb R/\equiv _W$.\\
We also define $\widetilde{Z}^1(X/Y; W)$ as the group of line bundles defined over some neighborhood of $W$ modulo the following equivalence relation: for $\msL_1\in \Pic(f^{-1}U_1)$ and $\msL_2\in \Pic(f^{-1}U_2)$, where $U_1$ and $U_2$ are open neighborhoods of $W$, $\msL_1\num_W \msL_2$ if $\msL_1\cdot C=\msL_2\cdot C$ for all curves $C\subset X$ such that $f(C)=\pt\in W$. Then $N^1(X/Y; W):=\widetilde{Z}^1(X/Y; W)\otimes_{\mbZ}\mbR$.

\begin{definition}[Property \textbf{P} and \textbf{Q}]\label{def:property-pq}\cite{Fuj22}
Let $f:X\to Y$ be a projective surjective morphism of analytic varieties and $W\subset Y$ is a compact subset of $Y$. We say that $f:X\to Y$ and $W$ satisfy property \bfP\ if the following holds:
\begin{enumerate}
    \item[(P1)] $X$ is a normal analytic variety,
    \item[(P2)] $Y$ is a Stein space,
    \item[(P3)] $W$ is a Stein compact subset of $Y$, i.e. $W$ has a fundamental system of Stein open neighborhoods, and
    \item[(P4)] $W\cap Z$ has finitely many connected components for any analytic set $Z$ defined on an open neighborhood of $W$.
\end{enumerate}
We will simply say that $f:X\to Y$ satisfies property \bfP\ if $W$ is understood.\\

We say that $f:X\to Y$ satisfies property \bfQ\ if 
\begin{enumerate}
    \item[(Q1)] $X$ is normal, and
    \item[(Q2)] $X$ and $Y$ are both compact.
    \end{enumerate}~\\
\begin{itemize}
    \item We will say that a projective morphism $f:X\to Y$ and a compact subset $W\subset Y$ satisfies either property \bfP\ or \bfQ\ if either $f:X\to Y$ and $W\subset Y$ satisfy property \bfP\ or $f:X\to Y$ satisfies property \bfQ. Moreover, if only the property \bfQ\ is satisfied, then we will denote $N^1(X/Y), N_1(X/Y)$, etc. to mean $N^1(X/Y;Y), N_1(X/Y;Y)$, etc.
\end{itemize}    
\end{definition}

\begin{remark}\label{rmk:relative-NS}
    By \cite[Chapter II. 5.19. Lemma]{Nak04}, if $f:X\to Y$ and $W\subset Y$ satisfy either property \bfP\ or \bfQ\, then $N^1(X/Y; W)$ (and hence also $N_1(X/Y;W)$) is finitely dimensional over $\mbR$. {Unluckily, this result is only stated in the case that $X\to Y$ is a} {\bf projective morphism}. By a result of Siu, \cite[Theorem 1]{Siu69}, it is known that property (P4) holds if and only if $\Gamma (W,\mathcal O _W)$ is notherian. In particular, for any $w\in W$ there is a neighborhood $w\in V\subset Y$ such that $V$ satisfies (P3) and (P4).
\end{remark}

\begin{theorem}[Cone Theorem]\cite[Theorem 4.12]{Nak87}\label{thm:general-relative-cone}
Let $f: X \to Y$  be a projective surjective morphsim of analytic varieties and $W\subset Y$ is a compact subset satisfying either property \bfP\ or \bfQ. 
Let $B\>0$ a $\mbQ$-divisor  on  $X$ such that $(X, B)$ is klt. Then the following hold: 
\begin{enumerate}
    \item If $K_X+ B$ is  not $f$-nef over $W$,  then 
\[\overline{NE}(X/Y; W)=\overline{NE}_{K_X+B\geq 0}(X/Y; W)+ \sum \mathbb R^{\geq 0}[l_i]\] where  each 
$l_i$ is an  irreducible  curve in $N_1(X/Y; 
W)$. Furthermore,  $\sum \mathbb R^{\geq 0}[l_i]$  is  locally  finite  and for  any  $R=\mathbb R^{\geq 0}[l_i]$, there 
exists  $L\in   \widetilde{Z}^1(X/Y; W)$  such that  
$R=\{\Gamma \in   \overline{NE}(X/Y; W)\setminus \{0\}|(L\cdot \Gamma )=0\}$ and that $L$ is  $f$-nef over  $W$. Such  an  $L$  is  called a  supporting function of $R$ 
and $R$  is  called an  extremal ray over  $W$  with  respect  to  $K_X+B$. 

\item For  an  extremal ray $R$,  there  exist an  open  neighborhood  $U$  of $W$ 
and a proper surjective morphism $\phi :f^{-1}(U)\to Z$ over  $U$ onto a normal variety 
$Z$  such  that \[\phi ( C) =\pt \qquad \mbox{ if and only if }\qquad [C]\in R\] 
for any irreducible  curve  $C$ of $f^{-1}(U)$ which  is  mapped to  a point of $W$. This $\phi$ is  denoted by ${\rm cont}_R$ and called the  contraction morphism associated with  $R$.

\item $\phi  =  {\rm cont}_R$ has  the following properties:
\begin{enumerate}
    \item  $-(K_X+B)|_{f^{-1}(U)}$ is  $\phi$-ample.
    \item   Let $E$ be an irreducible component of ${\rm Ex}(f)$ of maximal dimension, $n=\dim E-\dim f(E)$ and $p\in f(E)$ a general point, then $E_p=E\cap f^{-1}(p)$ is covered by a family of compact rational curves $\{\Gamma_t\}_{t\in T}$ such that $\phi(\Gamma_t)=\pt$ for all $t\in T$ and $-(K_X+B)\cdot \Gamma_t\<2n$, where $n=\dim X$.
    \item ${\rm Image} (\phi ^*:  {\rm Pic}(Z)\to {\rm Pic}(f^{-1}(U)))$ 
$=\{ D \in {\rm Pic}(f^{-1}(U))\;|\; (D\cdot \Gamma )=0\  \forall\   r  \in R\}$. 
    \item The following  mutually dual sequences are exact. 
 \[0\to N_1(f^{-1}(U)/Z;g^{-1}(W))\to N_1(X/Y; W)\to N_1(Z/U; W)\to 0,\] 
 \[0\leftarrow N^1(f^{-1}(U)/Z;g^{-1}(W))\leftarrow  N^1(X/Y; W)\leftarrow  N^1(Z/U; W)\leftarrow  0.\] 
Here  $g: Z\to U$ is the  structure  morphism. In  particular,  $\rho (X/Y; W)= \rho (Z/U; W)+1$.
\end{enumerate}
\end{enumerate}
\end{theorem}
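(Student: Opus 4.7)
The plan is to follow the classical Kawamata–Shokurov strategy adapted to the relative analytic setting, where the essential analytic ingredients (Stein covers, coherence of direct images, finite-dimensionality of $N^1(X/Y;W)$) have already been set up in Remark \ref{rmk:relative-NS} and Theorem \ref{thm:relative-kvv}. Throughout I will shrink $Y$ around $W$ freely: since $W$ admits a fundamental system of Stein neighborhoods in case (P), and since $X,Y$ are compact in case (Q), the relevant cohomological statements globalize.

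The first major step is a \emph{relative base-point-free theorem}: if $D$ is an $f$-nef Cartier divisor on $X$ with $aD-(K_X+B)$ $f$-nef and $f$-big for some $a>0$, then, after shrinking $Y$ around $W$, some multiple $mD$ is $f$-generated. I would prove this along the standard lines, constructing sections by induction on dimension: using Theorem \ref{thm:relative-kvv} to arrange $R^if_*\mathcal O_X(mD - \lfloor \Delta\rfloor)=0$ for a suitable perturbation, and using Theorem \ref{t-rel-ample}(3) to promote the coherent surjectivity to a surjection on sections over relatively compact Stein neighborhoods of $W$. A mild subtlety is that one must work on a log resolution provided by Theorem \ref{thm:log-resolution} and then descend; this is clean because the resolution is projective. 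From the BPF theorem one derives the \emph{rationality theorem} in the standard way: the supremum $r=\sup\{t\in\mathbb R_{\geq 0} : H+t(K_X+B)\text{ is } f\text{-nef over } W\}$ for an $f$-ample $H$ is rational with denominator bounded in terms of $\dim X$. This uses Kodaira's lemma and an asymptotic vanishing argument, where the bigness needed to apply the vanishing is exactly what Lemma \ref{lem:arbitrary-nef-and-big} secures in the K\"ahler/projective setting.

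The cone theorem itself then follows the Kawamata–Shokurov template: using rationality one shows that any $(K_X+B)$-negative face of $\overline{NE}(X/Y;W)$ is cut out by a supporting $f$-nef $\mathbb Q$-Cartier class $L$, and that $L$ can be chosen to be rational. Combined with Mori's bend-and-break in the relative analytic setting (applied fiberwise after passing to a smooth model — this is where the bound $-(K_X+B)\cdot \Gamma_t \leq 2\dim X$ in (3)(b) comes from) this produces the rational curves $l_i$ spanning the extremal rays and gives local finiteness of these rays in the $(K_X+B)$-negative halfspace. The contraction morphism $\phi=\mathrm{cont}_R$ is then built by applying the relative BPF to the supporting divisor $L$ of $R$: some multiple $mL$ is $f$-generated over a Stein neighborhood $U$ of $W$, the resulting map factors through its Stein factorization $\phi:f^{-1}(U)\to Z$, and $Z$ is normal with $g:Z\to U$ projective. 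Properties (3)(a), (3)(c), (3)(d) follow formally from the construction: (3)(a) because $-K_X-B$ is a positive combination of $L$ and an $f$-ample class along the fibers of $\phi$; (3)(c) from the projection formula and the surjectivity in (1); and (3)(d) from the exactness of $\phi_*$ on line bundles with trivial restriction to fibers together with Remark \ref{rmk:relative-NS}.

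The hardest part, to my mind, is not any single step but the careful bookkeeping required to keep everything in the Stein/compact-$W$ framework: one must verify that the various shrinkings of $Y$ around $W$ needed for coherent surjectivity, for vanishing, and for the existence of $\phi$ can all be taken simultaneously, and that $N_1(X/Y;W)$ is finite-dimensional so that ``locally finite'' in (1) is meaningful. The finite-dimensionality is Nakayama's result (cited in Remark \ref{rmk:relative-NS}), which reduces to Siu's theorem characterizing property (P4). The other delicate point is bend-and-break in the relative analytic setting: one needs to produce a compact rational curve contracted by $\phi$ on a fiber, which is achieved by combining Mori's characteristic-$p$ argument on an algebraic approximation of a fiber over a point of $W$ with the fact that fibers of projective morphisms are projective schemes.
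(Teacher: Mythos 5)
The paper does not reprove this theorem at all: its entire proof is the observation that parts (1), (2) and (3)(a),(c),(d) are literally \cite[Theorem 4.12]{Nak87}, and that the only missing item, the length bound in (3)(b), follows from \cite[Theorem 1.23]{DO23}. Your proposal instead sets out to reprove Nakayama's theorem from scratch via the Kawamata--Shokurov machinery (relative base-point-free theorem from Kawamata--Viehweg vanishing, rationality theorem, supporting nef classes, Stein factorization for the contraction). That is indeed the route the cited source takes, and your outline of the BPF/rationality/contraction steps and of the bookkeeping issues (simultaneous shrinking around $W$, finite-dimensionality of $N^1(X/Y;W)$ via Siu and Nakayama) is a fair description of what a from-scratch proof would require. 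But as written it is a plan, not a proof: each of the named steps (non-vanishing over a positive-dimensional Stein base, the X-method in the analytic category, local finiteness from bounded denominators) is asserted rather than carried out, whereas the paper deliberately outsources all of this to the reference.

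The genuine gap is in (3)(b), which is precisely the one statement \emph{not} contained in Nakayama and hence the only part where your argument must do real work. Your proposed derivation --- ``Mori's characteristic-$p$ argument on an algebraic approximation of a fiber over a point of $W$,'' using that fibers of projective morphisms are projective schemes --- does not give the stated conclusion. The bound $-(K_X+B)\cdot\Gamma_t\leq 2\dim X$ and the fact that the curves $\Gamma_t$ cover $E_p$ and are contracted by $\phi$ require deforming maps of curves into $X$ (or $f^{-1}(U)$) itself, not into the projective fiber $X_p$: the relevant dimension count in bend-and-break involves $\dim X$ and $-K_X\cdot C$, and $(K_X+B)|_{X_p}$ is not $K_{X_p}+B_p$, so running Mori theory intrinsically on the fiber produces neither the right canonical class nor curves known to generate the ray $R$ in $N_1(X/Y;W)$. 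This is exactly why the paper invokes \cite[Theorem 1.23]{DO23}, an analytic analogue of Kawamata's length-of-extremal-rays estimate (compare Lemma \ref{lem:lc-length}, where the same substitution for \cite{Kaw91} is made). Without that input, or a complete analytic bend-and-break argument on the total space, part (3)(b) of your proof is missing.
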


\begin{proof}
	Everything here is in \cite[Theorem 4.12]{Nak87}, except the claim 3(b). This follows from \cite[Theorem 1.23]{DO23}. 
\end{proof}

Finally we prove a relative dlt cone theorem. 
\begin{theorem}\label{thm:general-relative-dlt-cone}
	Let $(X, \Delta)$ be a dlt pair, $f:X\to Y$ a projective surjective morphism of analytic varieties and $W\subset Y$ is a compact set satisfying either property $\mathbf P$ or $\mathbf Q$. Assume that $X$ is $\mbQ$-factorial over $W$. Then there are countably many rational curves $\{C_i\}_{i\in I}$ such that $f(C_i)=\pt$ for all $i\in I$, $0<-(K_X+\Delta)\cdot C_i\leq 2\dim X$ and 
	\[
		\NE(X/Y;W)=\NE(X/Y;W)_{(K_X+\Delta)\>0}+\sum_{i\in I}\mbR^{\geq 0}\cdot[C_i].
	\] 
\end{theorem}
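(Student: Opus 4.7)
The plan is to deduce the dlt cone theorem from the klt version (Theorem \ref{thm:general-relative-cone}) by a perturbation argument, combined with an adjunction and induction on dimension to secure the length bound on rational curves.

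First I would observe that since $(X,\Delta)$ is dlt, the perturbed pair $(X,(1-\epsilon)\Delta)$ is klt for every $\epsilon\in(0,1)$: all log discrepancies strictly increase under the rescaling, and the $\mbQ$-factoriality of $X$ over $W$ ensures that $(1-\epsilon)\Delta$ is $\mbQ$-Cartier near $W$. Applying Theorem \ref{thm:general-relative-cone} to each such klt pair gives a cone decomposition
\[
\NE(X/Y;W) = \NE(X/Y;W)_{(K_X+(1-\epsilon)\Delta)\geq 0} + \sum_{j\in J_\epsilon}\mbR^{\geq 0}[C_j^\epsilon],
\]
with rational curves satisfying $0<-(K_X+(1-\epsilon)\Delta)\cdot C_j^\epsilon\leq 2\dim X$, together with the associated contraction $\phi_R:X\to Z$ for each such negative ray $R$ and, by part 3(b), a covering family $\{\Gamma_t\}$ of rational curves in a maximal-dimensional component $E$ of $\Ex(\phi_R)$.

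Next I would show that every $(K_X+\Delta)$-negative extremal ray $R$ of $\NE(X/Y;W)$ is $(K_X+(1-\epsilon)\Delta)$-negative for all $\epsilon>0$ small enough (depending on $R$), since intersection numbers depend continuously on $\epsilon$ and $(K_X+\Delta)\cdot R<0$; extremality is a property of the cone only, so $R$ appears as some $\mbR^{\geq 0}[C_j^\epsilon]$ in the klt decomposition. Taking $\epsilon_k\to 0^+$ and collecting these rays yields a countable set of $(K_X+\Delta)$-negative extremal rays exhausting all of them, and local discreteness in the $(K_X+\Delta)<0$ half-space follows from local discreteness in the $(K_X+(1-\epsilon)\Delta)<0$ half-space for $\epsilon$ small.

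The key issue is then the length bound $-(K_X+\Delta)\cdot C\leq 2\dim X$. Fix a $(K_X+\Delta)$-negative extremal ray $R$ and $\epsilon$ as above, and let $E$ be a maximal-dimensional component of $\Ex(\phi_R)$, covered by a family $\{\Gamma_t\}$ from part 3(b). In the easy case $E\not\subset\Supp(\Delta)$, a generic $\Gamma$ in the family satisfies $\Delta\cdot\Gamma\geq 0$, whence
\[
-(K_X+\Delta)\cdot\Gamma \leq -(K_X+(1-\epsilon)\Delta)\cdot\Gamma \leq 2\dim X.
\]
In the delicate case $E\subset\Supp(\Delta)$, the component $E$ lies in some irreducible component $S$ of $\lfloor\Delta\rfloor$. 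Since $(X,\Delta)$ is dlt, $S$ is normal, and adjunction gives $(K_X+\Delta)|_S=K_S+\Delta_S$ with $(S,\Delta_S)$ dlt of dimension $\dim X-1$. The ray $R$ is $(K_S+\Delta_S)$-negative in $\NE(S/Y;W)$, and by induction on $\dim X$ one obtains a rational curve $C\subset S$ generating $R$ with $-(K_S+\Delta_S)\cdot C\leq 2\dim S<2\dim X$, whence the desired bound on $-(K_X+\Delta)\cdot C$ follows by adjunction.

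The hardest part will be making the inductive step rigorous in the analytic relative setting. One must verify that $S\to Y$ satisfies the hypotheses of the dlt cone theorem: projectivity (which holds over a neighborhood of $W$ by composition of projective morphisms, as per Remark \ref{rmk:projective-morphism}), preservation of property $\mathbf P$ or $\mathbf Q$ (essentially automatic for closed analytic subvarieties), and most subtly $\mbQ$-factoriality of $S$ over $W$. The latter does not descend automatically from $X$; one would likely need either to pass to a small $\mbQ$-factorialization of $S$ (whose existence can be extracted from the MMP machinery being developed elsewhere in the paper) or to formulate and prove a slightly more general inductive cone theorem that requires only the minimal $\mbQ$-Cartier structure on $K_S+\Delta_S$ afforded by adjunction.
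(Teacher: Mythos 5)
Your overall strategy — reduce to the klt cone theorem by perturbation — is the same in spirit as the paper's, but your perturbation goes in the wrong direction and this creates a genuine gap. The paper perturbs the adjoint divisor upward by a small relatively ample class, writing $K_X+\Delta+\tfrac 1n H=K_X+(1-\ve)\Delta+(\tfrac 1n H+\ve\Delta)$ so that the klt cone theorem applies verbatim; the crucial feature is monotonicity, $\NE(X/Y;W)_{(K_X+\Delta)\geq 0}\subset \NE(X/Y;W)_{(K_X+\Delta+\frac 1n H)\geq 0}$ and $\bigcap_n \NE(X/Y;W)_{(K_X+\Delta+\frac 1n H)\geq 0}=\NE(X/Y;W)_{(K_X+\Delta)\geq 0}$, which is what allows the limiting/compactness argument (decomposing a class $v=v_n+w_n$ on a compact slice, extracting limits, and using closedness of the right-hand side) that actually establishes the displayed equality of cones. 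Your perturbation $\Delta\mapsto(1-\epsilon)\Delta$ has no such monotonicity: since the sign of $\Delta\cdot\gamma$ is uncontrolled, $\NE(X/Y;W)_{(K_X+(1-\epsilon)\Delta)\geq 0}$ neither contains nor is contained in $\NE(X/Y;W)_{(K_X+\Delta)\geq 0}$, and the curves $C_j^{\epsilon}$ appearing for the klt pair include rays that are not $(K_X+\Delta)$-negative at all. More fundamentally, your argument only shows that each individual $(K_X+\Delta)$-negative extremal ray is spanned by a rational curve; it never proves the decomposition $\NE(X/Y;W)=\NE(X/Y;W)_{(K_X+\Delta)\geq 0}+\sum\mbR^{\geq 0}[C_i]$. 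These are not equivalent statements: a priori the negative portion of the cone could be ``round,'' with few or no extremal rays there, and the identity would fail even though every negative extremal ray is rational. The bulk of the paper's proof is devoted precisely to this point, and your proposal omits it.

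The length bound is also handled very differently, and your version has problems the paper avoids. From $E\subset\Supp(\Delta)$ you cannot conclude $E\subset\lfloor\Delta\rfloor$ (the relevant component of $\Delta$ may have coefficient $<1$), so adjunction to a dlt pair $(S,\Delta_S)$ is not available in general; moreover an extremal ray of $\NE(X/Y;W)$ only cuts out an extremal \emph{face} of $\NE(S/Y;W)$, not a ray, and as you concede the $\mbQ$-factoriality of $S$ needed for the induction does not descend. None of this is necessary: because the paper's perturbed divisor $K_X+\Delta+\tfrac 1n H$ is exhibited as (klt boundary) $+$ (ample), the length estimate for its negative extremal rays comes directly out of the klt cone theorem (via Theorem \ref{thm:general-relative-cone}(3)(b) / the argument of Lemma \ref{lem:lc-length}), with no adjunction or dimension induction. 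If you want to salvage your approach, you should replace the $(1-\epsilon)\Delta$ perturbation by the paper's $+\tfrac 1n H$ perturbation and then carry out the compact-slice limiting argument explicitly.
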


\begin{proof}
	Fix a $f$-ample divisor $H$ on $X$. Then for any $n\in\mbN$ we can write $K_X+\Delta+\frac 1n H=K_X+(1-\ve)\Delta+(\frac 1n H+\ve \Delta)$ such that $\frac 1n H+\ve \Delta$ is $f$-ample for $\ve\in\mbQ^{\geq 0}$ sufficiently small (depending on $n$). Note that $(X, (1-\ve)\Delta)$ is a klt pair. Thus by Theorem \ref{thm:general-relative-cone}, there are finitely many $(K_X+\Delta+\frac 1n H)$-negative extremal rays generated by rational curves $\{C_i\}_{i\in I_n}$ contained in the fibers of $f$ such that $0<-(K_X+\Delta)\cdot C_i\leq 2\dim X$ and
	\begin{equation}\label{eqn:perturbed-cone}
		\NE(X/Y;W)=\NE(X/Y;W)_{(K_X+\Delta+\frac 1n H)\>0}+\sum_{i\in I_n}\mbR^{\geq 0}\cdot [C_i].
	\end{equation}
Define $I:=\cup_{n\>1} I_n$. Then clearly $\NE(X/Y;W)=\NE(X/Y;W)_{(K_X+\Delta+\frac 1n H)\>0}+\sum_{i\in I}\mbR^{\geq 0}\cdot [C_i]$. Note that we also have 

\[\NE(X/Y;W)_{(K_X+\Delta)\>0}=\cap_{n=1}^\infty \NE(X/Y;W)_{(K_X+\Delta+\frac 1n H)\>0}.\]
 Therefore from \eqref{eqn:perturbed-cone} we have
\begin{align*}
	\NE(X/Y;W) &=\cap_{n=1}^\infty \left(\NE(X/Y;W)_{(K_X+\Delta+\frac 1n H)\>0}+\sum_{i\in I}\mbR^{\geq 0}\cdot [C_i]\right)\\
			 &\supset \cap_{n=1}^\infty \left(\NE(X/Y;W)_{(K_X+\Delta+\frac 1n H)\>0}\right)+\sum_{i\in I}\mbR^{\geq 0}\cdot [C_i]\\
			 &=\NE(X/Y;W)_{(K_X+\Delta)\>0}+\sum_{i\in I}\mbR^{\geq 0}\cdot [C_i].
\end{align*}
	Suppose now that the inclusion is strict and so we have an element $v\in \cap_{n=1}^\infty \left(\NE(X/Y;W)_{(K_X+\Delta+\frac 1n H)\>0}+\sum_{i\in I}\mbR^{\geq 0}\cdot [C_i]\right)$ not contained in 
	\[\NE(X/Y;W)_{(K_X+\Delta)\>0}+\sum_{i\in I}\mbR^{\geq 0}\cdot [C_i].\]
	Intersecting $\NE(X/Y;W)$ with an appropriate affine hyperplane $\mathcal H$ we may assume that $\NE(X/Y;W)\cap \mathcal H$ is compact and convex and $v\in \NE(X/Y;W)\cap \mathcal H$. For each $n\>1$, we can write $v=v_n+w_n$, where $v_n\in \NE(X/Y;W)_{(K_X+\Delta+\frac 1n H)\>0}\cap \mathcal H$ and $w_n\in \sum_{i\in I}\mbR^{\geq 0}\cdot [C_i]\cap \mathcal H$. By compactness, passing to a subsequence, we may assume that limits exist, and  $v_\infty =\lim v_i$ and $w_\infty =\lim w_i$ such that $v=v_\infty +w_\infty$. Since $\NE(X/Y;W)_{(K_X+\Delta)\>0}= \cap_{n=1}^\infty \NE(X/Y;W)_{(K_X+\Delta+\frac 1n H)\>0}$ is closed, $v_\infty\in \NE(X/Y;W)_{(K_X+\Delta)\>0}\cap \mathcal H$.
	Since $\overline {\sum_{i\in I}\mbR^{\geq 0}\cdot [C_i]}\cap \mathcal H$ is compact, $w_\infty\in\overline {\sum_{i\in I}\mbR^{\geq 0}\cdot [C_i]}\cap \mathcal H$. 
	By standard arguments (see the end of the proof of \cite[Theorem III.1.2]{Kol96}) one sees that $\NE(X/Y;W)_{(K_X+\Delta)\>0}+{\sum_{i\in I}\mbR^{\geq 0}\cdot [C_i]}$ is closed and hence \[\overline {\sum_{i\in I}\mbR^{\geq 0}\cdot [C_i]}\subset \NE(X/Y;W)_{(K_X+\Delta)\>0}+{\sum_{i\in I}\mbR^{\geq 0}\cdot [C_i]}.\]
	Thus $w_\infty =v_0+w'_\infty$, where $v_0\in \NE(X/Y;W)_{(K_X+\Delta)\>0}$ and $w'_\infty\in {\sum_{i\in I}\mbR^{\geq 0}\cdot [C_i]}$. Finally, since $v=(v_\infty +v_0)+w'_\infty$, we obtain the required contradiction.
\end{proof}~\\

\part{MMP for Projective Morphisms}

\section{Finite generation following Cascini-Lazi\'c}

In \cite{CL10} it is shown that adjoint rings with big boundaries on   projective varieties are finitely generated.
In this section we will extend this result to the case of a projective morphism of analytic varieties. 
\begin{theorem}\label{t-a} Let
$\pi :X\to U$ be a projective morphism of complex analytic varieties, where $X$ is smooth variety with $\dim X=n$. Let $B_1, \ldots , B_k$
be $\mathbb Q$-divisors on $X$ such that $\lfloor B_i\rfloor  = 0$ for all $i$, and such that the support of
$\sum _{i=1}^k B_i$ has simple normal crossings. Let $A$ be a $\pi$-ample $\mathbb Q$-divisor on $X$, and
denote $D_i = K_X + A + B_i$ for every $i$.
Then the adjoint ring
\begin{equation*} 
R(X/U; D_1, \ldots , D_k) = \bigoplus _{(m_1,\ldots ,m_k)\in \mathbb N_k}
\pi _* \mathcal O _X \left(\lfloor \sum m_iD_i\rfloor \right)
\end{equation*}
is a locally finitely generated $\mathcal O _U$-algebra.
\end{theorem}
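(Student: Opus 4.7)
The claim is local on $U$, so by Definition \ref{def:finite-generation} and Remark \ref{rmk:finite-generation} it suffices to prove finite generation (as a genuine $\mathcal O_U(U)$-algebra) after shrinking $U$ to a relatively compact Stein open neighborhood of any prescribed compact subset. In this Stein setting, by Lemma \ref{l-mD}, the graded pieces $\pi_*\mathcal O_X(\lfloor \sum m_i D_i \rfloor)$ may be identified with global sections, and the linear series $|mD/U|$ and $|mD|$ have the same fixed parts. Thus, via Lemma \ref{l-2.28}, finite generation of $R(X/U; D_1,\ldots, D_k)$ is equivalent to showing that the function $\mathbf{Fix}$ extends to a rational piecewise affine function on the rational polytope $\mathcal P \subset \mathrm{Div}_{\mathbb R}(X)$ spanned by the $D_i$, and that the limit defining $\mathbf{Fix}$ is attained on a single divisibility scale. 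This is the target statement.

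The approach follows \cite{CL10}: we induct simultaneously on $\dim X = n$ and on the number $k$ of divisors. The low-dimensional and small-$k$ base cases reduce to classical finite generation (linear series on curves, section rings on surfaces, etc.). For the inductive step, apply the relative Bertini theorem (Theorem \ref{t-bertini+}) and Lemma \ref{l-klt} to replace a sufficiently positive multiple of $A$ by $A' + \epsilon S$, where $S$ is a very general smooth prime divisor, $\epsilon\in\mathbb Q_{>0}$ is small, and $A'$ remains $\pi$-ample. Absorbing $\epsilon S$ into the boundary on the inductive side, we are reduced to establishing finite generation of $R(X/U; K_X + S + A' + B_i'')$ where $(X, S + B_i'')$ is plt with $\lfloor S + B_i'' \rfloor = S$. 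By the inductive hypothesis applied to $S$, the restricted adjoint ring on $S$ with boundary given by the different of $B_i''$ plus $A'|_S$ is locally finitely generated.

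The engine that transfers this to finite generation on $X$ is an asymptotic extension statement: for $m$ sufficiently divisible, every section of $|m(K_X + S + A' + B_i'')|_S/U|$ lifts, modulo a fixed part supported on a divisor independent of $m$, to a section of $|m(K_X + S + A' + B_i'')/U|$. In the analytic/Stein relative setting this is powered by relative Kawamata--Viehweg vanishing (Theorem \ref{thm:relative-kvv}) applied to the short exact sequence
\[
0 \to \mathcal O_X(D - S) \to \mathcal O_X(D) \to \mathcal O_S(D|_S) \to 0
\]
with $D = m(K_X + S + A' + B_i'')$, giving $R^1\pi_*\mathcal O_X(D - S) = 0$ because $D - S - (K_X + B_i'')$ equals $A' + (m-1)(K_X + S + A' + B_i'')$ plus a boundary, which is $\pi$-nef-big up to modifications. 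Surjectivity of $\pi_*$ on a Stein base then gives surjectivity on global sections, and via Lemma \ref{l-lu} this translates into equality of the relevant linear series. Combining the extension with the inductive finite generation on $S$ and the convex/diophantine analysis of Lemma \ref{l-2.28} yields rational piecewise affinity of $\mathbf{Fix}$ on $\mathcal P$, and hence finite generation on $X$.

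The main obstacle, as in \cite{CL10}, lies in executing the extension argument cleanly: the lifted sections must be compatible across the rational polytope $\mathcal P$ so that the convex-geometric passage from pointwise rationality of $\mathbf{Fix}$ to global piecewise affinity actually closes the induction. In the projective case this is managed through asymptotic multiplier ideals and a simultaneous induction bundling non-vanishing with finite generation; in our setting each projective input must be replaced by its relative analytic counterpart, namely the Fix/Mob formalism of Lemmas \ref{l-mD}--\ref{l-gu}, relative Kawamata--Viehweg vanishing, Stein cohomology vanishing, and the analytic Bertini Theorem \ref{t-bertini+}. Once these are assembled into an analytic analogue of the CL10 extension theorem, the remaining convex-geometric and diophantine steps transfer essentially verbatim and complete the proof.
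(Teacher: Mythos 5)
Your overall strategy coincides with the paper's: follow Cascini--Lazi\'c, induct on dimension, restrict to a divisor $S$, lift sections via extension theorems powered by relative Kawamata--Viehweg vanishing over a Stein base, and close the induction with the Fix/Mob formalism. But there is a genuine gap: you never address the rationality of the relative pseudo-effective polytope $\mathcal E_A(V)=\{B\in\mathcal L(V):|K_X+A+B/U|_{\mathbb R}\ne\emptyset\}$, i.e.\ Theorem \ref{t-b}. This is precisely the part that does \emph{not} ``transfer essentially verbatim'' from the projective case: the paper proves it by restricting to very general fibers $X_u$ (where the projective statement of Cascini--Lazi\'c applies), sorting the fibers according to which rational polytope $\mathcal E_{A_u}(V_u)$ they produce, and using an uncountable-Zariski-density argument combined with generic flatness, semicontinuity, and cohomology-and-base-change to show that a single polytope computes $\mathcal E_A(V)$ on the total space (plus a separate argument for the components $S_i$ not dominating $U$). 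Without this, the very first step of the finite-generation argument --- replacing $\mathcal P$ by $\mathcal P_{\mathcal E}=\mathcal P\cap\mathcal E_A(V)$ and applying Gordan's lemma to obtain finitely many generators $K_X+A+P_i\sim_{\mathbb Q,U}G_i\geq 0$ --- is unavailable.

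A second, logical, problem: your claim that finite generation ``is equivalent to'' piecewise affinity of $\mathbf{Fix}$ via Lemma \ref{l-2.28} inverts that lemma, which deduces piecewise affinity \emph{from} finite generation, not conversely. The paper instead closes the induction as in the projective case by verifying three concrete properties of the cone $\mathcal C=\mathbb R^{\geq 0}\mathcal B$: it is covered by the subcones $\mathcal C_j$ on which some $S_j$ lies in $\lfloor B_\tau+B\rfloor$; for sufficiently large integral classes of $\mathcal C_j$ one may subtract $S_j$ and remain in $\mathcal C$; and the restricted algebras ${\rm res}_{S_j}R(X/U,\mathcal S_j)$ are finitely generated by the dimension-$(n-1)$ inductive hypothesis together with Theorem \ref{t-b}. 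Your sketch correctly identifies the extension theorem as the engine, but the passage from there to finite generation must go through this cone decomposition rather than through Lemma \ref{l-2.28}.
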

Note that if $K_X+B+A$ is klt and relatively nef, then the finite generation of $R(X/U, K_X+A+B)$ follows from the base point free theorem, cf. \cite[Theorem 4.8, Corollary 4.9]{Nak87}.
The proof in \cite{CL10} is an induction on the dimension proving the two statements \cite[Theorem A and  B]{CL10}. 
We will begin by showing that \cite[Theorem B]{CL10} implies a similar result in our setting. Recall that a (rational) \textit{polytope} $\mathcal P \subset \R ^n$ is  the convex hull of finitely many (rational) points in $\R ^n$.

\begin{theorem}\label{t-b}
Let $\left(X, \sum ^p_{i=1} S_i\right)$ be a log smooth pair, where
$S_1, \ldots , S_p$ are distinct prime divisors and $\pi :X\to U$ is a projective morphism to a Stein variety $U$. Let $V = \sum ^p
_{i=1} \mathbb R\cdot S_i \subset {\rm Div}_{ \mathbb R} (X)$, and $A\>0$ be a $\pi$-ample $\mbQ$-divisor on $X$. Define
\[
\mathcal L(V ):=
\{B = \sum_{i=1}^p b_iS_i \in V\; |\; 0 \leq b_i \leq 1\ {\rm for \ all}\ i\}.
\] 
Then
\[\mathcal E_A(V ):= \{B \in \mathcal L(V )\; :\; |K_X + A + B/U|_{\mathbb R} \ne  \emptyset \}\]
is a rational polytope.
\end{theorem}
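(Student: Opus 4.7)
The plan is to follow the inductive strategy of \cite[Theorem B]{CL10}, which proves rational polyhedrality jointly with finite generation (Theorem \ref{t-a}) by induction on $n=\dim X$. Since the assertion is local on $U$, I would first shrink $U$ to a relatively compact Stein open subset; the preliminary results Lemma \ref{l-mD}, Lemma \ref{l-2.3}, Lemma \ref{l-lu} and Lemma \ref{l-2.28} then provide analytic counterparts of the ring-theoretic tools used throughout \cite{CL10} (comparison of $|D/U|_\R$ with $|D|_\R$, piecewise-linearity of the fixed part on a polytope, and lifting of sections via the Stein property), so that the algebraic arguments transport.

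Convexity of $\mathcal E_A(V)$ is immediate by forming $\R$-linear combinations of effective representatives of $K_X+A+B_i$ over $U$. Closedness of $\mathcal E_A(V)\subset \mathcal L(V)$ follows from a perturbation argument that absorbs small errors into $\pi$-ample $A$: given $B_k\to B$ with $B_k\in \mathcal E_A(V)$, for any $\varepsilon>0$ one can write $B\le B_k+\varepsilon A'$ with $A'\sim_{\Q,U}A$ effective, so effectivity of $K_X+A+B_k$ forces that of $K_X+A+B$. To prove rational polyhedrality near a boundary point $B_0\in\partial\mathcal E_A(V)$, I would work on a small rational polytope $\mathcal P\ni B_0$ spanned by rational points $B_1,\ldots,B_\ell\in \mathcal L(V)\cap V_\Q$ and study the graded ring
\[
R=R(X/U;\,K_X+A+B_1,\ldots,K_X+A+B_\ell).
\]
Once $R$ is locally finitely generated on $U$, Lemma \ref{l-2.28} gives that ${\rm Fix}|m(K_X+A+B)/U|$ is piecewise $\Q$-linear in $B\in\mathcal P$ for $m$ sufficiently divisible, which immediately cuts $\mathcal E_A(V)\cap \mathcal P$ out as a rational polytope near $B_0$. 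Finite generation of $R$ is furnished by Theorem \ref{t-a} used inductively: when $B_0$ is an interior point $K_X+A+B_0$ is big and one invokes the relative base-point-free theorem \cite[Thm.\ 4.8]{Nak87}; when $B_0$ lies on the boundary, I pick a component $S$ of the stable base locus of $K_X+A+B_0$ (after a log resolution via Lemma \ref{l-log-resolution}), use adjunction $(K_X+A+B)|_S=K_S+A|_S+B_S$, apply Theorem \ref{t-b} inductively to $(S,A|_S)$ in dimension $n-1$, and lift restricted effective representatives back to $X$ via Kawamata--Viehweg vanishing (Theorem \ref{thm:relative-kvv}) combined with Lemma \ref{l-lu}.

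The main obstacle is the restriction-and-lifting step. In the algebraic proof of \cite{CL10}, Noetherianity of the base is used repeatedly to guarantee that finite subdata suffice; in the analytic setting, Noetherianity of $\Gamma(W,\mathcal O_W)$ is only available on compact Stein subsets (cf.\ Remark \ref{rmk:relative-NS}), so one must verify that the rational affine hyperplanes cutting out $\mathcal E_A(V)$ on small Stein neighborhoods assemble into a single finite collection over $U$. Since $\mathcal L(V)\subset V$ is a fixed rational polytope independent of $U$ and the cutting hyperplanes are determined by divisorial valuations on $X$, this bookkeeping is automatic; once it is settled, the remaining arguments of \cite{CL10} transport with no essential change.
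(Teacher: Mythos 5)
Your proposal and the paper part ways at the very first step: the paper does \emph{not} re-run the Cascini--Lazi\'c induction to prove Theorem \ref{t-b}. Instead it observes that once $\dim U>0$ every fibre $X_u$ is a projective variety of dimension $\le n-1$, applies the already-known projective statement \cite[Theorem B]{CL10} to each fibre to obtain rational polytopes $\mathcal E_{A_u}(V_u)$, and then proves that the polytope attached to an uncountably Zariski dense set of fibres computes $\mathcal E_A(V)$, using generic flatness, cohomology-and-base-change and the Stein hypothesis to lift sections from a very general fibre to $X$. That gluing step is the entire content of the proof. You dismiss the analogous bookkeeping as ``automatic'', but it is not: a priori $\mathcal E_{A_u}(V_u)$ can jump along countably many analytic subsets of $U$, and both inclusions between the generic fibrewise polytope and $\mathcal E_A(V)$ have to be established; the uncountable-Zariski-density argument exists precisely to handle this, and nothing in your sketch replaces it.

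The more serious gap is in your finite generation step. To apply Lemma \ref{l-2.28} you need the ring $R=R(X/U;K_X+A+B_1,\ldots,K_X+A+B_\ell)$ on the $n$-dimensional $X$ to be finitely generated, i.e.\ Theorem \ref{t-a} in dimension $n$. But in the paper's induction (as in \cite{CL10}) Theorem \ref{t-a}$_n$ is deduced \emph{from} Theorem \ref{t-b}$_n$ --- the proof of Theorem \ref{t-a} begins by invoking Theorem \ref{t-b} to see that $\mathcal P\cap\mathcal E_A(V)$ is a rational polytope --- so using it here is circular. Your proposed substitutes do not close the loop: the relative base-point-free theorem yields finite generation only when $K_X+A+B_0$ is nef, not merely big; and the restriction-to-$S$-and-lift route to finite generation of the restricted algebra is exactly the content of Theorems \ref{t-4.3} and \ref{t-ra}, which again sit inside the proof of Theorem \ref{t-a}$_n$ and presuppose Theorem \ref{t-b}. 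Your closedness argument is also incorrect as stated: from $B\le B_k+\varepsilon A'$ and effectivity of $K_X+A+B_k$ you only conclude effectivity of $K_X+A+B_k+\varepsilon A'$, and a divisor dominated by an effective one need not be effective. The fibrewise reduction in the paper sidesteps all of these difficulties, which is presumably why it was chosen.
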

\begin{proof} By \cite[Theorem B]{CL10}
we know that Theorem  \ref{t-b} holds in the projective case and hence we may assume that $\dim U>0$, so that 
$\dim X_u\leq n-1$ for any $u\in U$.

We will first prove the claim assuming that every $S_i$ dominates $U$.
Let $U'\subset U$ be the biggest open subset such that, 
denoting $\left(X', \sum ^{p}_{i=1} S'_i\right):=\left(X, \sum ^p_{i=1} S_i\right)\times _U U'$ then  $(X_u,\sum_{i=1}^p S_{i,u})$ is log smooth for any $u\in U'$.

Let $W_u$ be the subspace of ${\rm Div}_\R(X_u)$ spanned
by the irreducible components of $S_{i,u}$ and $V_u\subset W_u$ be the image of $V$ under the restriction map $r_u: {\rm Div}_\R(X)\to {\rm Div}_\R(X_u)$. Then $ \mathcal E _{A_u}(W_u)$ is a rational polytope, and hence so is $ \mathcal E _{A_u}(V_u)$ (since it is obtained by intersecting a rational polytope with a rational subspace).
Note that $r_u$ defines an isomorphism of $\mbR$-vector spaces $r_u:V\to V_u$. In what follows we often will identify $V$ and $V_u $.

 For every $u\in U'$, let $B_u^1,\ldots, B_u^{k_u}\in r_u({\rm Div}_\R(X))$ be a set of $\Q$-divisors generating the rational polytope $\mathcal E _{A_u}(V_u)$. 
 Consider the set  $\mathcal C^0=\{\mathbf B\}$ (resp. $\mathcal C^1=\{\mathbf B\}$) of finite subsets $\mathbf B=\{B^1,\ldots , B^k\}$, where $B^i\in  {\rm Div }_\Q (X)$ such that \[U (\mathbf B):=\{u\in U'\;|\;\mathcal E _{A_u}(V_u)=\langle B_u^1,\ldots ,B_u^k\rangle\}\]  is  (resp. is not)  {uncountably Zariski dense}. Here $\langle\mathbf B\rangle:=\langle B^1,\ldots ,B^k\rangle$ denotes the polytope spanned by $B^1,\ldots ,B^k$.
Note that $U'=\cup _{\mathbf B\in \mathcal C ^0\cup \mathcal C ^1}U(\mathbf B)$, where $\mathcal C ^0, \mathcal C ^1$ are countable as their elements are finite subsets of the countable set $V\cap {\rm Div }_\Q (X)$. Since, $\cup _{\mathbf B\in \mathcal C ^1}U(\mathbf B)$ is contained in a countable union of closed analytic subsets, then
 \[U^0:=\cup _{\mathbf B\in \mathcal C ^0}U(\mathbf B)=U'\setminus \cup _{\mathbf B\in \mathcal C ^1}U(\mathbf B)\]
 contains the complement of countably many analytic proper closed subsets in $U'$. In particular, $\mathcal C ^0$ is non-empty and for any 
 $\mathbf B\in \mathcal C ^0$, $U(\mathbf B)$ is {uncountably Zariski dense}.
 
 Fix $\bar {\mathbf B}\in \mathcal C ^0$ and write $\bar {\mathbf B}=\{\bar B ^1,\ldots , \bar B ^k\}$.
 For any $u\in U( \bar {\mathbf B} )$, we have that $\mathcal E _{A_u}(V_u)$ is the rational polytope generated by the $\mbQ$-divisors $\bar B ^1|_{X_u},\ldots , \bar B ^k|_{X_u}$ and there exists an integer $m=m(u)$ such that 
 $|m(K_{X_u}+A|_{X_u}+\bar B^i|_{X_u})|\ne \emptyset$ for all $1\leq i\leq k$. Since $U( \bar {\mathbf B} )$ is uncountably Zariski dense, it must contain an uncountably Zariski dense (in $U'$) set $W\subset U( \bar {\mathbf B} )$ such that $m(u)=\bar m$ is independent of $u\in W$.
 
 Now observe that, by the generic flatness, the upper semi-continuity theorem, and the cohomology and base-change theorem (see Theorem V.4.10 and Theorem III.4.12 in \cite{BS76}) it follows that there is a dense Zariski open subset $U^{\bar{m}}\subset U'$ such that $f$ is flat over $U^{\bar{m}}$ and 
 \[f_*\OO _X(\bar m(K_X+A+\bar B^i))\otimes\mbC(u)\to H^0(\OO _{X_u}(\bar m(K_{X_u}+A|_{X_u}+\bar B^i|_{X_u})))\]
 is an isomorphism for all $1\leq i\leq k$ and for all $u\in U^{\bar{m}}$.\\

Since $W$ is uncountably Zariski dense in $U'$, we have that $ W\cap U^{\bar{m}}\ne\emptyset$ and it follows from the relation above that $f_*\mcO_X(\bar{m}(K_X+A+\bar{B}^i))\otimes\mbC(u_0)\neq 0$ for any $u_0\in W\cap U^{\bar{m}}\subset U$. Then since $U$ is Stein, we have 
\[\Gamma (X,\OO _X(\bar m(K_X+A+\bar B^i)))=\Gamma (U, f_*\OO _X(\bar m(K_X+A+\bar B^i)))\ne 0.\] 
In particular, $|\bar m(K_X+A+\bar B^i)|\ne \emptyset$ and $|\bar{m}(K_{X_u}+A_u+\bar{B}^i_u)|\neq 0$ for all $1\<i\<k$ and for all $u\in U^{\bar{m}}$.\\
This shows that $\bar{B}^i_{u'}\in\mcE_{A_{u'}}(V_{u'})$ for all $u'\in U^{\bar{m}}$. Thus we have that 

    \begin{equation}\label{eqn:cone-comparison}
     r_u^{-1}\left( \mathcal E _{A_u}(V_u)\right) \subset r_{u'}^{-1}\left( \mathcal E _{A_{u'}}(V_{u'})\right)\qquad \text{ for all } u\in W \mbox{ and  for all } u'\in U^{\bar m}. 
\end{equation}
 Since $U^{\bar m}\cap U(  {\mathbf B} )\ne \emptyset $ for {any ${\mathbf B}\in \mathcal C ^0$},  it follows that for $u'\in U^{\bar m}\cap U(  {\mathbf B} )$ we have \[ \langle \bar{\mathbf B}\rangle=r_u^{-1}\left( \mathcal E _{A_u}(V_u)\right) \subset r_{u'}^{-1}\left( \mathcal E _{A_{u'}}(V_{u'})\right)=\langle {\mathbf B}\rangle.\]
 
  By symmetry, we have that $\langle{\mathbf B}\rangle= \langle\bar {\mathbf B}\rangle$ and hence $\mathcal C ^0=\{\bar{\mathbf B}\}$. In particular this shows that $\langle\bar {\mathbf B}\rangle\subset \mathcal E _{A}(V)$
 
 For the reverse inclusion, simply pick $B\in \mathcal E _{A}(V)$, then $\Gamma (X, \OO _X(m(K_X+A+ B)))\ne 0$ for some $m>0$, and so
 $\Gamma (X_u,\OO _{X_u}(m(K_{X_u}+A_u+ B_u)))\ne 0$ for general $u\in U$. This means that $B_u\in \mathcal E _{A_u}(V_u)$ for general $u\in U(\bar {\mathbf B})$,  and hence $B$ is contained in the polytope spanned by $\bar {\mathbf B}$.
 Thus $\mathcal E _{A}(V)=\langle\bar {\mathbf B}\rangle$ is a rational polytope.

To complete the proof, we consider the case when $S_1,\ldots ,S_{p'}$ dominate $U$ and $S_{p'+1},\ldots , S_p$ do not dominate $U$ (and hence $\pi (S_i)\cap U'=\emptyset$ for $i=p'+1,\ldots , p$). It suffices to show that if $B = \sum_{i=1}^p b_iS_i$ and $B' = \sum_{i=1}^{p'} b_iS_i$, then $B\in \mathcal E_A(V )$ if and only if $B'\in \mathcal E_A(V )$.  One direction is clear: if $B'\in \mathcal E_A(V )$ then $K_X + A + B'\sim _{\R,U}D'\geq 0$ so that $K_X + A + B\sim _{\R,U}D'+B-B'\geq 0$, and hence $B\in \mathcal E_A(V )$. Conversely, if $B\in \mathcal E_A(V )$, then $K_X + A + B\sim _{\R,U}D\geq 0$ and so $K_{X_u} + A_u + B_u\sim _{\R}D_u\geq 0$ for all $u\in U''$, where $U''$ is the largest open subset of $U'$ containing the points $u\in U'$ such that $X_u\not \subset {\rm Supp}(D)$. Note that $B_u=B'_u$ for any $u\in U''$ and hence $K_{X_u} + A_u + B'_u\sim _{\R}D_u\geq 0$ for all $u\in U''$. Then arguing as above there is a dense Zariski open subset $\bar U\subset U$ such that $f$ is flat over $\bar U$ and 
 \[f_*\OO _X(\bar m(K_X+A+ B'))\otimes\mbC(u)\to H^0(\OO _{X_u}(\bar m(K_{X_u}+A|_{X_u}+ B'|_{X_u})))\]
 is an isomorphism for all  $u\in \bar U$.
 Since $H^0(\OO _{X_u}(\bar m(K_{X_u}+A|_{X_u}+ B'|_{X_u})))=H^0(\OO _{X_u}(\bar m(K_{X_u}+A|_{X_u}+ B|_{X_u})))\ne 0$ for $u\in U''\cap \bar U$, then
 $f_*\OO _X(\bar m(K_X+A+ B'))\ne 0$.
 Since $U$ is Stein, then $H^0(f_*\OO _X(\bar m(K_X+A+ B')))\ne 0$ and so
it follows that $K_{X} + A + B'\sim _{\R}D^*\geq 0$ for some effective $\mbR$-divisor $D^*$ on $X$.
\end{proof}

The rest of this section is devoted to the proof of Theorem \ref{t-a}.
We will proceed by induction and show that Theorems \ref{t-a}$_{n-1}$ and \ref{t-b}$_{n}$ imply Theorem  \ref{t-a}$_{n}$ (here Theorem  \ref{t-a}$_{n}$ means Theorem  \ref{t-a} for $n$-dimensional varieties $\dim X=n$). Thus Theorem \ref{t-a}  holds in all dimensions.
Unluckily, we are unable to find a direct proof and so we will follow closely the arguments of \cite{CL10}.
We will not repeat the details of each step of the corresponding proof in \cite{CL10}, rather we will emphasize the necessary changes
to the statements, the arguments and the references used.
As remarked above, \cite{CL10} works with $X$ projective.
We will instead assume that $\pi:X\to U$ is a projective morphism of normal analytic varieties where $U$ is Stein  and relatively compact. If $(X,B)$ is a simple normal crossings pair, we will not assume (unless otherwise stated) that 
$(X,B)$ is simple normal crossings over $U$.
There are 3 kinds of results that play a prominent role in the arguments of \cite{CL10}.
The extension theorems from \cite[Section 3]{CL10} rely mainly on Kawamata-Viehweg vanishing and hence generalize easily to our context (cf. Theorem \ref{thm:relative-kvv}).
The results about convex polytopes and diophantine approximation require no changes.
The results about the Zariski decomposition are (with one simple exception discussed below) not used in the proof of Theorem  \ref{t-a}.
\subsection{Extension Theorems}
As an immediate consequence of Kawamata-Viehweg vanishing, one obtains the following basic extension result corresponding to \cite[Lemma 3.1]{CL10}.
   \begin{lemma}\label{l-3.1} Let $(X, B)$ be a log smooth pair of dimension $n$, where $B$ is
a $\mathbb Q$-divisor such that $\lfloor B\rfloor = 0$ and $\pi :X\to U$ is a projective morphism to a Stein variety. Let $A$ be a $\pi$-nef-big $\mathbb Q$-divisor.

(i) Let $S$ be a smooth prime divisor such that $S \not \subset  \Supp B$. If $G \in {\rm Div}(X)$ is
such that $G \sim _{\mathbb Q,U} K_X + S + A + B$, then 
$|G|_S=|G_{|S}|$.

(ii) Let $f : X \to Y$ be a bimeromorphic morphism of varieties projective over $U$, and let
$V \subset  X$ be an open set such that $f|_V$ is an isomorphism. 
{Let $H'$ be a very ample$/U$ divisor on $Y$}
and let $H = f ^*H'$. 
If $F \in {\rm Div}(X)$ is such that $F \sim _{\mathbb Q,U} K_X +(n+1)H +A+B$,
then $|F |$ is basepoint free at every point of $V$.\end{lemma}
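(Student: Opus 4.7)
The two parts of Lemma \ref{l-3.1} share a common theme: push forward a suitable short exact sequence by $\pi$, apply the relative Kawamata--Viehweg vanishing of Theorem \ref{thm:relative-kvv} to kill the obstructing $R^1\pi_*$, and then use the Stein hypothesis on $U$ to translate sheaf-level surjectivity into the desired statement about linear series via Lemmas \ref{l-lu} and \ref{l-gu}.

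For (i), I would apply $R\pi_*$ to the restriction sequence
\[
0 \to \mcO_X(G-S) \to \mcO_X(G) \to \mcO_S(G|_S) \to 0.
\]
Because $\lfloor B\rfloor=0$ and $(X,B)$ is log smooth, the pair $(X,B)$ is klt, and the relation $(G-S)-K_X \sim_{\Q,U} A+B$ combined with the $\pi$-nef-bigness of $A$ lets us apply Theorem \ref{thm:relative-kvv} to conclude $R^1\pi_*\mcO_X(G-S)=0$. Hence $\pi_*\mcO_X(G) \to \pi_*\mcO_S(G|_S)$ is surjective, and Lemma \ref{l-lu} (which is where the Stein assumption on $U$ enters) converts this into the equality $|G|_S = |G_{|S}|$.

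For (ii), fix $x\in V$ and set $y=f(x)\in Y$. By Lemma \ref{l-gu} it suffices to show that $\pi_*\mcO_X(F)\to\pi_*(\mcO_X(F)/\mathfrak m_x\cdot\mcO_X(F))$ is surjective on a neighborhood of $\pi(x)$. The plan is to construct, for each such $x$, an effective $\Q$-divisor $\Delta\sim_{\Q,U} B + \tfrac{n+1-\delta}{n+1}\sum_{i=1}^{n+1}H_i + \ve A'$ (with $\delta,\ve>0$ small and $A'\sim_{\Q,U} A$ a general choice) such that $(X,\Delta)$ is klt on $V\setminus\{x\}$ and has $\{x\}$ as its unique minimal non-klt center. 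The divisors $H_i\in |H|$ are obtained by pulling back, via $f$, general members $H'_1,\ldots,H'_{n+1}\in |H'|$ passing through $y$; since $H'$ is very ample over $U$, we can arrange $y$ to be an isolated component of $\bigcap H'_i$ in a Stein neighborhood of $y$, and since $f|_V$ is an isomorphism, the $H_i$ inherit the analogous local structure at $x$. A standard tie-breaking carried out on a common log resolution of $(X,B+\sum H_i)$, using the freedom in the $\pi$-big part of $A$ to perturb, then produces $\Delta$. The relation
\[
F-K_X-\Delta\sim_{\Q,U} \delta H + (1-\ve)A
\]
is $\pi$-nef-big (note that $H=f^*H'$ is $\pi$-nef-big because the structure morphism $g:Y\to U$ is projective with $H'$ relatively very ample and $f$ is bimeromorphic). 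Theorem \ref{thm:relative-kvv} applied on the log resolution, combined with the short exact sequence attached to the multiplier ideal $\mcJ(X,\Delta)$, then yields the required surjectivity.

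The main obstacle is the execution of (ii): carrying out the tie-breaking rigorously in the relative analytic setting, in particular arranging that the pullbacks $H_i$ cut out $x$ as an isolated component of $\bigcap H_i$ on a suitable neighborhood of $\pi(x)$ and that the resulting $(X,\Delta)$ has $\{x\}$ as a unique minimal non-klt center. The further delicate step is to check that KV vanishing applied to the sub-klt lift on the log resolution descends correctly to $X$; this descent, together with the final translation from sheaf-level surjectivity to basepoint-freeness of $|F|$, is exactly where the Stein hypothesis on $U$ (through Lemmas \ref{l-lu} and \ref{l-gu}) plays its essential role.
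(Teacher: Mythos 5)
Your part (i) is exactly the paper's argument: push forward the restriction sequence of $G$ along $S$, kill $R^1\pi_*\mcO_X(G-S)$ by Theorem \ref{thm:relative-kvv} (since $(G-S)-(K_X+B)\sim_{\Q,U}A$ is $\pi$-nef-big), and invoke Lemma \ref{l-lu}; nothing to add there.

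For part (ii) you take a genuinely different route. The paper argues by a Castelnuovo--Mumford-style induction: since $H'$ is very ample over the Stein base, $\mcO_X(H)\otimes\mathfrak m_x$ is relatively generated, so one picks $T_1,\dots,T_n\in|H\otimes\mathfrak m_x|$ whose intersection has a zero-dimensional component at $x$ (this is where $f|_V$ being an isomorphism is used), and then applies Theorem \ref{thm:relative-kvv} $n$ times to the chain of complete intersections $X_i=T_1\cap\dots\cap T_i$ to get surjectivity of $\pi_*\mcO_X(F)\to\pi_*\mcO_{X_n}(F|_{X_n})\to \mcO_X(F)/\mathfrak m_x$. This uses only the vanishing theorem already in hand and exactly accounts for the coefficient $n+1$ (one copy of $H$ is spent at each of the $n$ cutting steps, leaving $F-lH-(K_X+S_{\le l}+A+B)$ nef and big throughout). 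Your Angehrn--Siu-type plan -- building $\Delta$ with an isolated non-klt point at $x$ and applying Nadel-type vanishing to the multiplier ideal sequence -- would also prove the statement, and indeed once the $n+1$ general pullbacks $H_i$ through $x$ give $\mult_x\Delta\ge n$ with $\bigcap H_i\cap V=\{x\}$, the point $x$ is automatically an isolated component of the non-klt locus, so the tie-breaking step is not actually needed. That is fortunate, because as written that step is the one shaky ingredient: $A$ is only $\pi$-nef-big, so a ``general'' $A'\sim_{\Q,U}A$ may have fixed components through $x$ and cannot be used to perturb discrepancies at $x$ the way an ample class can. The real cost of your route is infrastructural: it requires analytic multiplier ideals, Nadel vanishing on a log resolution and its descent, and the relative Bertini statement, none of which are needed for the paper's more elementary induction. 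If you pursue your version, drop the tie-breaking, justify the isolatedness of $x$ in the cosupport of $\mcJ(X,\Delta)$ directly from the transversality of the $H_i$ on $V$ (Theorem \ref{t-bertini+}), and note that surjectivity onto the skyscraper at $x$ then follows because $x$ is isolated in the non-klt subscheme and $U$ is Stein.
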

    \begin{proof} Consider the short exact sequence
    \[0\to \mathcal O _X(G-S)\to \mathcal O _X(G)\to \mathcal O _S(G|_S)\to 0.\]
    By Kawamata-Viehweg vanishing (Theorem \ref{thm:relative-kvv}), we have $R^i\pi _* \mathcal O _X(G-S)=0 $
    for all $i>0$, and hence a surjection $\pi _*\mathcal O _X(G)\to \pi _*\mathcal O _S(G|_S)$; this is equivalent to (i), by Lemma \ref{l-lu}.

    The proof of (ii) proceeds by induction. Pick a point $x\in V$. Pick elements $T_1,\ldots ,T_{n}\in |H\otimes \mathfrak m_x|$, and let $X_0=X, X_i=T_1\cap \ldots \cap T_i$ for any $1\leq i\leq n$.
    Since $H'$ is very ample over $U$ and $U$ is Stein, $\mathcal O_X(H)\otimes \mathfrak m _x$ is generated over $U$. We may assume that $T_1\cap\ldots \cap T_n$ has a $0$-dimensional 
    component $X_n'$ supported at $x$. 
    For any $0\leq i\leq n-1$, we have short exact sequences
    \[ 0\to \OO _{X_i}((F-T_{i+1})|_{X_i})\to \OO _{X_i}(F|_{X_i})\to \OO _{X_{i+1}}(F|_{X_{i+1}})\to 0 .\]
    Since $R^k\pi_*\OO _X(F-lH)=0$ for $k>0$  and $0\leq l\leq n$ (by Kawamata-Viehweg vanishing, Theorem \ref{thm:relative-kvv} and by induction), {it is easy to see that $R^k\pi _* \OO _{X_i}((F-lH)|_{X_i})=0$ for $k>0$ and $0\leq l\leq n-i$} (cf. proof of \cite[Lemma 2.11]{Kaw99}). Thus the homomorphisms \[ \pi_*\OO _X(F)\to \pi_*\OO _{X_1}(F|_{X_1})\to \ldots \to \pi_*\OO _{X_n}(F|_{X_n})\] are surjective.
    Note that $x\in X$ is an irreducible component of the support of $\OO _{X_n}(F|_{X_n})$ and so there is a surjection $\OO _{X_n}(F|_{X_n})\to \OO _{X_n}(F|_{X_n})/\mathfrak m _x$. It follows that the evaluation map
    $\Gamma (\OO _X(F))\to \OO _X(F) /\mathfrak m _x$ is also surjective, i.e. $\OO _X(F)$ is generated at $x$.
    
    \end{proof}
 
      All results of \cite[Section 3]{CL10} follow similarly assuming that $\pi :X\to U$ is a projective morphism to a (relatively compact) Stein variety. For ease of notation we will say that $\pi :X\to U$ is a
morphism to a relatively compact variety if it is the restriction of a morphism $\pi' :X'\to U'$ over a relatively compact open subset $U\subset U'$ so that $X=X'\times _{U'}U$. 
Note that in this context, we consider $\pi$-ample, $\pi$-nef and $\pi$-big divisors
      instead of ample, nef and big divisors, however we do not require that smooth varieties (resp. log smooth pairs) are relatively smooth, i.e. the corresponding morphism is not assumed to be smooth.
     We  use Theorem \ref{thm:log-resolution} and Lemma \ref{l-log-resolution} for the existence of log resolutions, Theorem \ref{t-rel-ample} for useful facts about relatively ample divisors, Lemma \ref{l-klt} for a result about klt pairs, and Theorem \ref{t-bertini+} for the required Bertini Theorem.

\subsection{Proof of Finite Generation Theorem}
The key step in the proof of Theorem \ref{t-a} is to show that the restricted algebras are finitely generated (locally around every point $u\in U$).
In order to accomplish this we will need the following set up.
Let $(X,S+\sum _{i=1}^p S_i)$ be a log smooth pair, where $S,S_1, \ldots , S_p$ are distinct prime divisors and $\pi :X\to U$ is a projective morphism to a Stein space.
Let $V=\sum _{i=1}^p \mathbb R S_i\subset {\rm Div}_{\mathbb R}(X)$, $A$ be a $\pi$-ample $\mbQ$-divisor and $W\subset {\rm Div}_{\mathbb R}(S)$ is the subspace spanned by the components of $S_1|_S,\ldots ,S_p|_S$.
By Theorem \ref{t-b}, we know that $\mathcal E _{A|_S}(W)$ is a rational polytope. If $E_1,\ldots ,E_d$ are its vertices, then by induction on the dimension,
the ring $R(S/U;K_S+A|_S+E_1,\ldots , K_S+A|_S+E_d)$ is a locally finitely generated $\mathcal O _U$-algebra.
After shrinking $U$, we may assume that this ring is in fact a finitely generated $\mathcal O _U$-algebra.
For any $\Q$-divisor $E\in \mathcal E _{A|_S}(W)$ we let \[\mathbf F(E):={\mathbf{Fix}}(K_S+A|_S+E).\]
Recall that since $U$ is Stein, by Lemma \ref{l-2.3} we have ${\mathbf{Fix}}(K_S+A|_S+E)={\mathbf{Fix}}(K_S+A|_S+E/U)$.
By Lemma \ref{l-2.28}, $\mathbf F(E)$ extends to a rational piece-wise affine  function on $\mathcal E _{A|_S}(W)$,
and there exists an integer $k>0$ such that for any $E\in \mathcal E _{A|_S}(W)$ and any integer $m>0$ such that $(m/k)A|_S$ and $(m/k)E$ are integral, then \[ \mathbf F(E)=\frac 1m{\rm{Fix}}|m(K_S+A|_S+E)|.\]
The subset \[\mathcal F =\{ E\in \mathcal E _{A|_S}(W)\;|\; E\wedge \mathbf{F}(E)=0\}\subset \mathcal E _{A|_S}(W)\]
is defined by finitely many rational linear equalities and inequalities, and hence is a finite union of rational polytopes $\mathcal F=\cup _{i}\mathcal F _i$. For any $\mbQ$-divisor $B\in \mathcal B _A^S(V):=\{ B\in \mathcal L (V)\;|\; S\not \subset \mathbf {B}(K_X+S+A+B)\}$, we let
\[ {\mathbf F}_S(B):={\mathbf {Fix}}_S(K_X+S+A+B).\]
For any integer $m>0$ such that $mA$ and $mB$ are integral and $S\not \subset {\rm Bs}|m(K_X+S+A+B)|$, we let 
\[ \Phi_m (B):=B|_S -B|_S \wedge \frac 1m {\rm Fix}|m(K_X+S+A+B)|_S,\]
\[ {\mathbf {\Phi}} (B):=B|_S -B|_S \wedge {\mathbf F}_S(B)=\limsup \Phi_m (B).\]
With this notation and assumptions, we have the following analog of \cite[Lemma 4.2]{CL10}.
\begin{lemma}\label{l-4.2} If $B\in \mathcal B _A^S(V)$, then $\Phi _m(B)\in \mathcal E _{A|_S}(W)$ and $\Phi _m(B)\wedge {\mathbf F}(\Phi _m(B))=0$.
Thus if $\mathcal B _A^S(V)\ne \emptyset$, then $\mathcal F \ne \emptyset$.
\end{lemma}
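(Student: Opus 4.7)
The plan is to reduce both assertions to coefficient-wise comparisons on $S$, combined with one construction of a sub-linear-series. Set $F_m := \tfrac{1}{m}{\rm Fix}|m(K_X+S+A+B)|_S$ and let $\mathcal M_m$ denote the mobile part of the restricted linear series $|m(K_X+S+A+B)|_S$, so that ${\rm Fix}(\mathcal M_m)=0$ and every element of $\mathcal M_m$ has the form $D|_S - mF_m$ for some $D\in|m(K_X+S+A+B)|$ with $S \not\subset \Supp D$.

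The first step is elementary. For each prime divisor $D_i$ on $S$ with coefficients $a_i$ in $B|_S$ and $b_i$ in $F_m$, the coefficient of $D_i$ in $\Phi_m(B) = B|_S - B|_S\wedge F_m$ is $\max(a_i-b_i,0) \in [0,1]$, while that in $F_m - B|_S\wedge F_m$ is $\max(b_i-a_i,0)$. At most one of these is positive for any given $i$, hence
\[
\Phi_m(B)\wedge (F_m - B|_S\wedge F_m) = 0 \qquad (\ast)
\]
and $\Phi_m(B)\in\mathcal L(W)$. Moreover, choosing any $D\in|m(K_X+S+A+B)|$ not containing $S$ (which exists by hypothesis), adjunction gives
\[
m(K_S+A|_S+\Phi_m(B)) \sim m(F_m - B|_S\wedge F_m) + (D|_S - mF_m) \geq 0,
\]
so $\Phi_m(B)\in\mathcal E_{A|_S}(W)$.

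The critical step, which I expect to be the main obstacle, is to show $\mathbf F(\Phi_m(B)) \leq F_m - B|_S\wedge F_m$. For each positive integer $j$, set $k=jm$; the family
\[
\Sigma_k := \{\,jM + k(F_m - B|_S\wedge F_m)\,:\,M\in\mathcal M_m\,\} \;\subset\; |k(K_S+A|_S+\Phi_m(B))|
\]
of effective divisors has fixed part exactly $k(F_m - B|_S\wedge F_m)$, since ${\rm Fix}(j\cdot\mathcal M_m)=0$. Hence
\[
\tfrac{1}{k}{\rm Fix}|k(K_S+A|_S+\Phi_m(B))| \;\leq\; F_m - B|_S\wedge F_m,
\]
and taking $\liminf$ over $k$ sufficiently divisible by $m$ yields the claimed bound. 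Together with $(\ast)$ and the monotonicity $X\wedge Y_1 \leq X\wedge Y_2$ for $Y_1\leq Y_2$, this gives $\Phi_m(B)\wedge \mathbf F(\Phi_m(B))=0$, so $\Phi_m(B)\in\mathcal F$; in particular $\mathcal F\ne\emptyset$ whenever $\mathcal B_A^S(V)\ne\emptyset$. The reason this third step is delicate is that the naive fixed part of $|k(K_S+A|_S+\Phi_m(B))|$ could a priori acquire components meeting $\Supp\Phi_m(B)$, and the construction of $\Sigma_k$ from the mobile part $\mathcal M_m$ of the restricted system on $X$ is precisely what certifies that the asymptotic fixed divisor lies in the complement of $\Phi_m(B)$.
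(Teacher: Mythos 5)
Your argument is correct and is essentially the proof of \cite[Lemma 4.2]{CL10}, which is exactly what the paper invokes: the coefficient-wise disjointness of $\Phi_m(B)$ and $F_m-B|_S\wedge F_m$, the adjunction/effectivity step via a member of $|m(K_X+S+A+B)|$ not containing $S$, and the bound $\mathbf F(\Phi_m(B))\le F_m-B|_S\wedge F_m$ obtained from the translated restricted system sitting inside $|k(K_S+A|_S+\Phi_m(B))|$. Your explicit subfamily $\Sigma_k$ for $k=jm$ just unwinds the standard superadditivity $\tfrac1{jm}{\rm Fix}|jmD|\le\tfrac1m{\rm Fix}|mD|$; otherwise the two proofs coincide.
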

\begin{proof}
This follows by the proof of \cite[Lemma 4.2]{CL10}.
\end{proof}
The next result is the analog of \cite[Theorem 4.3]{CL10}.
\begin{theorem}\label{t-4.3} Let $\mathcal G$ be a rational polytope contained in the interior of $\mathcal L (V)$, and assume that $(S,G_{|S})$ is terminal for every $G\in \mathcal G$. If $\mathcal P =\mathcal G \cap \mathcal B _A^S(V)$, then
\begin{enumerate}
    \item $\mathcal P$ is a rational polytope, and
    \item $\mathbf \Phi$ extends to a piece-wise affine function on $\mathcal P$, and there exists a
positive integer $\ell$ with the property that $\mathbf \Phi( P)=\Phi_m( P)$ for every 
$P\in \mathcal P$ and
every positive integer $m$ such that $mP/\ell$ is integral.
\end{enumerate}

\end{theorem}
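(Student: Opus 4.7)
The plan is to follow the argument of \cite[Theorem 4.3]{CL10} closely, leveraging the machinery (extension theorems of Section 3 adapted to the analytic setting via Theorem \ref{thm:relative-kvv} and Lemma \ref{l-3.1}, convex geometry of \cite{CL10} which requires no changes, log resolutions \ref{thm:log-resolution}, Bertini \ref{t-bertini+}) that has been established in the preceding subsections. By Lemma \ref{l-4.2}, for any $B \in \mathcal{B}_A^S(V)$ and any sufficiently divisible $m$, $\Phi_m(B)$ lies in $\mathcal{F}$. The crux is to show that $\Phi_m(B)$ stabilizes uniformly: there exists an integer $\ell$, independent of $B \in \mathcal{P}$, such that $\mathbf{\Phi}(B) = \Phi_m(B)$ whenever $mB/\ell$ is integral.

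The main technical step is the analytic analogue of \cite[Theorem 3.15]{CL10}: for $B \in \mathcal{P}$ and sufficiently divisible $m$, every section of $m(K_S + A|_S + \Phi_m(B))$ lifts to a section of $m(K_X + S + A + B)$. The terminality hypothesis on $(S, G|_S)$ for $G \in \mathcal{G}$ controls singularities so that the Kawamata-Viehweg based lifting procedure runs. Combined with Lemma \ref{l-4.2}, this lifting yields $\mathbf{F}_S(B) = \mathbf{F}(\Phi_m(B)) + B|_S \wedge \frac{1}{m}{\rm Fix}|m(K_X+S+A+B)|_S$, whence $\mathbf{\Phi}(B) = \Phi_m(B)$ for all sufficiently divisible $m$. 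Next, the inductive hypothesis (Theorem \ref{t-a} in dimension $n-1$ applied to $S$, available after shrinking $U$) gives finite generation of the restricted adjoint ring $R(S/U; K_S + A|_S + E_1, \ldots, K_S + A|_S + E_d)$, where $E_1,\ldots,E_d$ are the vertices of the rational polytope $\mathcal{E}_{A|_S}(W)$. By Lemma \ref{l-2.28}, $\mathbf{F}$ is then a rational piecewise affine function on $\mathcal{E}_{A|_S}(W)$ with a uniform denominator $k$; combining with the extension step transfers this piecewise affine structure to $\mathbf{\Phi}$ and produces the uniform $\ell$ required by part (2).

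For part (1), after the stabilization above, $\mathcal{P}$ coincides with $\{B \in \mathcal{G} : \Phi_m(B) \in \mathcal{F}\}$ for any sufficiently divisible $m$; since $B \mapsto \Phi_m(B)$ is rational piecewise affine on $\mathcal{G}$ and $\mathcal{F}$ is a finite union of rational polytopes, $\mathcal{P}$ emerges as the preimage of a finite union of rational polytopes under a rational piecewise affine map intersected with $\mathcal{G}$, and is therefore itself a rational polytope. The main obstacle will be establishing the extension theorem rigorously in our analytic setting: while Theorem \ref{thm:relative-kvv} supplies the essential Kawamata-Viehweg vanishing, the multiplier-ideal and asymptotic section-lifting arguments must be adapted to the relatively compact Stein base $U$, which may require shrinking $U$ around a given $u \in U$ and handling the absence of quasi-projectivity for the target. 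Once this extension result is in hand, the remaining convex-geometric and diophantine approximation arguments of \cite[Theorem 4.3]{CL10} transfer with only cosmetic changes.
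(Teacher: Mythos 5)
Your proposal follows exactly the route the paper takes: the paper's proof of this theorem is simply "This follows by the proof of \cite[Theorem 4.3]{CL10}," relying on the analytic analogues already set up (Kawamata--Viehweg vanishing, the extension lemmas, Lemma \ref{l-4.2}, Lemma \ref{l-2.28}, and induction via Theorem \ref{t-a} in dimension $n-1$), which is precisely the machinery you invoke. Your sketch is a correct and in fact more detailed account of the same argument.
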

\begin{proof}This follows by the proof of \cite[Theorem 4.3]{CL10}.

\end{proof}
\begin{theorem}\label{t-ra}  Assume Theorem \ref{t-a} in dimension $n-1$. Let $\pi :X\to U$ be a projective morphism of complex analytic varieties, where $X$ is smooth variety with $\dim X=n$. Let $S,S_1, \ldots , S_p$
be  distinct prime divisors on $X$ such that $S+\sum _{i=1}^pS_i$ has simple normal crossings. Let $A$ be a $\pi$-ample $\mathbb Q$-divisor on $X$, $V=\sum_{i=1}^p \mathbb R S_i\subset {\rm Div}_{\mathbb R}(X)$, $B_1,\ldots , B_m \in \mathcal E_{S+A}(V)$ be $\Q$-divisors and
denote $D_i = K_X + S+A + B_i$ for every $i$.
Then the ring ${\rm res}_S R(X/U; D_1, \ldots , D_m)$
is a locally finitely generated $\mathcal O _U$-module.
\end{theorem}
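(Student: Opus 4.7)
The plan is to mimic the proof of \cite[Theorem 4.4]{CL10}, replacing projective ingredients with their relative (projective over Stein base) analogues that have already been set up in the excerpt. Finite generation of an $\mathcal O_U$-algebra is a local question, so after shrinking $U$ around a given point we may freely pass to suitable Veronese subalgebras and to a common log resolution; all we need is to express ${\rm res}_S R(X/U; D_1,\ldots, D_m)$ as a finitely generated submodule of a ring already known to be locally finitely generated. The inductive hypothesis (Theorem \ref{t-a}$_{n-1}$) applied on the $(n-1)$-dimensional smooth variety $S$ gives, after shrinking $U$, that the full adjoint ring $R(S/U; K_S+A|_S+E_1,\ldots, K_S+A|_S+E_d)$ is a finitely generated $\mathcal O_U$-algebra, where $E_1,\ldots ,E_d$ are the (rational) vertices of the polytope $\mathcal E_{A|_S}(W)$ furnished by Theorem \ref{t-b}.

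First I would reduce to the case where $B_1,\ldots,B_m$ all lie in a single rational polytope $\mathcal P\subset \mathcal B_A^S(V)$ satisfying the hypotheses of Theorem \ref{t-4.3}; this is possible by replacing each $B_i$ by a nearby perturbation inside $\mathcal B_A^S(V)$ and using that terminality is an open condition, and by subdividing into finitely many rational sub-polytopes. On each such sub-polytope, Theorem \ref{t-4.3} gives an integer $\ell>0$ and an affine rational map $\mathbf\Phi:\mathcal P\to \mathcal E_{A|_S}(W)$ such that $\mathbf\Phi(P)=\Phi_m(P)$ whenever $mP/\ell$ is integral. Combined with the extension Lemma \ref{l-3.1}(i) and its refinements (the relative versions of the Hacon--McKernan/Takayama lifting theorems from \cite[Section 3]{CL10}, which in our setting rely only on the relative Kawamata--Viehweg vanishing Theorem \ref{thm:relative-kvv} and Lemma \ref{l-lu}), this implies that for every $P=\sum q_iB_i$ with $P\in\mathcal P\cap\mathrm{Div}_{\mathbb Q}(X)$ and for every $k$ with $kP/\ell\in\mathrm{Div}(X)$, the natural restriction map
\[
\pi_*\mathcal O_X\bigl(k(K_X+S+A+P)\bigr)\longrightarrow \pi_*\mathcal O_S\bigl(k(K_S+A|_S+\mathbf\Phi(P))\bigr)
\]
is surjective onto the full linear series on $S$, up to a fixed divisorial correction $k\cdot(P|_S-\mathbf\Phi(P))$ which is affine-linear and rational in the multi-degree.

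With that in hand, the image of ${\rm res}_S R(X/U; D_1,\ldots,D_m)$ inside the (multi-)graded ring on $S$ is, degree by degree in $\vec m=(m_1,\ldots,m_m)$, equal to the subspace
\[
H^0\Bigl(S,\ \mathcal O_S\bigl(\textstyle\sum m_i(K_S+A|_S+\mathbf\Phi(B_i))+F(\vec m)\bigr)\Bigr)\cdot\mathbf 1_{F(\vec m)},
\]
where $F(\vec m)\ge 0$ is an explicit piecewise-affine (in $\vec m$), rational divisor coming from the difference $B_i|_S-\mathbf\Phi(B_i)$. After passing to a Veronese in the multi-index $\vec m$ chosen so that all these expressions become integral and so that $\mathbf\Phi$ is globally affine, the image is literally a graded $\mathcal O_U$-submodule of a finite Veronese of the finitely generated ring $R(S/U; K_S+A|_S+E_1,\ldots, K_S+A|_S+E_d)$, with multi-degrees cut out by a rational polyhedral cone. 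By Gordan's lemma the cone of admissible multi-degrees is finitely generated, and the finitely generated submodule statement then follows, as in \cite[Proof of Theorem 4.4]{CL10}.

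The main obstacle I expect is step two: verifying that the lifting/extension results of \cite[Section 3]{CL10} do go through uniformly over a Stein base with the hypotheses we actually have, and, crucially, that Theorem \ref{t-4.3} genuinely lets us equate the asymptotic invariant $\mathbf\Phi(B)$ with the finite-level invariant $\Phi_m(B)$ simultaneously for all the $B_i$ after one Veronese. Everything else is bookkeeping (shrinking $U$, subdividing polytopes, reducing to a common log resolution, applying Gordan); the lifting step is where the analytic-relative setting could conceivably cause trouble, but because $U$ is Stein and all the required vanishing is already available from Theorem \ref{thm:relative-kvv} and Lemma \ref{l-lu}, I expect the same arguments as in the projective case to transfer essentially verbatim.
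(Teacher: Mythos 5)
Your proposal is correct and follows essentially the same route as the paper, whose proof of this theorem is simply the one-line reference ``this follows along the lines of the proof of \cite[Lemma 6.2]{CL10}'': namely, Theorem \ref{t-b} and induction give finite generation of $R(S/U;K_S+A|_S+E_1,\ldots ,K_S+A|_S+E_d)$, Theorem \ref{t-4.3} makes $\mathbf\Phi$ rational piecewise affine and identifies it with $\Phi_m$ after a Veronese, the relative extension theorems (already verified in the paper to transfer over a Stein base via Theorem \ref{thm:relative-kvv} and Lemma \ref{l-lu}) identify the restricted graded pieces with full linear series on $S$ up to an affine divisorial correction, and Gordan's lemma plus subdivision finish the argument. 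The one delicate point you flag --- the uniform validity of the lifting results over a Stein base --- is exactly the point the paper disposes of in its ``Extension Theorems'' subsection, so no gap remains.
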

\begin{proof} 
This follows along the lines of the proof of \cite[Lemma 6.2]{CL10}.
\end{proof}~\\

\begin{proof}[Proof of Theorem \ref{t-a}]
Let $\mathcal P=\mbox{conv}(B_1,\ldots , B_k)\subset {\rm Div}_\R (X)$ be the polytope spanned by $B_i$ and $\mathcal R=\R^{\geq 0}(K_X+A+\mathcal P)$. We may assume that $U$ is a relatively compact Stein space. It suffices to show that $R(X/U, \mathcal R) $ is locally finitely generated (cf. \cite[Lemma 2.27]{CL10}). 
By Theorem \ref{t-b},  $\mathcal P_{\mathcal E}=\mathcal P\cap \mathcal E_A(V)$ is a rational polytope, where $V\subset {\rm Div}_\R (X)$ is the vector space spanned by the components of $B_1,\ldots , B_k$. Since $H^0(X,\OO _X(K_X+A+D))=0$ for any divisor $D\in \mathcal P\setminus \mathcal P_{\mathcal E}$, it suffices to show that
$R(X/U, \mathcal R_{\mathcal E}) $ is locally finitely generated, where $\mathcal R_{\mathcal E}=\R^{\geq 0}(K_X+A+\mathcal P_{\mathcal E})$. By Gordan's lemma (cf. \cite[Lemma 2.11]{CL10}) the monoid 
$\mathcal R_{\mathcal E}\cap \Div(X)$ is finitely generated, so there are $\Q$-divisors $R_i=p_i(K_X+A+P_i)$, where $p_i\in \Q^{\geq 0}$ and $P_i$ are $\Q$-divisors with simple normal crossings support such that $\lfloor P_i\rfloor=0$ for $1\leq i\leq \ell$. Since $P_i\in \mathcal E_A(V)$, we have $K_X+A+P_i\sim _{\Q,U}G_i\geq 0$.
Replacing $B_1,\ldots , B_k$ by $P_1,\ldots , P_\ell$, we may assume that $K_X+A+B_i\sim _{\Q,U}F_i\geq 0$ for all $i$.
Replacing $X$ by a log resolution (see Theorem \ref{thm:log-resolution}), we may assume that $\left(X,\sum (B_i+F_i)\right)$ is a simple normal crossings pair. 

Consider now $W\subset {\rm Div }_\R (X)$ the subspace spanned by the components $S_1,\ldots , S_p$ of $\sum (B_i+F_i)$. Let $\mathcal T=\{(t_1,\ldots , t_k)\;|\;t_i\geq 0,\ \sum t_i=1\}$ and for any $\tau =(t_1,\ldots , t_k)\in \mathcal T$, we let
\[ B_\tau=\sum t_iB_i,\qquad F_\tau=\sum t_iF_i\sim _{\Q,U}K_X+A+B_\tau.\]
Consider the following rational polytopes for $1\leq i\leq p$,
\[\mathcal B =\{F_\tau +B\;|\;\tau \in \mathcal T,\ 0\leq B\in W,\ B_\tau+B\in \mathcal L (W)\}\subset W ,\]
\[\mathcal B_i=\{F_\tau+B \in \mathcal B
\;|\; S_i\subset \lfloor B_\tau+B\rfloor \}\subset W .\]
We also have rational polyhedral cones $\mathcal C=\mathbb R^{\geq 0}\mathcal B$,  $\mathcal C_i=\mathbb R ^{\geq 0}\mathcal B_i$ and monoids $\mathcal S =\mathcal C\cap {\rm Div}(X)$, $\mathcal S _i =\mathcal C_i\cap {\rm Div}(X)$.
Following the proof of \cite[Theorem 6.3]{CL10}, it suffices to show that
\begin{enumerate}
    \item $\mathcal C =\cup _{i=1}^p\mathcal C _i$,
    \item there exists an integer $M>0$ such that if $\sum \alpha _i S_i\in \mathcal C _j$ for some $j$ and some $\alpha _i\in \mathbb N$ with $\sum \alpha _i \geq M$, then $\sum \alpha _i S_i-S_j\in \mathcal C$, and
    \item the rings ${\rm res} _{S_j}R(X/U,\mathcal S _j)$ are locally finitely generated for $1\leq j\leq p$.
\end{enumerate}
(1) Pick $0\ne G\in \mathcal C$. Then there exists $\tau \in \mathcal T$, $0\<B\in W$ and $r>0$ such that $B_\tau +B\in\mcL(W)$ and $G=r (F_\tau +B)$. Let \[ \lambda ={\rm max}\{t\geq 1|B_\tau+tB+(t-1)F_\tau \in \mathcal L (W)\},\]
and $B'=\lambda B+(\lambda -1)F_\tau $, then \[\lambda G=\lambda r(F_\tau +B)=r(F_\tau +\lambda B+(\lambda -1)F_\tau)=r(F_\tau +B'),\] where $\lfloor B_\tau +B'\rfloor$ is non-empty and hence contains a component $S_{j_0}$ for some $1\<j_0\<p$. Thus $G\in \mathcal C _{j_0}$ as required.

(2) Fix $\epsilon >0 $ such that the coefficients of $B_i$ are $\leq 1-\epsilon $, and hence
for any $\tau \in \mathcal T$ the coefficients of $B_\tau $ are also $\leq 1-\epsilon $. Now let $||\cdot ||$ be the sup norm on the vector space $W$ so that for any $D\in W$, $||D||$ is the largest coefficient of $D$ in the unique decomposition $D=\sum_{i=1}^p a_iS_i$. Since each set $\mathcal B_j$ is compact, there exists a constant $C>0$ such that  for any $\Psi \in \cup _{j=1}^p\mathcal B _j$ we have $||\Psi||\leq C$. Define $M:=pC/\epsilon$. 
Let $G=\sum _{i=1}^p\alpha _i S_i\in \mathcal C _j$, where $\sum _{i=1}^p\alpha _i \geq M$. Then 
\[||G||={\rm max}\{\alpha _i\}\geq \frac{\sum _{i=1}^p\alpha _i}p\geq \frac M p=\frac C \epsilon . \]
Since $G\in \mathcal C _j$, there exists $r>0$ such that  $G=r G'$ for some $G'\in \mathcal B _j$. Thus $||G'||\leq C$, and hence $r=\frac{||G||}{||G'||}\geq \frac 1 \epsilon$.
Since $G'\in \mathcal B _j$, we may write $G'=F_\tau +B$ where $\tau \in \mathcal T$, $0\leq B\in W$, $B_\tau +B\in \mathcal L (W)$ and $S_j\subset \lfloor B_\tau +B\rfloor$.
But then ${\rm mult}_{S_j}(B)=1-{\rm mult}_{S_j}(B_\tau )\geq \epsilon \geq 1/r$, so that
\[ G-S_j=r(F_\tau +B-\frac 1r S_j)\in \mathcal C.\]

(3)  We pick generators $E_1,\ldots , E_l$ of $\mathcal S _j=\mathcal C_j\cap {\rm Div}(X)$. For any $i\in \{ 1, \ldots ,l\}$, there exist
$k_i\in \mathbb Q ^{>0}$, $\tau _i\in \mathcal T \cap \mathbb Q ^k$,
$0\leq B_i\in W$ such that $B_{\tau _i}+B_i\in \mathcal L (W)$,
$S_j\leq \lfloor B_{\tau _i}+B_i\rfloor$ and $E_i=k_i(F_{\tau _i}+B_i)$. If $E'_i:=K_X+A+B_{\tau _i}+B_i$, then $E_i\sim_{\mbQ} k_iE_i'$.
Now, ${\rm res}_{S_j}(X/U;E'_1,\ldots , E'_l)$ is finitely generated by
Theorem \ref{t-ra} and hence ${\rm res}_{S_j}(X/U;E_1,\ldots , E_l)$ is also finitely generated (cf. \cite[Lemma 2.25]{CL10}).
Finally the claim follows from the surjection
\[{\rm res}_{S_j}(X/U;E_1,\ldots , E_l)\to {\rm res}_{S_j}(X/U;\mathcal S _j).\]
\end{proof}~\\

\begin{corollary}\label{c-fg1}
Let $\pi :X\to U$ be a projective morphism of normal varieties and  $(X,B)$ is a klt pair such that $K_X+B$ is $\pi$-big. Then $R(X/U, K_X+B):=\oplus_{m\>0}\pi_*\mcO_X(\lrd m(K_X+B)\rrd)$ is locally finitely generated over $U$. In particular, if $W\subset U$ is a compact subset, then after shrinking $U$ near $W$ suitably, $R(X/U, K_X+B)$ is finitely generated over $U$, and hence the log canonical model ${\rm Projan}R(X/U, K_X+B)\to U$ of $(X,B)$ over $U$ exists.
\end{corollary}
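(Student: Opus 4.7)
The plan is to reduce the statement to Theorem \ref{t-a} (finite generation of adjoint rings of the form $K_X+A+B$ with $A$ ample and $B$ klt with SNC support) by passing to a log resolution and exploiting the bigness hypothesis via Kodaira's lemma. First I would localize on $U$ so that we may assume $U$ is a relatively compact Stein space. Since $K_X+B$ is $\pi$-big and $\mbQ$-Cartier, bigness gives a decomposition $K_X+B\sim_{\mbQ,U}A_0+E_0$ with $A_0$ a $\pi$-ample $\mbQ$-divisor and $E_0\geq 0$. I would then take a log resolution $f:Y\to X$ of $(X,B+E_0)$ (Theorem \ref{thm:log-resolution}) and use the klt condition to write
\[
K_Y+\Gamma_Y \;=\; f^*(K_X+B)+F,
\]
where $(Y,\Gamma_Y)$ is klt and log smooth with $\lfloor\Gamma_Y\rfloor=0$, and $F\geq 0$ is an $f$-exceptional $\mbQ$-divisor. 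Using the projection formula together with $f_*\mcO_Y(mF)=\mcO_X$ for effective exceptional $mF$, a sufficiently divisible Veronese subring of $R(Y/U,K_Y+\Gamma_Y)$ coincides with that of $R(X/U,K_X+B)$, so it suffices to establish local finite generation of the former.

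Working now on $Y$, Remark \ref{rmk:projective-morphism} lets me arrange that $\pi\circ f:Y\to U$ is projective after possibly shrinking $U$ around any compact subset. Since $f^*A_0$ is $(\pi\circ f)$-nef and $(\pi\circ f)$-big, Kodaira's lemma in the analytic relative setting yields a decomposition $f^*A_0\sim_{\mbQ,U}A+G$ with $A$ a $(\pi\circ f)$-ample $\mbQ$-divisor and $G\geq 0$. After possibly taking a further log resolution to absorb $G$ into the SNC locus, and setting $E':=G+f^*E_0+F$, I obtain
\[
K_Y+\Gamma_Y\;\sim_{\mbQ,U}\;A+E'.
\]

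For any sufficiently small rational $\epsilon>0$, the pair $(Y,\Gamma_Y+\epsilon E')$ is still klt and log smooth, and
\[
(1+\epsilon)(K_Y+\Gamma_Y)\;\sim_{\mbQ,U}\;K_Y+\epsilon A+(\Gamma_Y+\epsilon E'),
\]
which exactly fits the hypotheses of Theorem \ref{t-a} with $k=1$, taking the $\pi$-ample divisor to be $\epsilon A$ and the klt boundary to be $\Gamma_Y+\epsilon E'$. Applying Theorem \ref{t-a} gives that $R(Y/U,K_Y+\epsilon A+(\Gamma_Y+\epsilon E'))$ is a locally finitely generated $\mcO_U$-algebra. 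Since this ring agrees with a Veronese subring of $R(Y/U,K_Y+\Gamma_Y)$ after clearing denominators arising from $\epsilon$ and the $\mbQ$-Cartier index, standard facts about graded algebras (finite generation is preserved under passing to and from Veronese subrings) yield local finite generation of $R(Y/U,K_Y+\Gamma_Y)$, hence of $R(X/U,K_X+B)$. For the \emph{In particular} assertion, given a compact subset $W\subset U$, Remark \ref{rmk:finite-generation} upgrades local finite generation to finite generation over a suitable open neighborhood of $W$, and $\operatorname{Projan}$ of this finitely generated $\mcO_U$-algebra provides the relative log canonical model.

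The main technical hurdle will be the bookkeeping needed to match $(1+\epsilon)(K_Y+\Gamma_Y)$ with the precise adjoint shape required by Theorem \ref{t-a}, keeping track of denominators coming from the $\mbQ$-Cartier index and from $\epsilon$, and transferring finite generation between Veronese subrings of the canonical ring on $Y$ and on $X$. A secondary subtlety is verifying Kodaira's lemma for $f^*A_0$ in the analytic relative setting, which I would derive from the definition of relative bigness combined with the fact that $\pi\circ f$ can be arranged projective over $W$.
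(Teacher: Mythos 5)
Your proposal is correct and follows essentially the same route as the paper: localize to a relatively compact Stein base, use $\pi$-bigness to write $K_X+B\sim_{\mbQ,U}A+N$, pass to a log resolution, perturb by a small $\epsilon$ so that $(1+\epsilon)(K_X+B)$ pulls back to an adjoint divisor $K_Y+(\text{ample})+(\text{klt SNC boundary})$, apply Theorem \ref{t-a}, and transfer finite generation through Veronese subrings and exceptional divisors. The only cosmetic divergence is that the paper makes $f^*A$ ample on $Y$ by subtracting a small effective $f$-exceptional divisor $F$ with $-F$ $f$-ample (avoiding the relative analytic Kodaira lemma you flag as a subtlety), but this is an interchangeable standard step.
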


\begin{proof}
Working locally on $U$ we may assume that $U$ is a relatively compact Stein space. Since $K_X+B$ is $\pi$-big, we have $K_X+B\sim _{\Q,U} A+N$, where $A$ is $\pi$-ample $\mbQ$-divisor and $N\geq 0$. Let $f:Y\to X$ be a log resolution of $(X, B+N)$ as in Theorem \ref{thm:log-resolution}. Write  $K_Y+\Gamma=f^*(K_X+B)+E$ such that $\Gamma\>0, E\>0, \Gamma\wedge E=\emptyset, f_*\Gamma=B$ and $f_*E=0$. Let $F\>0$ be a $f$-exceptional $\mbQ$-divisor such that $-F$ is $f$-ample. Then $A'=f^*A-F$ is $(\pi\circ f)$-ample. Choose a rational number $0<\eps\ll 1$ such that $(Y, \Gamma+\eps f^*N+\eps F)$ is klt and
\[
(1+\eps)f^*(K_X+B)+E\sim_{\mbQ} K_Y+\Gamma+\eps f^*N+\eps F+\eps A'.
\]
Thus from Theorem \ref{t-a} and \cite[Corollary 2.26]{CL10} it follows that $R(X/U, K_X+B)$ is locally finitely generated over $U$.\\ 

Moreover, if $W\subset U$ is a compact subset, then there exists a positive integer $m>0$ and finitely many open subsets $U_i\subset U, 1\leq i\leq k$ such that $W\subset \cup_{i=1}^k U_i $ and $R(X_i/U_i, K_{X_i}+B_i)$ is finitely generated in degree $\<m$ for all $i=1,2\ldots, k$; where $X_i=X\times _U U_i$ and $B_i=B|_{X_i}$.
The claim then follows replacing $U$ by $\cup_{i=1}^k U_i$.
\end{proof}

\section{Relative MMP for projective morphisms}
In this section we prove Theorems \ref{c-fg} and \ref{t-mmpscale}.\\

\begin{proof}[Proof of Theorem \ref{c-fg}]
 We follow the ideas of \cite{Fuj15}. By the proof of \cite[Theorem 5.1]{Fuj15} (also see \cite[21.5]{Fuj22}), there is a projective morphism $g:Z\to Y$ from a complex manifold $Z$ such that $X\dasharrow Z$ is bimeromorphic to the Iitaka fibration of $K_X+B$ over $Y$ and $(Z,B_Z\>0)$ is a log smooth klt pair such that $K_Z+B_Z$ is big over $Y$ and 
\[ \oplus_{m\geq 0} f_*\mathcal O _X(me(K_X+B))\cong \oplus_{m\geq 0} g_*\mathcal O _Z(me'(K_Z+B_Z))\] 
for some integers $e,e'>0$. {Then the result follows from Corollary} \ref{c-fg1}.

\end{proof}~\\

\begin{proof}[Proof of Theorem \ref{t-mmpscale}] We are free to replace $U$ by arbitrarily small neighborhoods of $W$, see \cite[1.11]{Fuj22}.
If $K_X+B$ is nef over $W$, then there is nothing to prove.
Otherwise, by the Cone Theorem (cf. Theorem \ref{thm:general-relative-cone}), there is a negative extremal ray $R=\R ^{\geq 0}[l]$
and a divisor $L\in A^1(X/U;W)$ such that $R=\overline{NE}(X/U;W)\cap L^\perp$, where $L$ is nef over $U$.
Let $\phi={\rm cont}_R:X\to Z$ be the corresponding morphism (which is defined after possibly further shrinking $U$). If $\dim Z<\dim X$, this is a Mori fiber space. If $\dim Z=\dim X$ and $\phi$ contracts a divisor, then this is a divisorial contraction. In this case we let $(Z,\phi _* B)=(X_1,B_1)$ and we note that $(X_1,B_1)$ is klt and $\Q$-factorial near $W$. If on the other hand, $\dim Z=\dim X$ and $\phi$ is small, then by {Corollary} \ref{c-fg1}, $R(X/Z, K_X+B)$ is finitely generated and hence we obtain a small birational morphism $\psi :X\dasharrow X_1:={\rm Projan}R(X/Z, K_X+B)$ (note that as $X\to Z$ is birational, the bigness assumption is automatically satisfied). In this case  we note that $(X_1,B_1=\psi _*B)$ is klt and $\Q$-factorial near $W$. We may now replace $(X,B)$ by $(X_1,B_1)$ and repeat the procedure. This proves (1).

Suppose now that we are running the MMP with scaling of a sufficiently $\pi$-ample $\mbQ$-divisor $A$.
This means that we have a sequence of flips and divisorial contractions
\[(X,B)=(X_0, B_0)\dasharrow (X_1,B_1)\dasharrow (X_2,B_2)\dasharrow \ldots,\] a $\pi$-ample divisor $A$ and a sequence of rational numbers $\lambda _0\geq \lambda _1\geq \lambda _2\geq \ldots \geq 0$ such that 
$K_{X_i}+B_i+\lambda A_i$ is nef over $U$ for $\lambda _i\geq \lambda \geq \lambda _{i+1}$.
It suffices to show that this sequence terminates locally over a neighborhood of any point of $W$. 
We claim that there exists a constant $\epsilon>0$ such that we may assume that $B\geq \epsilon A$.
Indeed, in Case (2), first if $B$ is $\pi$-big, then $B\sim _{\Q,U}\delta A +E$, where $\delta >0$ and $E\geq 0$. Then for any rational number $0<\gamma\ll 1$, $(X, B'=(1-\gamma)B+\gamma(\delta A+E))$ is klt, and $K_X+B'\sim_{\mbQ, U} K_X+B$. Thus the above MMP is also a $(K_X+B')$-MMP with the scaling of $A$, and $B'\>\gamma\delta A$. In this case we are done by replacing $B$ by $B'$ and setting $\eps=\gamma\delta$. On the other hand, if $K_X+B$ is $\pi$-big, then we can write $K_X+B\sim _{\Q,U}\delta A +E$, where $\delta >0$ and $E\geq 0$. Then again for any rational number $0<\gamma \ll 1$, $(X,B'=B+\gamma (\delta A +E))$ is klt and $K_X+B'\sim_{\Q,U} (1+\gamma)(K_X+B)$. It follows that the above MMP is a $(K_X+B')$-MMP with scaling of $(1+\gamma \delta)A$. Since $B'\geq \gamma \delta A$, the claim follows letting $\epsilon:=\frac {\gamma \delta}{1+\gamma}$. In Case (3) this holds since $K_X+B$ is not $\pi$-pseudo-effective, and therefore $K_X+B+\epsilon A$ is not $\pi$-pseudo-effective for some $0<\epsilon \ll 1$. In particular, $\lambda _i>\epsilon$ for all $i$. Replacing $B$ by $B+\epsilon A$ and $\lambda _i$ by $\lambda _i-\epsilon$ the claim follows.

We will now show that the corresponding minimal model programs terminate.
Suppose that $U$ is relatively compact and Stein.
Fix $||\cdot ||$ a norm on $N^1 (X/U;W)$. Let $\lambda =\lim \lambda _i$.

We may pick relatively ample $\Q$-divisors $H_1,\ldots ,H_r$
such that 
\begin{enumerate}
\item $H_j\geq \epsilon A$ for some $0<\epsilon \ll 1$ and $1\leq j\leq r$,
\item $(X,H_j)$ is klt for $1\leq j\leq r$,
    \item $||(B+\lambda A)- H_j||\ll 1$ for $1\leq j\leq r$,
    \item if $\mathcal C =\mathbb R^{\geq 0}(K_X+B)+\sum _{j=1}^r\mathbb R^{\geq 0}(K_X+H_j)\subset {\rm Div}_\R (X)$, then $K_X+B+\lambda A$ is in the interior of $\mathcal C$, and the dimension of $\mathcal C$ equals $\dim N^1(X/U;W)$.
\end{enumerate}
By Theorem \ref{t-a}, it then follows that $R(X/U, \mathcal C)$ is a  finitely generated $\mathcal O _U$ algebra.
Arguing as in \cite[Theorem 6.5]{CL13}, the corresponding MMP with scaling terminates.

In the general case, we observe that since $U$ is relatively compact, it suffices to prove termination locally over $U$. This follows from the above argument.
\end{proof}

\part{MMP in dimension $4$}

\section{Cone and Contraction Theorems}
\subsection{Cone and contraction theorems in dimension 3}
We begin by proving a unified cone theorem for $\mbQ$-factorial dlt pairs $(X, B)$ that works both when $K_X+B$ pseudo-effective and non pseudo-effective. We will need the following lemma on  the length of extremal rays.

\begin{lemma}\label{lem:lc-length}
 	Let $(X, \Delta)$ be a compact K\"ahler lc pair of dimension $n$. Let $\Delta_0\>0$ be a $\mbQ$-divisor such that $(X, \Delta_0)$ is klt. Let $R$ be a $(K_X+\Delta)$-negative extremal ray of $\NA(X)$ and $f:X\to Y$ is the projective morphism contracting $R$, i.e. a curve $C\subset X$ is contracted by $f$ if and only if $[C]\in R$. Then there is a rational curve $\Gamma\subset X$ contained in a fiber of $f$ such that $R=\mbR^{\geq 0}\cdot[\Gamma]$ and
	\[
		0<-(K_X+\Delta)\cdot\Gamma\<2n.
	\]
 \end{lemma}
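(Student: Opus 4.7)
The plan is to reduce to the klt case by a convex interpolation with $\Delta_0$, apply Nakayama's length bound (Theorem~\ref{thm:general-relative-cone}(3)(b)) to the projective contraction $f$, and then pass the bound back to $K_X + \Delta$ via a compactness argument for analytic cycles. Concretely, set $\Delta_t := (1-t)\Delta + t\Delta_0$ for $t \in (0,1]$; by convexity of log discrepancies on a common log resolution of $(X, \Delta + \Delta_0)$, combined with $a(E, X, \Delta) \geq -1$ and $a(E, X, \Delta_0) > -1$, the pair $(X, \Delta_t)$ is klt. Fix a K\"ahler form $\omega$ on $X$ and a generator $v$ of $R$ with $\omega \cdot v = 1$, and set $d_t := -(K_X + \Delta_t) \cdot v$; then $d_0 > 0$ by hypothesis and $d_t \to d_0$ as $t \to 0^+$. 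Choose $t_0 > 0$ small enough that $d_t > d_0/2$ for all $t \in (0, t_0]$; then $R$ is a $(K_X + \Delta_t)$-negative extremal ray contracted by the same projective morphism $f: X \to Y$. Since $X$ and $Y$ are compact, property $\mathbf{Q}$ holds, so Theorem~\ref{thm:general-relative-cone} applies.

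Applying Theorem~\ref{thm:general-relative-cone}(3)(b) to the klt pair $(X, \Delta_t)$ and $f$ with $W = Y$: if $E \subset \Ex(f)$ is a maximal-dimensional component and $p \in f(E)$ is general, then $E_p = E \cap f^{-1}(p)$ is covered by compact rational curves $\{\Gamma^{(s)}_t\}_s$ contracted by $f$ with $0 < -(K_X + \Delta_t) \cdot \Gamma^{(s)}_t \leq 2n$. Since $\NE(X/Y) = R$ (as $f = \operatorname{cont}_R$), each class $[\Gamma^{(s)}_t] = \lambda^{(s)}_t \cdot v$ for some $\lambda^{(s)}_t > 0$, and the bound translates to $\lambda^{(s)}_t \leq 2n/d_t \leq 4n/d_0$ uniformly for $t \in (0, t_0]$. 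In particular, all these curves have uniformly bounded $\omega$-degree.

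To conclude, appeal to compactness in the Douady space: rational curves on $X$ of $\omega$-mass at most $4n/d_0$ form a bounded family by Bishop's theorem, hence represent only finitely many numerical classes in $R$. Picking one $\Gamma_t := \Gamma^{(s)}_t$ for each $t \in (0, t_0]$, the classes $[\Gamma_t] = \lambda_t v$ range in a finite subset of $R$, so by pigeonhole some common value $\lambda v$ is realized along a sequence $t_k \to 0$; fix a corresponding rational curve $\Gamma$. Then $\lambda \leq 2n/d_{t_k}$ for every $k$, and letting $k \to \infty$ gives $\lambda \leq 2n/d_0$, i.e. $-(K_X + \Delta) \cdot \Gamma \leq 2n$ (positivity being automatic as $[\Gamma] \in R$). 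The main obstacle is precisely this finiteness / compactness step: without the discreteness of classes of bounded-mass analytic cycles in the K\"ahler setting, the perturbation argument would only yield the weaker conclusion $-(K_X + \Delta) \cdot \Gamma \leq 2n + \epsilon$ for every $\epsilon > 0$, and one must verify that the Douady-space (or Bishop-type) compactness genuinely produces an integral cycle attaining the sharp bound.
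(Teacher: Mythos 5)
Your proof is correct and follows essentially the same route as the paper, which simply invokes the argument of \cite[Theorem 3.8.1]{BCHM10} (klt interpolation $\Delta_t=(1-t)\Delta+t\Delta_0$, the length bound for klt contractions, and a limiting argument) with the K\"ahler length bound of \cite[Theorem 1.23]{DO23} in place of Kawamata's theorem. The finiteness-of-classes step you flag as the main obstacle is handled exactly by the Bishop/Douady compactness argument the paper itself uses in Lemma \ref{l-douady}, so the sharp bound $2n$ is indeed attained.
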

 
 \begin{proof}
 	The same proof as in \cite[Theorem 3.8.1]{BCHM10} works using \cite[Theorem 1.23]{DO23} in place of \cite[Theorem 1]{Kaw91}.
 \end{proof}

 \begin{theorem}\label{thm:unified-cone-thm}
	 Let $(X,B)$ be a $\mbQ$-factorial compact K\"ahler $3$-fold dlt pair. Then there exists a countable collection of rational curves $\{C_i\}_{i\in I}$ such that $0<-(K_X+B )\cdot C_i\leq 6$ and
 \[  \NA(X)=\NA(X)_{(K_X+B )\geq 0}+\sum _{i\in I}\mathbb{R}^{\geq 0}\cdot[C_i].\]
Moreover, if $\omega$ is a K\"ahler class, then there are only finitely many extremal rays $R_i=\mbR^{\geq 0}\cdot[C_i]$ satisfying $(K_X+B +\omega)\cdot C_i<0$ for $i\in I$.
 \end{theorem}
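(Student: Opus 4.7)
The plan is to reduce the dlt cone theorem to the K\"ahler klt cone theorem for 3-folds (established in \cite{HP16}, \cite{CHP16}, \cite{DH20}), essentially repeating the perturbation argument of Theorem \ref{thm:general-relative-dlt-cone} in the K\"ahler setting. Fix a K\"ahler class $\omega$ on $X$. For each integer $n\geq 1$, I choose a rational $\epsilon_n\in(0,1)$ small enough so that (i) $(X,(1-\epsilon_n)B)$ is klt, and (ii) the class $\tfrac{1}{n}\omega+\epsilon_n\{B\}\in H^{1,1}_{\BC}(X)$ is K\"ahler. Both are achieved by taking $\epsilon_n$ small, using that $(X,B)$ is dlt and that the K\"ahler cone is open around $\tfrac{1}{n}\omega$ (cf.\ Lemma \ref{lem:nef-cone}). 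Then
\[ K_X+B+\tfrac{1}{n}\omega \;=\; K_X+(1-\epsilon_n)B+\bigl(\tfrac{1}{n}\omega+\epsilon_n B\bigr), \]
so this is the adjoint class of a klt pair plus a K\"ahler class.

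Next I apply the K\"ahler klt cone theorem for 3-folds to $(X,(1-\epsilon_n)B)$; combined with the standard Kleiman-type finiteness using the K\"ahler class $\tfrac{1}{n}\omega+\epsilon_n B$, this produces finitely many extremal rays $R_{i,n}=\mathbb{R}^{\geq 0}[C_{i,n}]$ of $\NA(X)$ which are $(K_X+B+\tfrac{1}{n}\omega)$-negative, with
\[ \NA(X)=\NA(X)_{(K_X+B+\frac{1}{n}\omega)\geq 0}+\sum_{i\in I_n}\mathbb{R}^{\geq 0}[C_{i,n}]. \]
Since $(X,B)$ is in particular lc, Lemma \ref{lem:lc-length} lets me pick each $C_{i,n}$ to be a rational curve with $0<-(K_X+B)\cdot C_{i,n}\leq 2\dim X=6$. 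I set $I=\bigcup_n I_n$, giving a countable collection with the required length bound.

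For the cone equality I follow the closedness argument at the end of the proof of Theorem \ref{thm:general-relative-dlt-cone}. The inclusion $\NA(X)_{(K_X+B)\geq 0}+\sum_{i\in I}\mathbb{R}^{\geq 0}[C_i]\subseteq\NA(X)$ is obvious, and the reverse inclusion is obtained by intersecting the identities above over $n$: the key observation is
\[ \NA(X)_{(K_X+B)\geq 0}=\bigcap_{n\geq 1}\NA(X)_{(K_X+B+\frac{1}{n}\omega)\geq 0}, \]
so slicing $\NA(X)$ with an affine hyperplane to get a compact convex set, decomposing each element as $v=v_n+w_n$ with $v_n$ in the $(K_X+B+\tfrac{1}{n}\omega)\geq 0$ half-space and $w_n$ in $\sum\mathbb{R}^{\geq 0}[C_i]$, and passing to a convergent subsequence yields $v=v_\infty+w_\infty$ with $v_\infty\in\NA(X)_{(K_X+B)\geq 0}$ and $w_\infty\in\overline{\sum\mathbb{R}^{\geq 0}[C_i]}$. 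A Koll\'ar-type argument (as in \cite[III.1.2]{Kol96}) shows that this closure sits inside $\NA(X)_{(K_X+B)\geq 0}+\sum\mathbb{R}^{\geq 0}[C_i]$, which completes the cone equality.

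Finally, the finiteness statement in the \textquotedblleft moreover\textquotedblright\ part reduces directly to the klt K\"ahler cone theorem. Given a K\"ahler class $\omega$, choose $\epsilon>0$ so small that $(X,(1-\epsilon)B)$ is klt and $\omega+\epsilon\{B\}$ is K\"ahler; then any $C_i$ with $(K_X+B+\omega)\cdot C_i<0$ spans a $(K_X+(1-\epsilon)B+(\omega+\epsilon B))$-negative ray of $\NA(X)$, and the klt theorem guarantees there are only finitely many such rays. The principal technical point is the limiting step in the cone-equality paragraph; I expect this to be the main obstacle, but it is essentially formal once a length bound and the klt cone theorem applied with the perturbed K\"ahler class are in hand, exactly as in the projective dlt argument of Theorem \ref{thm:general-relative-dlt-cone}.
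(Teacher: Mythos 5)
Your overall strategy --- perturb the dlt pair to a klt pair plus a K\"ahler class, apply a klt cone theorem for each $n$, and pass to the limit as in Theorem \ref{thm:general-relative-dlt-cone} --- has a genuine gap at its very first step. The ``K\"ahler klt cone theorem for $3$-folds'' you invoke from \cite{HP16}, \cite{CHP16}, \cite{DH20} is only established there under the hypothesis that the adjoint class is pseudo-effective; the paper says so explicitly right before its proof (``The theorem is known when $K_X+B$ is pseudo-effective\dots''). Your perturbation $K_X+B+\tfrac1n\omega=K_X+(1-\epsilon_n)B+\bigl(\tfrac1n\omega+\epsilon_n B\bigr)$ fixes the singularities but does nothing for pseudo-effectivity: if $K_X+B$ is not pseudo-effective, neither is $K_X+(1-\epsilon_n)B$, so the decomposition of $\NA(X)$ into $\NA(X)_{(K_X+B+\frac1n\omega)\geq 0}$ plus finitely many rational-curve rays --- the entire input to your limiting argument and to the ``moreover'' part --- is precisely the statement that is missing. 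In the non-pseudo-effective case (the only case actually requiring a new proof) your argument is circular.

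The paper's proof gets around this by a different mechanism: it lets $I$ index all $(K_X+B)$-negative extremal rays of $\NA(X)$ spanned by rational curves of length $\leq 6$, sets $N=\NA(X)_{(K_X+B)\geq 0}+\sum_{i\in I}\mathbb{R}^{\geq 0}[C_i]$, and derives a contradiction from $\overline N\subsetneq\NA(X)$. A supporting nef class is normalized to the form $\alpha=K_X+B+\omega$ with $\omega$ K\"ahler using \cite[Corollary 3.16]{HP16}; the contraction theorem \cite[Theorem 1.7]{DH20} --- which requires no pseudo-effectivity --- yields a projective morphism $\psi:X\to Z$ with $\alpha\equiv\psi^*\alpha_Z$; and the relative projective dlt cone theorem (Theorem \ref{thm:general-relative-dlt-cone}) applied to $\psi$ produces a rational curve of length at most $6$ spanning an extremal ray inside $\alpha^{\perp}\cap\NA(X)$, contradicting the positivity of $\alpha$ on $N\setminus\{0\}$. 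Closedness of $N$ then comes from \cite[Lemma 6.1]{HP16}. Your final paragraph (finiteness via the bound $\omega\cdot C_i<6$ and a Douady-space argument) is essentially correct and matches the paper, but to repair the proof you must replace the appeal to an unavailable klt K\"ahler cone theorem with an argument, such as the contraction-plus-relative-cone argument above, that actually produces the extremal rational curves without any pseudo-effectivity assumption.
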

 
 \begin{proof} 
 
 The theorem is known when $K_X+B$ is pseudo-effective (see \cite[Theorem 1.3]{CHP16} and \cite[Theorem 2.26]{DO23}).
 
We give a unified proof of all the cases below.\\

 Let $\{C_i\}_{i\in I}$ be a set of curves spanning the $(K_X+B)$-negative extremal rays such that $0<-(K_X+B )\cdot C_i\leq 6$, and $N\subset N_1(X)$ be the cone defined by
 \[
 	N:=\NA(X)_{(K_X+B)\>0}+\sum_{i\in I} \mbR^{\geq 0}\cdot[\Gamma_i].
 \]
First we will show that $\NA(X)=\overline{N}$. Clearly $\overline{N}\subset \NA(X)$ holds, so assume that the reverse inclusion does not hold. Then there is a nef class $\alpha$ such that $\alpha \cdot \gamma >0$ for all $0\neq\gamma \in \overline{N}$ and $\alpha \cdot \gamma=0$ for some $\gamma\in\NA(X)$. Clearly $\NA(X)_{(K_X+B)\>0}$ is a closed sub-cone of $\NA(X)$. Let $K$ be a compact slice of $\NA(X)_{(K_X+B)\>0}$. Then there exists an $\epsilon >0$ such that $(\alpha -\epsilon(K_X+B))\cdot \gamma >0$ for all $0\neq\gamma \in K$. But then $\alpha-\epsilon(K_X+B)$ is strictly positive on $\NA(X)\setminus\{0\}$, and so $\alpha'=\frac{1}{\epsilon} \alpha=K_X+B+\omega$, where $\omega:=\frac{1}{\epsilon}(\alpha-\epsilon(K_X+B))$ is  strictly positive on $\overline {{\rm NA}}(X)$, and hence a  K\"ahler class by \cite[Corollary 3.16]{HP16}. Replacing $\alpha$ by $\alpha'$ we may assume that $\alpha=K_X+B+\omega$ for some K\"ahler class $\omega$ such that $\alpha\cdot\gamma>0$ for all $0\neq \gamma\in\overline{N}$ and $\alpha\cdot\gamma=0$ for some $\gamma\in\NA(X)$. By \cite[Theorem 1.7]{DH20}, there exists a projective morphism $\psi :X\to Z$ such that $K_X+B+\omega\equiv \psi ^* \alpha _Z $ where $\alpha _Z $ is a K\"ahler class on $Z$. In particular, $\psi$ is not an isomorphism, and thus by Theorem \ref{thm:general-relative-dlt-cone}, there is a rational curve $C\subset X$ contained in a fiber of $\psi$ generating an extremal ray contained in the face $\alpha^\bot\cap\NA(X)$ such that $0>(K_X+B)\cdot C\geq -6$ and $(K_X+B+\omega )\cdot C=0$. This is a contradiction and so
$\NA(X)=\overline{N}$.

By \cite[Lemma 6.1]{HP16} it follows that $N$ is a closed cone and so $\NA(X)={N}$. We note that the proof of \cite[Lemma 6.1]{HP16} works with $K_X$ replaced by $K_X+B$. Finally, for any K\"ahler class $\omega$, if $(K_X+B+\omega)\cdot C_i<0$, then $\omega\cdot C_i<-(K_X+B)\cdot C_i\<6$. Hence by a Douady space argument there are finitely many extremal rays $R_i=\mbR^{\geq 0}\cdot[C_i]$ satisfying $(K_X+B+\omega)\cdot R_i<0$. 

 \end{proof}~\\

We deduce the non $\mbQ$-factorial version of this theorem below which is used throughout the article.
\begin{corollary}\label{cor:nQ-unified-cone}
Let $(X, B)$ be a compact K\"ahler $3$-fold dlt pair. Then there exists a countable collection of rational curves $\{C_i\}_{i\in I}$ such that $0<-(K_X+B)\cdot C_i\<6$ for all $i\in I$ and 
\[ 
\NA(X)=\NA(X)_{(K_X+B)\>0}+\sum_{i\in I} \mbR^{\geq 0}\cdot [C_i].
\]
Moreover the following holds:
\begin{enumerate}
    \item For any K\"ahler class $\omega$, there are only finitely many extremal rays $R_i:=\mbR^{\geq 0}\cdot[C_i]$ such that $(K_X+B+\omega)\cdot R_i<0$.
    
    \item For any $(K_X+B)$-negative extremal ray $R=\mbR^{\geq 0}\cdot[C_{i}]$, there is a nef class $\alpha\in H^{1, 1}_{\BC}(X)$ such that $\alpha^\bot\cap\NA(X)=R$ and $\alpha=K_X+B+\eta$ for some K\"ahler class $\eta$.
\end{enumerate}
\end{corollary}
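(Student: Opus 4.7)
The strategy is to reduce to the $\mathbb{Q}$-factorial case established in Theorem \ref{thm:unified-cone-thm} via a small $\mathbb{Q}$-factorial dlt modification $\pi : (Y, B_Y) \to (X, B)$. Such a modification exists for compact K\"ahler $3$-fold dlt pairs (for example, by the results of \cite{DH20, DO23}) and satisfies $K_Y + B_Y = \pi^*(K_X + B)$ with $(Y, B_Y)$ a $\mathbb{Q}$-factorial compact K\"ahler dlt pair. Applying Theorem \ref{thm:unified-cone-thm} to $(Y, B_Y)$ produces rational curves $\{C_j^Y\}_{j\in J}$ with $0 < -(K_Y+B_Y)\cdot C_j^Y \leq 6$ spanning all $(K_Y+B_Y)$-negative extremal rays of $\NA(Y)$.

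Next I would push everything down to $X$. By the projection formula and the crepant relation $K_Y + B_Y = \pi^*(K_X+B)$, any $\pi$-contracted curve has zero $(K_Y+B_Y)$-intersection, so each $C_j^Y$ surjects finitely onto a rational curve $C_j := \pi(C_j^Y) \subset X$ with $0 < -(K_X+B)\cdot C_j \leq 6$; discarding duplicates produces the desired family $\{C_i\}_{i\in I}$. For the cone decomposition, observe that $\pi_* : \NA(Y) \to \NA(X)$ is surjective --- every curve in $X$ lifts to its strict transform in $Y$, and the image is closed because $\NA(Y)$ has a compact slice under any K\"ahler class. Given $\gamma \in \NA(X)$, I would lift to some $\tilde\gamma \in \NA(Y)$, decompose $\tilde\gamma = \tilde\gamma_0 + \sum \lambda_j [C_j^Y]$ via Theorem \ref{thm:unified-cone-thm}, and push forward; the identity $(K_X+B)\cdot \pi_*\tilde\gamma_0 = (K_Y+B_Y)\cdot \tilde\gamma_0 \geq 0$ forces $\pi_*\tilde\gamma_0 \in \NA(X)_{(K_X+B)\geq 0}$, yielding the required decomposition of $\NA(X)$.

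For (1), a K\"ahler class $\omega$ and any negative ray with $(K_X+B+\omega)\cdot C_i < 0$ force $\omega\cdot C_i < 6$, bounding the $\omega$-degree of the generating rational curves; a Douady-space compactness argument in the spirit of \cite[Lemma 6.1]{HP16} then gives finiteness of the corresponding numerical classes. For (2), given a negative extremal ray $R = \mathbb{R}^{\geq 0}\cdot[C_i]$, I would lift to a $(K_Y+B_Y)$-negative extremal ray $R^Y$ of $\NA(Y)$ mapping onto $R$, apply the K\"ahler contraction theorem \cite[Theorem 1.7]{DH20} on $Y$ to obtain a projective morphism contracting precisely $R^Y$, and descend this morphism through $\pi$ to a projective contraction $\psi : X \to Z$ of $R$. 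Pulling back a K\"ahler class $\alpha_Z$ on $Z$ gives $\alpha := \psi^*\alpha_Z$ nef on $X$ with $\alpha^\perp \cap \NA(X) = R$, and by construction $\alpha \equiv K_X+B+\eta$ for a K\"ahler class $\eta$.

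The main technical obstacle is the descent step in (2): ensuring that the contraction of $R^Y$ on $Y$ compatibly descends to a projective morphism on $X$. This reduces to checking that every $\pi$-exceptional curve lies in the face contracted by the morphism on $Y$; this in turn follows from $\pi$-exceptional curves having zero $(K_Y+B_Y)$-intersection and thus being contained in $\NA(Y)_{(K_Y+B_Y)=0}$, combined with the convex-geometric structure of the extremal face. The dimension-3 K\"ahler MMP machinery already available (dlt modifications, contraction theorems, and the Douady-space finiteness) supplies the remaining ingredients.
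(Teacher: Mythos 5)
Your reduction to the $\mbQ$-factorial case via a small dlt modification $\pi:(Y,B_Y)\to (X,B)$ is the same first step as the paper's, and your treatment of the cone decomposition and of (1) is essentially sound: since the generating curves on $Y$ are $(K_Y+B_Y)$-negative and $\pi$-contracted curves are $(K_Y+B_Y)$-trivial, none of them is contracted, and the projection formula sends the decomposition on $Y$ to the desired one on $X$. (The paper is more cautious here: rather than pushing the decomposition forward directly, it argues by contradiction with a separating nef class $\beta$, Lemma \ref{lem:cone-push-forward}, and the contraction theorem of \cite{DH20}; your direct argument works provided you invoke \cite[Proposition 3.14]{HP16} for the surjectivity of $\pi_*$ on $\NA$ rather than the informal ``every curve lifts'' remark.)

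Part (2), however, contains a genuine error. You propose to contract the lifted ray $R^Y$ on $Y$ and then descend that contraction through $\pi$, and you justify the descent by claiming that every $\pi$-exceptional curve lies in the contracted face because it is $(K_Y+B_Y)$-trivial. This is exactly backwards: the contraction of $R^Y$ contracts precisely the curves whose classes lie in the $(K_Y+B_Y)$-\emph{negative} ray $R^Y$, and a $(K_Y+B_Y)$-trivial curve can never lie in that ray. Hence the contraction of $R^Y$ contracts \emph{no} $\pi$-exceptional curve, the rigidity lemma does not apply, and the morphism does not factor through $X$. Moreover the route is circular: to contract $R^Y$ on $Y$ via \cite[Theorem 1.7]{DH20} you already need a nef supporting class of the form $K_Y+B_Y+\eta_Y$, i.e.\ statement (2) on $Y$, which Theorem \ref{thm:unified-cone-thm} does not provide. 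The paper avoids all of this by constructing $\alpha$ directly on $X$ by convex geometry: the subcone $V=\NA(X)_{(K_X+B)\geq 0}+\sum_{i\neq i_0}\mbR^{\geq 0}[C_i]$ is closed by \cite[Lemma 6.1]{HP16}, a nef class $\alpha$ with $\alpha^\bot\cap\NA(X)=R$ and $\alpha>0$ on $V\setminus\{0\}$ exists by \cite[Lemma 6.7(d)]{Deb01}, and after rescaling $\alpha-(K_X+B)$ is strictly positive on $\NA(X)\setminus\{0\}$, hence K\"ahler by \cite[Corollary 3.16]{HP16}. No contraction is needed at this stage; the contraction (Theorem \ref{thm:contraction-non-q-factorial}) is deduced \emph{from} the existence of $\alpha$, not the other way around.
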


\begin{proof}
Since $(X, B)$ is a dlt pair, there is a log resolution $\phi:Y\to X$ of $(X, B)$ such that $a(E, X, B)>-1$ for all exceptional divisors $E$ of $\phi$. Define $B_Y:=\phi^{-1}_*B+\Ex(\phi)$. Then running a $(K_Y+B_Y)$-MMP over $X$ as in \cite[Proposition 2.26]{DH20}, we may assume that there is a $\mbQ$-factorial dlt pair $(X', B')$ and a small projective bimeromorphic morphism $f:X'\to X$ such that $K_{X'}+B'=f^*(K_X+B)$. Then by the cone Theorem \ref{thm:unified-cone-thm} for $(X', B')$ there exist countably many rational curves $\{C'_i\}_{i\in I'}$ on $X'$ such that $0<-(K_{X'}+B')\cdot C'_i\<6$ for all $i\in I'$ and
\begin{equation}\label{eqn:cone-on-model}
    \NA(X')=\NA(X')_{(K_{X'}+B')\>0}+\sum_{i\in I'} \mbR^{\geq 0}\cdot [C'_i].
\end{equation}
Now from \cite[Proposition 3.14]{HP16} it follows that $f_*\NA(X')=\NA(X)$. Let $f(C'_i)=C_i\subset X$ for all $i\in I'$ such that $f(C'_i)\neq \pt$ and let $\{C_i\}_{i\in I}$ be the collection of all non contracted curves. Applying $f_*$ on both sides of \eqref{eqn:cone-on-model} we claim $\NA(X)=\NA(X)_{(K_{X}+B)\>0}+\sum_{i\in I} \mbR^{\>0}\cdot [C_i]$. If not, then assume that 
\begin{equation}\label{eqn:cone-equality-on-x}
    \NA(X)\supsetneq \NA(X)_{(K_X+B)\>0}+\sum_{i\in I} \mbR^{\>0}\cdot [C_i]. 
\end{equation}

Then there exists a $(1, 1)$ class $\alpha\in N^1(X)$ such that $\alpha$ is positive on the RHS of \eqref{eqn:cone-equality-on-x} and $\NA(X)\cap \alpha_{\<0}\neq \emptyset$. Let $\omega$ be a K\"ahler class on $X$ and $\lambda\in\mbR^{\geq 0}$ is defined as $\lambda:=\inf \{t\>0: \alpha+t\omega \mbox{ is a K\"ahler class}\}$. Then $\alpha+\lambda\omega$ is a nef class which is not K\"ahler. Consequently, from \cite[Corollary 3.16]{HP16} it follows that $(\alpha+\lambda\omega)^\bot \cap \NA(X)\neq \{0\}$. Let $\beta=\alpha+\lambda\omega$; then $\beta$ is nef and $\beta^\bot\cap \NA(X)\neq \{0\}$. In particular, $\beta^\bot\cap \NA(X)$ is an extremal face of $\NA(X)$; let's denote this face by $F$. Let $F'$ be the extremal face of $\NA(X')$ defined by $f^*\beta$, i.e. $F':=(f^*\beta)^\bot\cap\NA(X')$. Then from Lemma \ref{lem:cone-push-forward} it follows that $F'=(f^*)^{-1}F\cap\NA(X')$. We claim that $K_{X'}+B'$ is negative on $F'\setminus (f^*)^{-1}(\mathbf{0})$, where $\mathbf{0}\in\NA(X)$ is the zero vector. Indeed, if $\gamma'\in F'\setminus (f^*)^{-1}(\mathbf{0})$, then $f_*\gamma'\in F\setminus \{\mathbf{0}\}$ and thus $\beta \cdot f_*\gamma'=0$. In particular, $\alpha\cdot f_*\gamma'<0$, and thus $(K_X+B)\cdot f_*\gamma'<0$ (it follows from the construction of $\alpha$). Therefore by the projection formula we have $(K_{X'}+B')\cdot \gamma'<0$.\\
Now by the cone theorem on $(X', B')$ and \cite[Theorem 1.7]{DH20}, there must be a $(K_{X'}+B')$-negative extremal ray, say $R'=\mbR^{\>0}\cdot[C'_i]\subset F'\setminus (f^*)^{-1}(\mathbf{0})$. Then $C_i=f(C'_i)\neq \pt$ is one of the curves in the collection $\{C_i\}_{i\in I}$ above and $(K_X+B)\cdot C_i<0$. But $[C_i]\in F=\beta ^\bot\cap \NA(X)$ and this is a contradiction, since by our assumption $\alpha\cdot C_i>0$, and hence $\beta \cdot C_i= (\alpha+\lambda\omega)\cdot C_i>0$.\\

Now $(1)$ is proved exactly as in Theorem \ref{thm:unified-cone-thm}. For the second part, from \cite[Lemma 6.1]{HP16} we see that $V=\NA(X)_{(K_X+B)\>0}+\sum_{i\in I, i\neq i_0} \mbR^{\geq 0}[C_i]$ is a closed subcone on $\NA(X)$; note that \cite[Lemma 6.1]{HP16} is only stated for $K_X$, but this was never used in the proof, and the exact same proof works for $K_X+B$. Then by \cite[Lemma 6.7(d)]{Deb01} there is a nef class $\alpha\in H^{1, 1}_{\BC}(X)$ such that $\alpha$ is strictly positive on $V\setminus \{\mathbf{0}\}$ and $\alpha^\bot\cap\NA(X)=R$. Then scaling $\alpha$ appropriately we observe that $\alpha-(K_X+B)$ is strictly positive on $\NA(X)\setminus \{\mathbf{0}\}$, and thus by \cite[Corollary 3.16]{HP16}, $\alpha-(K_X+B)=\eta$ is a K\"ahler class on $X$, i.e. $\alpha=K_X+B+\eta$. 

\end{proof}~\\

The following lemma is taken from \cite{Wal18}.
\begin{lemma}\cite[Lemma 3.1]{Wal18}\label{lem:cone-push-forward}
Let $f:V\to W$ be a surjective linear transformation of finite dimensional vector spaces over $\mbR$. Suppose that $C_V\subset V$ and $C_W\subset W$ are closed convex cones of maximal dimensions and $H\subset W$ is a vector subspace of codimension $1$. Assume that the following hold:
\begin{enumerate}
    \item $f(C_V)=C_W$.
    \item $C_W\cap H\subset \partial C_W.$
\end{enumerate}
Then $f^{-1}H\cap C_V\subset \partial C_V$ and also $f^{-1}H\cap C_V=f^{-1}(H\cap C_W)\cap C_V$.
\end{lemma}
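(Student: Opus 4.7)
The plan is to handle the two assertions in reverse order, since the set-theoretic equality $f^{-1}H\cap C_V = f^{-1}(H\cap C_W)\cap C_V$ serves as a convenient warm-up. The inclusion $\supseteq$ is trivial. For $\subseteq$, given $v\in f^{-1}H\cap C_V$, hypothesis (1) forces $f(v)\in f(C_V)=C_W$, while $v\in f^{-1}H$ gives $f(v)\in H$; thus $f(v)\in H\cap C_W$ and so $v\in f^{-1}(H\cap C_W)\cap C_V$, as required.

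For the main inclusion $f^{-1}H\cap C_V\subset \partial C_V$, I would argue by contradiction. Suppose that some $v\in f^{-1}H\cap C_V$ lies in the topological interior $\operatorname{int}(C_V)$, which is nonempty because $C_V$ has maximal dimension in $V$. The key observation is that any surjective $\mathbb{R}$-linear map between finite-dimensional real vector spaces is an open map. Hence if $U\subset V$ is an open neighborhood of $v$ contained in $C_V$, then $f(U)$ is an open neighborhood of $f(v)$ in $W$ contained in $f(C_V)=C_W$. Consequently $f(v)\in \operatorname{int}(C_W)$. On the other hand, $f(v)\in H$ together with $f(v)\in C_W$ yields $f(v)\in H\cap C_W\subset \partial C_W$ by hypothesis (2), contradicting $f(v)\in \operatorname{int}(C_W)$.

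There is essentially no real obstacle here beyond recalling two standard facts: surjective $\mathbb{R}$-linear maps between finite-dimensional vector spaces are open, and for a closed convex cone of maximal dimension the topological interior in the ambient space is nonempty and coincides with the relative interior. The maximality assumption on the dimensions of $C_V$ and $C_W$ is precisely what makes the interior/boundary dichotomy meaningful in the ambient spaces $V$ and $W$, so that the open mapping argument applies directly without needing to pass to any relative topology.
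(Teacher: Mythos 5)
Your argument is correct. Note that the paper itself gives no proof of this lemma — it is quoted verbatim from \cite[Lemma 3.1]{Wal18} — so there is nothing to compare against except the standard argument, which is exactly what you give: the set-theoretic identity follows immediately from $f(C_V)=C_W$, and the boundary statement follows from the fact that a surjective linear map of finite-dimensional real vector spaces is open, so an interior point of $C_V$ would map to an interior point of $C_W=f(C_V)$, contradicting hypothesis (2); the maximal-dimension hypothesis guarantees that for the closed cones $C_V,C_W$ one has $\partial C_V=C_V\setminus\operatorname{int}(C_V)$ and likewise for $C_W$, which is the only place it is needed.
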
~\\

The following contraction theorem is a direct generalization of \cite[Theorem 1.7]{DH20}.
\begin{theorem}[Contraction Theorem]\label{thm:contraction-non-q-factorial}
Let $(X, B)$ be a compact K\"ahler $3$-fold klt pair, and $\alpha\in N^1(X)$ be a nef class such that $\alpha-(K_X+B)$ is nef and big. 
Then there exists a proper morphism $f:X\to Y$ with connected fibers to a normal compact K\"ahler variety $Y$ with rational singularities and a K\"ahler class $\omega_Y\in N^1(Y)$ such that $\alpha=f^*\omega_Y$. In particular, if $\alpha-(K_X+B)$ is a K\"ahler class, then $f$ is projective.
 \end{theorem}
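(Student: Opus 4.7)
The strategy is to reduce to the $\mathbb{Q}$-factorial setting, where Theorem~1.7 of \cite{DH20} already provides the contraction, and then descend via the rigidity lemma.

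First I would produce a small $\mathbb{Q}$-factorialization $\pi : X' \to X$ of the klt pair $(X,B)$, following the argument used in the proof of Corollary \ref{cor:nQ-unified-cone} and \cite[Proposition 2.26]{DH20}: take a log resolution of $(X,B)$ and run an appropriate relative MMP to obtain a small bimeromorphic morphism $\pi$ with $X'$ $\mathbb{Q}$-factorial. Setting $B' := \pi_*^{-1}B$, since $\pi$ is small and $(X,B)$ is klt, the pair $(X',B')$ is $\mathbb{Q}$-factorial klt with $K_{X'}+B' = \pi^*(K_X+B)$, and $X'$ remains a compact K\"ahler variety.

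Next, set $\alpha' := \pi^*\alpha$. Pullback preserves nefness, so $\alpha'$ is nef, and
\[
\alpha' - (K_{X'}+B') = \pi^*\bigl(\alpha - (K_X+B)\bigr)
\]
is nef (by Lemma \ref{lem:nef-pullback}) and big (bigness is preserved under bimeromorphic pullback). Applying \cite[Theorem 1.7]{DH20} to $(X',B')$ and $\alpha'$ then yields a proper morphism $f' : X' \to Y$ with connected fibers onto a normal compact K\"ahler variety $Y$ with rational singularities, together with a K\"ahler class $\omega_Y \in N^1(Y)$ satisfying $\alpha' = (f')^*\omega_Y$.

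The third step is to descend $f'$ through $\pi$. Any $\pi$-exceptional curve $C \subset X'$ satisfies $\alpha' \cdot C = \pi^*\alpha \cdot C = \alpha \cdot \pi_*C = 0$, and hence $\omega_Y \cdot f'_*C = 0$; since $\omega_Y$ is K\"ahler and $f'_*C$ is effective, this forces $f'_*C = 0$, i.e.\ $C$ lies in a fiber of $f'$. By the rigidity lemma, $f'$ factors as $f' = f \circ \pi$ for a unique proper morphism $f : X \to Y$, and $f$ automatically has connected fibers. Injectivity of $\pi^*$ on Bott--Chern cohomology (which follows from $\pi$ being bimeromorphic between normal compact varieties with rational singularities) together with $\pi^*(f^*\omega_Y) = \alpha' = \pi^*\alpha$ yields $\alpha = f^*\omega_Y$.

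The main obstacle will be the projectivity clause: when $\alpha - (K_X+B)$ is K\"ahler on $X$, its pullback $\alpha' - (K_{X'}+B') = \pi^*(\alpha-(K_X+B))$ is only nef and big on $X'$, so the projectivity half of \cite[Theorem 1.7]{DH20} does not directly apply. I would instead argue on $X$: every curve $C$ contracted by $f$ satisfies $-(K_X+B)\cdot C = (\alpha-(K_X+B))\cdot C > 0$, so $-(K_X+B)$ is $f$-ample on contracted curves. Combining this with Kawamata--Viehweg vanishing (Theorem \ref{thm:relative-kvv}) applied fiberwise, and a relative base-point-free argument using the K\"ahlerness of both $\omega_Y$ on $Y$ and $\alpha-(K_X+B)$ on $X$, one extracts, for $m\gg 0$, an integral class in $N^1(X)$ representable by an $f$-free line bundle that separates points of the fibers, producing an $f$-ample line bundle and thus showing $f$ is projective.
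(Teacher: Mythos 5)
Your first three steps coincide with the paper's proof: a small $\mathbb{Q}$-factorialization obtained from a relative MMP on a log resolution, an application of \cite[Theorem 1.7]{DH20} upstairs, and descent through the rigidity lemma (the paper descends the class by simply pushing forward $g^*\alpha = h^*\omega_Y$ under the bimeromorphic $g$, rather than invoking injectivity of $\pi^*$ on Bott--Chern cohomology, but both are fine). That part of your argument is correct.

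Where you go astray is the projectivity clause. The "main obstacle" you identify is not an obstacle, and the workaround you sketch is both unnecessary and too vague to stand as a proof: you speak of "extracting, for $m\gg 0$, an integral class representable by an $f$-free line bundle" from the K\"ahler class $\alpha - (K_X+B)$, but this class is a priori irrational, and you give no actual mechanism for producing sections or a line bundle from it. The point you are missing is that projectivity is read off directly on $X$ after $f$ has been constructed: since $\alpha = f^*\omega_Y$, the class $\alpha$ is numerically trivial on every fiber of $f$, so $-(K_X+B) \equiv_f \alpha - (K_X+B) = \omega_X$. Thus the $\mathbb{Q}$-Cartier divisor $-(K_X+B)$ restricts to a K\"ahler class on each fiber of $f$, i.e.\ it is $f$-ample (cf.\ \cite[Proposition 1.4]{Nak87}), and $f$ is projective. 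No vanishing theorem or base-point-free argument is needed, and there is no need to transport the K\"ahlerness of $\alpha-(K_X+B)$ up to $X'$ at all.
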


\begin{proof}
Let $g:Z\to X$ be a small $\mbQ$-factorialization of $X$ obtained by running an appropriate relative MMP on a log resolution of $(X, B)$ as in \cite[Proposition 2.26]{DH20}. Set $K_Z+B_Z:=g^*(K_X+B)$; then $g^*\alpha-(K_Z+B_Z)$ is nef and and big. Thus by \cite[Theorem 1.7]{DH20}, there is proper morphism $h:Z\to Y$ with connected fibers to a normal compact K\"ahler variety $Y$ with rational singularities and a K\"ahler class $\omega_Y\in N^1(Y)$ such that $g^*\alpha=h^*\omega_Y$. Now we will apply the rigidity lemma. Note that since $g$ is a proper birational morphism, the positive dimensional fibers of $g$ are covered by projective curves. Let $C\subset Z$ be a curve such that $g(C)=\pt$. Then by the projection formula $g^*\alpha\cdot C=0$. Thus $0=h^*\omega_Y\cdot C=\omega_Y\cdot h_*C$, and hence $C$ is contracted by $h$, as $\omega_Y$ is a K\"ahler class on $Y$. Therefore, by the rigidity lemma \cite[Lemma 4.1.13]{BS95}, there is a proper morphism $f:X\to Y$ such that $f\circ g=h$, and thus by pushing forward by $g$ it follows that $\alpha=f^*\omega_Y$.

Finally,  if $\omega _X:=\alpha-(K_X+B)$ is a K\"ahler class, then $-(K_X+B)\num_f \omega_X$, and hence $-(K_X+B)$ is $f$-ample, thus $f$ is projective.

\end{proof}

The following variant is often useful in applications.
\begin{corollary}\label{cor:contraction-non-q-factorial}
Let $(X, B)$ be a compact K\"ahler $3$-fold dlt pair, and $\alpha\in N^1(X)$ a nef class such that $\alpha-(K_X+B)$ is a K\"ahler class. Moreover, assume that $B=B_0+B'$, where $B_0\>0, B'\>0$ are $\mbQ$-divisors such that $K_X+B_0$ is $\mbQ$-Cartier and $(X, B_0)$ has klt singularities. Then there exists a proper morphism $f:X\to Y$ with connected fibers to a normal compact K\"ahler variety $Y$ with rational singularities and a K\"ahler class $\omega_Y\in N^1(Y)$ such that $\alpha=f^*\omega_Y$. In particular, if $\alpha-(K_X+B)$ is a K\"ahler class, then $f$ is projective.
\end{corollary}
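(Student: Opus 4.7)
The plan is to reduce this dlt case to the klt case already handled by Theorem \ref{thm:contraction-non-q-factorial} via a linear perturbation that interpolates between $B$ and $B_0$. For a small rational $\epsilon>0$ to be chosen, set
\[ B_\epsilon := (1-\epsilon) B + \epsilon B_0 = B_0+(1-\epsilon)B'. \]
Since $K_X+B$ and $K_X+B_0$ are both $\mbQ$-Cartier by hypothesis, $K_X+B_\epsilon$ is $\mbQ$-Cartier; moreover, the difference $B'=B-B_0=(K_X+B)-(K_X+B_0)$ is $\mbQ$-Cartier as well, so that $[B']$ defines a well-defined class in $N^1(X)$.

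First I would verify that $(X, B_\epsilon)$ is klt. Since log discrepancies are linear in the boundary, for every divisor $E$ over $X$ one has
\[ a(E, X, B_\epsilon) = (1-\epsilon)\, a(E, X, B) + \epsilon\, a(E, X, B_0). \]
As $(X, B)$ is dlt, and hence lc, $a(E, X, B) \geq -1$; as $(X, B_0)$ is klt, $a(E, X, B_0)>-1$. Combining these, $a(E, X, B_\epsilon) > -(1-\epsilon) - \epsilon = -1$, so $(X, B_\epsilon)$ is klt.

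Next, for the auxiliary positivity, compute
\[ \alpha - (K_X + B_\epsilon) = \bigl[\alpha - (K_X+B)\bigr] + \epsilon [B'] = \omega + \epsilon [B'], \]
where $\omega := \alpha - (K_X+B)$ is K\"ahler by hypothesis. Since the K\"ahler cone is open in $N^1(X)$, the class $\omega+\epsilon[B']$ remains K\"ahler for all sufficiently small $\epsilon>0$; fix such an $\epsilon$. In particular this class is nef and big by Lemma \ref{lem:arbitrary-nef-and-big}.

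With $(X,B_\epsilon)$ klt, $\alpha$ nef, and $\alpha-(K_X+B_\epsilon)$ nef and big, Theorem \ref{thm:contraction-non-q-factorial} applied to the klt pair $(X, B_\epsilon)$ and the class $\alpha$ produces the desired proper morphism $f: X \to Y$ with connected fibers onto a normal compact K\"ahler variety $Y$ with rational singularities, together with a K\"ahler class $\omega_Y\in N^1(Y)$ satisfying $\alpha = f^*\omega_Y$. Since $\alpha-(K_X+B_\epsilon)=\omega+\epsilon[B']$ is K\"ahler, the projectivity clause of Theorem \ref{thm:contraction-non-q-factorial} further yields that $f$ is projective, proving the final assertion. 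The only mildly delicate point is the klt verification for the perturbation; everything else is routine given the previous theorem.
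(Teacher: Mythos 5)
Your proposal is correct and follows essentially the same route as the paper: both perturb the boundary to $B_0+(1-\epsilon)B'$, note that $\alpha-(K_X+B_0+(1-\epsilon)B')=\omega+\epsilon B'$ stays K\"ahler for small $\epsilon$, and apply Theorem \ref{thm:contraction-non-q-factorial} to the resulting klt pair. Your explicit verification of the klt condition via linearity of discrepancies is a detail the paper leaves implicit, but the argument is the same.
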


\begin{proof}
Let $\alpha=K_X+B+\omega$, where $\omega$ is a K\"ahler class. Then for a sufficiently small $\ve\in\mbQ^{\geq 0}$ we can write $\alpha=K_X+B_0+(1-\ve)B'+(\omega+\ve B')$ so that $\omega+\ve B'$ is a K\"ahler class and $(X, B_0+(1-\ve)B')$ is klt. In particular, $\alpha-(K_X+B_0+(1-\ve)B')$ is a K\"ahler class, and thus by Theorem \ref{thm:contraction-non-q-factorial} there exists a projective morphism $f:X\to Y$ such that $\alpha=f^*\omega_Y$ for some K\"ahler class $\omega_Y$ on $Y$.
\end{proof}

\section{Termination of flips for effective pairs}
We will prove termination of flips for effective pairs as in \cite{Bir07}. In order to do this first we prove the existence of dlt models (local and global) and the ACC property for log canonical thresholds. Note that the ACC for for log canonical thresholds is also proved in \cite{Fuj22b}.\\

\begin{theorem}[Global dlt model]\label{thm:global-dlt-model}
	Let $(X, B)$ be a compact K\"ahler lc pair of dimension $4$. Then there exists a $\mbQ$-factorial dlt pair $(X', B')$ and a projective bimeromorphic morphism $g:X'\to X$ such that $K_{X'}+B'=g^*(K_X+B)$.
\end{theorem}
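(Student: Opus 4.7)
The plan is to adapt the classical construction of a dlt modification via a relative MMP, now in the compact K\"ahler 4-fold setting, using Theorem \ref{t-mmpscale} as the engine in place of its algebraic counterpart.

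First, I would take a log resolution $f \colon Y \to X$ of $(X, \supp(B))$ via Theorem \ref{thm:log-resolution}, so that $Y$ is smooth, $f$ is projective bimeromorphic, and $\Ex(f) \cup f^{-1}_* B$ has simple normal crossing support. Writing $K_Y = f^*(K_X+B) - f^{-1}_* B + \sum_i a_i E_i$ with $\{E_i\}$ the $f$-exceptional prime divisors and $a_i = a(E_i, X, B) \geq -1$ by the lc hypothesis, I would set the \emph{dlt boundary}
\[
\Gamma_Y := f^{-1}_* B + \sum_i E_i,
\]
so that $(Y, \Gamma_Y)$ is log smooth (hence $\mathbb{Q}$-factorial dlt) and
\[
K_Y + \Gamma_Y = f^*(K_X+B) + F, \qquad F := \sum_i (1+a_i) E_i \geq 0,
\]
with $F$ effective and $f$-exceptional.

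Next I would run a $(K_Y+\Gamma_Y)$-MMP over $X$ to eliminate $F$. Since $f \colon Y \to X$ is projective and both $Y$ and $X$ are compact, the morphism satisfies property $\mathbf{Q}$ of Definition \ref{def:property-pq}, so Theorem \ref{t-mmpscale} applies after the standard reduction from the dlt to the klt case: for a sufficiently general $f$-ample $\mathbb{Q}$-divisor $A$ and $0 < \delta \ll 1$, the pair $(Y, \Gamma_Y - \delta \lrd \Gamma_Y \rrd + \delta A)$ is klt, and for $\delta$ small enough the $(K_Y+\Gamma_Y)$-negative extremal rays over $X$ coincide with those of this perturbed klt pair (the bigness hypothesis of Theorem \ref{t-mmpscale}(2) is automatic in the bimeromorphic setting). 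Because $K_Y+\Gamma_Y \sim_{\mathbb{Q}, X} F \geq 0$ is $f$-exceptional, the negativity lemma forces every step of this MMP to be supported inside $\supp(F)$, so the MMP progressively contracts the components of $F$ and terminates, producing a projective bimeromorphic morphism $g \colon X' \to X$ with $(X', B')$ a $\mathbb{Q}$-factorial dlt pair (dlt-ness being preserved under MMP steps that start from a log smooth pair) and $K_{X'} + B'$ nef over $X$.

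To conclude, one more application of the negativity lemma on a common resolution of $g$ and $f$ yields $K_{X'} + B' = g^*(K_X+B)$, where $B'$ is the push-forward of $\Gamma_Y$ to $X'$, that is, the sum of the strict transform of $B$ together with the surviving exceptional divisors $E_i$ with $a_i = -1$. The main obstacle is the reduction from dlt to klt so that Theorem \ref{t-mmpscale} becomes directly applicable, together with the verification that the perturbed klt MMP faithfully implements the desired $(K_Y+\Gamma_Y)$-MMP and terminates in this bimeromorphic relative setting; once this technical reduction is in place, the argument proceeds exactly as in the projective algebraic case.
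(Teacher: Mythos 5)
Your construction is the same as the paper's up to and including the choice of boundary: log resolution $f\colon Y\to X$, $\Gamma_Y=f^{-1}_*B+\Ex(f)$, the identity $K_Y+\Gamma_Y=f^*(K_X+B)+F$ with $F\geq 0$ exceptional, a relative $(K_Y+\Gamma_Y)$-MMP over $X$, and the negativity lemma at the end. Where you diverge is in how the MMP is run and terminated. The paper works with the dlt pair directly: it invokes its relative dlt cone theorem (Theorem \ref{thm:general-relative-dlt-cone}), produces a supporting nef line bundle for each negative extremal ray via the rationality theorem, contracts it with Nakayama's base-point-free theorem after the perturbation $K_Y+(1-\ve)\Gamma_Y+(H+\ve\Gamma_Y)$, constructs flips from Corollary \ref{c-fg1}, and terminates by \emph{special termination}, using exactly your observation that every contracted or flipped curve lies in $\supp F\subset\lrd\Gamma_Y\rrd$ together with the known $3$-fold MMP. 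You instead perturb globally to a klt pair with big boundary and feed it to Theorem \ref{t-mmpscale} under property $\mathbf Q$. Both routes are legitimate; the paper's buys a cleaner statement (no need to compare the extremal rays of the dlt pair with those of a perturbation), while yours outsources both the steps and the termination to an already-proved black box.

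Two spots in your write-up need tightening. First, "the MMP progressively contracts the components of $F$ and terminates" is not an argument: flips contract no divisors, so being supported in $\supp F$ does not by itself force termination. You must either invoke special termination (as the paper does, using $\supp F\subset\lrd\Gamma_Y\rrd$ and the $3$-fold MMP) or genuinely rely on Theorem \ref{t-mmpscale}(2) for the perturbed klt pair, for which relative bigness over the bimeromorphic base is indeed automatic. Second, the assertion that the negative extremal rays of $(Y,\Gamma_Y)$ and of the perturbed klt pair "coincide" for small $\delta$ is too strong and is also not quite what you need: you need that every step of the perturbed MMP is $(K_Y+\Gamma_Y)$-non-positive (so crepancy is not destroyed) and that nefness of the perturbed divisor at the end forces nefness of $K_{Y'}+\Gamma_{Y'}$ over $X$; since $\delta$ must be re-chosen after each step, this is exactly the scaling argument you flag as the "main obstacle," and it is the part the paper's direct dlt argument is designed to avoid.
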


\begin{proof}
	Let $f:Y\to X$ be a log resolution of $(X, B)$.
	Define $B_Y:=f^{-1}_*B+\Ex(g)$. Then using the cone Theorem \ref{thm:general-relative-dlt-cone} we will run a $(K_Y+B_Y)$-MMP over $X$. If $R=\mbR^{\geq 0}\cdot[C_i]$ is a $(K_Y+B_Y)$-negative extremal ray of $\NE(Y/X)$, then from a standard argument using the rationality theorem as in \cite[Theorem 4.11]{Nak87} it follows that there is a $f$-nef line bundle $L$ on $Y$ such that $L-(K_Y+B_Y)$ is $f$-ample and $L^\bot\cap\NE(X/Y)=R$. Write $L=K_Y+B_Y+H$ for some $f$-ample class $H$; then $L=K_Y+(1-\ve B_Y)+(H+\ve B_Y)$  such that $H+\ve B_Y$ is $f$-ample for $\ve\in\mbQ^{\geq 0}$ sufficiently small. Note that $(Y, (1-\ve)B_Y)$ is klt and thus by the Base-Point Free Theorem \cite[Theorem 4.8]{Nak87}, 
  there is a projective bimeromorphic morphism $\phi:Y\to Z$ over $X$ contracting the ray $R$. If $\phi$ is a small morphism, then the existence of the flip follows from Corollary \ref{c-fg1}. Note that from our construction above it follows that at each step $(Y_i, B_{Y_i})$, the contracted locus is contained in $\lrd B_{Y_i}\rrd$. Since the log minimal model program is known in dimension $\<3$ due to \cite{DH20}, special termination holds and the above MMP terminates. Let $g:(X', B')\to (X, B)$ be the end result of this MMP. Then from the negativity lemma it follows that $K_{X'}+B'=g^*(K_X+B)$, and $(X', B')$ is a $\mbQ$-factorial compact K\"ahler dlt pair of dimension $4$. 
\end{proof}~\\

\begin{theorem}[Local dlt-model]\label{thm:local-dlt-model}
Let $(X, B)$ be a log canonical pair, where $X$ is a relatively compact Stein open subset of a K\"ahler variety and there is a compact subset $W\subset X$. Then shrinking $X$ around $W$ if necessary, there exists a projective bimeromorphic morphism $f:Y\to X$ such that $K_Y+B_Y=f^*(K_X+B)$ and $(Y, B_Y)$ is a $\mbQ$-factorial dlt pair, where $B_Y:=f^{-1}_*B+\Ex(f)$.  
\end{theorem}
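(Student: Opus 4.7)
The plan is to mirror the proof of Theorem \ref{thm:global-dlt-model} in the relative projective setting, leveraging the machinery for projective morphisms established in Part 2 of the paper. First, I would choose a log resolution $g : Y_0 \to X$ of $(X,B)$ via Theorem \ref{thm:log-resolution}; this is a projective bimeromorphic morphism from a smooth variety. Setting $B_{Y_0} := g^{-1}_*B + \Ex(g)$, the pair $(Y_0,B_{Y_0})$ is $\mathbb{Q}$-factorial dlt, and since $(X,B)$ is log canonical we may write
$$K_{Y_0} + B_{Y_0} \;=\; g^*(K_X+B) + E,$$
where $E \geq 0$ is an effective $g$-exceptional $\mathbb{Q}$-divisor supported on the exceptional divisors of $g$ with discrepancy strictly greater than $-1$. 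In particular $K_{Y_0}+B_{Y_0} \equiv_X E$, so running a $(K_{Y_0}+B_{Y_0})$-MMP over $X$ is equivalent to successively contracting components of $E$.

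Next, I would shrink $X$ around $W$ so that $X$ is a relatively compact Stein space and $(g : Y_0 \to X, W)$ satisfies property $\mathbf{P}$ of Definition \ref{def:property-pq}; this is automatic once $W$ admits a fundamental system of Stein neighborhoods and meets every analytic subset defined on a neighborhood of $W$ in finitely many connected components, both of which hold for compact subsets of Stein spaces (cf.\ Remark \ref{rmk:relative-NS}). Under these hypotheses, the relative cone and contraction Theorem \ref{thm:general-relative-dlt-cone} together with the existence of flips provided by Corollary \ref{c-fg1} (applied to the small contractions, which are projective) allow us to run a $(K_{Y_0}+B_{Y_0})$-MMP over $X$ in the sense of Theorem \ref{t-mmpscale}. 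At every stage $(Y_i,B_{Y_i})$, the negativity lemma applied to the relation $K_{Y_i}+B_{Y_i} \equiv_X E_i \geq 0$ forces the extremal contraction to be supported in $\Supp E_i \subset \lfloor B_{Y_i} \rfloor$.

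Termination follows by special termination combined with the $3$-dimensional MMP of \cite{DH20}: since every flipping or divisorial locus lies in $\lfloor B_{Y_i} \rfloor$, restricting to a log canonical centre yields a $3$-dimensional MMP sequence which must terminate, so after finitely many steps no extremal ray can meet $\lfloor B_{Y_i} \rfloor$, whereupon $E_i$ is nef over $X$ and the negativity lemma forces $E_i = 0$. Denoting the output by $f : Y \to X$, we obtain $(Y,B_Y)$ a $\mathbb{Q}$-factorial dlt pair with $B_Y = f^{-1}_*B + \Ex(f)$ and $K_Y+B_Y = f^*(K_X+B)$, as desired.

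The main obstacle is making the MMP framework of Theorem \ref{t-mmpscale} and Theorem \ref{thm:general-relative-dlt-cone} applicable in this local analytic setting — specifically, verifying property $\mathbf{P}$ after shrinking, and transporting special termination to the relative K\"ahler setting using the $3$-fold results of \cite{DH20}. Both points are essentially bookkeeping given the tools developed earlier in the paper; no new ideas appear to be required beyond those already in the proof of Theorem \ref{thm:global-dlt-model}.
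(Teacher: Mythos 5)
Your overall strategy --- log resolution, then a $(K_{Y_0}+B_{Y_0})$-MMP over $X$ whose steps are forced into $\Supp E_i\subset\lfloor B_{Y_i}\rfloor$ by the negativity lemma --- is exactly how the paper proves the \emph{global} dlt model theorem, but the paper deliberately does not argue this way in the local case, and the reason is precisely where your proof has a gap: termination. You appeal to ``special termination combined with the $3$-dimensional MMP of \cite{DH20}.'' First, Theorem \ref{thm:local-dlt-model} carries no dimension hypothesis, and it is invoked in the proof of Theorem \ref{thm:acc-for-lct} in arbitrary dimension $n$; an argument whose termination rests on the $3$-fold MMP can therefore only prove a special case of the stated result. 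Second, even in dimension $4$ the strata of $\lfloor B_{Y_i}\rfloor$ are not compact K\"ahler $3$-folds but $3$-folds projective over the Stein base $X$; \cite{DH20} provides termination for compact K\"ahler $3$-fold dlt pairs, while the relative analytic MMP established in Part 2 (Theorem \ref{t-mmpscale}) covers only klt pairs. So the lower-dimensional termination input that special termination requires is not available in the form you use it, and nothing in the paper supplies it.

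The paper's actual proof follows \cite[Theorem 3.1]{KK10}: one never runs the dlt MMP directly, but instead perturbs $(Y_0,B_{Y_0})$ to an auxiliary \emph{klt} pair with big boundary whose MMP over $X$ coincides with the desired one; the role of a general ample divisor in \cite{KK10} is played, over the Stein base, by a sufficiently general member of a relatively free linear system via Theorem \ref{t-bertini+} and Lemma \ref{l-klt}. Since $Y_0\to X$ is bimeromorphic, bigness over $X$ is automatic and termination comes for free from Theorem \ref{t-mmpscale}(2), in every dimension. This klt reduction is the ingredient missing from your outline. A secondary point: property $\mathbf{P}$ is not automatic for an arbitrary compact $W$ in a Stein space --- (P3) (Stein compactness of $W$) and (P4) are genuine hypotheses --- although by Remark \ref{rmk:relative-NS} one can reduce to finitely many compacts satisfying them; your blanket assertion that both hold for every compact subset of a Stein space needs justification or replacement by such a covering argument.
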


\begin{proof}
The proof of \cite[Theorem 3.1]{KK10} works here with few changes. Their proof uses ample divisors on $X$ and Bertini's theorem on a resolution of singularities of $X$. The ampleness assumption is replaced by assuming that $X$ is Stein, and the required Bertini theorem follows from Theorem \ref{t-bertini+}. Additionally, \cite[Theorem 3.1]{KK10} uses \cite{BCHM10} to obtain a log terminal model of a klt pair by running a relative MMP for projective morphism. We achieve the same thing here by Theorem \ref{t-mmpscale}.
\end{proof}

\begin{definition}\label{def:lct}
Let $(X, B)$ be a relatively compact log canonical pair and $M\>0$ an $\mbR$-Cartier divisor. Then we define 
\[ 
\lct(X, B; M):=\sup\;\{t\>0\;:\; (X, B+tM) \mbox{ is lc}\}.
\]
Now fix two sets $I\subset [0, 1]$ and $J\subset [0, \infty)$. Let $\mathfrak{I}_n(I)$ be the set of all log canonical pairs $(X, B)$, where $X$ is a relatively compact K\"ahler variety of dimension $n$, and the coefficients of $B$ belong to the set $I$. We define
\[
\LCT_n(I, J):=\{\lct(X, B; M)\; :\; (X, B)\in\mathfrak{I}_n(I)\},
\]
where the coefficients of $M$ belong to the set $J$.
\end{definition}

\begin{theorem}\label{thm:acc-for-lct}
Fix a positive integer $n$, and sets $I\subset [0, 1]$ and $J\subset [0, \infty)$. If $I$ and $J$ are DCC sets, then $\LCT_n(I, J)$ satisfies the ACC.
\end{theorem}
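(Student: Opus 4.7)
The plan is to adapt the Hacon--McKernan--Xu strategy to the K\"ahler/analytic setting, arguing by induction on $n$ and by contradiction. For $n=1$ the statement is elementary. For $n\geq 2$, suppose $\LCT_n(I,J)$ admits a strictly increasing sequence $t_i = \lct(X_i, B_i; M_i)$ realized by pairs $(X_i,B_i)\in \mathfrak{I}_n(I)$ with $M_i$ whose coefficients lie in $J$. Since the log canonical threshold is a local invariant, for each $i$ there is a point $x_i\in X_i$ at which $(X_i, B_i+t_iM_i)$ is lc but not klt. After replacing $X_i$ by a sufficiently small Stein open neighborhood of $x_i$, the problem becomes local in nature.

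First I would apply Theorem \ref{thm:local-dlt-model} to the lc pair $(X_i, B_i+t_iM_i)$ to produce a $\mathbb{Q}$-factorial dlt modification $f_i\colon Y_i\to X_i$ with
\[
K_{Y_i}+\Gamma_i+t_iN_i \;=\; f_i^*(K_{X_i}+B_i+t_iM_i),
\]
where $\Gamma_i=f_{i,*}^{-1}B_i+\Ex(f_i)$ and $N_i=f_{i,*}^{-1}M_i$. Because the pair is strictly lc over $x_i$, there is a prime divisor $E_i\subset Y_i$ with $f_i(E_i)\ni x_i$ whose coefficient in $\Gamma_i+t_iN_i$ equals $1$. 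Writing this as $\gamma_i+t_i\nu_i=1$, with $\gamma_i\in I\cup\{1\}$ (either the strict transform of a component of $B_i$ or an $f_i$-exceptional divisor) and $\nu_i\in J\cup\{0\}$, one already gets useful DCC control on $t_i$ whenever $\nu_i$ does not tend to $0$.

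Next, I would apply divisorial adjunction along $E_i$:
\[
(K_{Y_i}+\Gamma_i+t_iN_i)|_{E_i} \;=\; K_{E_i}+\Theta_i+t_i\Xi_i.
\]
By Shokurov's standard analysis of the different---which is a local calculation on $Y_i$ along $E_i$ and is therefore identical to the algebraic setting---the pair $(E_i,\Theta_i)$ is $\mathbb{Q}$-factorial dlt of dimension $n-1$, the coefficients of $\Theta_i$ lie in a DCC set $I'=I'(I)$, and those of $\Xi_i$ lie in a DCC set $J'=J'(J)$. Restriction immediately gives $\lct(E_i, \Theta_i; \Xi_i)\geq t_i$, while the reverse inequality---inversion of adjunction along the dlt center $E_i$---follows from Kawamata--Viehweg vanishing (Theorem \ref{thm:relative-kvv}) by a standard connectedness of lc centers argument. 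Hence $t_i\in \LCT_{n-1}(I',J')$, and the inductive hypothesis contradicts the strict monotonicity of $\{t_i\}$.

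The hard part will be the verification that adjunction in the K\"ahler/analytic setting genuinely produces DCC coefficient sets $I'$ and $J'$ depending only on $I$ and $J$, and that inversion of adjunction for dlt centers is available. Both rely on technology---local dlt models, local Kawamata--Viehweg vanishing, Stein neighborhoods adequate for restriction and extension---already set up in the preliminary sections (especially Theorems \ref{thm:local-dlt-model} and \ref{thm:relative-kvv}); the arguments themselves are local along $E_i$, so they transfer from the algebraic theory provided those analytic tools are in place. A subsidiary but nontrivial point is to handle the case $\nu_i\to 0$, where one must avoid drawing the "wrong" divisor $E_i$; this is arranged by choosing $E_i$ to be a maximal non-klt center mapping to $x_i$, for which the relation $\gamma_i+t_i\nu_i=1$ combined with the DCC of $I$ forces $\nu_i$ to be bounded below along any putative ACC-violating subsequence.
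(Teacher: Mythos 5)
Your overall skeleton (reduce to the case where the relevant lc centre has codimension at least two, pass to a dlt model, extract a divisor $E_i$ of coefficient one over the centre, and use adjunction to $E_i$) matches the paper's, but the inductive step contains a genuine gap: the claimed equality $t_i=\lct(E_i,\Theta_i;\Xi_i)$ is false in general, and inversion of adjunction does not give the inequality $\lct(E_i,\Theta_i;\Xi_i)\leq t_i$. The point is that $f_i^*(K_{X_i}+B_i+tM_i)=K_{Y_i}+E_i+\Gamma_i+tN_i+(t-t_i)(f_i^*M_i-N_i)$, and $f_i^*M_i-N_i$ is an effective exceptional divisor whose support contains $E_i$ (since $Z_i\subset\Supp M_i$); so for $t>t_i$ the pair downstairs fails to be lc merely because the coefficient of $E_i$ in the pullback exceeds $1$, which says nothing about the pair $(E_i,\Theta_i+t\Xi_i)$. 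Concretely, for $X=\mbC^2$, $B=0$ and $M=\{y^2=x^3\}$ one has $t=\lct=5/6$, the dlt model is the $(2,3)$-weighted blow-up with $E\cong\mbP^1$, and adjunction gives $K_E+\tfrac12P_1+\tfrac23P_2+\tfrac56P_3\num 0$ with $\Xi=P_3$, so that $\lct(E,\Theta;\Xi)=1\neq 5/6$. Consequently the restricted thresholds need not reproduce, let alone strictly increase with, the $t_i$, and the induction on $\LCT_{n-1}(I',J')$ does not close.

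What the paper does instead is not to compute a threshold on $E_i$ at all, but to exploit \emph{numerical triviality}: since $K_{Y_i}+E_i+\Gamma_i=f_i^*(K_{X_i}+B_i+c_iM_i)$, restricting to a \emph{general fibre} $F_i$ of $E_i\to f_i(E_i)=Z_i$ gives a projective pair with $K_{F_i}+\Theta_{F_i}\num 0$ whose boundary has a coefficient of the form $\frac{m-1+f+kc_i}{m}$ with $k,m\geq 1$ and $f\in D(I)$ (as in your cusp example, where $5/6$ itself appears as a boundary coefficient). A strictly increasing sequence $c_i$ then produces boundary coefficients violating the global ACC for numerically trivial pairs on projective varieties (\cite[Theorem 1.5 and Lemma 5.2]{HMX14}). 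Passing to the general fibre is essential for a second reason you do not address: $E_i$ is only an analytic germ over $Z_i$, whereas $F_i$ is compact projective, so the projective HMX machinery applies directly and no analytic ACC statement in dimension $n-1$ is needed; the paper's argument therefore requires no induction on $n$. Your treatment of the divisorial case ($\nu_i\not\to 0$ versus $\nu_i\to 0$) is essentially the paper's first reduction and is fine, but the heart of the proof must be replaced as above.
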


\begin{proof}
By contradiction assume that there is a strictly increasing sequence $\{c_i\}$, where $c_i=\lct(X_i, B_i; M_i)$. Now first assume that there is a component $S_i$ of $M_i$ which is a lc center of $(X_i, B_i+c_iM_i)$ for infinitely many $i$. Let the coefficient of $S_i$ in $B_i$ and $M_i$ be $b_{i}$ and $m_{i}$, respectively. Then we have $b_{i}+c_im_{i}=1$. Since $b_{i}$ and $m_{i}$ are both contained in DCC sets, by passing to a common subsequence we may assume that $b_{i}$ and $m_{i}$ are both monotonically increasing sequences. Then from $c_{i}m_{i}=1-b_{i}$ we see that the LHS is a strictly increasing sequence (since $\{c_i\}$ is strictly increasing) while the RHS is a monotonically decreasing sequence, a contradiction.\\
Thus passing to a tail of the sequence $\{c_i\}$ we may assume that all lc centers of $(X_i, B_i+c_iM_i)$ are contained in the support of $M_i$ are of codimension at least $2$. Let $Z_i$ be a maximal lc center of $(X_i, B_i+c_iM_i)$ contained in $\Supp M_i$ for all $i$.
Next choose a relatively compact Stein open subset  $U_i\subset X_i$ such that  $U_i\cap Z_i\neq\emptyset$ and $Z_i|_{U_i}$ is still a maximal lc center of $(U_i, (B_i+c_iM_i)|_{U_i})$. Replacing $(X_i, B_i+c_iM_i)$ by $(U_i, (B_i+c_iM_i)|_{U_i})$ we may assume that $X_i$ is relatively compact Stein space. Note that shrinking $X_i$ further we can pick a small open subset $V_i\subset X_i$ such that $V_i\cap Z_i\neq\emptyset$ is still a maximal lc center of $(V_i, (B_i+c_iM_i)|_{V_i})$, and additionally $\overline{V}_i\subset X_i$ holds. \\  
Now let $f_i:Y_i\to U_i$ be a dlt model of $(U_i, (B_i+c_iM_i)|_{U_i})$ as in Theorem \ref{thm:local-dlt-model}. Then there is an exceptional divisor $E_i$ intersecting the strict transform of $M_i$ such that $f_i(E_i)=Z_i$. Now write
\[ 
K_{Y_i}+E_i+\Gamma_i=f^*(K_{X_i}+B_i+c_iM_i)
\]
so that $f_{i*}\Gamma_i=B_i+c_iM_i$.\\
Then by adjunction, $(E_i, \Theta_i)$ is a dlt pair, where $K_{E_i}+\Theta_i=(K_X+E_i+\Gamma_i)|_{E_i}=f^*_i((K_{X_i}+B_i+c_iM_i)|_{V_i})$. Note that $\Theta_i$ has a component whose coefficient in $\Theta_i$ is of the form
\begin{equation}\label{eqn:adjunction-coefficient}
\frac{m-1+f+kc_i}{m},
\end{equation}
where $k, m\>1$ and $f\in D(I)$.\\
Now let $F_i$ be a general fiber of the induced morphism $f_{E_i}:=f_i|_{E_i}: E_i\to f_i(E_i)$. Then $F_i$ is projective, since $f_i$ is projective, and by adjunction we have $K_{F_i}+\Theta_{F_i}=(K_{E_i}+\Theta_i)|_{F_i}\num 0$. Note that $\Theta_{F_i}$ has a coefficient of the form \eqref{eqn:adjunction-coefficient}, and thus we arrive at a contradiction by Theorem 1.5 and Lemma 5.2 of \cite{HMX14}.

\end{proof}

\begin{remark}
    We note that Fujino independently proved a slightly more general result on ACC for LCT in \cite[Theorem 1.6]{Fuj22b}.
\end{remark}

\begin{theorem}\cite[Theorem 1.3]{Bir07}\label{thm:effective-termination}
	Let $(X,B)$ be a relatively compact dlt $4$-fold pair such that $(K_{X}+B)\sim_{\mbQ} D\>0$. Then any sequence $\{(X_i, B_i)\}$ of $(K_{X}+B)$-flips where $X_i$ is  K\"ahler for all $i$, terminates. 
\end{theorem}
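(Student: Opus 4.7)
The approach follows the strategy sketched in the introduction, modeled on Birkar's proof for effective pairs \cite{Bir07}. The first step is to reduce, via a $\mathbb{Q}$-factorial dlt model (Theorem \ref{thm:local-dlt-model}), to the situation where $(X,\operatorname{Supp}(B+D))$ is log smooth and $X$ is $\mathbb{Q}$-factorial; this is standard because an infinite sequence of $(K_X+B)$-flips pulls back to an infinite sequence of MMP steps on a crepant dlt model by the negativity lemma. The key ingredient is then the ``$\lambda$-trick'': pick a rational $\lambda>0$ small enough that $(X,\Delta)$ with $\Delta:=B+\lambda D$ is still dlt and large enough that $\operatorname{Supp}\lfloor\Delta\rfloor\supseteq \operatorname{Supp}(D)$, which is possible thanks to the snc assumption.

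Since $K_X+B\sim_{\mathbb{Q}}D$, we have $K_X+\Delta\sim_{\mathbb{Q}}(1+\lambda)(K_X+B)$, so the $(K_X+B)$-flip sequence is identical to the $(K_X+\Delta)$-flip sequence, with the same flipping contractions and flipping curves. If $C$ is a flipping curve at step $i$, then $(K_{X_i}+B_i)\cdot C<0$ forces $D_i\cdot C<0$, and since $D_i\geq 0$ this forces $C\subseteq\operatorname{Supp}(D_i)\subseteq\operatorname{Supp}\lfloor\Delta_i\rfloor$. Hence every flipping curve in the sequence lies in the reduced part $\lfloor\Delta_i\rfloor$ of the boundary of $\Delta_i$.

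The final step is to apply special termination to the $(K_X+\Delta)$-MMP. Restricting by adjunction to a component $S$ of $\lfloor\Delta_i\rfloor$ produces a dlt Kähler $3$-fold pair $(S,\Delta_{i,S})$; the coefficients of $\Delta_{i,S}$ come from the different, and by the ACC for log canonical thresholds (Theorem \ref{thm:acc-for-lct}) they lie in a fixed DCC set across all $i$. Combined with termination of flips for Kähler $3$-folds (cited in the introduction as a consequence of \cite{HP16,CHP16,DO23,DH20}), this yields, in the usual special-termination fashion, the conclusion that after finitely many steps the flipping locus is disjoint from every positive-dimensional stratum of $\lfloor\Delta_i\rfloor$. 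This contradicts the previous paragraph.

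The main obstacle will be carrying out special termination cleanly in the Kähler category: one must check that adjunction and the monotonicity-of-discrepancies argument go through with Bott--Chern cohomology and $\mathbb{R}$-valued intersection numbers, that the dlt strata $S\subset X_i$ remain Kähler under the induced birational maps so that the $3$-dimensional termination results apply, and that the DCC property of the coefficients of the different on $S$ is uniform across the infinite sequence --- this last point is exactly what Theorem \ref{thm:acc-for-lct} is designed to provide.
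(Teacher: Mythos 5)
Your overall architecture (pass to a dlt model, force the support of $D$ into the reduced boundary, observe that every flipping curve lies in $\operatorname{Supp}(D_i)$, and finish by special termination) has the right shape, but the pivotal step --- your ``$\lambda$-trick'' --- does not work as stated, and this is precisely the difficulty the theorem is about. You ask for a rational $\lambda>0$ that is simultaneously small enough that $(X,B+\lambda D)$ is dlt and large enough that $\operatorname{Supp}\lfloor B+\lambda D\rfloor\supseteq\operatorname{Supp}(D)$. These requirements are incompatible in general: to put a component $P$ of $D$ into the round-down you need $\lambda\geq(1-\operatorname{mult}_P B)/\operatorname{mult}_P D$, which is large when $\operatorname{mult}_P D$ is small, and for such $\lambda$ the pair $(X,B+\lambda D)$ (or its truncation with coefficients capped at $1$) is typically not even log canonical. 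The introduction of the paper flags exactly this hypothesis with ``suppose for simplicity''; Theorem \ref{thm:special-effective-dlt-mmp} treats only the special case $\operatorname{Supp}(D)\subset\lfloor B\rfloor$, and the proof of Theorem \ref{thm:effective-dlt-mm} needs a separate induction (on the number $\theta$ of components of $D$ not in $\lfloor B\rfloor$) precisely because no single $\lambda$ does the job. Since special termination only controls flips whose flipping locus meets $\lfloor\Delta_i\rfloor$, and your containment $C\subset\operatorname{Supp}(D_i)$ does not place $C$ in $\lfloor\Delta_i\rfloor$ without the failed trick, the argument does not close.

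The correct replacement is Birkar's argument, which the paper invokes verbatim: set $\lambda_i:=\operatorname{lct}(X_i,B_i;D_i)$; this sequence is non-decreasing along the flips by the negativity lemma, hence stabilizes at some $\lambda$ by the ACC for log canonical thresholds --- this, and not uniformity of the coefficients of the different, is the role of Theorem \ref{thm:acc-for-lct}, which you have misplaced. One then takes a $\mbQ$-factorial dlt modification of the \emph{lc} (not dlt) pair $(X_i,B_i+\lambda D_i)$ via Theorem \ref{thm:local-dlt-model}, shows using the stabilization of $\lambda_i$ that the flipping loci eventually lie in the non-klt locus of $(X_i,B_i+\lambda D_i)$ --- a locus which is in general a proper subset of $\operatorname{Supp}(D_i)$ --- and only then applies special termination on the dlt model together with termination of K\"ahler $3$-fold flips. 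The DCC property of the coefficients of the different that you attribute to the ACC theorem in fact comes for free from the standard adjunction formula $(m-1+\sum k_jb_j)/m$.
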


\begin{proof}
	The same proof as in \cite{Bir07} works here using Theorems \ref{thm:local-dlt-model} and \ref{thm:acc-for-lct}.
\end{proof}

\section{MMP for $\kappa(X, K_X+B)\>0$}
In this section we will prove Theorem \ref{thm:effective-dlt-mm}. First we prove the following easy lemma which will allow us to perturb a nef (but not K\"ahler) class of the form $K_X+B+\omega$, where $\omega$ is a K\"ahler class, such that its null locus intersects $\NA(X)$ precisely along an extremal ray.

\begin{lemma}[General and very general K\"ahler class]\label{lem:very-general-kahler} Let $V$ be a  vector space (resp. a finite dimensional vector space) over $\mbR$ and $\mcC\subset V$ a cone in $V$ which is not contained in any hyperplane.
Let $V^*$ denote the dual space of $V$, and fix a finite collection (resp. a countable collection) of dual vectors $\{C_i\}_{\\i\in I}$ in $V^*$ such that $\omega\cdot C_i:=C_i(\omega)>0$ for all $\omega\in \mcC$. Additionally, assume that $C_i\neq \lambda C_j$ for any $i\neq j\in I$ and $\lambda \in\mbR$. Fix an element $D\in V$. Then there is a finite (resp. countable) union of hyperplanes $\mcH\subset V$ such that if $\omega\in \mcC\setminus \mcH$, then for any $t\in \mbR$, $(D+t\omega)\cdot C_i=0$ for at most one $i\in I$. 
\end{lemma}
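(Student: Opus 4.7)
The plan is to reduce the lemma to a linear condition on $\omega$ for each pair of distinct indices $i, j \in I$. For each such pair, I would introduce the linear functional
\[
\ell_{ij} \; := \; (D \cdot C_i)\, C_j - (D \cdot C_j)\, C_i \;\in\; V^{*},
\]
and show that the ``bad'' set of $\omega$ --- those for which $(D + t\omega)\cdot C_i$ and $(D+t\omega)\cdot C_j$ share a common zero in $t \in \mbR$ --- is precisely $\ker \ell_{ij}$.

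Indeed, since every $\omega \in \mcC$ satisfies $\omega \cdot C_i > 0$, the single equation $(D+t\omega) \cdot C_i = 0$ has the unique solution $t_i(\omega) := -(D \cdot C_i)/(\omega \cdot C_i) \in \mbR$. A common zero therefore exists if and only if $t_i(\omega) = t_j(\omega)$, which on clearing the (positive) denominators becomes exactly the linear condition $\ell_{ij}(\omega) = 0$. The hypothesis that $C_i$ is not a scalar multiple of $C_j$, together with the positivity of both functionals on $\mcC$ (so neither of them is zero), makes $C_i$ and $C_j$ linearly independent in $V^{*}$; an $\mbR$-linear combination $a C_j - b C_i$ therefore vanishes identically on $V$ only if $a = b = 0$, so $\ell_{ij}$ is a nonzero element of $V^{*}$ unless $D \cdot C_i = D \cdot C_j = 0$.

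The required set is then $\mcH := \bigcup_{i \neq j,\ \ell_{ij} \neq 0} \ker \ell_{ij}$, a finite (resp. countable) union of hyperplanes in $V$. For any $\omega \in \mcC \setminus \mcH$ and any pair $i \neq j$ with $\ell_{ij} \neq 0$, the two functions $t \mapsto (D + t\omega) \cdot C_i$ and $t \mapsto (D + t\omega) \cdot C_j$ have no common zero, so for each fixed $t$ at most one index in this collection can give $(D+t\omega)\cdot C_i = 0$. The only subtle point is the degenerate case where $D \cdot C_i = D \cdot C_j = 0$ for two indices, in which $t = 0$ is a common zero regardless of $\omega$; this does not arise in the intended applications, where $D = K_X+B$ and each $C_i$ spans a $(K_X+B)$-negative extremal ray, so $D \cdot C_i < 0$ for all $i$. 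The whole argument is then an elementary linear-algebra calculation and presents no real obstacle.
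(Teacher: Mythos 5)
Your proof is correct and follows essentially the same route as the paper's: for each pair $i\neq j$ the paper shows that the set of $\omega$ for which $D+t\omega$ lies in $\langle C_i,C_j\rangle^{\perp}$ for some $t$ is contained in the hyperplane spanned by $\langle C_i,C_j\rangle^{\perp}$ and $D$, which is exactly $\ker\ell_{ij}$ in your notation. Your explicit elimination of $t$ even isolates the degenerate case $D\cdot C_i=D\cdot C_j=0$ (where the statement literally fails at $t=0$) more cleanly than the paper's argument does.
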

\begin{proof}
Since $C_i\neq \lambda C_j$ for any $i\neq j$ and $\lambda\in \mbR$, $\langle C_i, C_j\rangle^\bot$ is a codimension $2$ linear subspace of $V$. Define for $i\neq j$
\[
\mcH(C_i, C_j):=\{\omega\in \mcC\;|\; (D+t\omega)\in \langle C_i, C_j\rangle^\bot \mbox{ for some } t\in\mbR\}.
\]
Then $\mcH(C_i, C_j)$ is contained in some hyperplane. Indeed, if $D+t\omega\in \langle C_i, C_j\rangle^\bot$, then $t\omega\in \langle C_i, C_j\rangle^\bot-D$, and hence $\omega$ is contained in the linear subspace spanned by $\langle C_i, C_j\rangle^\bot-D$, which is contained in a hyperplane.\\
Define $\mcH:=\cup_{i\neq j} \mcH(C_i, C_j)$. Thus $\mcH$ is contained in a finite (resp. countable) union of hyperplanes, and for any $\omega\in\mcC\setminus\mcH$ it follows from our construction above that $D+t\omega\not\in \langle C_i, C_j\rangle^\bot$ for all $i\neq j\in I$ and any $t\in \mbR$; in particular, for any $t\in \mbR$, $(D+t\omega)\cdot C_i\neq 0$ for at most one $i\in I$.  
\end{proof}~\\

In the following we will show that if $(X, B)$ is a dlt pair such that $K_X+B\sim_{\mbQ} M\>0$ and all $(K_X+B)$-negative extremal contractions are contained in the support of $\lrd B\rrd$, then we have a minimal model. 
\begin{theorem}\label{thm:special-effective-dlt-mmp}
Let $(X, S+B)$ be a $\mbQ$-factorial compact K\"ahler dlt pair of dimension $4$ such that $\lrd S+B\rrd=S$ and $(K_X+S+B)\sim_{\mbQ} D\>0$ for some effective $\mbQ$-divisor $D\>0$. Assume that $\Supp (D)\subset S$. Then there exists a finite sequence of flips and divisorial contractions
\[
\xymatrixcolsep{3pc}\xymatrix{\phi:X=X_0\ar@{-->}[r] & X_1\ar@{-->}[r] &\cdots \ar@{-->}[r] & X_n}
\]
such that $K_{X_n}+S_n+B_n$ is nef, where $S_n+B_n=\phi _*(S+B)$.
\end{theorem}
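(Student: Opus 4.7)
The plan is to iteratively produce and contract $(K_X+S+B)$-negative extremal rays, with termination controlled by Theorem~\ref{thm:effective-termination} applied to the resulting sequence of flips. The central observation driving the strategy is that since $K_X+S+B\sim_{\mbQ} D\geq 0$ with $\Supp(D)\subset S$, any curve $C\subset X$ with $(K_X+S+B)\cdot C<0$ satisfies $D\cdot C<0$, so $C$ must lie inside some component $S_i$ of $S$ with $S_i\cdot C<0$. This reduces the location and contraction of negative rays on the $4$-fold $X$ to a $3$-dimensional problem on the components $S_i$: since $(X,S+B)$ is dlt and $\lfloor S+B\rfloor=S$, each $S_i$ is a normal $\mbQ$-factorial compact K\"ahler $3$-fold, and by adjunction $(S_i,B_{S_i})$ with $K_{S_i}+B_{S_i}=(K_X+S+B)|_{S_i}$ is dlt, so the $3$-dimensional K\"ahler MMP (Corollaries~\ref{cor:nQ-unified-cone} and~\ref{cor:contraction-non-q-factorial}) is available.

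Assuming $K_X+S+B$ is not nef, I would fix a K\"ahler class $\omega$ on $X$ and set $\lambda:=\inf\{t\geq 0 : K_X+S+B+t\omega \text{ is nef}\}>0$. After a generic perturbation of $\omega$ using Lemma~\ref{lem:very-general-kahler}, $\alpha:=K_X+S+B+\lambda\omega$ is a nef class that is not K\"ahler. Combining Theorem~\ref{thm:nef-restricts-to-pseff} (nef iff pseudo-effective on every subvariety) with Theorem~\ref{thm:nef-big-to-kahler} and Lemma~\ref{lem:arbitrary-nef-and-big}, one locates a minimal positive-dimensional irreducible subvariety $V\subset X$ on which $\alpha$ fails the relevant positivity; by the observation of the preceding paragraph, $V\subset S_i$ for some $i$. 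Working inside the dlt $3$-fold $S_i$ and applying Corollary~\ref{cor:nQ-unified-cone} to $(S_i,B_{S_i})$ with the K\"ahler perturbation $\lambda\omega|_{S_i}$, I then produce a rational curve $C\subset S_i$ generating a $(K_{S_i}+B_{S_i})$-negative extremal ray of $\overline{\mathrm{NA}}(S_i)$ with $\alpha\cdot C=0$; hence $(K_X+S+B)\cdot C<0$ and $S_i\cdot C<0$.

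The next step, and the main obstacle, is to upgrade the $3$-fold contraction $g_i:S_i\to T_i$ supplied by Corollary~\ref{cor:contraction-non-q-factorial} to a projective contraction $f:X\to Y$ of the ray $R:=\mbR^{\geq 0}[C]\subset\overline{\mathrm{NA}}(X)$. The strategy is to exploit $S_i\cdot R<0$: combining the pullback of an ample class from $T_i$ with $\alpha$ and a suitable multiple of $S_i$, one builds a nef $\mbQ$-Cartier class $L$ on $X$ whose null locus in $\overline{\mathrm{NA}}(X)$ is precisely $R$ and which is strictly positive elsewhere. Realizing $L$ as the pullback from a projective morphism is where most of the technical work lies; the argument combines (i) semi-ampleness of $L$ on $X\setminus S_i$, where $K_X+S+B\sim_{\mbQ} D$ is effective and supported in $S\setminus S_i$, and (ii) projectivity of $g_i$ together with the negativity $S_i\cdot R<0$ to descend $L$ across $S_i$, in the spirit of a Kodaira/Artin-type contractibility statement.

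Once $f:X\to Y$ is constructed as a projective morphism, I branch on its type: if $f$ is divisorial, I replace $(X,S+B)$ by $(Y,f_*(S+B))$, which remains $\mbQ$-factorial dlt with $K_Y+f_*(S+B)\sim_{\mbQ}f_*D\geq 0$ and $\Supp(f_*D)\subset f_*S$; if $f$ is small, then since $f$ is projective the flip $X\dasharrow X^+$ exists by Corollary~\ref{c-fg1}, and the analogous properties lift to $X^+$. The effective hypothesis and the support containment are preserved throughout, so the resulting sequence of flips and divisorial contractions terminates by Theorem~\ref{thm:effective-termination}, giving the desired minimal model $X_n$.
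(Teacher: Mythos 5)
Your overall strategy coincides with the paper's: locate the negative ray inside a component $T$ of $S$ using $D\cdot C<0$, run the $3$-fold cone/contraction machinery on $(T,B_T)$, extend to a contraction of $X$, and terminate by special termination inside $\lfloor S+B\rfloor$. However, there are two places where the proposal has genuine gaps rather than just compressed exposition.

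First, the extension of the $3$-fold contraction $\varphi\colon T\to W$ to a contraction $f\colon X\to Y$ is exactly the hard point, and your sketch of it does not work as stated. You propose to build ``a nef $\mbQ$-Cartier class $L$'' out of $\alpha=K_X+S+B+\lambda\omega$, a pullback of an ample class from $W$, and a multiple of $S_i$, and then argue semi-ampleness on $X\setminus S_i$. But $\alpha$ contains the transcendental K\"ahler class $\lambda\omega$, so $L$ is a priori only a real $(1,1)$-class, not a $\mbQ$-Cartier divisor class, and there is no base-point-free theorem available for such classes on a compact K\"ahler $4$-fold; moreover $X\setminus S_i$ is non-compact, so ``semi-ampleness'' there is not the right notion (the contraction is simply an isomorphism off $T$). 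The paper instead verifies that $\mcO_T(-mT)$ is $\varphi$-ample --- this uses that \emph{every} generator $\Sigma_k$ of the contracted face satisfies $T\cdot\Sigma_k<0$, which in turn requires the very general choice of $\omega$ to force all the $\Sigma_k$ to be proportional in $N_1(X)$ --- and then invokes the Grauert/Artin-type divisorial contraction criterion of \cite[Proposition 7.4]{HP16} or \cite[Theorem 1.1]{DH23}. Without citing such a contractibility theorem (or reproving it), this step is missing.

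Second, you never verify that the contracted variety $Y$ (resp.\ the flip $X^+$) is again a compact \emph{K\"ahler} variety. This cannot be taken for granted in the K\"ahler MMP: it is needed both to iterate the procedure (the $3$-fold cone and contraction theorems you invoke require a compact K\"ahler ambient space) and to apply Theorem \ref{thm:effective-termination}, whose hypotheses include that each $X_i$ is K\"ahler. The paper handles this by writing $K_X+S+B+\omega=f^*\omega_Y$ (which itself requires knowing $Y$ has rational singularities and $\rho(X/Y)=1$) and then checking $\int_V\omega_Y^{\dim V}>0$ for every subvariety $V\subset Y$, with separate arguments for $V\subset W$, for $V$ whose strict transform avoids $\Supp(D)$, and for $V$ contained in another component $T'$ of $\Supp(D)$, before concluding via Theorem \ref{thm:nef-big-to-kahler}. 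This block of the argument is absent from your proposal and is not a formality.
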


\begin{proof}
If $K_X+S+B$ is nef, then we are done, and so we will assume that $K_X+S+B$ is not nef.  
Note that the set of all curves in $X$ corresponds to countably many classes of curves $\{C_i\}_{i\in I}$ in $N_1(X)$, by \cite[Lemma 4.4]{Toma16}. So we can choose a very general K\"ahler class $\omega\in H^{1,1}_{\BC}(X)$ as in Lemma \ref{lem:very-general-kahler}.
Define 
\[\lambda:=\inf\{t\>0\;|\;K_X+S+B+t\omega \mbox { is K\"ahler}\}.\] 
Replacing $\omega $ by $\lambda\omega$, we may assume that $K_X+S+B+\omega$ is nef but not K\"ahler. Then $K_X+S+B+\omega\num D+\omega$ is a nef and big class but not K\"ahler. We make the following claim.

\begin{claim}\label{clm:passing-to-a-component}
There exists an irreducible component $T$ of $S$ and a curve $\Gamma =C_i$ for some $i\in I$, such that $(K_X+S+B+\omega )\cdot \Gamma=0$ and $T\cdot \Gamma<0$ and
$(K_X+S+B+\omega )\cdot C_{j}>0$ for any $i\ne j\in I$. 
In particular, $K_T+B_T+\omega_T:=(K_X+S+B)|_T+\omega|_T$ is nef but not K\"ahler.
\end{claim}

\begin{proof}[Proof of Claim \ref{clm:passing-to-a-component}]
Since $K_X+S+B+\omega$ is nef and big but not K\"ahler, by Theorem \ref{thm:nef-big-to-kahler} there exists a subvariety $V\subset X$ such that $((K_X+S+B+\omega)|_V)^{\dim V}=0$, i.e. $((D+\omega)|_V)^{\dim V}=0$. By Lemma \ref{lem:arbitrary-nef-and-big}, it follows that $(D+\omega)|_{V^\nu}$ is not a big class on $V^\nu$, where $V^\nu\to V$ is the normalization of $V$. In particular, $V$ is contained in the support of $D$, and hence there is an irreducible component, say $T$ of $S$ such that $V\subset T$. Clearly, $K_T+B_T+\omega_T:=(K_X+S+B)|_T+\omega|_T$ is nef but not K\"ahler. Then by Corollary \ref{cor:nQ-unified-cone}, the $(K_T+B_T)$-negative extremal face $F:=(K_T+B_T+\omega_T)^\bot\cap \NA(T)$ is generated by finitely many curve classes, say $[\Sigma_1],\ldots, [\Sigma_r]$, i.e. $F=\langle\Sigma _{1},\ldots , \Sigma _r\rangle$.

Then $\mathbb R\cdot [\Sigma _{1}]=\mathbb R\cdot [C_i]\subset N_1(X)$ for some $i\in I$.
Since $\omega$ is very general and $K_X+S+B+\omega$ is nef, from Lemma \ref{lem:very-general-kahler} it follows that $(K_X+S+B+\omega)\cdot C_i=0$ and $(K_X+S+B+\omega )\cdot C_{j}>0$ for all $j\ne i\in I$. Therefore $\mathbb R^{\geq 0}\cdot[\Sigma _k] =\mathbb R^{\geq 0}\cdot [C_i]$ in $N_1(X)$ for all $1\leq k\leq r$. Let $\Gamma =C_{i}$. Observe that we have $D\cdot \Gamma<0$, in particular, there is an irreducible component $T'$ of $S$ such that $T'\cdot \Gamma<0$. It then follows that \[(K_{T'}+B_{T'}+\omega|_{T'})\cdot \Gamma=(K_X+S+B+\omega)\cdot \Gamma=(D+\omega)\cdot\Gamma=0,\] and thus $K_{T'}+B_{T'}+\omega|_{T'}$ is nef but not K\"ahler. Thus replacing $T$ by $T'$ we may assume that $(K_T+B_T+\omega|_T)\cdot\Gamma=0$ and $T\cdot\Gamma<0$. In particular, $T\cdot \Sigma_k<0$ and thus $\Sigma_{k}\subset T$ for all $1\<k\<r$.

\end{proof}
By Corollary \ref{cor:contraction-non-q-factorial}, there exists a projective morphism $\vphi:T\to W$ contracting the $(K_T+B_T)$-negative extremal face $F=(K_T+B_T+\omega_T)^\bot\cap\NA(T)$. 
Note that $W$ is a normal compact K\"ahler variety and $\omega_W$ a K\"ahler class on $W$ such that $K_T+B_T+\omega|_T=\vphi^*\omega_W$. 
Also, recall that the face $F$ is generated by the classes of finitely many curves $\Sigma_1,\ldots,\Sigma_r\subset T$ such that $T\cdot\Sigma_i<0$ for all $i=1,\ldots, r$, and a curve $C\subset T$ is contracted by $\vphi$ if and only if its class $[C]\in F$. Thus 
\[
{\rm NE}(T/W)=\NE(T/W)=\left\{\sum_{i=1}^ra_i\Sigma_i\;|\; a_i\>0 \mbox{ for all } i=1, 2,\ldots, r \right\},
\]
and hence from \cite[Proposition 4.7(3)]{Nak87} it follows that $\mcO_T(-mT)$ is $\vphi$-ample, where $m>0$ is the Cartier index of $T$ in $X$.

By \cite[Proposition 7.4]{HP16} or \cite[Theorem 1.1]{DH23}, there exists a proper bimeromorphic morphism $f:X\to Y$ to a normal compact analytic variety $Y$ such that $f|_T=\vphi$ and $f|_{X\setminus T}$ is an isomorphism. From the discussion above it follows that the face $F$ of $\NA(T)$ corresponds to a $(K_X+S+B)$-negative extremal ray $R=\mbR_{\>0}\cdot[\Gamma]$, where $\Gamma=C_i$. Moreover, we know that  $(K_X+S+B+\omega)\cdot \Gamma=0$, and thus $-(K_X+S+B)$ is $f$-nef-big. Then $Y$ has rational singularities by Lemma \ref{lem:rational-singularities}. From \cite[Lemma 3.3]{HP16} it follows that $\rho(X/Y):=\dim_{\mbR}H^{1,1}_{\BC}(X)-\dim_{\mbR}H^{1,1}_{\BC}(Y)=1$. An immediate consequence of this is that $K_X+S+B+\omega=f^*\omega_Y$ for some $(1, 1)$ class $\omega_Y$ on $Y$. 
 Clearly $\omega_Y$ is nef and big. If $V$ is a subvariety of $Y$ of positive dimension, then we claim that $(\omega _Y|_V)^{\dim V}>0$. If $V\subset W$, then let $\lambda=\int _F \omega ^{d}>0$, where {$F$ is a general fiber of $f^{-1}(V)\to V$} and $d=\dim F$. Then by the projection formula
(see eg. \cite[Corollary 4.5]{Nicol})
\begin{equation*}
  \begin{split}
      \lambda\cdot \int _V (\omega _Y)^{\dim V}= \int _{f^{-1}(V)}(f^* \omega _Y)^{\dim V}\wedge\; \omega ^d &= \int _{\varphi ^{-1}(V)}(\varphi ^* \omega _W )^{\dim V}\wedge\; \omega ^d\\
      &= \lambda\cdot \int _V\omega _W ^{\dim V}>0.
  \end{split}  
\end{equation*}

If $V\not \subset W$, then let $V'$ be the strict transform of $V$. If 
 $V'$ is not contained in the support of $D$, then clearly $(K_X+S+B+\omega)|_{V'}=(D+\omega)|_{V'}$ is big (and nef), and so $(\omega _Y|_V)^{\dim V}=(D+\omega)|_{V'}^{\dim V}>0$ by Lemma \ref{lem:arbitrary-nef-and-big}. On the other hand, if $V'$ is contained in a component, say $T'\ne T$, of the support of $D$, then $(K_X+S+B+\omega)|_{T'}=K_{T'}+B_{T'}+\omega _{T'}$, where $(T',B_{T'})$ is dlt, $\omega _{T'}=\omega|_{T'}$
 and   \[K_{T'}+B_{T'}+\omega _{T'}=(K_X+S+B+\omega)|_{T'}=(f^*\omega _Y)|_{T'}=
 (f|_{T'})^*\omega _{W'},\] where $\omega _{W'}=\omega _Y|_{W'}$. By Corollary \ref{cor:contraction-non-q-factorial}, there is a contraction $g:T'\to \bar W$ such that
 $K_{T'}+B_{T'}+\omega _{T'}\equiv g^* \omega _{\bar W}$, where $\omega _{\bar W}$ is a K\"ahler class on $\bar W$. The curves $\Gamma$ contracted by $g$ are precisely the curves in $T'$ such that $(K_X+S+B+\omega) \cdot \Gamma=(K_{T'}+B_{T'}+\omega _{T'}) \cdot \Gamma =0$.
But these are also the curves contracted by $f$ and so by the rigidity lemma (see \cite[Lemma 4.1.13]{BS95}) it follows that $W'=\bar W$. Thus
 \[(\omega _Y|_V)^{\dim V}=((K_X+S+B+\omega)|_{V'})^{\dim V}=((K_{T'}+B_{T'}+\omega _{T'})|_{V'})^{\dim V}=(\omega _{\bar W}|_V)^{\dim V}>0.\]
 Then from Theorem \ref{thm:nef-big-to-kahler} it follows that $\omega_Y$ is a K\"ahler class, and hence $Y$ is a K\"ahler variety.\\
Now if $f$ is a divisorial contraction, then by a similar argument as in the projective case one can show that $Y$ is $\mbQ$-factorial and $(Y, S_Y+B_Y)$ has dlt singularities, where $S_Y+B_Y:=f_*(S+B)$.\\
If $f:X\to Y$ is a flipping contraction, then by Corollary \ref{c-fg1} the flip $f':X'\to Y$ exists, and again as in the algebraic case it follows that $X'$ is $\mbQ$-factorial and $(X', S'+B')$ has dlt singularities, where $S'+B':=\phi_*(S+B)$ and $\phi:X\bir X'$ is the induced bimeromorphic map.\\
Finally, the termination of flips follows from special termination in this case, since all the contracted curves are contained in $\Supp(D)$ and $\Supp(D)\subset S=\lrd S+B\rrd$. Note that the special termination holds here, since MMP in dimension $\<3$ is known due to \cite{DH20}.

\end{proof}

\begin{remark}\label{rmk:cone-theorem}
    The above proof essentially gives a cone theorem in dimension 4 under the given hypothesis. More specifically, with the same hypothesis as in Theorem \ref{thm:special-effective-dlt-mmp}, if $K_X+S+B$ is not nef, then there exists countably many rational curves $\{C_i\}_{i\in I}$ in $X$ such that 
    $0<-(K_X+S+B)\cdot C_i \<6$ and $\NA(X)=\NA(X)_{(K_X+B)\>0}+\sum_{i\in I}\mbR^{\geq 0}\cdot[C_i]$.
\end{remark}

\begin{remark}\label{rmk:choice-of-epsilon}
	Let $(X, \Delta)$ be a $\mbQ$-factorial compact K\"ahler $4$-fold dlt pair and $C$ an effective $\mbQ$-divisor. Fix a positive real number $t>0$ and let $\Lambda$ be the countable set indexing  \textit{all} $(K_X+\Delta)$-negative curve classes $[\Gamma_i]$ on $X$ such that $-(K_X+\Delta)\cdot \Gamma_i\<6$, $\Gamma_i\cdot C>0$ and $(K_X+\Delta+tC)\cdot\Gamma_i>0$. Let $m>0$ be the smallest positive integer such that $m(K_X+\Delta)$ and $mC$ are both Cartier. Then the intersection numbers $(K_X+\Delta)\cdot \Gamma_i$ and $C\cdot\Gamma_i$ are all contained in the set $\frac{1}{m}\mbZ$ for all $i\in \Lambda$. Moreover, since $0<-(K_X+\Delta)\cdot\Gamma_i\leq 6$ for all $i\in \Lambda$, the numbers $(K_X+\Delta)\cdot \Gamma_i$ are contained in a finite set, say $\mcK$. Then $(\mcK+\frac{t}{m}\mbN)\cap\mbR_{>0}$ is a DCC set and hence it has a non-zero minimum, say $\gamma>0$. Thus we can choose a sufficiently small rational number $\eps\in\mbQ^{\geq 0}$ such that
	\begin{equation}\label{eqn:fixing-epsilon}
		0<\eps<\frac{t\gamma}{\gamma+6}.
	\end{equation}  
\end{remark}~\\

The following theorem allows us to run the MMP with scaling in certain cases. This result is in the technical heart of the proof of Theorem \ref{thm:effective-dlt-mm} below.
   \begin{theorem}\label{t-scale}
    Let $(X,\Delta=S+B)$ be a $\mbQ$-factorial compact K\"ahler $4$-fold dlt pair. Assume that there is an effective $\mbQ$-divisor $C\geq 0$ and effective $\mbR$-divisors $D ,D'\geq 0$, and a positive real number $\alpha >0$ such that
    \begin{enumerate}
        \item $K_X + \Delta+C$ is nef,
        \item $K_X + \Delta \sim _{\mathbb R}D$,
        \item $D=\alpha C +D'$, and  ${\rm Supp}(D')\subset S$.
    \end{enumerate}
    Then we can run a $(K_X+\Delta)$-MMP with scaling of $C$ and it terminates with a log terminal model $\phi:X\bir Y$ (see Definition \ref{def:log-terminal-and-log-minimal-model}) such that $K_Y + \phi_*\Delta$ is nef.
    \end{theorem}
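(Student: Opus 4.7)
The plan is to run a $(K_X+\Delta)$-MMP with scaling of $C$, adapting the strategy of Theorem \ref{thm:special-effective-dlt-mmp} to the scaled setting. The pivotal numerical input, which feeds into both the search for extremal rays and the termination argument, is the observation that every curve $\Gamma$ with $(K_X+\Delta)\cdot\Gamma<0$ is contained in a component $T$ of $\Supp(D')$ satisfying $T\cdot\Gamma<0$. Indeed, nefness of $K_X+\Delta+C$ forces $C\cdot\Gamma\geq -(K_X+\Delta)\cdot\Gamma>0$, whence hypotheses (2)--(3) yield $D'\cdot\Gamma=(K_X+\Delta)\cdot\Gamma-\alpha\, C\cdot\Gamma<0$, so some component of $\Supp(D')\subset S$ has negative intersection with $\Gamma$.

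At the $i$-th stage of the MMP, I would set $\lambda_i := \inf\{t\in[0,\lambda_{i-1}] : K_{X_i}+\Delta_i+tC_i \text{ is nef}\}$, with $\lambda_{-1}=1$. If $\lambda_i=0$, the MMP stops. Otherwise, the task is to produce a $(K_{X_i}+\Delta_i)$-negative extremal ray of $\NA(X_i)$, which I would do by mimicking the argument of Theorem \ref{thm:special-effective-dlt-mmp}: choose a very general K\"ahler class $\omega$ via Lemma \ref{lem:very-general-kahler} and rescale so that $\beta:=K_{X_i}+\Delta_i+\omega$ is nef, not K\"ahler, and big (bigness follows since $K_{X_i}+\Delta_i\sim_{\mathbb R}D_i\geq 0$ and $\omega$ is K\"ahler). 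Theorem \ref{thm:nef-big-to-kahler} then furnishes a subvariety $V\subset X_i$ with $(\beta|_V)^{\dim V}=0$, and by Lemma \ref{lem:arbitrary-nef-and-big} the restriction $\beta|_{V^\nu}$ is nef but not big. This forces $V\subset\Supp(D_i)=\Supp(C_i)\cup\Supp(D'_i)$; the possibility $V\subset\Supp(C_i)\setminus S_i$ is ruled out by the identity
\[
\beta|_{V^\nu}=\frac{\alpha}{\alpha+1}(K_{X_i}+\Delta_i+C_i)|_{V^\nu}+\frac{1}{\alpha+1}D'_i|_{V^\nu}+\omega|_{V^\nu},
\]
whose three summands are respectively nef (by hypothesis (1)), pseudo-effective (as $V\not\subset\Supp(D'_i)$) and K\"ahler, making $\beta|_{V^\nu}$ big and giving a contradiction. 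Hence $V$ lies in some component $T$ of $S_i$. Applying Corollary \ref{cor:nQ-unified-cone} to the dlt pair $(T,\Delta_T)$ obtained by adjunction produces a negative extremal ray which, by the key observation, satisfies $T\cdot\Gamma<0$ and lifts via Corollary \ref{cor:contraction-non-q-factorial} to a projective contraction $f_i:X_i\to Y_i$.

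If $f_i$ is divisorial, I would set $X_{i+1}:=Y_i$; if it is small, the flip exists by Corollary \ref{c-fg1}. The pair $(X_{i+1},\Delta_{i+1})$ remains $\mathbb Q$-factorial dlt, and hypotheses (1)--(3) are preserved under strict transforms. Since every contracted locus lies in $S=\lfloor\Delta\rfloor$, special termination in dimension $4$ (using the $3$-fold MMP of \cite{DH20}) combined with Theorem \ref{thm:effective-termination} forces the sequence to terminate. The output $(Y,\phi_*\Delta)$ satisfies $K_Y+\phi_*\Delta$ nef, and since every divisorial contraction strictly raises the discrepancy of its exceptional divisor while flips are isomorphisms in codimension one, $\phi^{-1}$ contracts no divisor, giving the desired log terminal model.

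\emph{Main obstacle.} The principal difficulty is producing an extremal ray of $\NA(X_i)$ in the absence of a full $4$-dimensional cone theorem. The hypotheses bypass this neatly: (2)--(3) localize all $(K_X+\Delta)$-negative behavior to components of $S$, where the $3$-fold cone theorem applies, while hypothesis (1) is precisely what discards the component $\Supp(C)\setminus S$ in the subvariety analysis via the decomposition above.
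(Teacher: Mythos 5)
Your localization of $(K_X+\Delta)$-negative curves to components of $S$ is correct and is indeed the engine of the paper's argument: nefness of $K_X+\Delta+C$ forces $C\cdot\Gamma>0$, hence $D'\cdot\Gamma<0$, hence $\Gamma\subset T$ for a component $T$ of $\Supp(D')\subset S$ with $T\cdot\Gamma<0$; and your decomposition of $\beta|_{V^\nu}$ into a nef, a pseudo-effective and a K\"ahler part correctly rules out null subvarieties outside $S$. The gap lies in how you select the ray to contract. You define the nef threshold $\lambda_i$ but then contract the null ray of $K_{X_i}+\Delta_i+\omega$ for a rescaled very general K\"ahler class $\omega$, exactly as in Theorem \ref{thm:special-effective-dlt-mmp}. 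That ray is some $(K_{X_i}+\Delta_i)$-negative extremal ray $R$, but there is no reason for it to satisfy $(K_{X_i}+\Delta_i+\lambda_iC_i)\cdot R=0$. Consequently: (a) the procedure is not an MMP with scaling of $C$, which is what the statement asserts and what the proof of Theorem \ref{thm:effective-dlt-mm} actually uses (there one needs each step to remain a step of the $(K_X+B+t'C)$-MMP for $t'$ slightly below the current threshold); and (b), more fatally for your own induction, hypothesis (1) is \emph{not} preserved. If $(K_{X_i}+\Delta_i+\lambda_iC_i)\cdot R>0$, then $C_i$ is ample over the base of the flip while $C_{i+1}$ is anti-ample over it, and $K_{X_{i+1}}+\Delta_{i+1}+\lambda_iC_{i+1}$ (a fortiori $+C_{i+1}$) can fail to be nef on the flipped curves. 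Once (1) fails, your ``key observation'' fails with it — a $(K+\Delta)$-negative curve may now have $C\cdot\Gamma<0$ and $D'\cdot\Gamma>0$, so it need not lie in $S$ — and $\lambda_{i+1}$ need not even be attained in $[0,\lambda_i]$. So the assertion ``hypotheses (1)--(3) are preserved under strict transforms'' is unjustified and in general false for the rays you produce.

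What the paper does, and what your proposal is missing, is precisely the construction of a nef class whose null ray is simultaneously a $(K_X+\Delta)$-negative extremal ray of $\NA(X)$, spanned by a curve in a component $T$ of $S$ with $T\cdot\Gamma<0$, \emph{and} $(K_X+\Delta+tC)$-trivial at the threshold $t=\lambda_i$. This occupies Claims 7.5--7.7 of the paper: one first shows (via a DCC/Cartier-index argument on intersection numbers bounded by $6$) that some $(K_T+\Delta_T)$-negative extremal ray is $(K_X+\Delta+tC)$-trivial; one then perturbs not by $\omega$ alone but by the class $K_X+\Delta+(1-\ve)tC+\ve\omega=(K_X+\Delta+tC)+\ve(-tC+\omega)$ with $\omega$ very general and rescaled so that $-tC+\omega$ vanishes on exactly one of the finitely many $(K_X+\Delta+tC)$-trivial rays; and one verifies this perturbed class is still nef. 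After the contraction or flip of that ray one replaces $C$ by $tC$ and $\alpha$ by $\alpha/t$, which is how (1)--(3) are carried to the next step. Without an argument of this kind your induction does not close.
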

    \begin{proof}
    
    Let \[t:={\rm inf}\{s\geq 0\;|\;K_X+\Delta  +sC\ {\rm is \ nef}\}.\] 
    Then $0\leq t\< 1$. Note that if $t=0$, then we are done, otherwise, by Theorem \ref{thm:nef-restricts-to-pseff}, there is a subvariety $V\subset X$ such that $(K_X+\Delta  +(t-\epsilon)C)|_V$ is not pseudo-effective for any $t\geq \epsilon >0$. We fix $\eps$ satisfying the following 
    \begin{equation}\label{eqn:choosing-epsilon}
        0<\eps<\min\left\{t+\alpha, \frac{t\gamma}{\gamma+6}, \frac{1}{6m^2+1}\right\},
    \end{equation}
  where $\gamma \in\mbR^{\geq 0}$ and $m\in\mbZ^{\geq 0}$ are defined in the Remark \ref{rmk:choice-of-epsilon} above.   
     Since 
    \[ K_X+\Delta +(t-\epsilon )C=\frac {t+\alpha-\epsilon }{t+\alpha}( K_X+\Delta  +tC)+\frac {\epsilon }{t+\alpha}(K_X+\Delta -\alpha C)\]
     and $\frac {t+\alpha-\epsilon }{t+\alpha}>0$, it follows that $(K_X+\Delta -\alpha C)|_V\num D'|_V$ is not pseudo-effective. Since the support of $D'$ is contained in $S$, $V$ is contained in an irreducible component, say $T$, of $S$.

Also, note that $t(K_X+\Delta+(t-\epsilon)C)=(t-\epsilon)(K_X+\Delta+tC)+\epsilon(K_X+\Delta)$. Thus $(K_X+\Delta)|_V$ is not pseudo-effective; in particular,  $(K_T+\Delta _T )|_V$ is not pseudo-effective, and hence not nef, where $K_T+\Delta _T:=(K_X+\Delta )|_T$. Let $I$ be the countable set of all $(K_T+\Delta_T)$-negative extremal rays generated by the rational curves $\{\Gamma_i\}_{i\in I}$ as in Corollary \ref{cor:nQ-unified-cone}. We make the following claim.

\begin{claim}\label{clm:zero-at-a-curve}
	$(K_X+\Delta+tC)\cdot \Gamma_i=0$ for some $i\in I$. 
\end{claim}

\begin{proof}[Proof of Claim \ref{clm:zero-at-a-curve}]
To the contrary assume that $(K_X+\Delta +tC)\cdot \Gamma_i>0$ for all $i\in I$. Then we claim that there is a $\delta>0 $ such that $(K_X+\Delta +tC)\cdot \Gamma _i\geq \delta$ for all $i\in I$. To see this, let $m\>1$ be the smallest positive integer such that $m(K_X+\Delta)$ and $mC$ are both Cartier. Then $(K_X+\Delta)\cdot \Gamma_i$ and $C\cdot \Gamma_i$ belong to $\frac 1m \mathbb Z$ for all $i\in I$.
Since $0>(K_X+\Delta )\cdot \Gamma_i=(K_T+\Delta _T)\cdot \Gamma_i\geq -6$ by Corollary \ref{cor:nQ-unified-cone}, the intersection numbers 
$(K_X+\Delta)\cdot \Gamma_i$ are contained in a finite set $\mathcal K\subset\frac{1}{m}\mbZ$.
But then, since $t>0$ is a fixed number, the set $( \mathcal K+\frac tm\cdot \mathbb N)\cap \mathbb R ^{>0}$ is a DCC set and hence has a positive minimum $\delta>0$, i.e. $(K_X+\Delta+tC)\cdot\Gamma_i\>\delta$ for all $i\in I$.\\
By Remark \ref{rmk:choice-of-epsilon} we see that $I\subset \Lambda$, and hence $\delta\>\gamma$. Then from our choice of $\eps>0$ in equation \eqref{eqn:choosing-epsilon}, it follows that 
\[
	0<\eps<\frac{t\gamma}{\gamma+6}\<\frac{t\delta}{\delta+6}.
\]
Thus we have \[(K_T+\Delta _T+(t-\epsilon )C|_T)\cdot \Gamma_i\geq (t-\epsilon)\delta +\epsilon (K_T+\Delta _T)\cdot \Gamma_i\geq (t-\epsilon)\delta -6\epsilon>0\]
for all $i\in I$, and if $\eta\in \overline {\rm NA}(T)_{K_T+\Delta _T\geq 0}$, then 
\[(K_T+\Delta _T+(t-\epsilon )C|_T)\cdot \eta = (t-\epsilon)(K_T+\Delta _T+tC|_T)\cdot \eta +\epsilon (K_T+\Delta _T)\cdot \eta \geq 0.\]
Therefore, by the cone theorem on $T$ (see Corollary \ref{cor:nQ-unified-cone}), $K_T+\Delta _T+(t-\epsilon )C|_T$ intersects every class in $\eta \in \overline {\rm NA}(T)$ non-negatively. Then by \cite[Proposition 3.6]{HP16}, $K_T+\Delta _T+(t-\epsilon )C|_T$ is nef, which is a contradiction.

\end{proof}~\\

Now let $\{R_j\}_{j\in J_T}$ be the countable set of all $(K_T+\Delta_T)$-negative extremal rays which are spanned by rational curves $\Gamma_j\subset T$ as in Corollary \ref{cor:nQ-unified-cone}, for some component $T$ of $S=\lfloor \Delta \rfloor$. Let $J'_T\subset J_T$ be the subset of $(K_T+\Delta_T+tC|_T)$-trivial curves, and $J':=\cup _{T\in S}J'_T$ and $J:=\cup _{T\in S}J_T$, where $T\in S$ means $T$ is a component of $S$. By the above claim, $J'\ne \emptyset$. Let $\bar R_j\in N_1(X)$ be the image of $R_j$ for $j\in J$, and $\mathcal C\subset N_1(X)$ be the cone corresponding to the image of $\sum_{T\in S}{\NA}(T)$. Note that  since ${\NA}(T)={\NA}(T)_{K_T+\Delta _T\geq 0}+\sum _{j\in J_T}R_j$ for each component $T\subset S$, we have $\mathcal C=\mathcal C _{K_X+\Delta \geq  0}+\sum _{j\in J}\bar R_j$. Moreover, $\mathcal C \subset {\NA }(X)$ and 
\[\{ \bar R_j\; |\; j\in J'\}\subset (K_X+\Delta +tC)^\perp\cap {\NA }(X).\]
Let $\omega \in N^1(X)$ be a very general K\"ahler class as in Lemma \ref{lem:very-general-kahler}, and 
\[\lambda:={\rm inf}\{l>0\;|\;(-tC+l\omega ) \cdot \bar R_j\geq 0\ \mbox{ for all } j\in J'\}.\] 
\begin{claim}\label{clm:unique-ray} There is a unique ray $ \bar R_{j'}$ for some $j'\in J'$ such that $(-tC+\lambda\omega)\cdot \bar R_{j'}=0$.\end{claim}
\begin{proof} Since $\omega $ is very general in $N^1(X)$ as in Lemma \ref{lem:very-general-kahler}, it suffices to show that there is one such ray.
By definition of $\lambda$, for each $n\>1$, there is a $j'_n \in J'$ such that $(-tC+(\lambda-1/n)\omega)\cdot\Gamma_{j'_n}<0$, where $\bar{R}_{j'_n}=\mbR^{\geq 0}\cdot[\Gamma_{j'_n}]$. Then we have
\[(K_T+\Delta_T +(\lambda/2)\omega )\cdot \Gamma_{j'_n}= (K_X+\Delta+(\lambda/2)\omega)\cdot\Gamma_{j'_n}=
((\lambda/2)\omega-tC)\cdot \Gamma_{j'_n}<0\]
for all $n>\frac{2}{\lambda}$. By the cone theorem (Corollary \ref{cor:nQ-unified-cone}) there are only finitely many $(K_T+\Delta_T +(\lambda/2)\omega)$-negative extremal rays and so the $\Gamma _{j'_n}$ correspond to only finitely many distinct numerical equivalence classes in $N_1(T)$, and hence in  $N_1(X)$.
Thus, there is a ray  $j'\in J'$ such that  $(-tC+(\lambda-1/n)\omega)\cdot\Gamma_{j'}<0$  for infinitely many $n>0$, and hence $(-tC+\lambda\omega)\cdot\Gamma_{j'}\leq 0$. Then from our construction of $\lambda$ above it follows that $(-tC+\lambda\omega)\cdot\Gamma_{j'} =0$.

\end{proof}
Re-scaling $\omega$, we may assume that $(-tC+\omega )\cdot  \bar R_{j'}=0$ and  $(-tC+\omega )\cdot  \bar R_{j}>0$ for all $R_j\ne R_{j'}$, $j\in J'$. Now recall that $m\>1$ is the smallest positive integer such that $m(K_X+\Delta)$ and $mC$ are both Cartier.
\begin{claim}\label{clm:nef-but-not-kahler} For any $0< \ve \ll 1$, the class
$\alpha_{\ve} :=K_X+\Delta +(1-\ve) tC+\ve \omega \in N^1(X)$ is nef but not K\"ahler.
 \end{claim} 

\begin{proof}

We begin by showing that $ \mathcal C \subset (\alpha_\ve)_{\geq 0}$ or equivalently that $\alpha _\ve |_T$ is nef for all components $T$ of $S$.
Write 
\[\alpha_\ve=(1-\ve)(K_X+\Delta + tC)+\ve (K_X+\Delta +\omega ).\]
It then follows that, if  $ \mathcal C $ is not contained in $ (\alpha_\ve)_{\geq 0}$, then there is a $(K_X+\Delta +\omega)$-negative extremal ray $\bar R_j$ for some $j\in J$ such that $\alpha_\ve\cdot  \bar R_j<0$. 
Note that this set of rays, say indexed by the set $\Lambda$, is a finite set, by Corollary \ref{cor:nQ-unified-cone} (applied on each component $T$ of $S$). So, in particular we may assume that there exists a $\gamma >0$ such that if $j\in\Lambda$ and $(K_X+\Delta + tC)\cdot \bar R_j>0$, then $(K_X+\Delta + tC)\cdot  \Gamma _j>\gamma$, where $\bar R_j=\mathbb R _{\geq 0}[\Gamma _j]$.
But then $\alpha _\ve \cdot  \Gamma _j\geq (1-\ve)\gamma-6\ve>0$ for $\ve <\gamma /(6+\gamma)$, which is a contradiction.
Therefore, we may assume that $(K_X+\Delta + tC)\cdot \bar R_j=0$ for all $j\in\Lambda$, i.e. $\Lambda\subset J'_T$.
But then, by Claim \ref{clm:unique-ray}, $(-tC+\omega)\cdot \bar R_j\geq 0$ for all $j\in\Lambda$, and so
\[\alpha _\ve \cdot \bar R_j=(K_X+\Delta + tC)\cdot \bar R_j+\ve(-tC+\omega)\cdot \bar R_j\geq 0,\] 
this is a contradiction to the fact that $\alpha_{\ve}\cdot\bar R_j<0$ for all $j\in\Lambda$.
Thus $\alpha _\ve |_T$ is nef for all component $T$ of $S$.

Now if $\alpha _\ve$  is not nef on $X$, then by Theorem \ref{thm:nef-restricts-to-pseff} there is a subvariety $V\subset X$ such that $\alpha _\ve |_V$ is not pseudo-effective. Since  $\alpha _\ve=K_X+\Delta +(1-\epsilon)t C+\ve \omega$ and $\omega $ is Kahler,  $(K_X+\Delta +(1-\ve)t C) |_V$ is not pseudo-effective. Observe that
\[
K_X+\Delta+(1-\ve)tC=\frac{(1-\ve)t+\alpha}{t+\alpha}(K_X+\Delta+tC)+\frac{\ve t}{t+\alpha}(K_X+\Delta-\alpha C)
\]
and thus $(K_X+\Delta-\alpha C)|_V\num D'|_V$ is not pseudo-effective. Since $\Supp D'\subset S$, it follows that there is a component $T$ of $S$ such that $V\subset T$. In particular, $\alpha_{\ve}|_T$ is not pseudo-effective; this is a contradiction to the fact that $\alpha_{\ve}|_T$ is nef for all $T$ of $S$ as proved above. 

\end{proof}~\\

From what we have proved above it follows that $\mathcal C \cap (\alpha_\ve)^\bot=\bar R_{j'}$ for a unique $j'\in J'$ as in Claim \ref{clm:unique-ray}. Thus $\bar R_{j'}\subset \alpha_\ve^\bot\cap\NA(X)$. Note that a priori we don't know whether this inclusion is an equality or not. However, we have the following:
\[\alpha:=\frac 1 \ve \alpha _\ve =K_X+\Delta +\omega+\frac{1-\ve}\ve(K_X+\Delta +tC)=K_X+\Delta +\omega_\ve,\]
 where 
\begin{enumerate}
\item $\omega _\ve:=\omega+\frac{1-\ve}\ve(K_X+\Delta +tC) $ is K\"ahler, 
\item $\alpha$ is nef, and 
\item $\alpha ^\perp \cap \mathcal C =\bar R _{j'}\subset \alpha ^\perp \cap  \NA(X)$.
\end{enumerate}

Then we have
\[R_{j'}\subset F:=(\alpha  |_T)^\perp \cap \NA(T)\]
for some component $T$ of $S$.\\
Note that this inclusion could be strict, never the less, from Corollary \ref{cor:nQ-unified-cone} it follows that $F$ is spanned by a finite collection 
of $(K_T+\Delta _T)$-negative extremal rays $\{R_j\}_{j\in J''}$ such that $(K_T+\Delta _T+tC_T)\cdot R_j=0$, i.e. $J''\subset J'$. Note that $R_{j'}$ is one of these extremal rays. By Corollary \ref{cor:contraction-non-q-factorial}, there exists a projective contraction $\varphi:T\to W$ to a normal compact K\"ahler variety $W$ contracting the face $F$ such $\alpha  |_T =\varphi ^*\alpha _W$, where $\alpha _W$ is a K\"ahler class on $W$. Let $R_j$ be generated by the curve $\Sigma_j\subset T$ and $J''=\{1,2,\ldots, r\}$, i.e. $R_j=\mbR^{\geq 0}\cdot[\Sigma_j]$ for all $j=1,2,\ldots, r$. Then by our construction $(K_X+\Delta+\omega_\epsilon)\cdot\Sigma_j=0$ for all $j=1,2,\ldots, r$. Note that $R_{j'}=\mbR^{\geq 0}\cdot[\Sigma_{j'}]$, where  $\Sigma_{j'}=\Sigma_j$ for some $j\in\{1,2,\ldots, r\}$. Let $\bar R_{j'}$ be the image of $R_{j'}$ in $N_1(X)$ and $\bar R_{j'}=\mbR^{\geq 0}\cdot[\Gamma_{j'}]\subset \NA(X)$. Then $\mbR^{\geq 0}\cdot[\Sigma_{j'}]=\mbR^{\geq 0}\cdot[\Gamma_{j'}]$ in $N_1(X)$. Now recall that, since $\omega$ is very general (and hence so is $\omega_\ve$), $(K_X+\Delta+\omega_\ve)\cdot \Gamma_{j'}=0$ and $(K_X+\Delta+\omega_\ve)\cdot\Gamma_j>0$ for all $j\neq j'\in J'$. Therefore 
 \begin{equation}\label{eqn:num-equivalent-on-x}
 	 \mbR^{\geq 0}\cdot [\Sigma_j]=\mbR^{\geq 0}\cdot[\Sigma_{j'}]=\mbR^{\geq 0}\cdot[\Gamma_{j'}] \mbox{ in } N_1(X)\mbox{ for all } j=1,2,\ldots, r. 
 \end{equation}
Next we claim that $\mcO_T(-mT)$ is $\vphi$-ample. First observe that 
\[\mbox{NE}(T/W)=\NE(T/W)=\left\{\sum_{j=1}^r a_j[\Sigma_j]\; |\; a_j\>0 \mbox{ for all }j\right\}.  \]
Therefore by \cite[Proposition 4.7(3)]{Nak87} it is enough to show that $-T\cdot \Sigma_j>0$ for all $j\in J''$. 

Now let $\Sigma\subset T$ be a curve in a fiber of $\vphi$ such that $\Sigma$ is not contained in $\Supp(S-T)$. Then there are real numbers $a_j\geq 0$ for all $j\in J''$  such that $[\Sigma]=\sum _{j\in J''}a_j[\Sigma_j]$ in $N_1(T)$. Now recall that $tC\cdot\Sigma_j=-(K_X+\Delta)\cdot\Sigma_j=-(K_T+\Delta_T)\cdot\Sigma_j>0$, and thus $D'\cdot\Sigma_j<0$  for all $j\in J''$. Write $D'=bT+D''$ such that $b>0$ and $D''$ doesn't contain $T$ as a component. Then $(bT+D'')\cdot \Sigma=\sum _{j\in J''}a_j(D'\cdot \Sigma_j)<0$, and hence $T\cdot \Sigma<0$, since $D''\cdot \Sigma\>0$ by construction of $\Sigma$. But from equation \eqref{eqn:num-equivalent-on-x} it follows that $\mbR^{\geq 0}\cdot[\Sigma_j]=\mbR^{\geq 0}\cdot[\Sigma]$ for all $j=1, 2,\ldots, r$. Hence $T\cdot\Sigma_j<0$ for all $j=1,2,\ldots, r$.\\

Then by \cite[Proposition 7.4]{HP16}  or \cite[Theorem 1.1]{DH23}, $\vphi$ extends to a projective bimeromorphic morphism $\phi:X\to Y$ to a normal compact analytic variety $Y$ such that $\phi|_T=\vphi$. Note that by construction $-(K_X+\Delta)$ is $\phi$-ample. Then from Lemma \ref{lem:rational-singularities} it follows that $Y$ has rational singularities. Consequently, by Lemma \ref{l-HP16} we have $\alpha =\phi^*\omega _Y$ for some $(1, 1)$ class $\omega _Y$ on $Y$. 

 Clearly $\omega_Y$ is nef and big. Following the arguments of Theorem \ref{thm:special-effective-dlt-mmp}, it follows that if $V$ is a subvariety of $Y$ of positive dimension, then $(\omega _Y|_V)^{\dim V}>0$ as long as $V$ is contained in $W$ or in the image of the support of $D'$ or not contained in the image of the support of $D$. 

 Thus, we may assume that $V'$, the strict transform of $V$, is contained in the support of $D$ but not in the support of $D'$. Then we write
 \[\alpha _\epsilon =K_X+\Delta+(1-\epsilon)tC+\epsilon \omega =(1-\lambda)(K_X+\Delta-\alpha C) +\lambda (K_X+\Delta +tC)+\epsilon \omega, \]
 where $\lambda =\frac {(1-\epsilon)t+\alpha}{\alpha +t}$ so that $0<\lambda < 1$.
 Since $(K_X+\Delta-\alpha C)|_V\equiv D'|_{V'}\geq 0$, $(K_X+\Delta +tC)|_{V'}$ is nef and $\omega|_{V'}$ is K\"ahler, then $\alpha _\epsilon |_{V'}$ is big and so $\omega _Y|_V$ is also big.
 
  Then from Theorem \ref{thm:nef-big-to-kahler} it follows that $\omega_Y$ is a K\"ahler class, and hence $Y$ is a K\"ahler variety.
 In particular, $\Null(\alpha)=\Ex(\phi)$. Also, observe that from the discussion above it follows that a curve $C\subset X$ contracted by $\phi$ if and only if $\mbR^{\geq 0}\cdot[C]=\mbR^{\geq 0}\cdot[\Gamma_{j'}]=\bar R_{j'}$ in $N_1(X)$. Thus it follows that $\alpha^\bot\cap\NA(X)=\bar R_{j'}$, and hence from Lemma \ref{l-HP16} again it follows that $\rho(X/Y)=\dim_{\mbR}H^{1,1}_{\BC}(X)-\dim_{\mbR}H^{1,1}_{\BC}(Y)=1$.\\

Now if $\phi:X\to Y$ is a divisorial contraction, then we replace $(X, \Delta)$ by $(Y, \phi_*\Delta)$. Note that $K_Y+\phi_*\Delta+t\phi_*C$ is nef on $Y$. If $\phi$ is flipping contraction, then the flip  $\phi':X'\to Y$ exists by Corollary \ref{c-fg1}. Let $\psi:X\dasharrow X'$ be the induced bimeromorphic map. Then from a standard argument it follows that $(X',\psi_*\Delta)$ is a $\mbQ$-factorial dlt pair, $K_{X'}+\psi_*(\Delta+tC)$ is nef (as $(K_X+\Delta +tC)\cdot R_{j'}=0$), $K_{X'}+\psi_*\Delta\num\psi_*D$ and $\psi_* D=(\alpha/t)\psi_*(tC)+\psi_*D'$, where the support of $\psi_*D'$ is contained in the support of $\psi_* S$.
Therefore, replacing 
\[X,\Delta , S,B,C,D,D',\alpha\qquad {\rm by}\qquad X',\psi_*\Delta ,\psi_*S,\psi_*B,\psi_*(tC),\psi_*D,\psi_*D',\frac \alpha t,\] 
the hypothesis still hold and we may repeat the procedure. In this way we obtain a sequence of $(K_X+\Delta)$-flips and divisorial contractions for the $(K_X+\Delta)$-MMP with scaling of $C$. Since $K_X+\Delta \sim_\mbQ D\geq 0$, this procedure terminates after finitely many steps by Theorem \ref{thm:effective-termination}.\\

    \end{proof}~\\

\begin{lemma}\label{lem:extracting-divisor}
Let $(X, B)$ be a compact K\"ahler lc pair of dimension $4$ and $\{E_i\}_{i\in I}$ a finite set of exceptional divisors over $X$ with $a(E_i, X, B)\<0$ for all $i\in I$. Then there exists a $\mbQ$-factorial dlt pair $(X', B')$ and projective bimeromorphic morphism $f:X'\to X$ such that the following holds:
\begin{enumerate}
    \item $K_{X'}+B'=f^*(K_X+B)$.
    \item Every $E_i$ is an $f$-exceptional divisor, and for an arbitrary $f$-exceptional divisor $F$ either $F=E_i$ for some $i\in I$ or $a(F, X, B)=-1$ holds.
\end{enumerate}
\end{lemma}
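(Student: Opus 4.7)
The plan is to construct $(X', B')$ by running a relative MMP over $X$ on a carefully chosen log resolution. First, by Theorem~\ref{thm:log-resolution} take a projective log resolution $g : Y \to X$ of $(X, B)$, refined so that every $E_i$ appears as a prime divisor on $Y$; since $g$ factors as a sequence of smooth blowups, $Y$ inherits a compact K\"ahler structure from $X$. Let $\{F_j\}_{j \in J}$ be the collection of $g$-exceptional prime divisors, identify $I$ with a subset of $J$, and write $a_j := a(F_j, X, B) \geq -1$. Define
\[
\Gamma_Y := g^{-1}_* B + \sum_{i \in I}(-a_i)\, F_i + \sum_{j \in J \setminus I} F_j.
\]
Then $(Y, \Gamma_Y)$ is log smooth, hence $\mathbb{Q}$-factorial dlt, and a direct discrepancy calculation gives
\[
K_Y + \Gamma_Y = g^*(K_X + B) + E, \qquad E := \sum_{j \in J \setminus I}(1 + a_j)\, F_j \geq 0,
\]
where $E$ is effective, $g$-exceptional, and supported in $\lfloor \Gamma_Y \rfloor$.

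Next I would run a $(K_Y + \Gamma_Y)$-MMP over $X$. Since $K_Y + \Gamma_Y \sim_{\mathbb{Q}, X} E$, every $(K_Y + \Gamma_Y)$-negative extremal ray $R$ over $X$ is also $E$-negative and thus spanned by a curve contained in some component of $E \subset \lfloor \Gamma_Y \rfloor$. The existence of such rays comes from the relative dlt cone theorem (Theorem~\ref{thm:general-relative-dlt-cone}); the corresponding contractions and flips are produced via Corollary~\ref{c-fg1} applied after a standard klt perturbation of $(Y, \Gamma_Y)$ supported in $\lfloor \Gamma_Y \rfloor$. Because all contracted loci remain inside $\lfloor \Gamma_Y \rfloor$ throughout the process, the MMP is amenable to special termination: its restriction to a component of $\lfloor \Gamma_Y \rfloor$ becomes a three-dimensional dlt MMP, which terminates by the results of \cite{DH20}, Theorem~\ref{thm:effective-termination}, and the ACC for lc thresholds (Theorem~\ref{thm:acc-for-lct}).

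Finally, let $f : X' \to X$ denote the resulting bimeromorphic model, $\phi : Y \dashrightarrow X'$ the induced map, and set $B' := \phi_* \Gamma_Y$. At each step of the MMP, any contracted divisor $D$ must be a component of the strict transform of $E$: otherwise a moving curve $C \subset D$ would satisfy $E \cdot C \geq 0$, contradicting the $E$-negativity of the contracted ray $R = \mathbb{R}_{\geq 0}[C]$. In particular no $E_i$ is contracted, so each $E_i$ persists on $X'$. Conversely, $K_{X'} + B' \sim_{\mathbb{Q}, X} \phi_* E$ is nef over $X$ while $\phi_* E \geq 0$ is $f$-exceptional, so the negativity lemma forces $\phi_* E = 0$, and hence every component of $E$ is contracted. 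Combining both directions, every $f$-exceptional prime divisor is either some $E_i$ or an $F_j$ with $j \in J \setminus I$ and $a_j = -1$, and $K_{X'} + B' = f^*(K_X + B)$, establishing (1) and (2). The main technical difficulty is the relative MMP in the dlt rather than klt setting: existence of flipping contractions and their flips must be carried out via klt perturbations combined with Corollary~\ref{c-fg1}, and termination relies on special termination coupled with the three-fold K\"ahler MMP of \cite{DH20}.
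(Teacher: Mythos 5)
Your proposal is correct and follows essentially the same route as the paper: the same log resolution extracting the $E_i$, the identical boundary $\Gamma_Y=g^{-1}_*B+\sum_{i\in I}(-a_i)F_i+\sum_{j\in J\setminus I}F_j$ with $K_Y+\Gamma_Y=g^*(K_X+B)+E$, a $(K_Y+\Gamma_Y)$-MMP over $X$ run as in Theorem \ref{thm:global-dlt-model} with special termination, and the negativity lemma to kill $\phi_*E$. Your write-up is in fact somewhat more explicit than the paper's in verifying that no $E_i$ is contracted and that the remaining $f$-exceptional divisors have discrepancy $-1$.
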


\begin{proof}
Let $g:Y\to X$ be a log resolution of $(X, B)$ which extracts all exceptional divisors $\{E_i\}_{i\in I}$. Let $\{F_j\}_{j\in J}$ be the set of all $g$-exceptional divisors. Let $J'\subset J$ such that $\{F_j\}_{j\in J'}=\{E_i\}_{i\in I}$. We define $B_Y:=f^{-1}_*B-\sum_{j\in J'}a(F_j, X, B)F_j+\sum_{j\in J\setminus J'}F_j$. Observe that $B_Y\>0$ is an effective divisor and 
\[
K_Y+B_Y=g^*(K_X+B)+\sum_{j\in J\setminus J'}(1+a(F_j, X, B))F_j.
\]
Now we run a $(K_Y+B_Y)$-MMP over $X$ as in the proof of Theorem \ref{thm:global-dlt-model} and obtain a $\mbQ$-factorial dlt pair $(X', B')$ such that $K_{X'}+B'$ is nef over $X$. Let $f:X'\to X$ be the induced bimeromorphic morphism. Then from the negativity lemma it follows that $K_{X'}+B'=f^*(K_X+B)$.
\end{proof}~\\

\begin{definition}
Let $X$ be a normal variety and $D=\sum a_iD_i$ an $\mbR$-divisor. Then we define $D^{\<1}:=\sum a'_iD_i$, where $a'_i=\min\{a_i, 1\}$. \\
\end{definition}

\begin{proof}[Proof of Theorem \ref{thm:effective-dlt-mm}] We closely follow the proof of \cite[Proposition 3.4]{Bir10} using Theorem \ref{t-scale} as our main technical tool for running the MMP with scaling.\\
Let $(W, \Delta)$ be a log pair, i.e. $\Delta\>0$ is a $\mbQ$-divisor such that $K_W+\Delta$ is $\mbQ$-Cartier. We will call $(W, \Delta)$ an effective pair if there exists an effective $\mbQ$-Cartier divisor $D\>0$ such that $K_W+\Delta\sim_\mbQ D$. We will denote such a pair by the triple $(W, \Delta, D)$. Let $\mcM$ be the collection of all $4$-dimensional triples $(X, B, M)$ such that $(X, B)$ is a $\mbQ$-factorial dlt pair with $(K_X+B)\sim_\mbQ M\>0$ and $(X, B)$ does not admit a log minimal model. Let $\theta(X, B, M)$ be the number of components $P$ of $M$ such that $\mult_P(B)<1$. Pick $(X, B, M)\in\mcM$ such that $\theta(X, B, M)$ is minimal. If $\theta(X, B, M)=0$, then $\Supp M\subset \lrd B\rrd$ and thus by Theorem \ref{thm:special-effective-dlt-mmp}, $(X, B)$ has a log minimal model in fact a log terminal model); hence $(X, B, M)\not\in \mcM$. So assume that $\theta(X, B, M)>0$. Let $f:Y\to X$ be a log  resolution of the pair $(X, B+M)$. Let $E$ be the reduced sum of all exceptional divisors of $f$. Then $(Y, B_Y:=f^{-1}_*B+E)$ is a log smooth dlt pair and 
\[ 
M_Y:=(K_Y+B_Y)-f^*(K_X+B)+f^*M\sim_\mbQ K_Y+B_Y.
\]
Note that $M_Y\>0$ is an effective divisor, since $(X, B)$ is dlt. Moreover, the components of $M_Y$ are either the components of $f^{-1}_*M$ or $f$-exceptional divisors, and 
\begin{equation}\label{eqn:theta}
    \theta(Y, B_Y, M_Y)=\theta(X, B, M).
\end{equation}
Observe that, if $(Y, B_Y)$ has a log minimal model, then $(X, B)$ also has log minimal model (see \cite[Remark 2.6(i)]{Bir10}). Therefore replacing $(X, B, M)$ by $(Y, B_Y, M_Y)$ we may assume that $(X, B+M)$ is a log smooth pair. Define $\alpha>0$ as follows:
\[ 
\alpha:=\min\{t>0\;:\; \lrd (B+tM)^{\<1}\rrd\neq \lrd B\rrd\}.
\]
Note that $\alpha$ is a rational number, since $B$ and $M$ are $\mbQ$-divisors. We can write $(B+\alpha M)^{\<1}=B+C$, where $C$ is an effective $\mbQ$-divisor such that $\Supp\; C\subset \Supp M$. Moreover, we can write $\alpha M=C+M'$ such that $\Supp M'\subset \Supp \lrd B\rrd$, and $C=\alpha M$ outside of $\Supp \lrd B\rrd$. In particular, $\Supp M\subset \Supp (B+C)$.\\
Now observe that we have $(K_X+B+C)\sim_{\mbQ} M+C$ such that $(X, B+C)$ is a log smooth dlt pair and $\theta(X, B+C, M+C)<\theta(X, B, M)$. Therefore by the minimality of $\theta$, it follows that $(X, B+C)$ has a log minimal model, say $(Y, B_Y+C_Y+E)$, where $\phi:X\bir Y$ is the induced bimeromorphic map and $E$ is the sum of all exceptional divisors of $\phi^{-1}$. If $D$ is divisor on $X$, we will denote $\phi_*D$ by $D_Y$ from now on. 
Observe that $(K_Y+B_Y+E)\sim_{\mbQ} M_Y+E$, where $M_Y:=\phi_*M$, since $(K_X+B)\sim_{\mbQ} M$. Moreover, since $\alpha M=C+M'$ on $X$ for some $\mbQ$-divisor $M'\>0$ such that $\Supp M'\subset \lrd B\rrd$, it follows that $M_Y+E=(\frac{1}{\alpha}M'_Y+E)+\frac{1}{\alpha}C_Y$ such that $\Supp (M'_Y+E)\subset \lrd B_Y+E\rrd$. Then the hypothesis of Theorem \ref{t-scale} are satisfied and we can run a $(K_Y+B_Y+E)$-MMP with scaling of $C_Y$. Assume that this MMP terminates with $Y\bir Y'$ such that $K_{Y'}+B_{Y'}+E_{Y'}$ is nef.

Note that this is a nef model of $(X,B)$; however, it is not clear whether it is a log minimal model of $(X, B)$ or not, since the strict inequality $a(P,X,B)< a(P,Y',B_{Y'}+E_{Y'})$ does not necessarily hold for every divisor $P$ on $X$ exceptional over $Y'$. 
Let \[\mathcal T=\{t\in [0,1]\; |\; K_X+B+tC  \mbox{ has a log minimal model}\}.\]
Note that using the minimality of $\theta(X, B, M)$ we have already shown above that $(X, B+C)$ has a log minimal model, i.e. $1\in \mathcal T$. Now our goal is to show that $0\in \mathcal T$. 
For any $0<t\in \mathcal T$, let $\phi_t:X\dasharrow Y_t$ be a log minimal model for $K_X+B+tC$ such that $K_{Y_t}+B_t+E_t+tC_t$ is nef. Proceeding as above, we run a $(K_{Y_t}+B_t+E_t)$-MMP with  scaling of $tC_t$ as in Theorem \ref{t-scale}.
Since  $a(P,X,B+tC)< a(P,Y_{t},B_{t}+tC+E_{t})$ for any divisor $P$ on $X$ exceptional over $Y_{t}$, we also have that $a(P,X,B+t'C)< a(P,Y_{t},B_{t}+t'C+E_{t})$ for any divisor $P$ on $X$ exceptional over $Y_{t}$ and $0\leq t-t'\ll 1$.
But then, this MMP with the scaling of $tC_t$ also yields a log minimal model for $K_X+B+t'C$ for $0\leq t-t'\ll 1$. Thus $[t', t]\subset\mathcal T$.

Let $\tau ={\rm inf}\{t\in \mathcal T \}$. By what we have seen above, if $\tau \in \mathcal T$, then $\tau =0$ and we are done. Suppose therefore that $\tau \not \in \mathcal T$ and $t_k\in \mathcal T$ is a strictly decreasing sequence with $\lim t_k=\tau$; we will derive a contradiction.
For each $k\>1$, let $(Y_{t_k}, B_{t_k}+t_kC_{t_k}+E)$ be a log minimal model of $(X, B+t_kC)$ whose existence is guaranteed by the definition of $\mathcal T$. Then we get a nef model $(Y'_{t_k}, B'_{t_k}+E'_{t_k}+\tau C'_{t_k})$ of $(X, B+\tau C)$ by running a $(K_{Y_{t_k}}+B_{t_k}+E+\tau C_{t_k})$-MMP with the scaling of $(\tau-t_k)C_{t_k}$ as in Theorem \ref{t-scale}.   

Let $D\subset X$ be a divisor contracted by $X\dasharrow Y'_{t_k}$, then by the arguments in Step 5 of the proof of \cite[Proposition 3.4]{Bir10}, we have \[a(D,X,B+t_kC)<a(D,Y'_{t_k},B'_{t_k}+\tau C'_{t_k}+E'_{t_k}).\]
Passing to a subsequence of the $t_k$, we may assume that $X\dasharrow Y'_{t_k}$ contracts a fixed set of components of the support of $B+C$.
By \cite[Claim 3.5]{Bir10} we have that
\[a(D,Y'_{t_k},B'_{t_k}+\tau C'_{t_k}+E'_{t_k})=a(D,Y'_{t_{k+1}},B'_{t_{k+1}}+\tau C'_{t_{k+1}}+E'_{t_{k+1}})\] 
for every divisor $D$ over $Y'_{t_k}$ and for all $k\>1$.
It then follows that 
\[a(D,X,B+\tau C)=\lim a(D,X,B+t_kC) \leq a(D,Y'_{t_k},B'_{t_k}+\tau C'_{t_k}+E'_{t_k}).\] 
This is not yet a log minimal model because we need the inequality to be strict for every divisor $D$ on $X$ exceptional over $Y'_{t_k}$. To remedy this, it suffices to construct a bimeromorphic model $\nu:Y^\sharp\to Y'_k$ which extracts exactly the divisors $D$ on $X$ exceptional over $Y'_{t_k}$ such that 
$a(D,X,B+\tau C)=a(D,Y'_{t_k},B'_{t_k}+\tau C'_{t_k}+E'_{t_k})$ holds. Note that $a(D,X,B+\tau C)\leq 0$ and $(Y'_{t_k},B'_{t_k}+\tau C'_{t_k}+E'_{t_k})$ is lc, so this can be done by Lemma \ref{lem:extracting-divisor}. Let $K_{Y^\sharp}+B_{Y^\sharp}+\tau C_{Y^\sharp}=\nu ^*(K_{Y'_{t_k}}+B'_{t_k}+\tau C'_{t_k}+E'_{t_k})$ such that $\nu_*B_{Y^\sharp}=B'_{t_k}+E'_{t_k}$; then $({Y^\sharp},B_{Y^\sharp}+\tau C_{Y^\sharp})$ is a $\mathbb Q$-factorial dlt pair and $a(D,X,B+\tau C)<a(D,{Y^\sharp},B_{Y^\sharp}+\tau C_{Y^\sharp})$ for every divisor $D$ on $X$ exceptional over $Y^\sharp$. Therefore $X\dasharrow Y^\sharp$ is a log minimal model of $(X,B+\tau C)$. Thus, we have shown that $\tau \in \mathcal T$, which is a contradiction.

\end{proof}~\\

\begin{corollary}\label{cor:klt-ltm}
Let $(X, B)$ be a $\mbQ$-factorial compact K\"ahler plt pair of dimension $4$ such that $\kappa (X, K_X+B)\geq 0$. Then $(X, B)$ has log terminal model.
\end{corollary}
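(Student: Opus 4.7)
The plan is to deduce this corollary directly from Theorem \ref{thm:effective-dlt-mm} together with Lemma \ref{lem:lmm-to-ltm}. There are essentially three ingredients to combine: the effectiveness coming from $\kappa(X,K_X+B)\geq 0$, the existence of a log minimal model for effective dlt pairs, and the plt rigidity that forces every log minimal model of a plt pair to be a log terminal model.

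First I would unpack the Kodaira dimension assumption. By definition $\kappa(X,K_X+B)\geq 0$ means $H^0(X,\mcO_X(m(K_X+B)))\neq 0$ for some $m>0$, so there is an effective $\mbQ$-divisor $M\geq 0$ with $K_X+B\sim_{\mbQ} M$. Since a plt pair is in particular dlt, the pair $(X,B)$ satisfies the hypotheses of Theorem \ref{thm:effective-dlt-mm}; applying it yields a log minimal model $\phi:X\bir Y$ in the sense of Definition \ref{def:log-terminal-and-log-minimal-model}, i.e.\ a $\mbQ$-factorial dlt pair $(Y,B_Y+E_Y)$ with $K_Y+B_Y+E_Y$ nef, where $E_Y$ collects the prime Weil divisors on $Y$ contracted by $\phi^{-1}$, and with the discrepancy inequality $a(E,X,B)<a(E,Y,B_Y+E_Y)$ for every prime divisor $E\subset X$ contracted by $\phi$.

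Now I would invoke Lemma \ref{lem:lmm-to-ltm}(2): since $(X,B)$ is plt, every component of $E_Y$ (being $\phi$-exceptional over $X$) would have discrepancy strictly greater than $-1$ against $(X,B)$, yet the conclusion of Lemma \ref{lem:lmm-to-ltm}(1) forces its discrepancy against $(Y,B_Y+E_Y)$ to be $-1$, contradicting the monotonicity $a(E,X,B)\leq a(E,Y,B_Y+E_Y)$. Hence $E_Y=0$, i.e.\ $\phi^{-1}$ extracts no divisors, and $(Y,B_Y)$ automatically satisfies all four conditions (i)--(iv) of Definition \ref{def:log-terminal-and-log-minimal-model} for being a log terminal model of $(X,B)$.

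Since the argument is just a direct concatenation of Theorem \ref{thm:effective-dlt-mm} and Lemma \ref{lem:lmm-to-ltm}, there is no genuine obstacle here; the only thing to be careful about is verifying that the log minimal model produced by Theorem \ref{thm:effective-dlt-mm} is exactly of the form required by Lemma \ref{lem:lmm-to-ltm}, which is immediate from the definitions.
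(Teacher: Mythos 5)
Your proposal is correct and follows exactly the paper's argument: the paper also deduces the corollary by combining Theorem \ref{thm:effective-dlt-mm} (which applies since $\kappa(X,K_X+B)\geq 0$ gives $K_X+B\sim_{\mbQ}M\geq 0$ and plt implies dlt) with Lemma \ref{lem:lmm-to-ltm}(2) to see that the plt hypothesis forces $E_Y=0$. Nothing is missing.
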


\begin{proof}
This follows from Theorem \ref{thm:effective-dlt-mm} and Lemma \ref{lem:lmm-to-ltm}.
\end{proof}

\section{MMP for Semi-stable pairs}
The main result of this section is Theorem \ref{thm:ss-mmp}. We start with various definitions and establish necessary results first.  
\begin{definition}\label{def:klt-semi-stable-pair}
Let $f:X\to T$ be a proper surjective morphism from a normal K\"ahler variety $X$ to a smooth curve $T$ and $W\subset T$ a compact subset. Let $B\>0$ be an effective $\mbQ$-divisor on $X$. We say that $(X, B/T;W)$ is a \textit{semi-stable klt pair} if $(X,X_w+B)$ is plt  for any $w\in W$. It is well known that this implies (and is in fact equivalent to) the following conditions:
\begin{enumerate}
    \item the fibers $X_w$ of $f$ are all reduced, irreducible and normal,
    \item $\Supp B$ does not contain any fiber $X_w$, and 
    \item $K_X+B$ is $\mbQ$-Cartier and $(X_w, B_w)$ is klt, where $B_w:=B|_{X_w}$.
\end{enumerate}
\end{definition}
By abuse of notation, we will occasionally omit $W$ and simply say that $f:(X, B)\to T$ is a semi-stable klt pair to mean that $(X, B/T; W)$ is a semi-stable klt pair. 
We wish to run a relative MMP for $K_X+B$ over $T$ in a neighborhood of $W$ (so we will repeatedly replace $T$ by an appropriate neighborhood of $W$). We will say that $K_X+B$ is nef over $W$ if $K_{X_w}+B_w=(K_X+B)|_{X_w}$ is nef for every $w\in W$.\\

\begin{definition}\label{def:Neron-Severi-group} 
Let $f:X\to T$ be a proper morphism from a normal analytic variety  $X$ to a smooth curve $T$ such that every fiber of $f$ is an irreducible and reduced normal complex space. Let $W\subset T$ be a fixed compact subset and $U\subset T$ an open neighborhood of $W$.\\ 
If $\tau$ is a real closed bi-dimension $(1, 1)$ current on $X_u$ for some $u\in U$, then for any real closed $(1, 1)$ form $\eta$ on $f^{-1}U $ with local potentials, we define 
\[
\tau (\eta):=(\iota_{u,*}\tau)(\eta)=\tau (\eta|_{X_u}),
\]
where $\iota_u:X_u\injective X$ is the closed embedding.\\
We define $N_1(X/T;W)$ to be the vector space generated by the real closed bi-dimension $(1, 1)$ currents $\tau$ on $X_w$ as $w$ varies in $W$, modulo the following equivalence relation: \[ \tau_1\num \tau_2 \mbox{ if and only if } \tau_1(\alpha)=\tau_2(\alpha) \] for all classes $\alpha\in H^{1,1}_{\rm BC}(X_{U})$, for some open neighborhood $U\subset T$ of $W$ such that $X_U=f^{-1}U\supset f^{-1}W$. 
We define  $\NA(X/T, W)\subset N_1(X/T, W)$ to be the closed cone generated by the classes of closed positive currents. 

We also define $N^1(X_U/U, W)$ as the vector space generated by the classes $\alpha\in H^{1,1}_{\BC}(X_U)$ modulo the following equivalence relation: 
 \[
 \alpha_1\num\alpha_2 \mbox{ if and only if } [\tau](\alpha_1)=[\tau](\alpha_2)
 \]
 for $\tau$  real closed bi-dimension $(1, 1)$ currents on $X_w$ for all $w\in W$.
 Note that if $U\supset U'$ are open subsets containing $W$, then there is a natural restriction map $N^1(X_U/U, W)\to N^1(X_{U'}/U', W)$. Finally let $N^1(X/T, W):=\varinjlim_{W\subset U} N^1(X_U/U, W)$.\\

 We also define $\Pic(X/T, W)$ as the direct limit of $\Pic(f^{-1}U)$, where $W\subset U\subset T$ is an open neighborhood of $W$, i.e.
\[
\Pic(X/T, W):=\varinjlim_{W\subset U} \Pic(f^{-1}U).
\]

\end{definition}

\begin{remark}\label{rmk:infinite-dim}
    We note that $N^1(X/T, W)$ and $N_1(X/T, W)$ could be infinite dimensional vector spaces over $\mbR$, since $X$ and $T$ are not assumed to be compact here.
\end{remark}

\subsection{Relative cone theorem for $4$-folds} We now prove a weak form of the relative cone theorem for proper morphisms $f:X\to T$ from a K\"ahler variety to a curve. We say that a form $\omega$ or a class $\omega\in N^1(X/T; W)$ is relatively nef (resp. relatively K\"ahler) if $\omega _t:=\omega |_{X_t}$ is nef (resp. K\"ahler) for any $t\in T$. 

\begin{lemma}\label{l-douady} 
Let $f:X\to T$ be as above, $\omega$ a relatively K\"ahler  form and $W\subset T$ a compact subset. Fix $M>0$ and let $\{C_i\}_{i\in I}$ be the set of $f$-vertical curves such that $f(C_i)\subset W$ and $\omega \cdot C_i \leq M$,
then the $C_i$ belong to finitely many families of curves.
\end{lemma}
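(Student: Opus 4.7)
The plan is to combine Bishop's compactness theorem for analytic cycles with the local finiteness of irreducible components of the Douady--Barlet space of compact $1$-cycles. First, since $W\subset T$ is compact and $T$ is a smooth curve, I would choose a relatively compact open neighborhood $T_0$ of $W$ with $\overline{T_0}$ compact, and replace $X$ by $f^{-1}(T_0)$. Then $f^{-1}(W)$ is a compact subset of $X$, and every curve $C_i$ is contained in this single compact set.

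The next step is to upgrade the relatively K\"ahler form $\omega$ to a genuinely K\"ahler form on a neighborhood of $f^{-1}(W)$, without altering the intersection numbers against vertical curves. Let $\omega_T$ be a K\"ahler form on a neighborhood of $W$ in $T$. Because $\omega$ is smooth, closed of type $(1,1)$, and positive when restricted to each fiber $X_w$ ($w\in W$), a standard partition-of-unity / compactness argument (using that $\overline{f^{-1}(W)}$ is compact and the normal directions to fibers are generated by $f^*T^*_T$) shows that for $C\gg 0$ the form $\tilde\omega:=\omega+Cf^*\omega_T$ is K\"ahler on a neighborhood of $f^{-1}(W)$. Since each $C_i$ is $f$-vertical, $f^*\omega_T\cdot C_i=0$, so $\tilde\omega\cdot C_i=\omega\cdot C_i\leq M$. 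Replacing $\omega$ by $\tilde\omega$, I may assume $\omega$ is K\"ahler on a neighborhood of the compact set $f^{-1}(W)$.

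At this point I would invoke Bishop's compactness theorem: in a complex space endowed with a K\"ahler form, the set of effective compact analytic $1$-cycles contained in a fixed compact subset and of uniformly bounded mass is relatively compact in the Douady--Barlet space $\mathcal{B}_1$ of compact $1$-cycles (equipped with its natural topology of Hausdorff convergence of supports together with convergence of multiplicities). Applied with the compact set $f^{-1}(W)$ and mass bound $M$, this shows that the family of cycles $\{[C_i]\}_{i\in I}$ is relatively compact in $\mathcal{B}_1(X)$. Because $\mathcal{B}_1(X)$ is a complex space whose irreducible components form a locally finite collection, any relatively compact subset meets only finitely many irreducible components; and each irreducible component parametrizes a single analytic family of $1$-cycles. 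Hence the $C_i$ belong to finitely many families, as required.

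The main obstacle I anticipate is the K\"ahler upgrade: verifying carefully that a closed $(1,1)$-form which is only fiberwise K\"ahler over $W$ can be modified by a pull-back from $T$ (which is harmless on vertical curves) to become actually K\"ahler on an open neighborhood of the compact set $f^{-1}(W)$. Once this reduction is in place the argument is a direct appeal to Bishop's theorem and the local finiteness of irreducible components of $\mathcal{B}_1(X)$, both of which are standard in analytic geometry.
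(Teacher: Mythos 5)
Your proof is correct and rests on the same two pillars as the paper's: a degree bound for the curves $C_i$ with respect to an honest K\"ahler form near the compact set $f^{-1}(W)$, followed by Bishop--Barlet compactness of the space of $1$-cycles of bounded degree supported in a fixed compact set (the paper cites Toma for this step). The difference is purely in how the K\"ahler degree bound is produced. You add a large multiple of $f^{*}\omega_T$ to $\omega$ to make it genuinely K\"ahler near $f^{-1}(W)$, which requires the (standard but not entirely free) lemma that a fiberwise K\"ahler closed $(1,1)$-form plus $Cf^{*}\omega_T$, $C\gg 0$, is K\"ahler on a neighborhood of a compact set -- exactly the obstacle you flag. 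The paper goes the other way: since $X$ is a K\"ahler $4$-fold in this section, it fixes a global K\"ahler form $\eta$ and uses openness of fiberwise positivity plus compactness of $W$ to find $\epsilon>0$ with $(\omega-\epsilon\eta)|_{X_t}$ K\"ahler for all $t$ near $W$, whence $\eta\cdot C_i\leq \frac{1}{\epsilon}\,\omega\cdot C_i\leq M/\epsilon$. That subtraction trick avoids your upgrade lemma entirely, at the cost of using the ambient K\"ahler hypothesis on $X$; your version is marginally more robust (it only needs $\omega$ relatively K\"ahler and $T$ K\"ahler near $W$) but carries the extra verification. Both are sound.
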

\begin{proof}
Let $\eta$ be a K\"ahler form on $X$. Then for each $t\in W$ there exists an $\epsilon_t>0$ such that $(\omega -\epsilon_tf^*\eta)|_{X_t}$  is a K\"ahler form on $X_t$. It follows that $(\omega -\epsilon_t f^*\eta )|_{X_s}$ is K\"ahler for any $s$ in a neighborhood of $t$. Since $W$ is compact, we may pick an $\epsilon >0$ such that $(\omega -\epsilon f^*\eta )|_{X_t}$ is K\"ahler for every $t$ is a neighborhood of $W$.
Then
\[ 
\eta\cdot C_i<\frac 1 \epsilon \omega \cdot C_i\leq \frac{M}{\epsilon}. 
\]
Since the relative cycle space has only finitely many components of bounded degree with respect to $\eta$, it follows that the curves $C_i$ belong to finitely many families (see for example \cite[Theorem 5.5]{Tom21} for further details).

\end{proof}

The following result gives a weak form of relative cone theorem for semi-stable klt pairs.
\begin{theorem}\label{thm:weak-cone0} Let $f:X\to T$ be a proper surjective morphism from a K\"ahler $4$-fold $X$ to a  curve $T$ such that $f_*\mcO_X=\mcO_T$. Let $W\subset T$ be a compact subset and $(X,B/T; W)$ is a semi-stable klt pair. Fix a K\"ahler form $\omega$ on $X$.
Then there are finitely many classes of curves  $\{C_i\}_{i\in J}$ ($J$ is a finite set) over $W$ such that $0> (K_X+B)\cdot C_i\geq -6$ and for each $t\in W$
\[\overline{\rm NA}(X_t)=\overline{\rm NA}(X_t)_{(K_{X_t}+B_t+\omega_t)\geq 0}+\sum _{i\in J}\mathbb R ^{\geq 0}[C_i].\]
Suppose now that $K_{X_t}+B_t+\omega _t$ is nef for all $t\in W$, where $\omega _t:=\omega |_{X_t}$ is K\"ahler for all $t\in W$. Let \[\lambda :=\inf\{s\geq 0\;|\; K_{X_t}+B_t+s\omega _t \mbox{ is nef for all } t\in W\}.\]
If $\lambda >0$, then there are finitely many classes of curves  $\{C_i\}_{i\in I}$ ($I\subset J$) over $W$ which satisfy the following properties:
\begin{enumerate}
    \item $C_i\subset X_t$ for some $t\in W$, and $\mathbb R ^{\geq 0}[C_i]$ is a $(K_{X_{t}}+B_{t})$-negative extremal ray of $\NA(X_t)$ such that $(K_{X_{t}}+B_{t}+\lambda \omega _{t})\cdot C_i=0$,  
    \item  if $C\subset X_t$ is a curve such that $(K_{X_t}+B_t+\lambda \omega _t)\cdot C=0$ for some $t\in W$, then $[C]\equiv \sum_{i\in I} c_i[C_i]$ in $N_1(X/T,W)$ for some $c_i\in \mathbb R^{\geq 0}$,
    
    \item if $\omega \in N^1(X/T,W)$ is general, then $|I|=1$ (i.e. we may assume that there is a unique such class $[C_i]\in N_1(X/T,W)$). 
\end{enumerate}

\end{theorem}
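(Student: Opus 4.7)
The overall strategy is to apply the $3$-dimensional K\"ahler cone theorem (Corollary \ref{cor:nQ-unified-cone}) fiber by fiber, and then reduce the resulting fiber-wise finiteness to global finiteness over the compact set $W$ by means of Lemma \ref{l-douady}. For the first part, fix $t\in W$. Since $(X,B/T;W)$ is semi-stable klt, $(X_t,B_t)$ is a klt $3$-fold and $\omega_t$ is K\"ahler, so Corollary \ref{cor:nQ-unified-cone} gives a decomposition of $\overline{\rm NA}(X_t)$ in which only the finitely many $(K+B+\omega)$-negative extremal rays need to appear as generators (the remaining $(K+B)$-negative rays being absorbed into $\overline{\rm NA}(X_t)_{(K+B+\omega)\geq 0}$); each such generator is spanned by a rational curve $C^t_{\alpha}$ with $-6\leq (K+B)\cdot C^t_{\alpha}<0$ and hence $\omega\cdot C^t_{\alpha}<6$. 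Taking the union of these curves as $t$ ranges over $W$, Lemma \ref{l-douady} (applied with the uniform bound $\omega\cdot C^t_{\alpha}<6$) forces them to fall into finitely many classes $\{[C_i]\}_{i\in J_0}\subset N_1(X/T,W)$; setting $J:=J_0$ establishes the first assertion.

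Assume now $\lambda>0$ and set $\alpha:=K_X+B+\lambda\omega$. For each $t\in W$ consider the extremal face $F_t:=\alpha|_{X_t}^{\perp}\cap\overline{\rm NA}(X_t)$. Any extremal ray $R=\mathbb R^{\geq 0}[C]$ of $F_t$ must satisfy $(K+B)\cdot C<0$ (otherwise $\alpha\cdot R>0$, since $\omega$ is K\"ahler, contradicting $R\subset F_t$), and so $R$ is a $(K+B)$-negative extremal ray of $\overline{\rm NA}(X_t)$; by Corollary \ref{cor:nQ-unified-cone} we have $-6\leq (K+B)\cdot C<0$, and the orthogonality $\alpha\cdot C=0$ gives $\omega\cdot C=-(K+B)\cdot C/\lambda\leq 6/\lambda$. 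Lemma \ref{l-douady} again reduces these rays, as $t$ varies in $W$, to finitely many classes $\{[C_i]\}_{i\in I}\subset N_1(X/T,W)$; enlarging $J$ to $J\cup I$ if necessary preserves the first-part decomposition. Since $\omega_t$ is strictly positive on $\overline{\rm NA}(X_t)\setminus\{0\}$, the cone is pointed and finite-dimensional, so the closed sub-face $F_t$ is the convex hull of its finitely many extremal rays, yielding property (2). For non-emptiness of $I$ and property (1), the infimum definition of $\lambda$ produces sequences $\epsilon_n\downarrow 0$, $t_n\in W$ and (via the $3$-fold cone theorem on $X_{t_n}$) rational curves $C_n\subset X_{t_n}$ spanning a $(K+B)$-negative extremal ray with $(K+B+(\lambda-\epsilon_n)\omega)\cdot C_n<0$ and $\omega\cdot C_n<12/\lambda$ (for $\epsilon_n<\lambda/2$); Lemma \ref{l-douady} together with pigeonhole yields a subsequence with constant class $[C_n]\equiv [D]$, and passing to the limit together with nefness of $\alpha$ forces $\alpha\cdot D=0$, so $[D]\in I$.

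For (3), apply Lemma \ref{lem:very-general-kahler} to $V:=N^1(X/T,W)$ and its dual, with $\mathcal C$ the relative K\"ahler cone (open in $V$, hence not contained in any hyperplane), $D:=K_X+B$, and the countable collection of all $(K_X+B)$-negative fibral curve classes in $N_1(X/T,W)$ (countable by stratifying with respect to bounded $\omega_0$-degree for a fixed reference K\"ahler class $\omega_0$ and invoking Lemma \ref{l-douady} at each level, together with \cite[Lemma 4.4]{Toma16} on individual fibers). The lemma produces a countable union of hyperplanes $\mathcal H$ off of which, for any $t\in\mathbb R$, $(K_X+B+t\omega)\cdot [\Gamma]=0$ holds for at most one class $[\Gamma]$ in the collection; specialising $t=\lambda$ and combining with the non-emptiness of $I$ from paragraph 2 forces $|I|=1$. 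The principal technical obstacle is the passage from fiber-wise $(K+B)$-negative extremal rays (countably many per fiber, uncountably many across $W$) to finitely many relative classes, which is possible only because the constraint $\alpha\cdot C=0$ gives the uniform degree bound $\omega\cdot C\leq 6/\lambda$ that feeds into Lemma \ref{l-douady}; a secondary difficulty is the compactness-and-limit argument securing a curve realizing the precise orthogonality $(K+B+\lambda\omega)\cdot D=0$ despite $\lambda$ being only an infimum.
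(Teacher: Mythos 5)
Your proof is correct and follows essentially the same route as the paper's: fiberwise application of Corollary \ref{cor:nQ-unified-cone}, the uniform degree bound $\omega\cdot C\leq 6/\lambda$ on the orthogonal extremal rays feeding into Lemma \ref{l-douady}, and Lemma \ref{lem:very-general-kahler} for part (3). The only (immaterial) difference is that you secure non-emptiness of $I$ by a sequential pigeonhole-and-limit argument, whereas the paper argues by contradiction that some fiber must have $K_{X_t}+B_t+\lambda\omega_t$ nef but not K\"ahler (otherwise $K_X+B+\lambda\omega$ would be relatively K\"ahler near $W$, contradicting the definition of $\lambda$) and then invokes \cite[Corollary 3.16]{HP16} on that fiber.
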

\begin{proof}
By Corollary \ref{cor:nQ-unified-cone}, for any $t\in T$ there are finitely many $(K_{X_t}+B_t+\omega_t)$-negative extremal rays $C_i$ where $i\in J_t$ and $0> (K_{X_t}+B_t) \cdot C_i=(K_X+B)\cdot C_i \geq -6$.
Let $J=\cup _{t\in T}J_t$. Since $\omega \cdot C_i=\omega _t\cdot C_i<- (K_{X_t}+B_t)\cdot C_i\leq  6$, it follows from  Lemma \ref{l-douady} that $J$ is finite. The first statement is proven.

Suppose now that $K_{X_t}+B_t+\omega _t$ is nef for all $t\in W$. Define the set \[\Lambda:=\{t\in W\; |\; K_{X_t}+B_t+\lambda\omega_t \mbox{ is nef but not K\"ahler} \}\subset T.\]
Then $\Lambda\ne \emptyset$, as otherwise arguing as in the proof of Lemma \ref{l-douady} above, one sees that $K_X+B+\lambda \omega $ is relatively K\"ahler over a neighborhood of $W$, which contradicts the definition of $\lambda$.
For any $t\in \Lambda$, we have $F_t:=(K_{X_{t}}+B_{t}+\lambda\omega_{t})^\bot\cap\NA(X_{t})\neq \{0\}$ by \cite[Corollary 3.16]{HP16}. Moreover, from Corollary \ref{cor:nQ-unified-cone} it follows that $F_t$ is generated by finitely many classes of curves, each of which generates a $(K_{X_{t}}+B_{t})$-negative extremal ray. 
Let $\Gamma:=\{C\subset X_t\;|\; t\in \Lambda,\ C \mbox{ generates a } (K_{X_t}+B_t)\mbox{-negative extremal ray such that } (K_{X_t}+B_t+\lambda\omega_t)\cdot C=0 \}$.
Then for a curve $C\in \Gamma$ we have $C\subset X_t$ for some $t\in\Lambda$, and 
\[ 
\omega\cdot C=\omega_{t}\cdot C=\frac{-1}{\lambda}(K_{X_{t}}+B_{t})\cdot C\<\frac{6}{\lambda}. 
\]
By Lemma \ref{l-douady} the curves in $\Gamma$ belong to finitely many families, and hence correspond to finitely many numerical classes.  This proves (1).\\

For (2), let $C\subset X_t$ be a curve such that $(K_{X_t}+B_t+\lambda\omega_t)\cdot C=0$. Then $[C]\in F_t$, and by Corollary \ref{cor:nQ-unified-cone} and Part (1) above it follows that there is a subset $J\subset I$ such that $F_t$ is generated by the curves $C_j$ for $j\in J$. In particular, $[C]=\sum c_i[C_i]$ in $H^{1,1}_{\BC}(X_t)$ for some $c_i\in\mbR_{\geq 0}$, and hence also in $H^{1,1}_{\BC}(X)$.\\

(3) now follows from Lemma \ref{lem:very-general-kahler}.

\end{proof}

\begin{definition}
We say that $(X,B)$ is a minimal model over $W$ if $K_X+B$ is nef over $W$. If, possibly replacing $T$ by an appropriate neighborhood of $W$, there is a morphism $g:X\to Z$ over $T$ such that $\dim X>\dim Z$ and $-(K_X+B)$ is ample on each fiber of $g$, then we say that $g$ is a Mori fiber space over $W$. 
We say that $(X/T;W)$ is $\mathbb Q$-factorial if: (i) every Weil divisor $D$ defined over a neighborhood of $W$ is $\mbQ$-Cartier over a (possibly smaller) neighborhood of $W$, and (ii) $(\omega_X^{\otimes m})^{**}$ is a line bundle over a neighborhood of $W$ for some $m\>1$.
\end{definition}~\\

We will use the following variant of \cite[Lemma 3.3]{HP16}. The main point here is that $X$ and $Y$ are not assumed to be compact. The proof is similar to that of \cite{HP16}, however, we reproduce it here for the convenience of the reader.   
\begin{lemma}\cite[Lemma 3.3]{HP16}\label{l-HP16}
Let $f:X\to Y$ be a proper birational map between normal complex spaces in Fujiki's class $\mathcal C$ with rational singularities. 
Then we have an injection
\[f^*:H^{1,1}_{\rm BC}(Y)=H^1(Y,\mathcal H_Y)\hookrightarrow H^1(X,\mathcal H_X) =H^{1,1}_{\rm BC}(X)\] 
such that ${\rm Im}(f^*)=\{\alpha \in H^1(X,\mathcal H_X)\;|\; \alpha\cdot C=0 \mbox{ for all curves } C\subset X \mbox{ s.t. } f(C)=\pt\}$.
\end{lemma}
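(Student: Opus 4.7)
The plan is to adapt the argument of \cite[Lemma 3.3]{HP16} to the non-compact setting, working locally on $Y$ so as to avoid any use of compactness. Recall that Bott--Chern cohomology in degree $(1,1)$ is computed as sheaf cohomology $H^{1,1}_{\rm BC}(X)=H^1(X,\mathcal{H}_X)$, where $\mathcal{H}_X$ is the Bott--Chern sheaf of pluriharmonic functions (and similarly for $Y$). Since $f$ is proper, the Leray spectral sequence for $f$ and $\mathcal{H}_X$ converges, and its low-degree exact sequence reads
\[
0\to H^1(Y,f_*\mathcal{H}_X)\to H^1(X,\mathcal{H}_X)\to H^0(Y,R^1f_*\mathcal{H}_X)\to H^2(Y,f_*\mathcal{H}_X).
\]
The pullback $f^*:H^1(Y,\mathcal{H}_Y)\to H^1(X,\mathcal{H}_X)$ is induced by the unit of adjunction $\mathcal{H}_Y\to f_*\mathcal{H}_X$. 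Thus, to establish both the injectivity of $f^*$ and to identify its image as the kernel of the edge map $H^1(X,\mathcal{H}_X)\to H^0(Y,R^1f_*\mathcal{H}_X)$, it suffices to prove that $\mathcal{H}_Y\xrightarrow{\sim}f_*\mathcal{H}_X$ is an isomorphism.

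First I would verify this identification $f_*\mathcal{H}_X=\mathcal{H}_Y$ locally on $Y$. Since $f$ is proper bimeromorphic and $Y$ is normal with rational singularities, one has $f_*\mathcal{O}_X=\mathcal{O}_Y$ (and similarly for the antiholomorphic sheaf). Combined with the local exact sequence presenting $\mathcal{H}$ as a quotient of $\mathcal{O}\oplus\overline{\mathcal{O}}$ modulo $\mathbb{R}$, and the fact that $f$ has connected fibers so $f_*\mathbb{R}_X=\mathbb{R}_Y$, a short diagram chase (applying $f_*$ to the resolution and using that $R^1f_*\mathcal{O}_X=0$ by rationality of the singularities on the target) yields $f_*\mathcal{H}_X=\mathcal{H}_Y$. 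This is where rational singularities of $Y$ (and of $X$) enter crucially.

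Next I would analyze the edge map $H^1(X,\mathcal{H}_X)\to H^0(Y,R^1f_*\mathcal{H}_X)$. The sheaf $R^1f_*\mathcal{H}_X$ is supported on the image of the exceptional locus, which is a proper analytic subset of $Y$. Locally around a point $y\in Y$, the stalk $(R^1f_*\mathcal{H}_X)_y$ is computed by $H^1$ of $\mathcal{H}_{X_U}$ for $U$ a sufficiently small Stein neighborhood of $y$, and using the formal function theorem together with the fact that $f^{-1}(y)$ is a proper analytic space, this stalk is controlled by $H^{1,1}_{\rm BC}$-classes on the (analytic, possibly non-reduced) fiber. A Bott--Chern class on a proper analytic space vanishes if and only if it intersects every curve trivially (this is the standard duality, cf. \cite[Proposition 3.9]{HP16} applied to fibers once one passes to a resolution of $X$ dominating a resolution of $Y$). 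It follows that $\alpha\in H^1(X,\mathcal{H}_X)$ lies in the kernel of the edge map precisely when $\alpha\cdot C=0$ for every irreducible curve $C\subset X$ contracted by $f$, which is the desired characterization of $\operatorname{Im}(f^*)$.

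The main technical obstacle will be step three: making rigorous the claim that the stalk of $R^1f_*\mathcal{H}_X$ at $y$ is detected by intersections with contracted curves. In the compact setting of \cite[Lemma 3.3]{HP16} this is immediate from the duality between $N^1$ and $N_1$. In our setting one must either (i) pass to a common resolution and use the projection formula to reduce to the smooth case, or (ii) invoke a formal/completion argument plus the duality applied to a projective bimeromorphic resolution of the fiber $f^{-1}(y)$. Either approach works but requires some care; once it is in place, combining with the earlier steps immediately yields both injectivity of $f^*$ and the asserted description of its image.
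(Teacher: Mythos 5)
Your overall architecture coincides with the paper's: both proofs run the low-degree Leray exact sequence for $\mathcal H_X$, identify $f_*\mathcal H_X\cong\mathcal H_Y$ using $f_*\mathcal O_X=\mathcal O_Y$ and the vanishing $R^if_*\mathcal O_X=0$ coming from rational singularities, and then characterize $\operatorname{Im}(f^*)$ as the kernel of the edge map $H^1(X,\mathcal H_X)\to H^0(Y,R^1f_*\mathcal H_X)$. Up to that point your argument is fine.

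The genuine gap is in your third step, which is exactly where the content of the lemma lies, and which you yourself flag as "the main technical obstacle" without resolving it. Your plan is to compute the stalk $(R^1f_*\mathcal H_X)_y$ by the fiber $f^{-1}(y)$ and then apply an $N^1$/$N_1$-type duality to that fiber. Both halves of this are problematic: (a) proper base change, which lets you compute stalks of higher direct images by cohomology of fibers, is available for constant (or constructible) sheaves such as $\mathbb R_X$, but not for the non-constructible sheaf $\mathcal H_X$; a "formal function theorem" for $\mathcal H_X$ is not something you can simply invoke; and (b) the duality of \cite[Proposition 3.9]{HP16} requires a normal compact space in class $\mathcal C$ with rational singularities, whereas $f^{-1}(y)$ is an arbitrary compact analytic set, typically reducible and badly singular, so that result does not apply to it. The paper avoids both difficulties with one move you are missing: since $R^if_*\mathcal O_X=0$ for $i>0$, the exact sequence relating $\mathcal H_X$, $\mathcal O_X$ and $\mathbb R_X$ gives an isomorphism $R^1f_*\mathcal H_X\cong R^2f_*\mathbb R$, transporting the whole question to the constant sheaf, where base change is valid; the statement that a class killing every contracted curve dies in $H^0(Y,R^2f_*\mathbb R)$ is then precisely claim $(\star)$ in the proof of \cite[Thm.~12.1.3]{KM92}, which the paper cites. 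Without either this reduction or a worked-out substitute for it, your proof of the inclusion $\{\alpha\;|\;\alpha\cdot C=0 \text{ for all contracted } C\}\subset\operatorname{Im}(f^*)$ is incomplete.
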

\begin{proof} Note that we are not assuming that $X,Y$ are compact and so it is not clear that $H^1(X,\mathcal H_X)\to H^2(X,\mathbb R)$ and $H^1(Y,\mathcal H_Y)\to H^2(Y,\mathbb R)$ are injective. However, we still have a commutative diagram similar to \cite[Eqn. (5), page 224]{HP16}: 
\begin{equation}\label{eqn:vertical-class}
    \xymatrixcolsep{3pc}\xymatrixrowsep{3pc}\xymatrix{
    0\ar[r] & H^1(Y, \mcH_Y)\ar[r]\ar[d] & H^1(X, \mcH_X)\ar[r]^{\varphi}\ar[d]^{\psi} & H^0(Y, R^1f_*\mcH_X)\ar[d]^{\cong}\\
    0\ar[r] & H^1(Y, \mbR)\ar[r] & H^1(X, \mbR)\ar[r]^{\varphi'} & H^0(Y, R^2f_*\mbR)
    }
\end{equation}
Suppose now that $\alpha \in H^1(X,\mathcal H_X)$ such that $\alpha \cdot C=0$ for all curves $C\subset X$ such that $f(C)=\pt$. Then from the claim $(\star)$ in the proof of \cite[Thm. 12.1.3, page 649]{KM92} it follows that $(\varphi'\circ\psi)(\alpha)=0$. Therefore from the diagram above it follows that there exists a $\beta\in H^1(Y, \mcH_Y)$ such that $\alpha=f^*\beta$.

\end{proof}~\\

\begin{lemma}\label{lem:rational-singularities}
Let $f:X\to Y$ be a proper morphism of normal analytic varieties and $f_*\mcO_X=\mcO_Y$. Let $B\>0$ be an effective $\mbQ$-divisor such that $K_X+B$ is $\mbQ$-Cartier. Assume that one of the following conditions hold:
\begin{enumerate}
    \item[(i)] $(X, B)$ is klt and $-(K_X+B)$ is $f$-nef-big.
    \item[(ii)] $(X, B)$ is dlt, $K_X$ is $\mbQ$-Cartier and $-(K_X+B)$ is $f$-ample. 
\end{enumerate}
Then $Y$ has rational singularities.
\end{lemma}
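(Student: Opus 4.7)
The plan is to reduce case (ii) to case (i), prove the relative vanishing $R^if_*\mcO_X=0$ for $i>0$ in case (i) via Kawamata--Viehweg on a log resolution, and then deduce that $Y$ has rational singularities by combining this with the fact that klt singularities are rational.

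For case (i), take a log resolution $g:X'\to X$ of $(X,B)$ and write
\[
K_{X'}+F=g^*(K_X+B)+G,
\]
where $F,G\geq 0$ are $\mbQ$-divisors with no common components, $G$ is $g$-exceptional, and $\lfloor F\rfloor=0$ (both facts coming from klt plus the log resolution). A quick computation gives
\[
\lceil G\rceil-(K_{X'}+F+\{-G\})=-g^*(K_X+B),
\]
and the pair $(X',F+\{-G\})$ is klt (SNC with coefficients in $[0,1)$). Setting $h=f\circ g$, the right-hand side is $h$-nef-big, since its pushforward $-(K_X+B)$ is $f$-nef-big and $g$ is birational, and it is $g$-nef-big since it is $g$-numerically trivial and $g$ has relative dimension $0$. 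Theorem \ref{thm:relative-kvv} then yields $R^ih_*\mcO_{X'}(\lceil G\rceil)=0$ and $R^ig_*\mcO_{X'}(\lceil G\rceil)=0$ for $i>0$. Because $\lceil G\rceil$ is effective and $g$-exceptional and $X$ is normal, $g_*\mcO_{X'}(\lceil G\rceil)=\mcO_X$, and the Leray spectral sequence for $h=f\circ g$ collapses to give $R^if_*\mcO_X=0$ for all $i>0$.

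To conclude that $Y$ has rational singularities, note that $X$ itself has rational singularities (klt implies rational, which holds in the analytic category as well). Pick a resolution $\pi:\widetilde Y\to Y$ and construct a smooth variety $Z$ dominating both $X$ and $\widetilde Y$ via proper morphisms $\mu:Z\to X$ and $\tau:Z\to \widetilde Y$ with $\pi\tau=f\mu$ (obtained by resolving an irreducible component of $X\times_Y \widetilde Y$ dominating $\widetilde Y$, or by resolving the indeterminacy of $X\dasharrow \widetilde Y$ after successive blow-ups). Since $X$ has rational singularities and $\mu$ is a resolution, $R\mu_*\mcO_Z=\mcO_X$; combined with the vanishing $R^if_*\mcO_X=0$, we get $R(f\mu)_*\mcO_Z=\mcO_Y$. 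Since $f_*\mcO_X=\mcO_Y$ forces $f$ to have geometrically connected fibers, the base-change $X\times_Y\widetilde Y\to \widetilde Y$ (and hence the resolution $\tau$) may be arranged to have connected fibers, giving $\tau_*\mcO_Z=\mcO_{\widetilde Y}$; and $R^i\tau_*\mcO_Z=0$ for $i>0$ because $\widetilde Y$ is smooth. Leray for $\pi\tau$ then gives $R^i\pi_*\mcO_{\widetilde Y}=0$ for $i>0$, so $Y$ has rational singularities.

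For case (ii), we reduce to case (i) by perturbation. Since $K_X$ and $K_X+B$ are both $\mbQ$-Cartier, so is $B$. For sufficiently small rational $\epsilon>0$ the pair $(X,(1-\epsilon)B)$ is klt: on a log resolution realizing the dlt structure, strict transforms of components of $\lfloor B\rfloor$ acquire coefficient $1-\epsilon<1$, while the discrepancies of the exceptional divisors of $(X,B)$ are strictly greater than $-1$, so they remain $>-1$ after replacing $B$ by $(1-\epsilon)B$. Moreover $-(K_X+(1-\epsilon)B)=-(K_X+B)+\epsilon B$ is $f$-ample for small $\epsilon>0$ by the openness of the $f$-ample cone (working locally over $Y$ on a compact neighborhood of any point), hence in particular $f$-nef-big. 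Applying case (i) to $(X,(1-\epsilon)B)$ completes the proof. The main technical obstacle is the final Leray step, specifically arranging a suitable common resolution $Z$ and verifying $\tau_*\mcO_Z=\mcO_{\widetilde Y}$; this is a standard but delicate base-change argument that crucially uses connectedness of fibers coming from $f_*\mcO_X=\mcO_Y$.
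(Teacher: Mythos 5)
Your reduction of (ii) to (i) and your Kawamata--Viehweg computation in case (i) are both correct, and they follow the standard route (the paper itself only cites \cite[Lemma 2.44]{DH20} here, so a self-contained argument is welcome). The identity $\lceil G\rceil-(K_{X'}+F+\{-G\})=-g^*(K_X+B)$, the klt-ness of $(X',F+\{-G\})$, the vanishing of $R^ih_*\mcO_{X'}(\lceil G\rceil)$ and $R^ig_*\mcO_{X'}(\lceil G\rceil)$, and the resulting $R^if_*\mcO_X=0$ for $i>0$ are all fine, as is the perturbation $(X,(1-\epsilon)B)$ in case (ii).

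There is, however, a genuine gap in the last step. The lemma does not assume $f$ bimeromorphic: ``$f$-nef-big'' and ``$f$-ample'' only force $\kappa(X/Y,-(K_X+B))=\dim X-\dim Y$, and the statement is in fact invoked in the paper for extremal contractions that may be of fibre type (Proposition \ref{pro:extremal-ray-contraction}). When $\dim Y<\dim X$ the map $\tau\colon Z\to\widetilde Y$ is a fibre space, and your claim that $R^i\tau_*\mcO_Z=0$ for $i>0$ ``because $\widetilde Y$ is smooth'' is false: smoothness of the target gives this vanishing only for \emph{bimeromorphic} $\tau$ (e.g.\ for a family of elliptic curves $Z=E\times\widetilde Y$ one has $R^1\tau_*\mcO_Z\neq 0$). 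Without that vanishing the Leray spectral sequence for $\pi\circ\tau$ does not collapse, and $E_2^{p,0}=R^p\pi_*\mcO_{\widetilde Y}$ is only killed modulo images of differentials coming from $R^{p-r}\pi_*R^{r-1}\tau_*\mcO_Z$, so you cannot conclude $R^p\pi_*\mcO_{\widetilde Y}=0$. The standard repair is Kov\'acs's splitting criterion: from $R(f\circ\mu)_*\mcO_Z\cong\mcO_Y$ and the factorization $\mcO_Y\to R\pi_*\mcO_{\widetilde Y}\to R(\pi\circ\tau)_*\mcO_Z\cong\mcO_Y$ one sees that the natural map $\mcO_Y\to R\pi_*\mcO_{\widetilde Y}$ admits a left inverse in the derived category, and a resolution with this splitting property forces $Y$ to have rational singularities (via Grothendieck duality and Grauert--Riemenschneider, which are available in the analytic category). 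Your argument as written is complete only in the case where $f$ is bimeromorphic; for the general statement you need to replace the final Leray collapse by this splitting argument.
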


\begin{proof}
It follows from \cite[Lemma 2.44]{DH20} and its proof.
\end{proof}~\\

\begin{proposition}\label{pro:extremal-ray-contraction}
Let $(X, B/T; W)$ be a $\mbQ$-factorial semi-stable klt pair of dimension $4$. Let $R=\mbR^{\geq 0}\cdot[\Gamma]$ be a $(K_X+B)$-negative extremal ray of $\NA(X,B/T; W)$ generated by a curve $\Gamma\subset X$. Assume that contraction of $R$ exists, i.e. there is an open neighborhood $U$ of $W$ and a projective morphsim $g:f^{-1}U\to Z$ over $U$ such that a (compact) curve $C\subset f^{-1}U$ which maps to a point $f(C)\in W$ is contracted by $g$ if and only if $[C]\in R$. Let $h:Z\to U$ be the induced morphism. Then the following hold:
\begin{enumerate}
    \item We have the following exact sequences:
    \begin{equation}\label{eqn:N^1}
    \xymatrixcolsep{3pc}\xymatrix{0\ar[r] & N^1(Z/U, W)\ar[r] & N^1(f^{-1}U/U, W)\ar[r]^-{\alpha\mapsto \alpha\cdot\Gamma} & \mbR\ar[r] & 0}
    \end{equation}
    \begin{center}
        and
    \end{center}
    \begin{equation}\label{eqn:Pic}
        \xymatrixcolsep{3pc}\xymatrix{0\ar[r] & \Pic(Z/U, W)\ar[r] & \Pic(f^{-1}U/U, W)\ar[r]^-{L\mapsto L\cdot\Gamma} & \mbZ\ar[r] & 0. }
    \end{equation}
    
    \item If $g$ is a divisorial contraction, then $(Z/U, W)$ is $\mbQ$-factorial semi-stable klt pair.
    \item If $g$ is a flipping contraction with flip $g':V\to Z$, then $(V/U, W)$ is $\mbQ$-factorial semi-stable klt pair.  
\end{enumerate}
\end{proposition}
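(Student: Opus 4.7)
The plan is to reduce each assertion to a three-dimensional K\"ahler MMP statement on a single fiber $X_w$ via adjunction, and then quote the 3-fold results of \cite{DH20, HP16, CHP16}. Note first that, by the definition of $N_1(X/T;W)$, the generator $\Gamma$ of $R$ is a curve contained in some fiber $X_w$ with $w\in W$, and since $(X,X_w+B)$ is plt, adjunction gives $(K_X+B)|_{X_w}=K_{X_w}+B_w$ and $X_w\cdot\Gamma=0$. Hence $\mathbb R^{\geq 0}[\Gamma]$ is a $(K_{X_w}+B_w)$-negative extremal ray on the three-fold $X_w$, and $g|_{X_w}\colon X_w\to Z_w:=g(X_w)$ is precisely its contraction. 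This fiberwise reduction is the technical heart of the proof.

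For part (1), we prove both exact sequences by combining the projection formula with Lemma \ref{l-HP16}. Injectivity of $g^*$ on $N^1$ (resp.\ on $\Pic$) follows from $f=h\circ g$ and the fact that every curve in a fiber of $h$ over $W$ is the image of a curve in a fiber of $f$ over $W$. Surjectivity onto $\mathbb R$ (resp.\ $\mathbb Z$) follows by taking the intersection of $\Gamma$ against a relatively ample line bundle on $f^{-1}U$ (which exists since $g$ is projective). For exactness at the middle term, note that if $\alpha\cdot\Gamma=0$ then $\alpha\cdot C=0$ for every compact curve $C$ in a fiber of $g$ lying over $W$, since all such curves have class in $R=\mathbb R^{\geq 0}[\Gamma]$. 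In the divisorial case $-(K_X+B)$ is $g$-ample and $(X,B)$ is klt, so $Z$ has rational singularities by Lemma \ref{lem:rational-singularities}(i); in the flipping case we apply the same lemma to the flip $g'\colon V\to Z$ produced in (3). Lemma \ref{l-HP16}, applied after shrinking $U$ to a Stein neighborhood of $W$, then yields $\beta$ with $\alpha=g^*\beta$. The argument for $\Pic$ is analogous.

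For part (2), the fiberwise reduction together with the 3-fold K\"ahler MMP of \cite{DH20} shows that $Z_w$ is a normal $\mathbb Q$-factorial variety and $(Z_w,(g|_{X_w})_*B_w)$ is klt. Consequently the fibers of $h$ over $W$ are reduced, irreducible and normal. Klt-ness of the total pair $(Z,g_*B)$ follows from $K_X+B=g^*(K_Z+g_*B)+aE$ with $a>0$ (by the negativity lemma applied to the $g$-anti-ample class $K_X+B$), which combined with the fiber klt-ness gives the plt property needed for $(Z/U,W)$ to be semi-stable klt. Finally, $\mathbb Q$-factoriality of $Z$ over $W$ follows from the exact sequence in (1): for any Weil divisor $D$ defined near $h^{-1}W$, the pullback $g^{-1}_*D$ is $\mathbb Q$-Cartier near $f^{-1}W$ by hypothesis on $X$; adjusting by a suitable rational multiple of $E$ to kill the intersection with $\Gamma$ and then pushing forward, we obtain that $D$ is $\mathbb Q$-Cartier over a neighborhood of $W$. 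The same argument applied to $K_X$ shows $K_Z$ is $\mathbb Q$-Cartier.

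For part (3), the flip $g'\colon V\to Z$ exists by Corollary \ref{c-fg1}: $g$ is small and birational and $-(K_X+B)$ is $g$-ample (hence $g$-big), so $R(X/Z,K_X+B)$ is locally finitely generated, and $V=\operatorname{Projan}R(X/Z,K_X+B)$. Restricting to a fiber, $g|_{X_w}\colon X_w\to Z_w$ is a 3-dimensional flipping contraction of the $\mathbb Q$-factorial klt three-fold $(X_w,B_w)$, and its flip coincides with $g'|_{V_w}\colon V_w\to Z_w$; the existence, normality, $\mathbb Q$-factoriality, and klt-ness of the 3-fold flip are provided by \cite{DH20}. Thus $V_w$ is normal with $(V_w,B_{V,w})$ klt, giving semi-stability of $(V/U,W)$. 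The remaining properties of $V$ over $W$ ($\mathbb Q$-factoriality and klt-ness of the total pair) follow by applying the exact sequence from (1) to $g'$ together with standard discrepancy comparisons $a(E,X,B)<a(E,V,g'_*g_*B)$ for every divisor $E$ on the common resolution. The main obstacle throughout will be ensuring the exceptional locus interacts correctly with the fiber structure, which is enforced by the plt condition on $(X,X_w+B)$: no exceptional divisor of $g$ (or flipped divisor of $g'$) can contain a fiber $X_w$, so fibers remain irreducible through each MMP operation.
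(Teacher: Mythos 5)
Your reduction to the fibers is a reasonable instinct, but there is a genuine gap in part (1): the exactness of the $\Pic$ sequence is \emph{not} ``analogous'' to the $N^1$ sequence. Lemma \ref{l-HP16} lives at the level of real Bott--Chern $(1,1)$-classes; knowing that $c_1(L)$ is a pullback from $Z$ does not produce a line bundle $M$ on $Z$ with $L\cong g^*M$. Descending an actual line bundle with $L\cdot\Gamma=0$ requires a positivity argument: one first shows $L|_{g^{-1}(z)}$ is nef for $z\in h^{-1}(W)$, hence $(L-(K_X+B))|_{g^{-1}(z)}$ is ample, then uses \cite[Proposition 1.4]{Nak87} to make $L-(K_X+B)$ $g$-ample after shrinking $U$, and finally applies the relative base point free theorem \cite[Theorem 4.8]{Nak87} together with the rigidity lemma on Stein open subsets of $Z$ to get $L\cong g^*M_Z$ locally, concluding by the projection formula. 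This is the step the paper's proof spends most of its effort on, and your proposal omits it entirely; without it the ``standard argument'' for $\mbQ$-factoriality of $Z$ in part (2) also has nothing to run on. A second, smaller error: you cannot apply Lemma \ref{lem:rational-singularities} to the flip $g'\colon V\to Z$, since $K_V+B_V$ is $g'$-ample, so $-(K_V+B_V)$ is not $g'$-nef and the hypotheses of the lemma fail; the correct move (and the paper's) is to apply it directly to $g\colon f^{-1}U\to Z$, where $-(K_X+B)$ is $g$-ample after shrinking $U$, in both the divisorial and the flipping case.

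On part (2), your fiberwise route is workable in principle but heavier than needed and leaves loose ends: passing from normality of the three-fold contraction target $Z_w'$ of $X_w$ to the statement that the scheme-theoretic fiber $h^{-1}(w)$ is reduced, irreducible and normal, and then back up to plt-ness of $(Z,Z_w+B_Z)$, requires an inversion-of-adjunction step you only gesture at. The paper short-circuits all of this: since $X_w\cdot\Gamma=0$, the ray $R$ is also a $(K_X+X_w+B)$-negative extremal ray of the plt pair $(X,X_w+B)$, so the proof of \cite[Corollary 3.44]{KM98} applies verbatim to give that $(Z,Z_w+B_Z)$ is plt, which is exactly the definition of semi-stability. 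You should also be careful with the claim that $g|_{X_w}$ contracts ``precisely the ray $\mathbb R^{\geq 0}[\Gamma]$'' on $X_w$: the ray $R\subset N_1(X/T;W)$ may pull back to a higher-dimensional extremal face of $\NA(X_w)$, since distinct classes on $X_w$ can become proportional in $N_1(X/T;W)$.
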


\begin{proof}
First note that using \cite[Proposition 1.4]{Nak87} we may replace $U$ by a smaller open neighborhood of $W$ and assume that $-(K_X+B)|_{f^{-1}U}$ is $g$-ample. Thus by Lemma \ref{lem:rational-singularities}, $Z$ has rational singularities. The exactness of the sequence \eqref{eqn:N^1} follows from Lemma \ref{l-HP16}. Next, let $L$ be a line bundle on $f^{-1}U$ such that $L\cdot \Gamma=0$.  Then $L\cdot C=0$ for all curves in the fibers of $g$; in particular, $L|_{g^{-1}(z)}$ is nef for all $z\in h^{-1}W$. Then $(L-(K_X+B))|_{g^{-1}(z)}$ is ample for all $z\in h^{-1}W$. Then again from \cite[Proposition 1.4]{Nak87} it follows that $L-(K_X+B)$ is $g$-ample over a neighborhood of $h^{-1}W$. Since $h:Z\to U$ is proper and flat (as $U$ is a smooth curve), and hence is both an open and closed morphism, shrinking $U$ suitably near $W$ we may assume that $L-(K_X+B)$ is $g$-ample.\\
Next, for the exactness of the sequence \eqref{eqn:Pic}, observe that if $L\cdot\Gamma=0$, then we need to show that $L\cong g^*M$ for some line bundle $M$ on $Z$. Since $g_*L$ is unique, it is enough to show locally on $Z$ that $g_*L$ is a line bundle and $L\cong g^*M_Z$ locally over $Z$, where $M_Z$ is a line bundle on an appropriate open subset of $Z$. So we may assume that $Z$ is Stein. Then $L$ is given by a Cartier divisor (since $g$ is projective), and hence by the base-point free theorem as in \cite[Theorem 4.8]{Nak87} and the rigidity lemma \cite[Lemma 4.1.13]{BS95} it follows that $L\cong g^*M_Z$ for some line bundle $M_Z$ on $Z$. Then by the projection formula, $g_*L\cong M$ is a line bundle, as required. This shows the exactness of the sequence \eqref{eqn:Pic}.\\
Now assume that $g$ is a divisorial contraction. Then from a standard argument using \eqref{eqn:Pic} it follows that $(Z/U, W)$ is $\mbQ$-factorial. 
Since $(X, X_w+B)$ has plt singularities for any $w\in W$ and $R=\mbR^{\geq 0}\cdot[\Gamma]$ is also a $(K_X+X_w+B)$-negative extremal ray, {then by the proof of} \cite[Corollary 3.44]{KM98} $(Z, Z_w+B_Z)$ has plt singularities, where $Z_w+B_Z=g_*(X_w+B)$. Thus $(Z, B_Z/U; W)$ is a semi-stable klt pair.\\
If $g$ is a flipping contraction, let $g': V\to Z$ be the flip. Then again from a standard argument it follows that $(V, B'/U; W)$ is a $\mbQ$-factorial semi-stable klt pair, where $B':=\phi_*B$ and $\phi:f^{-1}U\to V$ is the induced bimeromorphic morphism.
\end{proof}~\\

\begin{lemma}\label{lem:n1-pushforward}
Let $(X, B/T; W)$ be a $\mbQ$-factorial semi-stable klt pair of dimension $4$, and $\phi:X\dasharrow X'$ is either a $(K_X+B)$-flip or a divisorial contraction over $T$. Then $\phi_*:N^1(X/T;W)\to N^1(X'/T;W)$ is well defined and surjective.
\end{lemma}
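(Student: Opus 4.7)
The plan is to define $\phi_*$ uniformly in both cases using a common bimeromorphic resolution of $\phi$, and to verify well-definedness and surjectivity via the projection formula, together with the exact sequences of Proposition~\ref{pro:extremal-ray-contraction}(1). After shrinking $T$ about $W$ if necessary, choose a smooth variety $Y$ with projective bimeromorphic morphisms $p : Y \to X$ and $q : Y \to X'$ over $T$ such that $q = \phi \circ p$; in the divisorial case one may simply take $Y = X$, $p = \mathrm{id}$, $q = \phi$. For $\alpha \in N^1(X/T; W)$ represented by a Bott--Chern class $\tilde\alpha \in H^{1,1}_{\BC}(f^{-1}U)$ on a neighborhood $U$ of $W$, I would set
\[
\phi_* \alpha \;:=\; [\,q_* p^* \tilde\alpha\,] \;\in\; N^1(X'/T; W),
\]
where $p^*$ is the standard pullback of Bott--Chern classes and $q_*$ is the pushforward of currents; the latter takes values in $H^{1,1}_{\BC}(X'_U)$ because $X'$ has rational singularities (Lemma~\ref{lem:rational-singularities}), and the composition is independent of the choice of resolution by the usual argument.

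Well-definedness on the quotient $N^1(\cdot/T; W)$ would follow from the projection formula applied fiber by fiber. If $\tilde\alpha$ represents the zero class in $N^1(X/T;W)$, i.e.\ $\tilde\alpha \cdot \sigma = 0$ for every closed real bi-dimension $(1,1)$ current $\sigma$ on a fiber $X_w$ with $w \in W$, then for any such test current $\tau$ on $X'_w$ one has
\[
(q_* p^* \tilde\alpha)\cdot \tau \;=\; (p^* \tilde\alpha) \cdot q^* \tau \;=\; \tilde\alpha \cdot p_* q^* \tau \;=\; 0.
\]
The key point is that by the semi-stability hypothesis the exceptional locus of $\phi$ sits inside $f^{-1}(W)$, so $p|_{Y_w}$ and $q|_{Y_w}$ are bimeromorphic morphisms of the normal fibers $X_w, Y_w, X'_w$, and $p_* q^* \tau$ is a well-defined bi-dimension $(1,1)$ current on $X_w$. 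Hence $\phi_*$ descends to the quotient.

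For surjectivity, the divisorial contraction case is direct: $\phi : X \to X'$ is a proper bimeromorphic morphism between K\"ahler varieties with rational singularities, so the standard projection formula yields $\phi_* \phi^* = \mathrm{id}$ on $N^1(X'/T; W)$, proving surjectivity. The hard case is the flip, which I would handle using the exact sequences
\[
0 \to N^1(Z/U,W) \xrightarrow{g^*} N^1(X/U,W) \xrightarrow{\cdot \Gamma} \mbR \to 0, \quad 0 \to N^1(Z/U,W) \xrightarrow{g'^*} N^1(X'/U,W) \xrightarrow{\cdot \Gamma'} \mbR \to 0
\]
from Proposition~\ref{pro:extremal-ray-contraction}(1), where $g : X \to Z$ and $g' : X' \to Z$ are the flipping contractions with flipping and flipped curves $\Gamma$ and $\Gamma'$. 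Since $g = g' \circ \phi$, one checks directly that $\phi_* \circ g^* = g'^*$, so $\phi_*$ restricts to the identity on the common subspace $N^1(Z/U, W)$. To hit the additional $\mbR$-direction on the $X'$-side, I would take any class $\alpha \in N^1(X/U, W)$ with $\alpha\cdot\Gamma \neq 0$ and compute $(\phi_*\alpha)\cdot\Gamma'$ via the common resolution: this reduces to $\alpha \cdot p_* q^* [\Gamma']$, and $p_* q^*[\Gamma']$ is a nonzero multiple of $[\Gamma]$ in $N_1(X/T;W)$ because $\phi^{-1}$ is an isomorphism in codimension one and matches $\Gamma'$ to a curve numerically proportional to $\Gamma$ in $N_1(Z/T;W)$. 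Combined with the identification on $N^1(Z/U, W)$, this yields the required surjectivity.
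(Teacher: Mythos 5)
The paper offers no argument here (the proof is dismissed as ``standard''), so your proposal must stand on its own. Its architecture is the right one: define $\phi_*$ as $q_*p^*$ through a common resolution, use rational singularities of the target so that the pushforward current again has local potentials, and deduce surjectivity in the flip case from the two exact sequences over $Z$ together with $\phi_*\circ g^*=g'^*$. The genuine gap is that two of your steps invoke the \emph{pullback of a current under a bimeromorphic morphism}, which is not a defined operation. In the well-definedness step you write $(q_*p^*\tilde\alpha)\cdot\tau=(p^*\tilde\alpha)\cdot q^*\tau=\tilde\alpha\cdot p_*q^*\tau$; but $q^*\tau$ has no meaning for a general closed bi-dimension $(1,1)$ current $\tau$ on $X'_w$ (and $N_1$ here is built from arbitrary such currents, not just curve classes). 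The correct route is through the compact fibers: by the duality $N^1(X_w)\cong N_1(X_w)^*$ of \cite[Proposition 3.9]{HP16} (each $X_w$ is a normal compact K\"ahler variety with rational singularities by semi-stability), the hypothesis ``$\tilde\alpha\cdot\sigma=0$ for all currents $\sigma$ on $X_w$, $w\in W$'' says exactly that $\tilde\alpha|_{X_w}=0$ in $H^{1,1}_{\BC}(X_w)$; since $(q_*p^*\tilde\alpha)|_{X'_w}=(q_w)_*(p_w)^*(\tilde\alpha|_{X_w})$ (the fibers are reduced and irreducible and the exceptional loci are $f$-vertical), this class vanishes, and no current is ever pulled back.

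The same defect reappears in the surjectivity argument for flips, where you pair $\phi_*\alpha$ with $\Gamma'$ via ``$\alpha\cdot p_*q^*[\Gamma']$'': the flipped curve $\Gamma'$ lies inside the locus where $q$ fails to be an isomorphism, which is precisely where $q^*[\Gamma']$ and that form of the projection formula are unavailable. The conclusion is nevertheless true and comes for free from a class you already control: $p^*(K_X+B)=q^*(K_{X'}+B')+E$ with $E$ exceptional, so $\phi_*(K_X+B)=K_{X'}+B'$, and $(K_{X'}+B')\cdot\Gamma'>0$ because $K_{X'}+B'$ is $g'$-ample; hence the image of $\phi_*$ contains $g'^*N^1(Z/U,W)$ and a class of nonzero degree on $\Gamma'$, which together generate $N^1(X'/U,W)$. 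Note also that the second exact sequence is not literally Proposition \ref{pro:extremal-ray-contraction}(1), since $-(K_{X'}+B')$ is not $g'$-ample; it follows instead from Lemma \ref{l-HP16} applied to $g'$ together with $\rho(X'/Z)=1$, which is part of the standard theory of the flip. With these two repairs the proof is complete.
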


\begin{proof} We omit this proof as it follows from the standard well known arguments of the corresponding compact and projective results.

\end{proof}~\\

\begin{lemma}\label{l-psef}
Let $(X,B/T;W)$ be a semistable klt pair. Then the following are equivalent.
\begin{enumerate}
\item $\kappa (K_{X_t}+B_t)\geq 0$ for all $t\in W$. 
\item $\kappa (K_{X_t}+B_t)\geq 0$ for very general $t\in W$. 
\item $W\subset {\rm Supp}f_*\mathcal O _X(m(K_X+B))$ for some $m>0$.
\item For every positive constant $\mu >0$, $K_{X_t}+B_t+\mu \omega _t$ is pseudo-effective for very general $t\in W$.
\end{enumerate}
\end{lemma}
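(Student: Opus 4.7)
The plan is to establish the cycle $(1) \Rightarrow (2) \Rightarrow (3) \Rightarrow (1)$ together with $(1) \Rightarrow (4) \Rightarrow (2)$. The implications $(1) \Rightarrow (2)$ and $(1) \Rightarrow (4)$ are immediate: $\kappa(K_{X_t}+B_t)\geq 0$ produces an effective $\mbQ$-linear representative, hence a pseudo-effective class, and adding a K\"ahler class preserves pseudo-effectivity.

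For $(3) \Rightarrow (1)$, I would fix $t \in W$ and a nonzero section $s \in H^0(f^{-1}(U), \mcO_X(m(K_X+B)))$ over a neighborhood $U$ of $t$. Writing $n := \ord_{X_t}(s)$ and using that $X_t = f^*(t)$ is a principal Cartier divisor near $t$, the section $s/(f^*\sigma)^n$ (for $\sigma$ a local coordinate vanishing at $t$) gives a section of $\mcO_X(m(K_X+B)-nX_t)$ whose associated divisor does not contain $X_t$. Restricting to $X_t$ and using the plt adjunction $(K_X+X_t+B)|_{X_t} = K_{X_t}+B_t$ together with $\mcO_X(X_t)|_{X_t}\cong \mcO_{X_t}$, one obtains a nonzero effective divisor in $|m(K_{X_t}+B_t)|$.

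For $(2) \Rightarrow (3)$, the key tool is upper semi-continuity of $h^0$ for a flat family of line bundles on a smooth curve. Choosing $m_0$ such that $m_0(K_X+B)$ is Cartier, $\mcO_X(m(K_X+B))$ is $f$-flat for every positive multiple $m$ of $m_0$ (since $X$ is irreducible and $T$ is smooth of dimension one), and the set $S_m := \{t \in T : h^0(X_t, m(K_{X_t}+B_t)) \geq 1\}$ is therefore a closed analytic subset of the curve $T$ --- either discrete or all of $T$. In the main case of an uncountable $W$, the very general subset of $W$ coming from (2) is uncountable, and by a countable pigeonhole over $m\in m_0\mbN$, some $S_m$ meets this set in an uncountable subset, forcing $S_m = T$. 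The torsion-freeness of $f_*\mcO_X(m(K_X+B))$ (inherited from flatness over the curve) combined with the positive generic rank then implies that its support is all of $T$, hence contains $W$.

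Finally, for $(4) \Rightarrow (2)$, I would take the countable intersection $V := \bigcap_{\mu\in\mbQ^{>0}} V_\mu$ of the very general subsets $V_\mu \subset W$ on which $K_{X_t}+B_t+\mu\omega_t$ is pseudo-effective; this $V$ is still very general in $W$. For $t \in V$, letting $\mu \to 0^+$ and using closedness of the pseudo-effective cone in the finite-dimensional space $N^1(X_t)$, one deduces that $K_{X_t}+B_t$ is pseudo-effective on the compact K\"ahler klt $3$-fold $X_t$. The hard step is then to pass from pseudo-effective to $\kappa\geq 0$, which requires the non-vanishing theorem for compact K\"ahler klt $3$-fold pairs, a consequence of abundance in the K\"ahler $3$-fold MMP developed in \cite{HP16, CHP16, DO23, DH20}; this is the main obstacle and the most delicate ingredient of the proof.
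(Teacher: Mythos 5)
Your proof is correct and follows essentially the same route as the paper's: upper semicontinuity of $h^0$ in the flat family plus a countable pigeonhole over the multiple $m$ for (2)$\Rightarrow$(3), removal of the vertical part of the section and plt adjunction for (3)$\Rightarrow$(1), and closedness of the pseudo-effective cone together with non-vanishing for compact K\"ahler klt $3$-folds (\cite[Theorem 1.1]{DO23}) for (4)$\Rightarrow$(2). The only deviation is the cosmetic rearrangement of the implication cycle (you prove (1)$\Rightarrow$(4) where the paper proves (2)$\Rightarrow$(4)), which changes nothing.
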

\begin{proof}
(1) clearly implies (2).

(2) implies (3). Since the supports of $f_*\mathcal O _X(m(K_X+B))$ are closed subsets of $W$ it suffices to show that for a very general point $w\in W$ there is an integer $m>0$ such that 
$w\in {\rm Supp}f_*\mathcal O _X(m(K_X+B))$. Assume that $\kappa (K_{X_t}+B_t)\geq 0$ for very general $t\in W$. Let $T'\subset W$ be the set of $t\in W$ for which $\kappa (K_{X_t}+B_t)\geq 0$ and for each $t\in T'$, let $m(t)>0$ be the smallest positive integer such that $m(t)(K_{X_t}+B_t)$ is Cartier and $H^0(X_t, m(t)(K_{X_t}+B_t))\neq 0$. Let $m(t)(K_{X_t}+B_t)\sim M(t)\>0$ for some effective Cartier divisor $M(t)$ for any $t\in T'$. Since $T'$ is a complement of countably many analytic subsets, it follows that for any $w\in W$ there is a subset $T''\subset T'$ such that $w$ is an accumulation point of $T''$ and  $m(K_{X_t}+B_t)\sim M(t)$ for all $t\in T''$ for some positive integer $m$ independent of $t\in T''$. Therefore, from Grauert's theorem (see \cite[Theorem III.4.7]{GPR94}) it follows that $w\in {\rm Supp}f_* \mathcal O _X(m(K_X+B))\ne 0$. This concludes the proof that (2) implies (3).

(3) implies (1). Suppose that $W\subset {\rm Supp}f_*\mathcal O _X(m(K_X+B))$, then for any $t\in W$, there is an open subset  $t\in V\subset T$ and an effective divisor $D_V$ on $X_V$ such that $m(K_V+B_V)\sim _V D_V$. Discarding vertical components of $B_V$ we may assume that $D_V$ contains no fibers and hence we have $m(K_{X_t}+B_t)\sim D_t:=D_V|_{X_t}$ for every $t\in V$ and $\kappa (K_{X_t}+B_t)\geq 0$ for all $t\in W$.\\

(2) clearly implies (4) and hence it suffices to show that (4) implies (2).

So, suppose that for every $\mu >0$, $K_{X_t}+B_t+\mu \omega _t$ is pseudo-effective for very general $t\in W$.
Let $W_k=\{t\in W\;|\; K_{X_t}+B_t+\frac 1k \omega _t\ {\rm is\ pseudo-effective}\}$, then $W_k$ contains the complement of countably many points and hence so does $W_\infty =\cap _{k\geq 0}W_k$.
But then $K_{X_t}+B_t$ is pseudo-effective for any $t\in W_\infty$. By \cite[Theorem 1.1]{DO23}, $\kappa (K_{X_t}+B_t)\geq 0$.
\end{proof}~\\

Now we are ready to prove the existence of minimal models for a semi-stable klt pairs $(X, B/T;W)$ when $K_X+B$ is effective over $W$ i.e. when any of the equivalent conditions of Lemma \ref{l-psef} hold.
\begin{theorem}\label{thm:ss-pseff-mmp}
Let $f:(X,B)\to T$ be a  
semi-stable klt pair of dimension $4$ and $W\subset T$ a compact subset. If $(X/T;W)$ is $\mbQ$-factorial and $K_X+B$ is effective over $W$, then we can run the $(K_X+B)$-MMP over a neighborhood of $W$ in $T$ which ends with a minimal model over $W$.
\end{theorem}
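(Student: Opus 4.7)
The plan is to reduce the relative $4$-fold MMP over $T$ near $W$ to the already established $3$-fold K\"ahler MMP on the fibers $X_w$, $w\in W$. First, by Lemma \ref{l-psef}, after shrinking $T$ around $W$ we may assume there is an effective $\mathbb{Q}$-divisor $D\geq 0$ on $X$ with $m(K_X+B)\sim D$ for some integer $m>0$; in particular each $(X_w, B_w)$ is a compact K\"ahler klt $3$-fold with $\kappa(K_{X_w}+B_w)\geq 0$, for which the MMP exists by \cite{HP16, CHP16, DH20}. I will iterate a single MMP step until $K_X+B$ becomes nef over $W$.

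Suppose $K_X+B$ is not nef over $W$. By the weak relative cone theorem (Theorem \ref{thm:weak-cone0}) combined with the generic-choice-of-K\"ahler-class argument of Lemma \ref{lem:very-general-kahler}, after choosing a generic K\"ahler form $\omega$ on $X$ and a suitable $\lambda>0$, the class $\alpha := K_X+B+\lambda\omega$ is nef on $X$ over a neighborhood of $W$ and satisfies $\alpha^\perp\cap \NA(X/T;W)=R$ for a unique $(K_X+B)$-negative extremal ray $R=\mathbb{R}^{\geq 0}[C]$ with $C\subset X_w$ for some $w\in W$. Since $\kappa(K_{X_w}+B_w)\geq 0$, no Mori fiber contraction can occur on $X_w$; hence the exceptional locus of any contraction of $R$ is contained in $X_w$ and has codimension at least two in $X$, so the contraction is necessarily small. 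The class $\alpha|_{X_w}$ is nef on the klt $3$-fold $X_w$ with $\alpha|_{X_w}-(K_{X_w}+B_w)$ K\"ahler, so Corollary \ref{cor:contraction-non-q-factorial} produces a projective bimeromorphic contraction $\phi_w: X_w\to Y_w$. I will then extend $\phi_w$ to a small projective bimeromorphic contraction $\phi: X\to Y$ over a neighborhood of $w$ in $T$, exploiting the semi-stable structure and the fact that the exceptional locus of the desired $\phi$ lies entirely within $X_w$. Once $\phi$ is constructed, it is birational and so $K_X+B$ is trivially $\phi$-big; by Corollary \ref{c-fg1} the log canonical ring $R(X/Y, K_X+B)$ is finitely generated, and therefore the flip $\phi': X^+\to Y$ exists. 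By Proposition \ref{pro:extremal-ray-contraction}(3), $(X^+, B^+/T;W)$ is again a $\mathbb{Q}$-factorial semi-stable klt pair, so the procedure can be iterated.

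For termination, each flip in the sequence has flipping locus contained in a fiber $X_w$, $w\in W$, i.e.\ inside the boundary of the plt pair $(X, X_w+B)$, so special termination applies. Along the lines of the proof of Theorem \ref{thm:effective-termination}, I will combine ACC of LCT (Theorem \ref{thm:acc-for-lct}), the construction of local dlt models (Theorem \ref{thm:local-dlt-model}), and the termination of $3$-fold flips (\cite{DH20}) to conclude termination of the $4$-fold MMP, yielding the desired minimal model. The main obstacle is the extension step in the middle of the argument: transferring the fiberwise contraction $\phi_w$ to a relative contraction $\phi: X\to Y$ over a neighborhood of $w$ in the K\"ahler setting. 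A direct relative basepoint free theorem is not available here, and since $X_w$ is the fiber of $f:X\to T$ its normal bundle is trivial (so $X_w\cdot C = 0$), hence \cite[Proposition 7.4]{HP16} and \cite[Theorem 1.1]{DH23} do not apply verbatim; a careful adaptation using the semi-stable structure, together with the observation that the exceptional locus of any candidate $\phi$ must sit inside $X_w$, will be required.
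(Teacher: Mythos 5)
Your reduction to the fibers is the right instinct and matches the paper's strategy up through the choice of a general K\"ahler class and the identification of the extremal ray $R$ via Theorem \ref{thm:weak-cone0}. But there are two genuine gaps. First, the claim that the contraction of $R$ is ``necessarily small'' because its exceptional locus lies in a single fiber $X_w$ is false. The ray $R$ lives in $N_1(X/T;W)$, and curves in \emph{other} fibers can be numerically proportional to $[C]$ there; in fact the class $K_X+B+\lambda\omega$ typically cuts out a nontrivial $(K_{X_t}+B_t)$-negative face on \emph{every} nearby fiber, so the relative contraction can be divisorial on each $X_t$ and hence divisorial on $X$, with exceptional divisor dominating $T$. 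You must therefore handle divisorial contractions (the paper's termination for these uses precisely that the exceptional divisor dominates $T$, so that $\rho(X_t)$ drops for general $t$). Relatedly, your special-termination argument for flips needs more care: special termination relative to a single plt pair $(X, X_{w_0}+B)$ only shows the flipping locus eventually avoids $X_{w_0}$, not that the whole sequence terminates, since the flipping loci may a priori move through infinitely many fibers; the paper instead invokes Theorem \ref{thm:effective-termination} directly, which is available to you as well since $K_X+B$ is effective over $W$.

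Second, the extension of the fiberwise contraction to a relative one --- which you correctly flag as the main obstacle --- is left unresolved, and your framing (extending a map whose exceptional locus sits inside one fiber) points in the wrong direction. The paper's solution is to \emph{deform} the contraction $g_t:X_t\to Z_t$ to nearby fibers using \cite[Proposition 11.4]{KM92}: the hypotheses there are that $X_t,Z_t$ are compact, $g_{t,*}\mathcal O_{X_t}=\mathcal O_{Z_t}$, and $R^1g_{t,*}\mathcal O_{X_t}=0$, the last following from relative Kawamata--Viehweg vanishing because $-(K_{X_t}+B_t)$ is $g_t$-ample. The resulting local contractions $X_U\to Z_U$ are then glued over $T$ by the rigidity lemma, and only afterwards does one distinguish the divisorial and flipping cases (the flip existing by Corollary \ref{c-fg1}, as you say). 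Without this deformation step, or an equivalent substitute, the inductive step of your MMP does not get off the ground.
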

\begin{proof}
Suppose that $K_X+B$ is not nef over $W$. Choose a K\"ahler class $\omega$ on $X$ such that $K_{X_t}+B_t+\omega_t$ is nef for all $t\in W$, where $\omega_t:=\omega|_{X_t}$. 
We may assume that $\omega$ is {general} in $N^1(X/T;W)$. 
Let \[\lambda:=\inf\{s\>0\;|\; K_{X}+B+s\omega \mbox{ is nef over } W\},\] then we have the following. By Theorem \ref{thm:weak-cone0}, there exists a $(K_{X_t}+B_t)$-negative extremal ray $R_t=\mathbb R ^{\geq 0}[C]\subset N_1(X_t)$ on $X_t$ for some $t\in W$ such that $(K_{X_t}+B_t+\lambda \omega _t)\cdot C=0$ and if $C'\subset X_{t'}$ is a  $(K_{X}+B+\lambda\omega)$-trivial curve for some $t'\in W$, then $[C']\in R:=\mathbb R ^{\geq 0}[C]\subset \NA(X/T;W)$.

 Replacing $\lambda \omega $ by $\omega$, we may assume that $K_{X_t}+B_t+ \omega_t$ is nef for all $t\in W$ and $K_X+B+\omega$ supports the extremal ray $R\subset N_1(X/T;W)$.  
 Note that $K_X+B+ \omega$ may cut out $(K_{X_t}+B_t)$-negative faces $F_t$ from multiple or even all fibers $X_t$ with $t\in W$. 
By Theorem \ref{thm:contraction-non-q-factorial}, there is an extremal contraction $g_t:X_t\to Z_t$ for the face $F_t\subset \NA(X_t)$.
By \cite[Proposition 11.4]{KM92}, this extends to a contraction $g:X_U\to Z_U$ over a neighborhood $U$ of $t\in T$, where $X_U=X\times _TU$ (we note that $X_t,Z_t$ are compact, $g_{t,*}\OO _{X_t}=\OO _{Z_t}$ and $R^1g_{t,*}\OO _{X_t}=0$  by the relative Kawamata-Viehweg vanishing Theorem \ref{thm:relative-kvv}, as $-(K_{X_t}+B_t)$ is $g_t$-ample). Note that $X_U\to Z_U$ is a surjective morphism of normal varieties with connected fibers which contracts precisely the set of curves $C\subset X_t$ for some $t\in U$ such that $[C]\in R\subset N_1(X/T;W)$. Suppose that $U,U'\subset T$ are two such open subsets, then over $U\cap U'$, $X_U\to Z_U$ and $X_{U'}\to Z_{U'}$ are isomorphic, since they are both surjective morphisms of normal varieties with connected fibers which contract identical subsets (see the rigidity lemma in \cite[Lemma 4.1.13]{BS95}). 
Thus these contractions glue together to give a projective contraction $g:X\to Z$ over $T$.
Note that if $\dim Z_t<\dim X_t$ for some $t\in T$, then from the flatness over $T$ it follows that $\dim Z<\dim X$, which is impossible as $K_X+B$ is pseudo-effective. In particular, $g$ is bimeromorphic. 
If $g$ is a divisorial contraction, then we replace $X$ with $Z$ and $B$ with $g_*B$. If $g$ is a flipping contraction, then flip $g^+:X^+\to Z$ exists by Corollary \ref{c-fg1}. Then we replace $X$ by the flip $X^+$.

Note that by construction, for every $t\in W$ we have that $K_{X_t}+B_t+\omega _t$ is nef, and by Corollary \ref{cor:contraction-non-q-factorial}, $K_{X_t}+B_t+\omega _t=g_t^*\omega _{Z_t}$ for some K\"ahler form $\omega _{Z_t}$ on $Z_t$. Since $-(K_X+B)$ is $g$-nef-big, by Proposition \ref{lem:rational-singularities}, $Z$ has rational singularities. By Lemma \ref{l-HP16}, $K_X+B+\omega =g^*\alpha$ for some form $\alpha$ on $Z$. 
Since $(g^*\alpha)|_{X_t}=g_t^*\omega _{Z_t}$ for every $t\in W$, it follows that $\alpha $ is K\"ahler over $W$ (see eg. the proof of Theorem \ref{thm:special-effective-dlt-mmp}).\\
If $X\to Z$ is a flipping contraction, then since $K_{X^+}+B^+$ is ample over $Z$, it follows that $(g^+)^*\alpha +\epsilon(K_{X^+}+B^+)$ is K\"ahler over $W$.\\

Termination of flips follows from Theorem \ref{thm:effective-termination}, however termination of divisorial contractions is not immediately clear as $N^1(X/T;W)$ may be infinite dimensional. But observe that if $X\to Z$ is a divisorial contraction, then the exceptional divisor $E$ dominates $T$ and so $\rho (X_t)>\rho (Z_t)$ for general $t\in T$. Therefore there are no infinite sequences of divisorial contractions.
\end{proof}~\\

Next we prove the existence of Mori fiber space when $K_X+B$ is not effective over $W$.
\begin{theorem}\label{thm:ss-non-pseff-mmp}
Let $(X,B/T;W)$ be a $\mbQ$-factorial semi-stable klt pair of dimension $4$, where $W\subset T$ is a compact subset. If $K_X+B$ is not effective over $W$ (see Lemma \ref{l-psef}), then we can run a $(K_X+B)$-MMP over a neighborhood of $W$ which ends with a Mori fiber space.
\end{theorem}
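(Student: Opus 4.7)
\smallskip

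The plan is to run the $(K_X+B)$-MMP over $T$ with scaling of a relatively K\"ahler form $\omega$ on $X$, following the template established in Theorem \ref{thm:ss-pseff-mmp}, and to prove termination by combining a very-general-fiber argument with the known $3$-fold MMP from \cite{DH20}. To start, fix such an $\omega$ and set $\lambda_0:=\inf\{s\geq 0\,:\,K_X+B+s\omega\text{ is nef over }W\}$. Since $K_X+B$ is not effective over $W$, condition (4) of Lemma \ref{l-psef} fails, so there exists $\mu_0>0$ with $K_{X_t}+B_t+\mu_0\omega_t$ not pseudo-effective for a very general $t\in W$; in particular $\lambda_0\geq\mu_0>0$.

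At step $i$, write $\lambda_i$ for the current scaling value. By Theorem \ref{thm:weak-cone0} there are only finitely many $(K_{X_i}+B_i+\lambda_i\omega)$-trivial extremal ray classes on fibers over $W$, so we may pick one, say $R_i=\mathbb{R}_{\geq 0}[C_i]$ with $C_i\subset (X_i)_{t_i}$ for some $t_i\in W$. Applying Corollary \ref{cor:contraction-non-q-factorial} to the corresponding face of $\overline{NA}((X_i)_{t_i})$ we obtain a fiberwise contraction, and exactly as in the proof of Theorem \ref{thm:ss-pseff-mmp}---using \cite[Proposition 11.4]{KM92}, the relative Kawamata-Viehweg vanishing Theorem \ref{thm:relative-kvv}, and the rigidity lemma to glue over an open neighborhood $U\supset W$---this extends to a projective contraction $\phi_i:X_i\to Z_i$ over $U$. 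If $\dim Z_i<\dim X_i$, we are done. Otherwise $\phi_i$ is either divisorial or small flipping; in the small case the flip $X_i\dashrightarrow X_{i+1}$ exists by Corollary \ref{c-fg1}, and in either case Proposition \ref{pro:extremal-ray-contraction} ensures that $(X_{i+1},B_{i+1}/T;W)$ is again $\mathbb{Q}$-factorial semi-stable klt with $\lambda_{i+1}\leq\lambda_i$.

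The main obstacle, flagged already in the introduction, is termination. The scaling sequence $\{\lambda_i\}$ is non-increasing with limit $\lambda_\infty\geq\mu_0>0$. The key observation is that for a \emph{very general} $t\in W$, the fiber $X_t$ is unchanged throughout the MMP: each step affects only fibers over finitely many points of $W$, so a countable family of steps still leaves uncountably many admissible $t$. Hence the bimeromorphic map $X_t\cong(X_i)_t$ induces, step by step, a $3$-dimensional MMP on the compact K\"ahler dlt pair $(X_t,B_t)$ obtained by restriction (using semi-stability to ensure $(X,X_t+B)$ is plt and applying adjunction, so the restriction $(K_{X_i}+B_i)|_{(X_i)_t}=K_{(X_i)_t}+(B_i)_t$ behaves correctly under flips and divisorial contractions). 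Since the $3$-dimensional K\"ahler MMP terminates by \cite{HP16,CHP16,DH20}, the associated sequence on $X_t$ must terminate; combined with a special-termination argument (the contracted locus at each step meets only finitely many fibers, so the failure of termination concentrates on finitely many $w\in W$ above each of which $(X_i,(X_i)_w+B_i)$ is plt and adjunction to $(X_i)_w$ yields a dlt $3$-fold with effective $(K+B)|_{(X_i)_w}$-analogue where Theorem \ref{thm:effective-termination} applies), we conclude that the $4$-dimensional MMP cannot produce infinitely many nontrivial steps without reaching a fiber-type contraction.

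The principal technical difficulty is establishing that each $4$-dimensional step restricts faithfully to a $3$-dimensional MMP step on the very general fiber and, dually, that an infinite sequence of $4$-fold contractions cannot occur without eventually producing a fiber-type ray on a very general fiber. This requires a careful analysis using the $\lambda_\infty>0$ stabilization: once $\lambda_i=\lambda_\infty$, the nefness of $K_{X_i}+B_i+\lambda_\infty\omega$ over $W$ forces any further contracted ray to restrict to a $(K_{X_t}+B_t+\lambda_\infty\omega_t)$-trivial extremal ray on the very general fiber, which is a $(K_{X_t}+B_t)$-negative ray; this gives a bijection between our tail MMP and a tail MMP on the $3$-fold $(X_t,B_t)$, and the latter terminates by \cite{DH20}. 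The remaining sub-case $\lambda_\infty=0$ would force $K_{X_t}+B_t+\epsilon\omega_t$ to be nef for all $\epsilon>0$ and hence $K_{X_t}+B_t$ pseudo-effective, contradicting the failure of Lemma \ref{l-psef}(4).
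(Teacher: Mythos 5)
Your setup is on the right track: you correctly reduce existence of contractions and flips to the argument of Theorem \ref{thm:ss-pseff-mmp}, you correctly observe that for very general $t$ the MMP is eventually an isomorphism near $X_t$, and you correctly extract from Lemma \ref{l-psef}(4) that the scaling numbers satisfy $\lambda_i\geq \mu_0>0$. But the termination argument has a genuine gap. Your central claim --- that the tail of the $4$-fold MMP is in ``bijection with a tail MMP on the $3$-fold $(X_t,B_t)$'' for very general $t$ --- is self-defeating: as you yourself note, the MMP is eventually an isomorphism in a neighborhood of every very general fiber, so the induced sequence on $(X_t,B_t)$ is eventually constant and its (trivial) termination imposes no constraint whatsoever on the $4$-fold sequence. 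Likewise, the sequence $\{\lambda_i\}$ is strictly decreasing and need not stabilize, so the phrase ``once $\lambda_i=\lambda_\infty$'' is not available. Finally, the assertion that ``the failure of termination concentrates on finitely many $w\in W$'' is exactly what must be proved and is not a consequence of anything you have said: each flip touches only finitely many fibers, but an infinite sequence of flips could a priori touch infinitely many \emph{distinct} special fibers, and special termination (applied to $(X,X_w+B)$ for a \emph{fixed} $w$) only handles finitely many fibers at a time.

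The missing ingredient is a boundedness statement for the flipping curves, and this is where $\lambda:=\lim\lambda_i>0$ is actually used. For each $i$ at which a new fiber $X_{t_i}$ is touched, the MMP $X\dashrightarrow X^i$ is still an isomorphism near $X_{t_i}$, so the flipping curve $C_i\subset X^i_{t_i}\cong X_{t_i}$ can be viewed in $X$; it satisfies $(K_{X^i_{t_i}}+B^i_{t_i}+\lambda_i\omega^i_{t_i})\cdot C_i=0$ with $0>(K_{X^i_{t_i}}+B^i_{t_i})\cdot C_i\geq -6$ by the length bound, whence $\omega\cdot C_i\leq 12/\lambda$. Lemma \ref{l-douady} then places all the $C_i$ in finitely many families, hence in finitely many fibers $X_{t_1},\dots,X_{t_k}$, and only at this point does special termination for the finitely many plt pairs $(X,X_{t_j}+B)$ yield the contradiction. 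A secondary issue you gloss over: in the absence of a full relative cone theorem, one must perturb the scaling class at each step ($\omega_i=\omega_{i-1}+\epsilon_i\alpha_i$ with $\alpha_i$ general in $N^1(X/T;W)$, using Lemma \ref{lem:n1-pushforward} and Lemma \ref{lem:very-general-kahler}) to guarantee that $(K_{X^i}+B^i+\lambda_i\omega_i^i)^{\perp}\cap\overline{\mathrm{NA}}(X^i/T;W)$ is a single extremal ray, so that the contraction and the comparison $\omega\cdot C_i\leq 2\omega_i\cdot C_i$ make sense.
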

\begin{proof}
Throughout the proof we will repeatedly shrink $T$ in a neighborhood of $W$ without further mention.
	
The existence of flips and divisorial contractions here works exactly as in Theorem \ref{thm:ss-pseff-mmp}, and so we will only discuss the termination of flips below.

To see termination, we proceed as follows. First by inversion of adjunction, $(X,X_t+B)$ is dlt for any $t\in T$. Moreover, it is easy to see that any $(K_X+B)$-MMP over $T$ is also a $(K_X+X_t+B)$-MMP over $T$ for a fixed $t\in T$, and thus by special termination the flipping locus is disjoint from $X_t$ after a finitely many steps. Note also that any divisorial contraction must induce a nontrivial morphism on $X_t$ for general $t\in T$, and hence decreases its Picard number $\rho (X_t)$. Therefore, we may assume that there are no divisorial contractions after finitely many steps of this minimal model program.
We fix a point $t_0\in T$, and from now on we will assume that any $(K_X+B)$-MMP over $T$ is disjoint from the fixed fiber $X_{t_0}$; regardless of what MMP we run. In particular, the flipping loci do not dominate the base curve $T$, and hence the flipping curves for any given flip are contained in finitely many fibers of $f$. 
 Since there are at most countably many flips for any given $(K_X+B)$-MMP over $T$, it follows that, for very general $t\in T$, any finite sequence of steps of a $(K_X+B)$-MMP over $T$ will induce an isomorphism on a neighborhood of $X_t$.

By contradiction assume that flips do not terminate for any $(K_X+B)$-MMP over $T$. Let $\omega$ be a  K\"ahler class on $X$ such that $K_X+B+\omega$ is K\"ahler over $W$. Now we will discuss the strategy our proof first without full technical details. The idea is as follows. We run a minimal model program with the scaling of $\omega$: $X=X^1\dasharrow X^2\dasharrow \ldots \dasharrow X^n$. As we have observed above, this MMP is disjoint from a very general fiber $X_s$ and from any fiber $X_t$ for $n\gg 0$. It follows that there is a sequence of fibers $X_{t_i}\cong X^i_{t_i}$ containing a flipping curve for $X^i\dasharrow X^{i+1}$. Let $C_i\subset X_{t_i}$ be a curve whose isomorphic image in $X^i_{t_i}$ is a flipping curve of $X^i\bir X^{i+1}$; we will identify $C_i$ with its image in $X^i_{t_i}$. Suppose that $(K_{X^i}+B^i+\lambda _i\omega ^i)\cdot C_i=0$, where $\lambda _1\geq \lambda _2\geq \ldots $ are the nef thresholds. By Lemma \ref{l-psef} $\lim \lambda _i=\mu >0$, as $K_X+B$ is not effective over $W$, and so
\[\omega \cdot C_i=\omega ^i\cdot C_i=\frac{-1}{\lambda _i}(K_{X^i_{t_i}}+B^i_{t_i})\cdot C_i\leq \frac{6}{\mu}.\] By Lemma \ref{l-douady}, these $C_i\subset X$ belong to finitely many families and so must be contained in finitely many fibers. This is a contradiction, and hence the sequence of flips terminates. Unluckily, there are several technical issues that arise in the proof. Since we do not have a cone theorem here, it is not clear whether for each $i$ there is a \textit{unique} $(K_{X^i}+B^i)$-negative extremal ray $R_i$ of $\NA(X/T; W)$ such that $(K_{X^i}+B^i+\lambda_i\omega^i)\cdot R_i=0$. 

However, this can be achieved as long as each $\omega ^i$ is general in $N^1(X/T; W)$, and so at each step it suffices to perturb the given K\"ahler class. Thus we end up with a sequence of K\"ahler classes $\omega_{i+1} =\omega _i+\epsilon _i\alpha _i$ such that $\alpha _i$ is general in $N^1(X/T; W)$ and $0<\epsilon _i\ll 1$. This is discussed in detail below.\\

As mentioned above, we will run a $(K_X+B)$-MMP over $W$ with scaling of a sequence of general K\"ahler classes $\omega_i$. This means that:
{\it  There exists a sequence $X=X^1\dasharrow X^2\dasharrow \cdots \dasharrow X^n$ of $(K_X+B)$-flips and divisorial contractions over $W$ and real numbers $\lambda _1> \lambda _2> \cdots > \lambda _n>0$ satisfying the following properties:
\begin{enumerate}
    \item $\omega_i:=\omega_{i-1}+\eps_i \alpha_i, \omega_1=\omega$, where $\alpha_i\in N^1(X/T; W)$ is a general class and $0<\eps_i\ll 1$ for all $i\>1$. In particular, we may assume that $\omega+2(\omega _i-\omega)$ and $K_X+B+\omega_i$ are both K\"ahler over $W$ for $i\>1$. 
    \item $\lambda_i:=\inf\{s\>0\;:\; K_{X^i}+B^i+s\omega_i^i\mbox{ is nef over }W\}$.
    \item For each $i\>1$, $(K_{X^i}+B^i+\lambda_i\omega^i_i)^\bot\cap \NA(X^i/T; W)=R$ is an extremal ray. Moreover,  
    there is a point $w_i\in W$ and a curve $C_i\subset X^i_{w_i}$ spanning the ray $R$.
    \item $K_{X^i}+B^i+t\omega ^i_i$ is K\"ahler over $W$ for $0<t-\lambda_i \ll 1$. 
    \item There is a positive integer $n\>1$ such that 
    there is a morphism $X^n\to Z^n$ over $W$ such that $\dim X^n>\dim  Z^n$, $-(K_{X^n}+B^n)$ is relatively ample over $Z^n$ and $K_{X^n}+B^n+\lambda _n\omega ^n_n$ is relatively trivial over $Z^n$.
\end{enumerate}}
Note that this MMP is still disjoint from the fiber $X_{t_0}$.
 We explain the details of running this MMP below.
 Let $X:=X^1$ and $\lambda _0=1$.
     Suppose that $\phi ^{i-1}:X^1\dasharrow X^{i-1}$ have already been constructed so that properties (1-4)$^{i-1}$ are satisfied. In particular, by  (3-4)$^{i-1}$ we have that
 $K_{X^{i-1}}+B^{i-1}+t\omega^{i-1}_{i-1} = \phi ^{i-1}_*(K_X+B+t\omega_{i-1})$ is K\"ahler 
 for $0< t- \lambda _{i-1}\ll 1$ and $(K_{X^{i-1}}+B^{i-1}+\lambda_{i-1}\omega^{i-1}_{i-1})^\bot\cap \NA(X^{i-1}/T; W)=R_{i-1}$ is an extremal ray spanned by a curve $C_{i-1}$. If $R_{i-1}$ defines a Mori fiber space, then we are done.
 Otherwise, by what we argued above, we may assume that we have a flip, say $\psi ^{i-1}:X^{i-1}\dasharrow X^i$. 
 If $g^{i-1}:X^{i-1}\to Z^{i-1}$ and $h^i:X^i\to Z^{i-1}$ are the corresponding flipping and flipped contraction, then arguing as in the proof of Theorem \ref{thm:ss-pseff-mmp}, $\eta_{Z^{i-1}}:=g^{i-1}_*(K_{X^{i-1}}+B^{i-1}+\lambda_{i-1}\omega^{i-1}_{i-1})$ is K\"ahler over $W$. Since $\rho (X^i/Z^{i-1})=1$ and $K_{X^i}+B^i$ is ample over $Z^{i-1}$, it follows that $-\omega^{i}_{i-1}$ is K\"ahler over $Z^{i-1}$. Then for $0<\delta \ll 1$ we have
 \[K_{X^{i}}+B^{i}+(\lambda_{i-1}-\delta)\omega^{i}_{i-1}=\psi ^{i-1}_*(K_{X^{i-1}}+B^{i-1}+(\lambda_{i-1}-\delta)\omega^{i-1}_{i-1})=(h^i)^*\eta-\delta \omega^{i}_{i-1}\] is K\"ahler over $W$.
 Note that since $N^1(X/T;W)\to N^1(X^i/T;W)$ is surjective by Lemma \ref{lem:n1-pushforward}, and since $\alpha_i \in N^1(X/T;W)$ is a general class, then so is its pushforward  $\alpha^i_i \in N^1(X^i/T;W)$. In particular, $\omega^i_i=\omega^i_{i-1}+\epsilon _i\alpha^i_i$ is a general class in $N^1(X^i/T; W)$.
 Since $0<\epsilon _i\ll 1$, we may assume that \[K_{X^i}+B^i+(\lambda_{i-1}-\delta)\omega^{i}_{i}=K_{X^i}+B^i+(\lambda_{i-1}-\delta)\omega^{i}_{i-1}+\epsilon _i(\lambda_{i-1}-\delta)\alpha^{i}_{i}\] is K\"ahler over $W$.
 Let $\lambda _i:=\inf\{s\>0\;:\; K_{X^i}+B^i+s\omega_i^i\mbox{ is nef over }W\}$. Clearly  property (2)$^i$ is satisfied. Since $0<\epsilon _i\ll 1$, \[K_X+B+\omega _i=K_X+B+\omega _{i-1}+\epsilon _i \alpha _i\qquad {\rm and}\qquad \omega +2(\omega_i -\omega )=\omega+2(\omega_{i-1} -\omega  )+2\epsilon _i \alpha _i,\] property (1)$^{i-1}$ implies property (1)$^{i}$.
 
 To see (3)$^i$ we proceed as follows.
 We write  \begin{equation}\label{eqn:kahler-scaling}
     K_{X^i}+B^i+\lambda _i\omega _i^i=\frac 1{m+1}\left(K_{X^i}+B^i+m\left(K_{X^i}+B^i+\left(\frac {m+1} m\right)\lambda _i\omega _i^i \right)\right).
 \end{equation}
  For $m\gg 0$, $\lambda _i<\lambda _i\left(\frac {m+1} m\right)\leq \lambda _{i-1}-\delta$, and hence $K_{X^i}+B^i+\lambda _i\left(\frac {m+1} m\right)\omega ^i_i$ is K\"ahler over $W$. 
  From Theorem \ref{thm:weak-cone0} it easily follows that the face $F=(K_{X^i}+B^i+\lambda_i\omega^i_i)\cap\NA(X/T;W)$ is generated finitely many classes of curves.
 
  Since $\omega _i^i$ is general in $N^1(X^i/T; W)$, it follows that $(K_{X^i}+B^i+\lambda_i\omega^i_i)^\bot\cap \NA(X^i/T; W)=R_i$ is an extremal ray spanned by a curve $C_i\subset X^i_{w_i}$ for some $w_i\in T$ and so (3)$^i$ holds.\\
 
To see (4)$^i$, simply note that  the sum of a nef class and a K\"ahler class is K\"ahler, and hence $K_{X^i}+B^i+t\omega ^i_i$ is K\"ahler over $W$ for $\lambda _{i-1}-\delta \geq t>\lambda _i$ and $\delta>0$.

Finally, we must show that the process terminates after finitely many steps.
 
We claim that $\lim\lambda_i>0$. By contradiction assume that $\lim \lambda _i=0$.
For a very general $t\in T$, we have $X_t\cong X^i_t$ for all $i\>1$ (as discussed above).
By Lemma \ref{l-psef}, there exists a $\mu >0$ such that $K_{X_t}+B_t+\mu \omega _t$ is not pseudo-effective for very general $t\in T$. 
Since \[K_{X_t}+B_t+\mu \omega _t = K_{X_t}+B_t+\lambda _i(\omega_i)_t+(\mu \omega _t-\lambda _i(\omega_i)_t) \] and $\mu \omega _t-\lambda _i(\omega_i)_t$ is K\"ahler for $i\gg 0$ (as $\lim\lambda_i=0$), it follows that $K_{X_t}+B_t+\lambda _i(\omega_i)_t$ is not pseudo-effective for $i\gg 0$. 
Since
\[K_{X_t}+B_t+\lambda _i(\omega_i)_t = K_{X^i_t}+B^i_t+\lambda _i(\omega^i_i)_t\] 
is nef (for $t\in T$ very general), this is the required contradiction. So $\lim \lambda _i=\lambda>0$.  

Now for a fixed point $w_0\in W$, let $C_{w_0}\subset X_{w_0}$ be a flipping curve of the above MMP. Note that every step of the above MMP is also a step of the $(K_X+B+X_{w_0})$-MMP over $W$. Thus by special termination, after finitely many steps the flipping locus of the above MMP is disjoint from the fiber $X_{w_0}$. So after passing to a subsequence we may assume that for each $i\>1$, $t_i\in W$ is a point such that the fiber $X_{t_i}$ contains a flipping curve of the above MMP for the very first time. Consequently, we have that $X=X^1\bir X^i$ is an isomorphism over a neighborhood of $t_i$; in particular, $X_{t_i}\cong X^i_{t_i}$. Let $C_i\subset X^i_{t_i}$ be a flipping curve of the above MMP as in Theorem \ref{thm:weak-cone0}. Then identifying $C_i$ with its image in $X_{t_i}$ we get
 \[ (K_{X}+B+\lambda _i\omega _i)\cdot C_i=(K_{X^i}+B^i+\lambda _i\omega^i_i )\cdot C_i=(K_{X^i_{t_i}}+B^i_{t_i}+\lambda _i(\omega ^i_i)_{t_i})\cdot C_i=0.\]
 Since $\lambda _i\geq \lambda >0$, and $2\omega_i-\omega =\omega +2(\omega_i-\omega )$ is K\"ahler, it follows that
 \[\omega \cdot C_i\leq 2\omega^i \cdot C_i=2(\omega^i_i)_{t_i}\cdot C_i=\frac{-2}{\lambda _i}(K_{X^i_{t_i}}+B^i_{t_i})\cdot C_i\leq \frac {12}\lambda,\] and so by Lemma \ref{l-douady}, the curves $\{C_i\}_{i}$, belong to finitely many families of curves on $X$ (over $W$). Consequently, the curves $\{C_i\}_i$ are contained in finitely many fibers $X_{t_1},\ldots , X_{t_k}$,  where $t_i\in W$, and hence by special termination this sequence of flips must terminate, this is a contradiction. Therefore, we may assume that $K_{X^m}+B^m+\lambda_m \omega^m$ is nef for some $m\>1$, and there is a Mori fiber space $X^m\to Z$ over $T$.

\end{proof}

\begin{proof}[Proof of Theorem \ref{thm:ss-mmp}]
This follows from Theorem \ref{thm:ss-pseff-mmp} and \ref{thm:ss-non-pseff-mmp}.
\end{proof}

\bibliographystyle{hep}
\bibliography{4foldreferences} 

\end{document}